\DeclareFontFamily{U}{wncy}{}
\DeclareFontShape{U}{wncy}{m}{n}{<->wncyr10}{}
\DeclareSymbolFont{mcy}{U}{wncy}{m}{n}
\DeclareMathSymbol{\Sha}{\mathord}{mcy}{"58}
\newcommand\cA{{\mathcal A}}
\newcommand\cC{{\mathcal C}}
\newcommand\cE{{\mathcal E}}
\newcommand\cF{{\mathcal F}}
\newcommand\cH{{\mathcal H}}
\newcommand\cL{{\mathcal L}}
\newcommand\cM{{\mathcal M}}
\newcommand\cO{{\mathcal O}}
\newcommand\cP{{\mathcal P}}
\newcommand\cS{{\mathcal S}}
\newcommand\cT{{\mathcal T}}
\newcommand\cX{{\mathcal X}}
\newcommand\fF{{\mathfrak F}}
\newcommand\fG{{\mathfrak G}}
\newcommand\bA{{\mathbb A}}
\newcommand\bB{{\mathbb B}}
\newcommand\bC{{\mathbb C}}
\newcommand\bD{{\mathbb D}}
\newcommand\bF{{\mathbb F}}
\newcommand\bH{{\mathbb H}}
\newcommand\bN{{\mathbb N}}
\newcommand\bP{{\mathbb P}}
\newcommand\bQ{{\mathbb Q}}
\newcommand\bR{{\mathbb R}}
\newcommand\bT{{\mathbb T}}
\newcommand\bV{{\mathbb V}}
\newcommand\bZ{{\mathbb Z}}
\newcommand\wL{{\widetilde L}}
\newcommand\wX{{\widetilde X}}
\newcommand{\set}[1]{\left\{ #1 \right\}}
\newcommand\oA{{\overline A}}
\newcommand\oD{{\overline D}}
\newcommand\oH{{\overline H}}
\newcommand\oX{{\overline X}}
\newcommand\oY{{\overline Y}}
\newcommand\uR{{\underline R}}
\newcommand\bfM{{\bf M}}
\newcommand\bfN{{\bf N}}
\DeclareMathOperator{\Sym}{Sym}
\DeclareMathOperator{\GL}{GL}
\DeclareMathOperator{\Hilb}{Hilb}
\DeclareMathOperator{\im}{Im}
\DeclareMathOperator{\Proj}{Proj}
\DeclareMathOperator{\rk}{rk}
\DeclareMathOperator{\SL}{SL}
\DeclareMathOperator{\Sp}{Sp}
\DeclareMathOperator{\Spec}{Spec}
\newcommand{\wh}[1]{{\widehat{#1}}}
\newtheorem{theorem}{Theorem}[section]
\newtheorem{corollary}[theorem]{Corollary}
\newtheorem{lemma}[theorem]{Lemma}
\newtheorem{proposition}[theorem]{Proposition}
\theoremstyle{definition}
\newtheorem{construction}[theorem]{Construction}
\newtheorem{definition}[theorem]{Definition}
\newtheorem{example}[theorem]{Example}
\newtheorem{notation}[theorem]{Notation}
\newtheorem{question}[theorem]{Question}
\newtheorem{remark}[theorem]{Remark}
\newcommand{\twopartdef}[4]
{
	\left\{
		\begin{array}{ll}
			#1 & \mbox{if } #2 \\
			#3 & \mbox{if } #4
		\end{array}
	\right.
}
\newcommand{\twobytwo}[4]
{
	\begin{pmatrix}
		#1 & #2 \\
		#3 & #4
	\end{pmatrix}
}
\title[The Mumford construction]{Combinatorics
and Hodge theory of degenerations of abelian varieties:
A survey of the Mumford construction}
\author[Engel]{Philip Engel}
\address{Department of Mathematics, Statistics, 
and Computer Science, University of Illinois in Chicago (UIC),
851 S Morgan St, Chicago, IL 60607, USA}
\email{pengel@uic.edu}
\author[de Gaay Fortman]{Olivier de Gaay Fortman}
\address{Department of Mathematics, Utrecht University,  Budapestlaan 6, 3584 CD Utrecht, The Netherlands.}
\email{a.o.d.degaayfortman@uu.nl}
\author[Schreieder]{Stefan Schreieder}
\address{Leibniz University Hannover, Institute of Algebraic Geometry, Welfengarten 1, 30167 Hannover, Germany.}
\email{schreieder@math.uni-hannover.de}
\date{\today}
\begin{document}
\subjclass[2020]{14K10, 14K25, 
14C30, 05B35, 32G20, 14M25}
\keywords{Abelian varieties, 
toric varieties, Hodge theory,
degenerations, matroids}

\begin{abstract} 
We survey the Mumford construction of degenerating
abelian varieties, with a focus on the analytic version
of the construction, and its relation to toric geometry. Moreover, we study the geometry and Hodge
theory of multivariable  
degenerations of abelian varieties
associated to regular matroids, and 
extend some fundamental results of Clemens
on $1$-parameter semistable degenerations to the
multivariable setting.
\end{abstract}

\setcounter{tocdepth}{2}
\maketitle 

\tableofcontents

\section{Introduction}

In 1972, Mumford gave an analytic construction
of degenerations of abelian varieties
over complete rings \cite{mumford1972}. 
It played an important role in the development
of the theory of toric varieties \cite{kempf}
and toroidal compactifications of 
locally symmetric varieties \cite{mumford-amrt}.
When working over $\bC$, 
the basic idea is that one may view
a degeneration of abelian varieties analytically,
as the quotient of an appropriate intermediate cover
of the universal cover. 

It is well-known that, over the complex numbers, 
any abelian $g$-fold 
$A\simeq \bC^g/H_1(A,\bZ)$ is the quotient 
of a vector space $\bC^g$ by a lattice 
$H_1(A,\bZ)\subset \bC^g$ 
of rank $2g$. Suppose that 
$A=X_t$ is a general fiber of a degenerating
family $X\to \Delta$ of
abelian varieties over the unit disk 
$\Delta\subset \bC$. Let $V_\bZ\coloneqq H_1(X_t,\bZ)$.
Then, there is a saturated sublattice 
$W_{-1}V_\bZ \subset V_\bZ$ of 
the fundamental group 
$V_{\mathbb Z} = \pi_1(X_t)$, consisting of $1$-cycles
which are invariant under the monodromy
of the punctured family $X^*\to \Delta^*$.
It contains a further sublattice $W_{-2}V_\bZ\subset W_{-1}V_\bZ$ consisting of {\it vanishing cycles}; that is, the 
$1$-cycles on $X_t$ which are null-homologous
in $X$.
The filtration 
$W_{-2}V_\bZ\subset W_{-1}V_\bZ\subset W_0V_\bZ=V_\bZ$ defines the 
{\it weight filtration}.

The subgroup $W_{-1}V_\bZ \subset V_\bZ = \pi_1(X_t)$
gives rise to
a cover 
$Y^*\to X^*$ corresponding fiberwise
to the intermediate cover 
$\bC^g\simeq \wX_t\to Y_t\to X_t$ of the universal cover, 
whose Galois group over $X_t$
is the graded piece ${\rm gr}^W_0V_\bZ$.
In general, $Y_t$ is a {\it semiabelian variety}---an
algebraic torus bundle over an abelian variety
of dimension $\tfrac{1}{2}\textrm{rank}\, {\rm gr}_{-1}^W V_\bZ$.  When $W_{-1}V_\bZ$
has rank $g$, 
then we have 
$Y_t\simeq \bC^g/\bZ^g \simeq (\bC^*)^g$, 
and we call the degeneration {\it maximal}.

In the case of a maximal
degeneration, the intermediate
cover $Y^*$ of the punctured family admits an analytic open embedding $Y^*\hookrightarrow 
(\bC^*)^g\times \bC^*$ 
into an algebraic torus, with the map
to $\Delta^*$ given by the projection
to the second factor. 
Thus, by the theory of toric varieties, it is possible
to extend $Y^*\to \Delta^*$ 
to a family $Y\to \Delta$.
More precisely, we take a toric
extension of $(\bC^*)^g\times \bC^*$
for which the fiber-preserving action of ${\rm gr}_0^WV_\bZ$ extends to an action
on the central fiber $Y_0$. 
The quotient of $Y\to \Delta$
by the extended action of ${\rm gr}_0^WV_\bZ$ 
produces a new model of the degeneration
with particularly nice properties, 
e.g.~toroidal singularities.

The original paper \cite{mumford1972} performs
this construction more generally over any
complete ring, corresponding to a possibly
higher-dimensional base, 
and using formal algebraic geometry. 
Mumford's work was inspired by an influential
1959 manuscript of Tate \cite{tate_curve} 
on degenerating elliptic curves.
In the late 1970's, 
Nakamura and Namikawa
worked out the theory in the complex-analytic
setting \cite{nakamura75, nakamura77,
namikawa76, namikawa79, namikawabook}, 
culminating in a method of patching together
Mumford constructions over the cones of the second
Voronoi fan, 
to produce a relatively proper, analytic
extension of the universal abelian variety
$\cX_g\to \cA_g$ to a toroidal 
compactification of $\cA_g$.

An impressive, and notoriously technical, further
advancement was the work of Faltings--Chai \cite{fc} 
in the early 1990's, who extended the Mumford construction 
and the theory of toroidal compactifications 
to the arithmetic setting. In the later 
1990's, Alexeev--Nakamura \cite{alexeev-nakamura}
and Alexeev \cite{alexeev}
used the Mumford construction to compactify the universal
abelian torsor $(\cX_g^\star,\Theta_g)$ with theta
divisor, cf.~Construction \ref{torsor}, 
and provide a modular interpretation
of this compactification, as the normalization
of the closure of the space of 
KSBA-stable pairs $(X,\epsilon \Theta)$ 
in the proper DM stack of log
general type varieties.

\subsection{Contents} 
The goal of this paper is 
to provide a ``working mathematician's guide'' 
to the Mumford construction.
Thus, we usually work analytically over $\bC$,
though we do also touch on the question of algebraicity
of Mumford constructions (Prop.~\ref{algebraicity}).
Furthermore, we largely restrict our attention
to maximal degenerations, though most of the
results presented here apply 
in the non-maximal case.
Many of the ideas of the paper are to be 
found scattered through the literature; some are
difficult to find, and others appear to be new,
such as Theorem \ref{regular-extension-thm}.

After reviewing in Section \ref{sec:prelim}
preliminary material on principally
polarized abelian varieties, their Hodge theory,
their moduli, their degenerations, 
and toroidal extensions and compactifications of $\cA_g$,
we dive into the main constructions in Section 
\ref{sec:mumford}:

\subsubsection*{Mumford constructions and examples}

Using tools from toric geometry, we construct
maximal degenerations of principally polarized
abelian $g$-folds. 
The constructions are presented in increasing
levels of generality and are broadly divided
into two classes: {\it fan} constructions
and {\it polytope} constructions (see Section
\ref{list} for a list). The fan construction
only produces a degeneration complex-analytically,
but has the advantage of being relatively simpler,
and having more readable geometry.
The advantage of
the polytope construction is that it 
outputs a relatively projective degeneration.

The equality
of the two constructions is examined in Section
\ref{sec:comparison}, while the 
topology is discussed in Section \ref{sec:wf}, 
where we analyze the weight filtration 
on the limiting mixed Hodge structure of a general fiber.  
We also study the effect, on Mumford constructions,
of a toroidal base change and 
of replacing the polarization by a multiple,
see Section \ref{sec:base-change}.

\subsubsection*{Regular matroids} 
We continue in Section \ref{sec:regular-matroids} with
a more detailed analysis of Mumford constructions
associated to regular matroids 
(Constr.~\ref{shifted-matroidal-construction}). A
{\it matroid} $\underline{R}$
is a collection of subsets of a finite set $E$,
encoding the notion
of linear
independence of a set 
of vectors in a vector space
(Def.~\ref{matroid-def}).
An embedding $E\hookrightarrow \bF^g$ 
into a vector space
over a field $\bF$, realizing this 
collection of independent sets,
is a {\it realization} of $\underline{R}$. Matroids
which admit a realization over any field
are {\it regular} (Def.~\ref{def:regular}),
though $\bF_2$ and $\bF_3$ suffice.

Associated to a regular matroid $\underline{R}$ are
the so-called {\it matroidal}, {\it shifted matroidal},
and {\it transversely shifted matroidal}
Mumford constructions, see Sections \ref{sec:reg} and \ref{sec:shift-reg}. 
Related degenerations
were explored for cographic
matroids by Dancso--McBreen--Shende
\cite[Sec.~8.3]{mcbreen}, building on
unpublished work of Hausel--Proudfoot.
Perhaps unsurprisingly, regular matroids
are intimately connected to the 
total space of a Mumford
construction being regular, i.e.~smooth. In fact,
a Mumford construction $X\to \Delta^k$ such that
(i) $X$ is regular, and (ii) over each coordinate
hyperplane $\{u_i=0\}$, $i=1,\dots, k$, of the polydisk,
the vanishing cycles span a $1$-dimensional space, 
is necessarily a transversely shifted
matroidal degeneration, and vice versa
(Props.~\ref{X-smooth} and \ref{mono-1}).

As we explain in Section \ref{sec:monodromies}, for a 
family $f^*\colon X^*\to (\Delta^*)^k$ of 
$g$-dimensional PPAVs, the monodromy about the 
$i$-th coordinate hyperplane defines, 
via the principal polarization, a symmetric matrix 
$B_i \in {\rm{Sym}}_{g \times g}(\bZ)$ 
(Def.~\ref{def:Bi}). The cone in 
${\rm{Sym}}_{g \times g}(\bR)$ generated by 
$\{B_1, \dotsc, B_k\}$ is
the \emph{monodromy cone} 
of the family $f^\ast$ (Def.~\ref{monodromy-cone}). 
Such a cone is \emph{matroidal} if it 
is induced by an integral realization of a
regular matroid (Def.~\ref{matroidal-cone}).
Transversely shifted matroidal degenerations 
are examples; they are of particular importance to our 
companion paper \cite{companion}. 

\begin{theorem}\label{regular-extension-thm}
Let $f^*\colon X^*\to (\Delta^*)^k$ be a smooth family
of PPAVs of dimension $g$, whose monodromy cone
is {\it matroidal} (Defs.~\ref{monodromy-cone}, \ref{matroidal-cone}). 
There is a flat, $K$-trivial extension
$f\colon X\to \Delta^k$ which is a 
{\rm nodal} morphism (Def.~\ref{D-nodal}), and $f$ may be assumed {\rm strictly nodal}
if the monodromy about each coordinate hyperplane
is imprimitive.

Moreover, given $k$ generators
of a matroidal cone, there exists a family
$f^\ast\colon X^\ast \to (\Delta^\ast)^k$ 
of PPAVs whose monodromies are the 
specified generators, and an extension as above, 
which is 
the restriction 
of a projective family over a quasiprojective variety
$Y$ to a polydisk $\Delta^k\subset Y$.
\end{theorem}

See Theorem \ref{theorem:extension} and Corollary 
\ref{corollary:degeneration}, respectively, for 
more algebraic formulations of the first and 
second statements of the above theorem.

As a particular application, the 
relative intermediate Jacobian
fibration $IJY^\circ\to (\Delta^*)^{10}$ 
of the punctured universal deformation $Y^\circ\to (\Delta^*)^{10}$
of the Segre cubic $Y_0$ (Ex.~\ref{gwena-ex}) 
admits such a filling $IJY\to \Delta^{10}\simeq {\rm Def}_{Y_0}$
as does the relative Jacobian fibration
of the universal deformation of a nodal
curve 
$C_0$ (Exs.~\ref{nodal}, \ref{cographic-ex}). 
The respective matroids are
the Seymour--Bixby matroid $\underline{R}_{10}$
and the cographic matroid $M^*(G)$ 
of the dual graph $G=\Gamma(C_0)$ of $C_0$
(Exs.~\ref{seymour-bixby}, \ref{cographic}).

\subsubsection*{Nodal and semistable morphisms}

Our investigations connect in a very natural way
to the notion of a {\it semistable morphism}
$f\colon X\to Y$, introduced by Abramovich--Karu 
\cite{ak} and the more restrictive 
notion of a {\it nodal morphism} 
(Def.~\ref{D-nodal}). The former are morphisms
between smooth spaces $X$ and $Y$, which
are \'etale or analytically-locally a 
product of snc degenerations $x^{(1)}\cdots x^{(m)}=u$,
and the latter (nodal morphisms) additionally 
satisfy $m\leq 2$.
The main result of Adiprasito--Liu--Temkin
\cite{alt} is that, for {\it any} 
dominant morphism $f\colon X\to Y$,
there is 
a regular
alteration $Y'\to Y$ of the base and a birational
modification of the base change $X'\to X\times_{Y}Y'$ so that $f'\colon X'\to Y'$ is semistable.
Furthermore, once semistability is achieved, further
base changes admit a functorial semistable resolution.

Once we have proven that transversely
shifted matroidal
degenerations are nodal morphisms, we continue
with a general analysis of the Hodge theory
of semistable morphisms, in Section \ref{sec:semistable}.
We define a multi-parameter analogue 
(Prop.~\ref{clemens-collapse}) of the 
Clemens retraction for $1$-parameter semistable
degenerations, and investigate the relationship
(Prop.~\ref{specialization}) 
between the dual complex of the central
fiber, and the graded piece
${\rm gr}^W_0V_\bZ$ of the weight filtration
on $V_\bZ = H_1(X_t,\bZ)$. 

In Section \ref{sec:resolution}, we 
instantiate
explicitly 
the functorial resolution of \cite[Thm.~4.4]{alt},
in the case of a base change of a nodal morphism
(Thm.~\ref{can-res}). 
We apply this resolution algorithm to 
transversely shifted matroidal
degenerations in Section \ref{sec:res-shifted}. 
Refinements of these results will play an 
important role in \cite[Sec.~3 and 4]{companion}.

\subsubsection*{The second Voronoi 
fan and Alexeev's theorem}

Finally, we review the work of Alexeev, Nakamura,
Namikawa and Faltings--Chai 
on the extension of the universal family
$\cX_g\to \cA_g$ over the toroidal compactification
$\overline{\cA}_g^{\rm vor}$ associated 
to a distinguished fan
$\fF_{\rm vor}$ (Defs. 
\ref{vor-cell}, \ref{vor-fan}), 
whose support is the cone $\cP_g^+$ 
of positive semi-definite $g\times g$
matrices with rational null space. 

We sketch a proof of Alexeev's theorem that 
$\overline{\cA}_g^{\rm vor}$ 
is the normalization of the
KSBA compactification of the moduli space of
abelian torsors with theta divisor $(X,\epsilon \Theta)$,
paying particular attention to the subtle differences
between Alexeev's universal family 
$\cX_g^\star\to \cA_g$ and the universal family of 
abelian varieties $\cX_g\to \cA_g$ 
(Constr.~\ref{torsor} and Rem.~\ref{torsor-remark}). 

 We also provide a brief review of the extensive 
 literature on the cones of the 
 second Voronoi fan, for $g\leq 6$ 
 (Ex.~\ref{voronoi-ex}).

 \subsection{Index of constructions}
 \label{list}

The various forms of the Mumford construction presented
in this paper are thus:

\begin{enumerate}
    \item[(\ref{mumford}):] Via fans, 
    over a $1$-parameter base (i.e., a disk), 
    and over a family of $1$-parameter bases,
    complete with respect to a fixed monodromy operator $T\colon H_1(X_t,\bZ)\to H_1(X_t,\bZ)$,
    encoded by a symmetric bilinear form
    $B\in {\rm Sym}^2 ({\rm gr}^W_0 V_\bZ)^\vee$.\smallskip
    \item[(\ref{mumford-multi-fan}):] 
    Via fans, over a polydisk $\Delta^k$, and 
    over a family of such polydisks,
    complete with respect to a fixed collection
    of monodromy bilinear forms $B_i\in 
    {\rm Sym}^2 ({\rm gr}^W_0 V_\bZ)^\vee$, 
    $i=1,\dots,k$. \smallskip
    \item[(\ref{singular-base}):] Via fans,
    over the toroidal extension 
    $\cA_g\hookrightarrow \cA_g^{\bB}$ 
    associated to a rational 
    polyhedral cone $\bB=\bR_{\geq 0}\{B_1,\dots,B_k\}\subset \cP_g^+$.\smallskip
    \item[(\ref{mumford-polytope}):] Via polytopes,
    over a polydisk $\Delta^k$,
    associated to a collection $\{b_1,\dots, b_k\}$ of convex piecewise linear
    functions $\bR^g\to \bR$ with appropriate $\bZ^g$-periodicity, and over a family of polydisks, 
    complete with respect to the associated
    monodromy cone $\bB$. \smallskip
    \item[(\ref{mumford-polytope-2}):] Via polytopes,
    over the toroidal extension 
    $\cA_g\hookrightarrow \cA_g^{\bB}$ 
    associated to a rational 
    polyhedral cone $\bB=\bR_{\geq 0}\{B_1,\dots,B_k\}\subset \cP_g^+$.\smallskip
    \item[(\ref{veronese}):] Via polytopes,
    as in (\ref{mumford-polytope}), 
    but where only $d$ times the principal polarization extends
    to a relatively ample line bundle on the family.\smallskip
    \item[(\ref{shifted-matroidal-construction}):] 
    As special cases of
    (\ref{mumford-polytope}) and (\ref{veronese}), 
    associated to a regular matroid $\underline{R}$
    of rank $g$, and a hyperplane arrangement inducing
    this regular matroid from the set of normal vectors.\smallskip
    \item[(\ref{voronoi-mumford}):] Associated
    to a ``tautological'' version of (\ref{mumford-polytope-2})
    for a cone $\bB\in \fF_{\rm vor}$ of the second Voronoi fan,
    giving a local 
    analytic extension of the universal family
    of abelian varieties.\smallskip
    \item[(\ref{torsor}):] As in (\ref{voronoi-mumford}), but
    giving an extension $\overline{\cX^\star_g}^{\rm vor}\to \overline{\cA}_g^{\rm vor}$ of the universal family
    of abelian torsors with theta divisor.
\end{enumerate}

An extensive collection of examples (Exs.~\ref{ex:tate},
\ref{theta-1-param}, \ref{theta-3-param}, \ref{tate-base},
\ref{cographic-ex}, \ref{r10-ex}, \ref{shifted-matroidal},
\ref{shifted-ex-2}, \ref{ex:mon-sep}, \ref{voronoi-ex}), 
with figures, is also
provided in the text, 
see especially Section \ref{sec:examples}.
The first of these 
(Ex.~\ref{ex:tate}) is the prerequisite
ur-example of the Mumford construction:
the {\it Tate curve}, i.e.~the extension
of the family $\bC^*/u^\bZ\to \Delta_u^*$ 
of elliptic curves by an irreducible nodal curve. 
 
\subsection{Acknowledgments} 
We thank Valery Alexeev, Younghan Bae, David Holmes,
and Mirko Mauri for useful discussions and comments.

PE was partially supported by NSF grant DMS-2401104. 
OdGF and StS have received funding from the European Research Council (ERC) under the
European Union’s Horizon 2020 research and innovation programme under grant agreement
N\textsuperscript{\underline{o}} 948066 (ERC-StG RationAlgic). 
OdGF has also received funding from the ERC Consolidator Grant FourSurf N\textsuperscript{\underline{o}} 101087365. 
The research was partly conducted in the framework of the DFG-funded research training group RTG 2965: From Geometry to Numbers.

\section{Preliminary material} 
\label{sec:prelim}

\subsection{Algebraic and analytic stacks} By a 
\emph{DM algebraic stack}, or simply \emph{DM stack}, 
we will mean a separated Deligne--Mumford stack of finite 
type over $\bC$. Similarly, a \emph{DM analytic stack} will 
be a separated Deligne--Mumford analytic stack $X$ in the 
sense of \cite[Def.~5.2]{toen}. Thus, $X$ is a stack on 
the site of complex analytic spaces such that the diagonal 
is representable and finite and there exists an analytic 
space $Y$ and a surjective \'etale morphism $Y \to X$. 
It follows that $X$ is locally modeled as a finite 
quotient of an analytic space, see \cite[Prop.~5.4]{toen}, 
and that the analytification of a DM algebraic stack is 
a DM analytic stack, see \cite[Lem.~5.5]{toen}.

\subsection{Principally polarized abelian varieties}

Let $\cA_g$ denote the DM stack
of principally polarized abelian varieties (PPAVs) 
of dimension $g$, over $\bC$.
Since a PPAV $X$ 
is uniquely determined by
its polarized weight $-1$ 
Hodge structure on $H_1(X,\bZ)$, 
the period map defines an isomorphism 
$\cA_g\simeq \Sp_{2g}(\bZ)\backslash \cH_g$ to
an arithmetic quotient of a 
Hermitian symmetric domain of Type III. 
We review this construction now.

\begin{definition} A {\it $\bZ$-polarized 
Hodge structure $(V_\bZ, H^{\bullet,\bullet}, L)$ 
of weight $k$} 
is a $\bZ$-module $V_\bZ$ together 
with an integral, non-degenerate,
$(-1)^k$-symmetric bilinear form 
$L \colon V_\bZ\otimes V_\bZ\to \bZ$,
and a Hodge decomposition $$V_\bC\coloneqq V_\bZ\otimes \bC= 
\bigoplus_{p+q=k} H^{p,q}$$
satisfying the following conditions:
\begin{enumerate}
\item $H^{q,p}=\overline{H^{p,q}}$ for all $p+q=k$,
\item $L(H^{p,q}, H^{p',q'})=0$ unless $p=q'$, $q=p'$,
\item $(-1)^{k(k-1)/2}i^{p-q}L(\bar v, v)>0$ 
for all $0\neq v\in H^{p,q}$.
\end{enumerate}
\end{definition}

\begin{definition} A $\bZ$-polarized 
Hodge structure $(V_\bZ, H^{\bullet,\bullet}, L)$
is {\it principally polarized} 
if the pairing $L $ is unimodular.
\end{definition}
 
Let $(V_\bZ, L)$ be a unimodular
symplectic lattice, and
consider the Lagrangian Grassmannian 
${\rm LGr}(V_\bC, L)$.
It is the projective flag variety of isotropic $g$-dimensional
subspaces of $V_\bC$.
The polarized weight $-1$ 
Hodge structures on $(V_\bZ,L)$ with a Hodge decomposition
of the form $V_\bC=H^{-1,0}\oplus H^{0,-1}$ define 
an analytic open subset of ${\rm LGr}(V_\bC, L)$, given by 
\begin{align}
\label{lag-gr}
\{H^{-1,0}\subset V_\bC \,:\,L\vert_{H^{-1,0}} = 0
\textrm{ and }
iL(\bar v, v)>0 \textrm{ for } 0\neq v\in H^{-1,0}\}.
\end{align} 
Given $[H^{-1,0}]$ in (\ref{lag-gr}),
we may define a complex torus
$$X\coloneqq V_\bC/(V_\bZ+H^{-1,0}).$$ We have canonical
isomorphisms $H_1(X,\bZ)\simeq V_\bZ$ and 
$H^1(X,\bZ)\simeq V_\bZ^\vee$.
Thus, the symplectic form
$L \in V_\bZ^\vee\wedge V_\bZ^\vee$ defines an element 
$$L \in \wedge^2 H^1(X,\bZ)\simeq H^2(X,\bZ).$$ 
The condition that $H^{-1,0}$ is Lagrangian for $L$
amounts to the property that $L \in H^{1,1}(X)$ is a 
Hodge class. Hence
$L$ determines  
a holomorphic line bundle $\cL\to X$, 
unique  up to translation by ${\rm Pic}^0(X)$.
Finally, the 
condition $iL(\bar v, v)>0$ ensures that any
lift $\cL$ is ample, and so
in fact, $X$ is an abelian variety (i.e.~projective). 

Choosing a standard symplectic basis of $V_\bZ$
produces an isometry $(V_\bZ, L)\simeq (\bZ^{2g},\cdot)$,
where $\bZ^{2g}$ is generated by vectors 
$e_i, f_i$ for $i=1,\dots,g$ and the unimodular 
symplectic form $\cdot$ 
satisfies $e_i\cdot e_j = f_i\cdot f_j = 0$ and $e_i\cdot f_j=\delta_{ij}$.

\begin{definition}\label{def:siegel}
The {\it Siegel upper half-space} $\cH_g$ is
the space of symmetric $g\times g$ matrices 
with positive-definite
imaginary part.
\end{definition}

A choice of symplectic basis of $(V_\bZ, L)$ identifies
(\ref{lag-gr}) with $\cH_g$.
In a standard symplectic basis, the Lagrangian 
$H^{-1,0}\subset V_\bZ\otimes \bC$
is the span of the columns of some 
$2g\times g$ {\it period matrix} 
\begin{align*}
\begin{pmatrix} \sigma \\ I \end{pmatrix} 
\in {\rm Mat}_{2g \times g}(\bC),
\end{align*}
which we write in $2\times 1$ block form. 
The condition that $L\in H^{1,1}(X)$ is 
Hodge amounts to the symmetry
of $\sigma$, while the positivity condition 
$iL(\bar v, v)>0$ amounts
to the fact that the imaginary part
${\rm Im}(\sigma)>0$ is positive-definite. 
Hence $\sigma\in \cH_g$.
Changes of symplectic basis, 
i.e.~elements of $\Sp_{2g}(\bZ)$,
act on the left, by 
$2\times 2$ block matrices, 
$$\twobytwo{A}{B}{C}{D}\begin{pmatrix} \sigma \\ 
I \end{pmatrix} = 
\begin{pmatrix} A\sigma +B \\ C\sigma +D \end{pmatrix}.$$ 
Renormalizing the generators of our Lagrangian subspace
to be dual to the $e_i$ with respect to the symplectic form
corresponds to right multiplication by $(C\sigma+D)^{-1}$.
So we get the Lagrangian corresponding to
$(A\sigma+B)(C\sigma+D)^{-1}\in \cH_g$, which is
the standard action of ${\rm Sp}_{2g}(\bZ)$ on $\cH_g$.

\begin{definition}  \label{ppav}
The pair $(X,L)$ is called a 
{\it principally polarized 
abelian variety},
or PPAV.
\end{definition}

For any representative $\cL\in {\rm Pic}(X)$ of $L$,
we have $h^0(X,\cL)=1$, and so there is a unique divisor 
$\Theta\in |\cL|$ called the {\it theta divisor}. 

It follows
from the above discussion that 
the moduli stack of PPAVs $(X,L)$
is given by the orbifold 
(i.e.~smooth DM
analytic stack)
$\cA_g \simeq \Sp_{2g}(\bZ)\backslash \cH_g$.
Furthermore, the universal family $\cX_g\to \cA_g$
of PPAVs  is uniformized by $\bC^g\times \cH_g$ and can 
be presented as a quotient, too:
$$\cX_g \simeq  \left(\bZ^{2g} \rtimes
\Sp_{2g}(\bZ) \right)\backslash \left( \bC^g\times \cH_g\right).$$

\subsection{Degenerations of PPAVs} \label{sec:monodromies}

In the following sections, 
we discuss the monodromy of 
degenerations of PPAVs, especially
in relation to toroidal extensions 
of $\cA_g$. See \cite{prym_ref} 
for reference.

Let $f\colon (X,L)\to \Delta^k$ be a degeneration of 
PPAVs of dimension $g$ over a polydisk $\Delta^k$ with 
coordinates $u_1, \dotsc, u_k$, such that the discriminant
locus is the union of the coordinate hyperplanes 
$V(u_i)=\{u_i=0\}$, for $i=1,\dots,k$. Fix a 
base point $t\in (\Delta^*)^k$ and 
let $V_\bZ\coloneqq H_1(X_t,\bZ)$.
Suppose that the monodromy transformation
$T_i\colon V_\bZ\to V_\bZ$ associated to 
the simple, oriented loop 
$\gamma_i\in \pi_1((\Delta^*)^k,t)\simeq \bZ^k$
is unipotent---for instance, by a result of Clemens 
\cite[Thm.~7.36]{clemens}, this holds if the general fiber
over $V(u_i)$ has reduced normal crossings.

Choosing a symplectic basis $(V_\bZ, L)\simeq (\bZ^{2g},\cdot)$,
we may view $T_i$ as acting on the reference lattice $\bZ^{2g}$. 
Let $N_i = \log(T_i) = T_i - I$ be its logarithm,
where $I$ denotes the identity matrix of size
$2g\times 2g$. Note that $N^2_i=0$
and $N_i \circ N_j = N_j \circ N_i$ commute. Let 
$N = \sum_{i=1}^k r_iN_i$, $r_i\in \bN$, 
be any strictly positive linear combination. 
Then $N$ is the monodromy 
of the restriction of $f$ to the cocharacter 
$\Delta\to \Delta^k$ defined by 
$u\mapsto (u^{r_1},\dots, u^{r_k})$. 
By \cite[Thm.~3.3]{cattani-kaplan}, 
all $(r_1,\dots,r_k)\in \bN^k$
define the same increasing 
{\it weight filtration} 
\begin{align*} W_{-2}&\coloneqq ({\rm im}\,N)^{\rm sat} \\ W_{-1}&\coloneqq \ker N \\ W_0&\coloneqq V_\bZ.\end{align*}
More generally, for any 
$(r_1,\dots,r_k)\in (\bZ_{\geq 0})^k$,
the filtration so defined depends only on the polyhedral
face of $(\bR_{\geq 0})^k$ containing $N$.

The above weight filtration may also be 
described as follows:
\begin{align}\label{eq:weight-filtration}
\textstyle W_{-2}= \left(\sum_ {i=1}^k
{\rm im}\,N_i\right)^{\rm sat} 
\quad \text{and}\quad W_{-1}=
\bigcap_{i=1}^k \ker(N_i).
\end{align}
Indeed, by the saturatedness of the above 
filtration, it suffices to prove this for 
rational, and, in fact, for real coefficients. 
The inclusions ${\rm im}(\sum N_i)\subset 
\sum {\rm im}\,N_i$ and 
$\bigcap \ker(N_i)\subset \ker (\sum N_i)$ 
are clear.
To prove the converse, let $(r_1,\dots ,r_k)
\in (\bR_{>0})^k$ and note that 
${\rm im}(\sum r_i N_i)$ and $\ker (\sum r_i N_i)$ 
do not depend on the choice of  $r_i>0$, see 
\cite[Thm.~3.3]{cattani-kaplan}.
The inclusions in question thus follow from a limit
argument where $r_j=1$ and $r_i\to 0$ for 
$i\neq j$, applied to ${\rm im} (\sum r_iN_i)\subset 
{\rm im}(\sum N_i)$ and 
$\ker(\sum N_i)\subset \ker(\sum r_iN_i)$.

\begin{definition}\label{unipotent-parabolic}
Consider the standard Lagrangian subspace
$\bZ e_1\oplus \cdots \oplus \bZ e_g\subset (\bZ^{2g},\cdot)$. 
Its stabilizer is 
the {\it parabolic group}
$$P_\bZ\coloneqq \left\{\twobytwo{A}{B}{0}{A^{-T}}
\in \Sp_{2g}(\bZ)\right\}$$ with $A\in \GL_g(\bZ)$
and $BA^T=AB^T$. We define
the {\it unipotent subgroup} of 
$P_\bZ$ to be
$$U_\bZ\coloneqq \left\{ 
\twobytwo{I}{B}{0}{I}\colon B
\in {\rm Sym}_{g\times g}(\bZ)\right\}$$
and the {\it Levi quotient} to be 
$P_\bZ/U_\bZ\simeq \GL_g(\bZ)$, which can also
be lifted into ${\rm Sp}_{2g}(\bZ)$ as the block
diagonal matrices (i.e.~matrices with
$B=0$ in $P_\bZ$).
\end{definition}

The collection of commuting unipotent matrices 
$T_i\in {\rm Sp}_{2g}(\bZ)$ 
 can be simultaneously conjugated into 
 the unipotent subgroup $U_\bZ$
as they fix a coisotropic space 
 (given by $W_{-1}$) and hence fix a Lagrangian
 subspace. 
Thus, choosing a basis appropriately, 
we may assume that the monodromies
$T_i$ are all of the form 
\begin{align}\label{simultaneous}T_i = \twobytwo{I}{B_i}{0}{I}\end{align}
for symmetric matrices $B_i$.

\begin{definition} \label{def:Bi}
Let $f\colon (X,L)\to \Delta^k$ be 
a degeneration
of PPAVs with unipotent monodromies 
about the coordinate
hyperplanes. We define the 
{\it monodromy bilinear
forms} 
$B_i\in {\rm Sym}^2 ({\rm gr}^W_0 V_\bZ)^\vee$ 
for $i=1,\dots,k$ by the formula
\begin{align} \label{b-formula}
B_i(x,y)= L(N_ix,y).\end{align}
 Observe that $B_i$ depends only
 on the punctured family 
 $f^*\colon (X^*,L^*)\to (\Delta^*)^k$
 and so the definition
 extends naturally to families of PPAVs
 over the punctured polydisk. 
\end{definition}

This provides a coordinate-free definition of
the matrices $B_i$ 
from above. 
Implicit in the above definition is the claim that $N_i$ 
contains $W_{-1}V_{\mathbb Z}$ in its kernel, which
follows from (\ref{eq:weight-filtration}).

\begin{definition}\label{monodromy-cone} 
Choose a symplectic basis of $V_\bZ$ 
such that $T_i$ has the form \eqref{simultaneous} 
for each $i$, which identifies
each 
$B_i$ with a symmetric matrix 
$B_i\in {\rm Sym}_{g\times g}(\bZ)$.
The span 
$$\bB_f\coloneqq 
\bR_{\geq 0}\{B_1,\,\dots,\,B_k\} 
\subset {\rm Sym}_{g\times g}(\bR)$$ 
is the {\it monodromy cone} 
associated to the degeneration 
$f\colon (X,L)\to \Delta^k$. 
This definition extends to any
family of $g$-dimensional PPAVs 
$f^\ast \colon (X^\ast, L^\ast) \to (\Delta^\ast)^k$ 
with unipotent monodromy.
\end{definition}

Note that the collection $(B_i)_{i=1,\dots,k}$ 
of monodromy matrices, and hence the 
monodromy cone $\bB_f$, is unique, 
up to the simultaneous action of $A\in \GL_g(\bZ)$
by $B_i\mapsto AB_iA^T$. This action
corresponds to the conjugation action 
$T_i \mapsto M T_i M^{-1}$
of $P_\bZ \subset {\rm{Sp}}_{2g}(\bZ)$, 
$M \in P_\bZ$, which descends
to the Levi quotient 
$P_\bZ/U_\bZ\simeq \GL_g(\bZ)$ because
$U_\bZ$ is commutative.

The symplectic basis of $V_\bZ$
determines a lift of the classifying map 
$\Phi\colon (\Delta^*)^k\to \cA_g$ to a 
holomorphic period map $\widetilde{\Phi}\colon 
\bH^k \to \cH_g$, where $\bH^k\to (\Delta^*)^k$
is the universal cover,
$\bH \coloneqq \set{\tau \in \bC \mid \im(\tau)>0}$.
Take coordinates
$(\tau_1,\dots,\tau_k)\in \bH^k$, with the universal
covering map given by $u_i={\rm exp}(2\pi i \tau_i)$.
This lifted map satisfies the equivariance property
$$\widetilde{\Phi}(\tau_1,\dots,\tau_i+1,\dots,\tau_k)
=T_i\cdot \widetilde{\Phi}(\tau_1,\dots,\tau_k) 
= \widetilde{\Phi}(\tau_1,\dots,\tau_k)+B_i$$
corresponding to the deck transformation for the
generator $\gamma_i\in \pi_1((\Delta^*)^k, t)$.

\begin{definition}
Define a holomorphic map to the flag variety 
$\bD^\vee\coloneqq {\rm LGr}(V_\bZ\otimes \bC, L)$ 
by
\begin{align*} \widetilde{\Psi}\colon \bH^m
& \to \bD^\vee \\ \tau&\mapsto 
\widetilde{\Phi}(\tau)- (\tau_1B_1+\dots+\tau_k B_k)\end{align*}
and denote by $\Psi\colon (\Delta^*)^k\to \bD^\vee$ 
its descent to $(\Delta^*)^k$. 
\end{definition}

Note that $\widetilde{\Psi}$ descends 
because the $-(\tau_1 B_1+\cdots +\tau_kB_k)$ 
term cancels the equivariance
of $\widetilde{\Phi}$, and so makes the map invariant 
under the action of $\bZ^k$.
We now recall Schmid's multivariable nilpotent 
orbit theorem \cite[Thm.~4.12]{schmid},
applied to our setting:

\begin{theorem}\label{schmid} $\Psi$ 
extends to a holomorphic map 
$\Delta^k\to \bD^\vee$.
Let $\Psi(0)$ denote the extension to the origin, 
and consider $\widetilde{\Phi}_{\rm nilp}(\tau) \coloneqq  
\Psi(0)+(\tau_1B_1+\dots+\tau_k B_k)$.
Then 
\begin{enumerate}
\item $\widetilde{\Phi}_{\rm nilp}(\tau)\in \cH_g
\subset \bD^\vee$ for 
all sufficiently large ${\rm Im}\,\tau_i$ and
\item\label{metric-decay} the distance
$d(\widetilde{\Phi}(\tau),
\widetilde{\Phi}_{\rm nilp}(\tau))$ 
decays exponentially in ${\rm Im}\,\tau_i$.
\end{enumerate}
\end{theorem}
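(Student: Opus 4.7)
The plan is to adapt Schmid's original argument \cite{schmid} to the present setting, where $\cH_g$ embeds as a bounded symmetric domain of Type III in the flag variety $\bD^\vee$, and $\widetilde{\Phi}$ is explicitly a map to symmetric $g\times g$ matrices with positive definite imaginary part. By construction $\widetilde{\Psi}(\tau) = \widetilde{\Phi}(\tau) - \sum_i \tau_i B_i$ is $\bZ^k$-invariant and descends to the holomorphic $\Psi\colon(\Delta^*)^k\to\bD^\vee$. The substantive point is to extend $\Psi$ across $\bigcup_i V(u_i)\subset \Delta^k$; once this is done, (1) and (2) will follow by short arguments.

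For the extension, I would use Riemann's removable singularity theorem in several variables: it suffices to show that $\Psi$ is bounded in a fixed affine chart of $\bD^\vee$. In matrix coordinates with $\sigma(\tau) = \widetilde{\Phi}(\tau) \in \cH_g$, the required estimate is that $\sigma(\tau) - \sum_i \tau_i B_i$ remains bounded as ${\rm Im}\,\tau_i \to \infty$. This is the core analytic content. I would derive it from the Schwarz--Ahlfors distance-decreasing property of the period map between $\bH^k$ (with the Poincar\'e metric) and $\cH_g$ (with the invariant Hodge metric): horizontal derivatives $\partial_{\tau_i}\sigma$ are bounded in the Hodge norm, and the comparison between the Hodge and Euclidean metrics on $\cH_g$ shows that the linear correction $\sum_i\tau_i B_i$ absorbs the entire leading-order growth of $\sigma$, so that $\widetilde{\Psi}$ stays bounded. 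This boundedness estimate is the main obstacle; in full generality Schmid handles it via the multivariable $SL_2$-orbit theorem, whereas the Hermitian symmetric case here admits a more direct treatment using the bounded-domain realization of $\cH_g$.

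For assertion (1), observe that for every $\tau\in\bH^k$ the identity
\[
{\rm Im}\,\widetilde{\Phi}(\tau) = {\rm Im}\,\widetilde{\Psi}(\tau) + \sum_i{\rm Im}(\tau_i)\,B_i
\]
holds and the left side is positive definite by definition of $\cH_g$. Fixing ${\rm Re}\,\tau_i$ and sending ${\rm Im}\,\tau_i\to\infty$, continuity of the extended $\Psi$ at the origin yields $\widetilde{\Psi}(\tau)\to\Psi(0)$, so that ${\rm Im}\,\widetilde{\Phi}_{\rm nilp}(\tau) = {\rm Im}\,\Psi(0) + \sum_i {\rm Im}(\tau_i) B_i$ differs from the positive definite ${\rm Im}\,\widetilde{\Phi}(\tau)$ by a vanishingly small perturbation, hence is itself positive definite for ${\rm Im}\,\tau_i$ sufficiently large.

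For assertion (2), Taylor-expand the extension $\Psi$ around $0$ in a fixed affine chart of $\bD^\vee$ to get $\Psi(u) - \Psi(0) = O(\max_i |u_i|)$. Under the covering $u_i = e^{2\pi i\tau_i}$ this becomes $\widetilde{\Psi}(\tau) - \Psi(0) = O\bigl(e^{-2\pi\min_i{\rm Im}\,\tau_i}\bigr)$, which, since $\widetilde{\Phi}(\tau) - \widetilde{\Phi}_{\rm nilp}(\tau) = \widetilde{\Psi}(\tau) - \Psi(0)$, is already exponential decay in the Euclidean coordinates of the chart. Translating this into the intrinsic distance $d$ on $\cH_g$ uses the result of Step 2: for large ${\rm Im}\,\tau_i$, both $\widetilde{\Phi}(\tau)$ and $\widetilde{\Phi}_{\rm nilp}(\tau)$ lie along the nilpotent cocharacter in a region where the Hodge and Euclidean metrics are comparable up to polynomial factors in ${\rm Im}\,\tau_i$, so the exponential decay persists.
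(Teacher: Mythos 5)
The paper does not give a proof of this theorem --- it is explicitly quoted as ``Schmid's multivariable nilpotent orbit theorem [Thm.~4.12, Schmid], applied to our setting,'' and the argument is entirely delegated to \cite{schmid}. So there is no in-paper argument to compare against; what you have produced is a sketch of a proof of Schmid's theorem itself, and that sketch has real gaps.

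The central gap is in the extension step. You reduce the extension of $\Psi$ across the boundary to the boundedness of $\widetilde{\Psi}(\tau) = \widetilde{\Phi}(\tau) - \sum_i \tau_i B_i$ as $\operatorname{Im}\tau_i \to \infty$, and then assert that this ``follows from the Schwarz--Ahlfors distance-decreasing property'' together with ``the bounded-domain realization of $\cH_g$,'' while simultaneously acknowledging that ``this boundedness estimate is the main obstacle; in full generality Schmid handles it via the multivariable $SL_2$-orbit theorem.'' That is the entire analytic content of the nilpotent orbit theorem, and it is not supplied. Distance decrease alone gives control on $\partial_{\tau_i}\sigma$ in the Hodge norm, not in the Euclidean norm, and passing between the two is exactly where the difficulty sits; there is no known shortcut in the Hermitian symmetric case that avoids an $SL_2$-orbit type argument (or a substitute such as the Cattani--Kaplan--Schmid theory).

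There is a second, independent gap in assertion (1). Your argument is that $\operatorname{Im}\widetilde{\Phi}_{\rm nilp}(\tau) = \operatorname{Im}\widetilde{\Phi}(\tau) + (\operatorname{Im}\Psi(0)-\operatorname{Im}\widetilde{\Psi}(\tau))$ is a vanishingly small perturbation of a positive-definite matrix, hence positive definite. But positive definiteness is not stable under perturbations that are merely ``small'' when the smallest eigenvalue is itself tending to zero --- and that is precisely the situation here on the common null space $V$ of the $B_i$. On $V$ one has $\operatorname{Im}\widetilde{\Phi}(\tau)|_V = \operatorname{Im}\widetilde{\Psi}(\tau)|_V$, which converges to $\operatorname{Im}\Psi(0)|_V$; the limit is a priori only positive \emph{semi}-definite, so there is no way to conclude, from the perturbation argument alone, that $\operatorname{Im}\Psi(0)|_V > 0$. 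Proving this strict positivity is a genuine piece of Schmid's theorem and requires an additional idea (it is where the limit mixed Hodge structure's polarization enters). For assertion (2), your argument correctly observes that Euclidean exponential decay dominates polynomial blow-up of the invariant metric, but the needed polynomial comparison between the Hodge metric and the Euclidean metric near the boundary is stated rather than proved and again rests on asymptotics that are part of the $SL_2$-orbit package. In short: the strategy is reasonable as a roadmap, but the argument as written silently defers precisely the nontrivial points, and the proof of (1) as given would fail even granting the extension.
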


Here distance is measured in the natural left
$\Sp_{2g}(\bR)$-invariant metric on $\cH_g$.

\begin{definition} Let 
$\cP_g\coloneqq \{B\in 
{\rm Sym}_{g\times g}(\bR)\,:\,B>0\}$
be the cone of positive-definite matrices 
and let $\cP_g^+$ be its {\it rational closure},
consisting of all positive semi-definite 
matrices whose kernel is a rational subspace
of $\bR^g$.
\end{definition} 

There is a natural stratification
$$\cP_g^+ = \textstyle\cP_g \sqcup 
\bigsqcup_{V_1} \cP_{g-1} \sqcup \bigsqcup_{V_2}
\cP_{g-2} \sqcup\cdots \sqcup \bigsqcup_{V_g} \cP_0$$ where the $V_i\subset \bZ^g$ range
over all primitive integral 
sublattices of $\bZ^g$
of dimension $i$, and the
relevant copy of $\cP_{g-i}$ is the cone of positive-definite
bilinear forms on $\bR^g/(V_i\otimes \bR)$. See Figure 
\ref{fig:second-voronoi} to visualize
the projectivization of $\cP_2^+$, which is a cusped
hyperbolic disk.

It follows from item (1) of Theorem \ref{schmid} that:

\begin{corollary}\label{pos-def-cor} 
The monodromy cone $\bB_f$
is contained in the rational closure $\cP_g^+$ of 
the positive-definite 
$g\times g$ matrices, uniquely
up to the action of $\GL_g(\bZ)$. \end{corollary}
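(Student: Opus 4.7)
The strategy is to apply the first conclusion of Schmid's nilpotent orbit theorem (Theorem \ref{schmid}\,(1)) along cocharacters of $\bH^k$ tending to the cusp, and thereby isolate the monodromy contribution from the finite piece $\Psi(0)$.

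Concretely, I would fix a strictly positive integral vector $(r_1,\dots,r_k)\in\bN^k$ and consider the imaginary ray $\tau(s)=(ir_1s,\dots,ir_ks)$ for real $s\to\infty$. Along this ray,
\[
\widetilde{\Phi}_{\rm nilp}(\tau(s))\;=\;\Psi(0)\,+\,is\,\textstyle\sum_{j=1}^k r_jB_j,
\]
which by Theorem \ref{schmid}\,(1) lies in $\cH_g$ for all sufficiently large $s$. By Definition \ref{def:siegel}, the imaginary part $\mathrm{Im}(\Psi(0))+s\sum_j r_jB_j$ must then be positive-definite for all $s\gg 0$; testing against any hypothetical eigenvector of $\sum_j r_jB_j$ with strictly negative eigenvalue would contradict this as $s\to\infty$, so $\sum_j r_jB_j$ is positive semi-definite. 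Since it has integer entries, its kernel is a rational subspace of $\bR^g$, and it therefore lies in $\cP_g^+$.

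To extend from the integral interior points to the whole cone $\bB_f$, I would observe that $\cP_g^+$ is closed under non-negative real combinations: if $\lambda_j\geq 0$ and each $B_j'$ is PSD with rational kernel, the sum $\sum_j\lambda_jB_j'$ is PSD, and the standard PSD identity $v^T B_j'v=0\Rightarrow B_j'v=0$ gives $\ker(\sum_j\lambda_jB_j')=\bigcap_{j:\lambda_j>0}\ker B_j'$, an intersection of rational subspaces. Applied to $(B_1,\dots,B_k)$ this produces $\bB_f\subset\cP_g^+$.

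The uniqueness up to $\GL_g(\bZ)$ is already justified in the paragraph following Definition \ref{monodromy-cone}: choices of symplectic basis simultaneously conjugating the $T_j$ into $U_\bZ$ form a torsor under the parabolic $P_\bZ\subset\Sp_{2g}(\bZ)$, and since $U_\bZ$ is abelian this ambiguity descends to the Levi quotient $P_\bZ/U_\bZ\simeq\GL_g(\bZ)$, acting by $B_j\mapsto A^TB_jA$. The only substantive input is Theorem \ref{schmid}\,(1); the remainder is elementary linear algebra on PSD cones, and I do not anticipate a genuine obstacle.
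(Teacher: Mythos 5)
Your proof is correct and is exactly the unpacking one would expect of the paper's one-line citation of Theorem~\ref{schmid}(1); there is only a small gap in the final step. Applying your observation that $\cP_g^+$ is closed under non-negative real combinations to the generators $(B_1,\dots,B_k)$ presupposes that each individual $B_j$ already lies in $\cP_g^+$, whereas your first step only directly establishes this for strictly positive integral sums $\sum_j r_jB_j$. The fix is immediate: either note that Theorem~\ref{schmid}(1) applies along any real ray $(r_1,\dots,r_k)\in(\bR_{>0})^k$ (the condition is just that all $\mathrm{Im}\,\tau_i\to\infty$), so the entire interior of $\bB_f$ consists of PSD matrices and closedness of the PSD cone handles the boundary, or argue explicitly that $B_i = \lim_{n\to\infty}\tfrac{1}{n}\bigl(nB_i + \sum_{j\neq i}B_j\bigr)$ is PSD, with rational kernel since $B_i$ is an integer matrix. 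With that extra line inserted, your application of the identity $\ker(\sum_j\lambda_jB_j)=\bigcap_{j:\lambda_j>0}\ker B_j$ goes through and yields $\bB_f\subset\cP_g^+$, and the uniqueness statement is, as you say, already settled in the text following Definition~\ref{monodromy-cone}.
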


We have that $W_{-2}^\perp=W_{-1}$ where 
the perpendicular is taken with respect to $L$. 
Thus $L$ descends to a unimodular 
symplectic form on $W_{-1}/W_{-2}$. In fact, by 
\cite[Thm.~6.16]{schmid}, we have the following fundamental
theorem on the existence of the limit MHS:

\begin{theorem}
The tuple $(V_\bZ, L, \Psi(0) , W_\bullet)$ defines a 
graded-polarized  $\bZ$-mixed Hodge structure,
the {\rm limit mixed Hodge structure}. 
In particular, the filtration of $W_{-1}/W_{-2} \otimes \bC$ 
induced by the Lagrangian subspace
$\Psi(0)$ defines a pure, principally 
polarized Hodge structure of weight $-1$ on 
$W_{-1}/W_{-2}\simeq (\bZ^{2h},\cdot)$.
Here $h$ is the rank of the null space 
of a general element
 of $\bB_f$.
\end{theorem}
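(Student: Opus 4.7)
The plan is to reduce the statement to the multivariable version of Schmid's theorem \cite[Thm.~6.16]{schmid}, after carefully matching up the data. There are three things to verify: (a) $W_\bullet$ as defined is the monodromy weight filtration centered at $-1$ associated to a general $N = \sum r_i N_i$ in the interior of $\bB_f$; (b) $L$ descends to a unimodular symplectic form on ${\rm gr}^W_{-1}$ of rank $2h$; and (c) the filtration induced by $\Psi(0)$ on each graded piece is pure of the correct weight and polarized.

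For (a), since each $T_i$ has been conjugated into the standard unipotent parabolic $U_\bZ$, we have $N_i = \twobytwo{0}{B_i}{0}{0}$ in block form, and therefore $N_i N_j = 0$ for all $i,j$. In particular $N^2 = 0$ for every $N \in \bB_f$. For a nilpotent with $N^2 = 0$, the monodromy weight filtration centered at $-1$ is uniquely determined as the three-step filtration $\im N \subset \ker N \subset V_\bZ$, since the defining conditions $N(W_l) \subset W_{l-2}$ and $N\colon {\rm gr}^W_0 \xrightarrow{\sim} {\rm gr}^W_{-2}$ force exactly this (with $W_{-2}$ then saturated). Independence of the interior point of $\bB_f$ is \cite[Thm.~3.3]{cattani-kaplan}. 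A direct computation with $B = \sum r_i B_i$ of rank $g-h$ yields $\ker N = \bZ^g \oplus \ker B$ and $\im N = \im B \oplus \{0\}$, so ${\rm gr}^W_{-1}$ has rank $2h$.

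For (b), symmetry of $B_i(x,y) = L(N_i x, y)$ together with the antisymmetry of $L$ gives $L(N_i x, y) = -L(x, N_i y)$, so $N$ is skew-adjoint with respect to $L$. Hence $W_{-2} = \im N \subset W_{-1}^\perp$, and since $\rk W_{-2} + \rk W_{-1} = \rk V_\bZ = 2g$, equality $W_{-2}^\perp = W_{-1}$ holds. A standard duality argument for unimodular forms on saturated sublattices then shows that $L$ descends to a unimodular symplectic form on $W_{-1}/W_{-2}$, identifying it with $(\bZ^{2h}, \cdot)$.

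For (c), one invokes Schmid's theorem directly. Theorem~\ref{schmid} already extracts the holomorphic extension $\Psi(0) \in \bD^\vee$ and the nilpotent orbit $\widetilde{\Phi}_{\rm nilp}(\tau) = \Psi(0) + \sum \tau_i B_i \in \cH_g$ for ${\rm Im}\,\tau_i \gg 0$. The deeper half of Schmid's theorem---the $\mathrm{SL}_2$-orbit theorem, extended to several variables by Cattani--Kaplan---then asserts that $(F^\bullet = \Psi(0), W_\bullet)$ is a polarized $\bZ$-mixed Hodge structure: the filtration induced on each ${\rm gr}^W_k \otimes \bC$ is pure of weight $k$ and polarized by the form induced from $L$ and $N$. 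Specializing to $k = -1$, the polarization is just the descended $L$ of (b), so we obtain a principally polarized Hodge structure of weight $-1$ on the rank $2h$ lattice $W_{-1}/W_{-2}$, as required. The main obstacle is genuinely the invocation of Schmid's theorem, which is deep but well-established; the work of the proof lies in arranging the inputs---the choice of symplectic basis placing the $T_i$ in $U_\bZ$, the three-step form of $W_\bullet$, and the unimodular descent of $L$---so that Schmid's machinery applies and the pure weight $-1$ piece can simply be read off.
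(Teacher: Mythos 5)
Your proposal is correct and follows essentially the same route as the paper, which treats this theorem as a direct consequence of Schmid's nilpotent-orbit/$\mathrm{SL}_2$-orbit theorem \cite[Thm.~6.16]{schmid} (in its multivariable Cattani--Kaplan extension), with the only substantive preparatory step being the observation that $W_{-2}^\perp = W_{-1}$ so that $L$ descends to a unimodular symplectic form on ${\rm gr}^W_{-1}$. Your elaboration of parts (a) and (b) -- the block form $N_i N_j = 0$, the three-step weight filtration forced by $N^2 = 0$, the rank count giving $\rk\,{\rm gr}^W_{-1} = 2h$, and the skew-adjointness of $N$ pinning down $W_{-2} = W_{-1}^\perp$ -- fills in the details the paper takes for granted in the two sentences preceding the theorem statement, and does so correctly.
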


 \begin{definition}\label{def:max}
We say that $f\colon X\to \Delta^k$ 
is {\it maximally degenerate}
if $\bB_f\cap \cP_g\neq \emptyset$, i.e.~the general element
of $\bB_f$ is positive-definite. 
Equivalently,  
$W_{-2}=W_{-1}$, i.e.~$h=0$. 
\end{definition}

 \begin{definition}\label{ag-trop}
 We define the 
 {\it tropical moduli space of abelian varieties} to be
 $$(\cA_g)_{\rm trop}\coloneqq \GL_g(\bZ)\backslash \cP_g^+$$ 
 where the action is via $B\mapsto ABA^T$.
 \end{definition} 

We can view $(\cA_g)_{\rm trop}$ as the tropical
moduli space of abelian varieties, because, as we will see
in Section \ref{sec:toroidal-extensions}, a fan defining
a toroidal extension of $\cA_g$ lives naturally in
$(\cA_g)_{\rm trop}$. But more deeply, $(\cA_g)_{\rm trop}$
(or at least, the image of $\cP_g$ in it)
is itself a moduli space of ``tropical abelian varieties''
\cite{mz, chan, trop-torelli}: 
It parametrizes isometry classes of full rank
lattices in $\bR^g$, where the action of $\GL_g(\bZ)$
serves the role of forgetting the basis of the lattice
in which the Gram matrix of the corresponding real
intersection form has been expanded.

Then $\bB_f$ defines, canonically, an immersed 
polyhedral cone in $(\cA_g)_{\rm trop}$.

\begin{example}[Degenerations of Jacobians]\label{nodal}
Let $\pi \colon C\to \Delta^k$ be a family of nodal curves, 
which is smooth over the complement of the coordinate hyperplanes
$V(u_1\cdots u_k)$. 

Let $\{p_{ij}\}$ denote the nodes of the 
general fiber over $V(u_i)$.
The local equation of the smoothing 
of the node $p_{ij}$ is given
by $x_{ij}y_{ij}=u_i^{r_{ij}}$ for
some positive integer $r_{ij}$. 
It follows from
the Picard-Lefschetz formula that 
the logarithm of monodromy 
on $V_\bZ=H_1(C_t,\bZ)$ about the 
$i$-th coordinate hyperplane is 
given by \begin{align}\label{pic-lef} 
\textstyle N_i\colon x\mapsto -\sum_{j} 
r_{ij}(x\cdot \gamma_{ij})\gamma_{ij}
\end{align}
where $\gamma_{ij}\in H_1(C_t,\bZ)$ is 
(either orientation of)
the vanishing cycle of the node $p_{ij}$ and $\cdot$ 
is the intersection form on $V_\bZ$. 
Observe that the total space $C$ 
is smooth if and only if all $r_{ij}=1$ and no 
node of $C_0$ is the specialization of a node over 
both $V(u_i)$ and $V(u_j)$ for $i\neq j$. 
Taking the relative Jacobian fibration 
$$J\pi^\circ\colon JC^\circ
\to (\Delta^*)^k$$
of the smooth family, 
the monodromy is given by the same formula,
since $H_1(JC_u, \bZ)\simeq H_1(C_u,\bZ)$ 
for $u\in (\Delta^*)^k$.
The weight filtration is 
$$W_{-2}=\bZ\textrm{-span}\{\gamma_{ij}\} 
\textrm{ and } W_{-1}=(W_{-2})^\perp.$$
Computing the monodromy bilinear forms 
on ${\rm gr}^W_0 V_\bZ$ we get 
$$\textstyle B_i(x,y)\coloneqq  N_i(x)\cdot y = 
\sum_j r_{ij} (x\cdot \gamma_{ij})(y\cdot \gamma_{ij}).$$
Suppose now that $\pi\colon C\to \Delta^k\subset {\rm Def}(C_0)$ 
is a slice of the universal deformation of $C_0$
which is transversal to the 
equisingular/locally trivial deformations.
Then there is only one node over each $V(u_i)$, the corresponding
integer $r_i=1$, and $k$ is the number of nodes of $C_0$.
Thus, $$B_i(x,x) = (\gamma_i\cdot x)^2\in {\rm Sym}^2 
({\rm gr}^W_0 V_\bZ)^\vee$$
is a rank $1$ quadratic form,
given by the square of the linear form which is
pairing with the vanishing cycle
$\gamma_i\in {\rm gr}^W_{-2}V_\bZ$.

We have canonical isomorphisms
\begin{align*} {\rm gr}^W_0V_\bZ&\simeq H_1(\Gamma(C_0),\bZ), \\
{\rm gr}^W_{-2}V_\bZ&\simeq H^1(\Gamma(C_0),\bZ), \end{align*}
where $\Gamma(C_0)$
is the dual graph of the central fiber;
indeed, by duality it suffices to prove the first 
isomorphism, which follows e.g.\ from 
Proposition \ref{prop:gr-H_lim=gr-H-Gamma} below. 
The space
$W_{-2}V_\bZ= {\rm gr}_{-2}^WV_\bZ$ is generated 
by the vanishing cycles
$\gamma_i$ which are in bijection with the edges of $\Gamma(C_0)$.
The relations between the vanishing cycles $\gamma_i$ are given
by the boundaries of the subsurfaces they bound. In terms of graphs,
these are the coboundaries of the vertices of $\Gamma(C_0)$, 
so that $${\rm gr}_{-2}^WV_\bZ \simeq 
{\rm coker}(\cC^0(\Gamma(C_0),\bZ)\xrightarrow{\partial} 
\cC^1(\Gamma(C_0),\bZ)) \eqqcolon H^1(\Gamma(C_0),\bZ).$$
In turn, 
$H_1(\Gamma(C_0),\bZ)\simeq {\rm gr}^W_0V_\bZ$ 
with the quadratic form 
$B_i=(\gamma_i\cdot x)^2$
evaluating on a $1$-cycle 
$\sum c_ie_i\in H_1(\Gamma(C_0),\bZ)$ 
to the square $c_i^2$ of the coefficient 
of the edge $i$ in it.
See Figure \ref{fig:theta_curve}.

It follows from the Mayer--Vietoris sequence that 
${\rm gr}^W_{-1}V_\bZ \simeq H_1(C_0^\nu, \bZ)$ where
$C_0^\nu\to C_0$ is the normalization, with its natural
polarized $\bZ$-Hodge structure.

\begin{figure}[ht]
\includegraphics[height=2.7in]{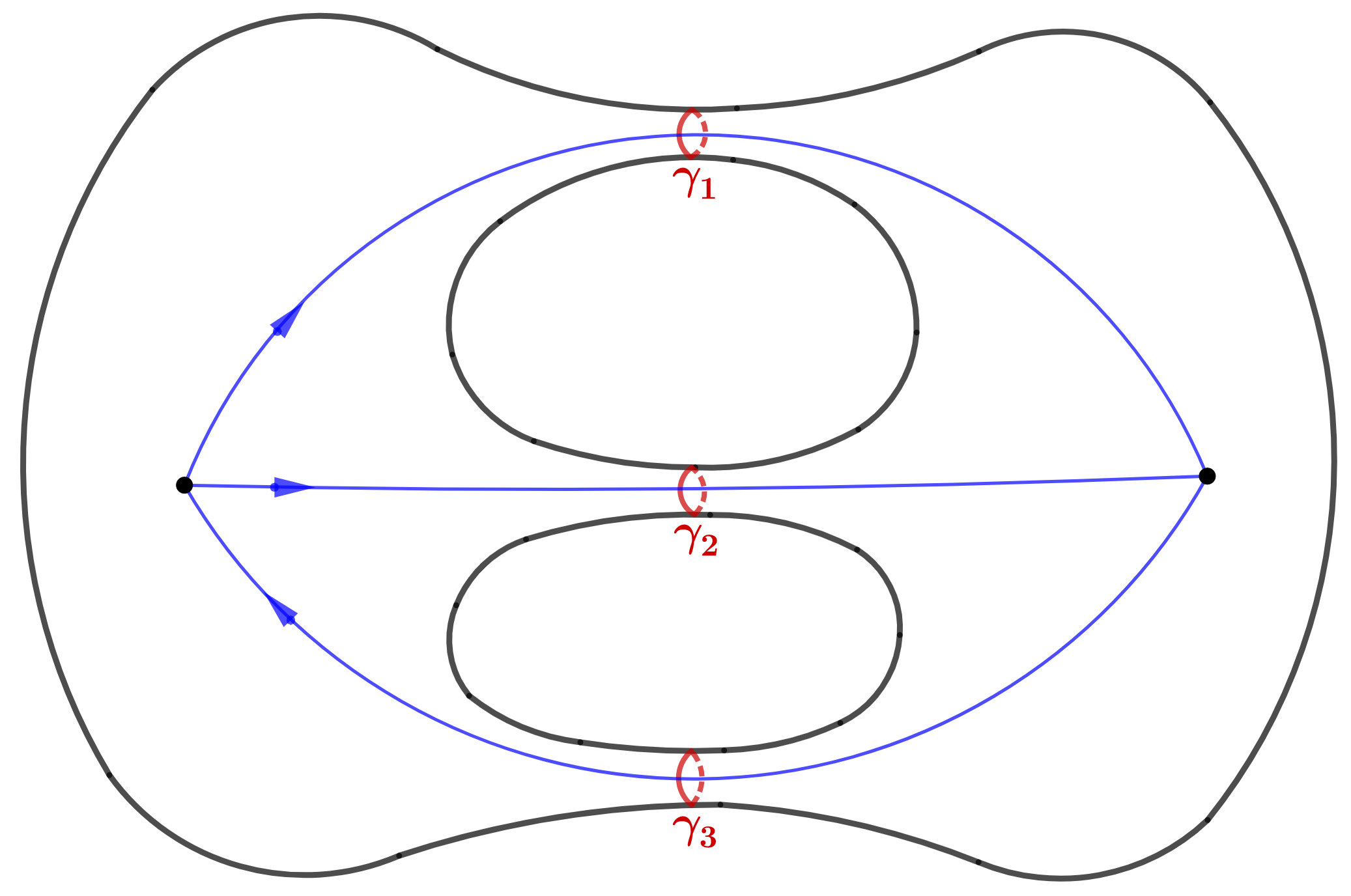}
\caption{Nearby fiber $C_t$ 
to the universal deformation
$\pi \colon C\to \Delta^3$ of a nodal genus $2$
curve $C_0$ with three nodes. 
Vanishing cycles 
$\{\gamma_1,\gamma_2,\gamma_3\}\in {\rm gr}^W_{-2}V_\bZ$
shown in red, and dual graph $\Gamma(C_0)$
of the central fiber, shown in blue.}
\label{fig:theta_curve}
\end{figure}
\end{example}

\begin{example}[Degenerations of 
intermediate Jacobians]\label{gwena-ex}
This example is due to Gwena \cite{gwena}.
Let $Y_0$ be the unique cubic threefold with $10$ isolated
$A_1$ singularities,
the {\it Segre cubic}. Concretely, it is 
defined by the equations
$$Y_0\coloneqq \textstyle \left\{\sum_{i=0}^5 x_i =
\sum_{i=0}^5x_i^3 =0\right\}\subset \bP^5$$
with the $10$ singularities given by the $S_6$-orbit
of the point $[1: 1:1:-1:-1:-1]\in \bP^5$. Then, the 
universal deformation (whose existence follows from 
\cite[Cor.~4.2]{friedman}, for example)
$$\pi \colon Y\to {\rm Def}_{Y_0}\simeq \Delta^{10}$$ of $Y_0$ 
is a degeneration of smooth cubic threefolds, whose discriminant
locus is the union of the $10$ coordinate hyperplanes
$V(u_1\cdots u_{10})\subset \Delta^{10}$. Similar to the universal
deformation of a nodal curve, if we label the nodes of $Y_0$
and coordinate hyperplanes appropriately, then 
$V(u_i)\simeq \Delta^9\subset \Delta^{10}$
is exactly the locus where the $i$-th node is not smoothed.

We now define the {\it intermediate Jacobian fibration} 
$$IJ\pi^\circ\colon IJY^\circ\to (\Delta^*)^{10}.$$ 
The rank $10$, polarized, unimodular $\bZ$-local system
$\bV_\bZ\coloneqq  (R^3\pi^\circ_*\bZ(1))^\vee$
underlies a variation of Hodge structure of 
weight $-1$ and type $(-1,0)$, $(0,-1)$.
It is polarized by the negation of the 
intersection form on $H_3(Y_u,\bZ)$ for $u \in (\Delta^\ast)^{10}$. 
Then, we define $IJY^\circ\coloneqq \bV_\bC/(\cF^0+\bV_\bZ)$.

Let $p_i\in Y_i$ be the unique node on the general fiber
of $Y_i\to V(u_i)$. Then associated to $p_i$ is the cycle of 
a vanishing $3$-sphere
$\gamma_i \in V_\bZ = H_3(Y_t, \bZ(-1))$. 
With an appropriate labeling 
and orientations, the $10$ cycles $\gamma_i$ 
satisfy the following collection
of linear relations
\begin{align}\label{relations}
\begin{split}
\gamma_6 &= \gamma_5-\gamma_1+\gamma_2 \\
\gamma_7 &= \gamma_1-\gamma_2+\gamma_3 \\
\gamma_8 &= \gamma_2-\gamma_3+\gamma_4 \\
\gamma_9 &= \gamma_3-\gamma_4+\gamma_5 \\
\gamma_{10} &= \gamma_4-\gamma_5+\gamma_1,
\end{split}
\end{align}
see \cite[Sec.~7.1.10]{gwena}, 
and generate a primitive integral
Lagrangian subspace
$W_{-2}\subset H_3(Y_t,\bZ(-1))$ of rank $5$. The Dehn
twist about the three-sphere $\gamma_i$ gives the formula
$$N_i\colon x\mapsto -(x\cdot \gamma_i)\gamma_i$$
for the logarithm of monodromy about the $i$-th coordinate
axis. By the same computation as Example \ref{nodal}, 
the monodromy bilinear forms on ${\rm gr}^W_0 V_\bZ$ 
satisfy $B_i(x,x)=(\gamma_i\cdot x)^2$. 
As we will see in Remark 
\ref{R10notcographic} in Section \ref{sec:matroid}, 
there is no graph $G$ for which
the $\gamma_i$ correspond to the edges 
of $G$ and such that the relations \eqref{relations} 
between the $\gamma_i$ are given by the image of the map 
$C^0(\Gamma(G),\bZ) \to C^1(\Gamma(G),\bZ)$ defined by 
some choice of orientation, cf.~Example \ref{nodal}.
\end{example}

We will give explicit extensions of
the families $J\pi^\circ$ and $IJ\pi^\circ$ 
over $\Delta^k$ and $\Delta^{10}$
in Examples \ref{cographic-ex}, \ref{r10-ex}
and in Construction \ref{shifted-matroidal-construction}.

\subsection{Toric varieties}\label{sec:toric-varieties}

We recall here some of the basic theory of toric
varieties. We refer to \cite{fulton, cox}
for the standard notions.

Toric geometry will be used both to extend
$\cA_g$ toroidally 
(Section \ref{sec:toroidal-extensions}), and
to build Mumford degenerations 
(Section \ref{sec:mumford}). 
We employ the standard 
toric notations of 
$\bfN\simeq \bZ^g$
for a free abelian group of rank $g$ and 
$\bfM\coloneqq {\rm Hom}(\bfN ,\bZ)$,
for constructions concerning the abelian and
degenerate abelian fibers. 
These lattices
play, respectively, the roles of the 
cocharacter and the character
lattices in toric geometry. Thus, fans lie
in $\bfN$ while polytopes lie in $\bfM$.

\begin{definition} A {\it fan} $\fF$ in $\bfN$
is a set of strongly convex, rational polyhedral
cones $\tau\subset \bfN_\bR$ for which every face
of a cone is a cone, and the intersection
of two cones is a face of each.
\end{definition}

We do not impose the hypothesis that fans
have finitely many cones, and indeed almost
none of the fans in this paper satisfy this hypothesis.
For each cone $\tau\in \fF$ in a fan, 
we may consider the $\bC$-algebra $\bC[\tau^\vee\cap \bfM]$ associated to the commutative semi-group $\tau^\vee\cap \bfM$,
where $\tau^\vee\subset \bfM_\bR$ is the collection
of all linear functionals evaluating non-negatively
on $\tau\subset \bfN_\bR$.
We form
the corresponding affine toric variety 
\begin{align} \label{affinetoric}Y(\tau)\coloneqq {\rm Spec}\, \bC[\tau^\vee\cap \bfM].\end{align}
Then the gluing 
$Y(\fF)\coloneqq \bigcup_{\tau\in \fF} Y(\tau)$ of these
affine schemes along the natural
open immersions corresponding face inclusions
gives the {\it toric variety} $Y(\fF)$, 
see \cite[Sec.~1.4]{fulton}.

\begin{notation} If $\fF$ is a polyhedral fan, 
we notate its toric variety by $Y(\fF)$.
\end{notation}

For all constructions in this paper, $Y(\fF)$ will be an
analytic space (and even a $\bC$-scheme)
which is locally of finite type.

As in the usual theory of toric varieties, 
the torus orbits of dimension $d$ in $Y(\fF)$, 
isomorphic necessarily to $(\bC^*)^d$, 
correspond bijectively
to cones of codimension $d$ in $\fF$, 
see \cite[Sec.~3.1]{fulton}.

\begin{definition} \label{polytope:1}A {\it polytope} $H$ 
is a convex set in $\bfM_\bR$ defined by the intersection
of a (possibly infinite) number of closed 
rational half-spaces,
such that $H$ is {\it locally of finite type}, 
i.e.~locally about every point $p\in H$, it is
defined by a finite number of half-spaces. 
  \end{definition}

  \begin{definition} \label{polytope:2}
 A {\it face}
 $F\subset H$ of a polytope
 is a non-empty intersection of $H$ with a 
 (possibly empty) collection of supporting 
 hyperplanes, and the {\it local monoid}
 $\bfM_F$ of this face
 is the intersection of the lattice $\bfM$ 
 with the finitely
 many (possibly empty) 
 closed half-spaces which define $H$ 
 in the neighborhood of
 a general point of $F$,
 translated to the origin.

 The {\it normal fan} of $H$ is the fan formed
 from the dual cones of the local monoids $\bfM_F$ ranging
 over all faces $F\subset H$ (including the open face $H$).
 \end{definition}

The toric variety $Y=Y_H$ associated to the polytope
$H$ is the union of ${\Spec}\,\bC[\bfM_F]$ ranging
over all faces, see also \cite[Sec.~1.5]{fulton}. 
If, furthermore, all faces
of $H$ are integral polytopes,
then there is a canonically defined
torus-equivariant holomorphic line bundle
$\cL$ on $Y$, given by gluing together line
bundles on each affine chart ${\Spec}\,\bC[\bfM_F]$,
in a manner which locally agrees with the recipe
in \cite[Sec.~3]{fulton}.

\begin{remark}\label{toricpolytope}
When the polytope $H$ is compact, we may
construct $Y=Y_H$ 
directly as the projective variety 
$Y_H = {\rm Proj}\,
\bC[{\rm Cone}(H)\cap (\bfM\times \bZ)]$,
where ${\rm Cone}(H)$ is the cone over $H$,
put at height $1$ in $\bfM_\bR\times \{1\}\subset \bfM_\bR\times \bR$; the above line bundle 
is $\cL = \cO_Y(1)$. 
The lattice points ${\bf m}\in H\cap \bfM$ 
define a basis of torus-equivariant
sections of $H^0(Y, \cO(1))$. 
More generally, lattice points 
$({\bf m},w) \in 
{\rm Cone}(H) \cap (\bfM \times \set{w})$ 
of height $w$ correspond to 
torus-equivariant sections 
$\theta_{{\bf m}/w} \in H^0(Y, \cO(w))$.
The multiplication map 
$$H^0(Y, \cO(w_1)) \otimes H^0(Y, \cO(w_2)) \to 
H^0(Y, \cO(w_1+w_2))$$ corresponds to 
$(({\bf m}, w_1), ({\bf m'}, w_2)) 
\mapsto ({\bf m+m'}, w_1+w_2)$, which
we may equivalently write as
a multiplication rule 
$\theta_{{\bf m}/w_1}\cdot \theta_{{\bf m}'/w_2}= \theta_{({\bf m}+{\bf m}')/(w_1+w_2)}$, 
cf.~(\ref{mult-rule}) below.
\end{remark}

Suppose
now that there is a subgroup
$A\subset \GL(\bfM)\ltimes \bfM$ of the 
integral-affine group (possibly infinite), 
acting on the polytope $H$. Then there is a natural
action of $A$ on $Y$, which in a 
basis $\bfM\simeq \bZ^g$ acts on the open
torus orbit $\bfN\otimes \bC^*\simeq (\bC^*)^g$
by the map $$c_i\mapsto c_1^{a_{i1}}\cdots c_g^{a_{ig}}$$
where $(a_{ij})_{1\leq i,j\leq g}\in \GL_g(\bZ)$ 
is the linear part of $A$ in the chosen basis.
Moreover, there is a linearization
of $\cL=\cO_Y(1)$ 
with respect to the $A$-action on $Y$,
which acts on sections by 
$a\cdot (z^m)=z^{a\cdot m}$.
The $A$-action is properly discontinuous on a tubular
neighborhood of all toric strata corresponding
to faces $F$, or cones $\tau\in \fF$,
whose $A$-stabilizer has finite order.

\begin{definition} A fan $\fF$ is {\it regular}
if every cone $\tau\in \fF$
is {\it standard affine}; that is,
the primitive integral
generators of the extremal
rays of $\tau$ form a subset
of a basis of $\bfN$.\end{definition}

Equivalently, if $\fF$ is a normal fan,
the polytope $H$ should be {\it Delzant}. 
The toric variety
$Y_H=Y(\fF)$ is a smooth analytic space if and only
if $\fF$ is regular, in which case for each 
$\tau \in \fF$, the affine toric variety $Y(\tau)$, 
see \eqref{affinetoric}, is isomorphic to a product of an 
affine space with a torus. 

\begin{definition} The {\it support} (in $\bfN_\bR$) 
of a fan $\fF$ is the union of all 
cones $\tau\in \fF$. 
\end{definition}

Finally, a {\it morphism} of fans $\fF\to \fG$ 
is a linear map between the corresponding cocharacter
lattices, which sends cones into cones.
It induces a torus-equivariant map
$Y(\fF)\to Y(\fG)$.

\subsection{Toroidal extensions}\label{sec:toroidal-extensions}
We now outline the 
construction of toroidal extensions of $\cA_g$ associated
to a monodromy cone $\bB_f$ and more generally a fan $\fF$.
See \cite{namikawabook, mumford-amrt, mumford1975new, fc} 
for references on toroidal compactifications of 
Siegel spaces. 

\begin{definition}
\label{def:fan} A {\it fan} $\fF$ for $\cA_g$ 
is a rational polyhedral fan, whose support
is contained in 
$\cP_g^+\subset {\rm Sym}_{g\times g}(\bR)$, and
such that $\fF$ is $\GL_g(\bZ)$-invariant
under the action $B\mapsto ABA^T$,
with finitely many orbits of cones.
\end{definition}

\begin{example}\label{embed}
Let $f\colon X\to \Delta^k$ 
be an abelian fibration with unipotent 
monodromies about the coordinate hyperplanes.
Then $\fF_f\coloneqq \GL_g(\bZ)\cdot \bB_f$ defines a fan,
when $\bB_f$ injects 
into $(\cA_{g})_{\rm trop}=\GL_g(\bZ)\backslash \cP_g^+$.
\end{example}

Let $\bB$ be a polyhedral cone which embeds
into $(\cA_g)_{\rm trop}$. In what follows, we will 
consider the fan $\fF=\GL_g(\bZ)\cdot \bB$.
Consider the coordinate-wise exponential mapping:
\begin{align*} E\colon {\rm Sym}_{g\times g}(\bC)& \to 
 {\rm Sym}_{g\times g}(\bC^*) \\ (\sigma_{ij})_{i,j=1}^g&\mapsto (\exp(2\pi i 
 \sigma_{ij}))_{i,j=1}^g. \end{align*}
The map $E$ is the quotient by the action
of translation by $U_\bZ\simeq 
{\rm Sym}_{g\times g}(\bZ)$, so $E$ 
defines an open embedding of the quotient
of Siegel space (Def.~\ref{def:siegel}) 
into a torus 
\begin{align}\label{torus-embedding}
\overline{E}\colon U_\bZ 
\backslash \cH_g\hookrightarrow 
U_\bZ \backslash {\rm Sym}_{g \times g}(\bC) 
\simeq \Sym_{g \times g}(\bC^\ast) \simeq  
{\rm Sym}_{g\times g}(\bZ)\otimes 
\bC^*\simeq U_\bZ\otimes \bC^*.\end{align}
Here $U_\bZ\subset P_\bZ$
is the unipotent radical of the parabolic,
as in Definition \ref{unipotent-parabolic}, 
and the isomorphism 
$\Sym_{g \times g}(\bZ) \otimes \bC^\ast 
\simeq \Sym_{g \times g}(\bC^\ast)$ is given by
$(n_{ij})_{1\leq i,j\leq g}\otimes \lambda = 
(\lambda^{n_{ij}})_{1\leq i,j\leq g}$.
This is called the 
{\it first} or {\it unipotent partial quotient} 
of $\cH_g$ in the theory of 
toroidal compactifications. \smallskip

Compose the period maps 
$\widetilde{\Phi}$ and $\widetilde{\Phi}_{\rm nilp}$ 
of any degeneration with $\bB_f=\bB$
with the quotient map $\cH_g \to 
\Sym_{g \times g}(\bZ) \backslash \cH_g$.  
They descend to single-valued maps 
$\Phi, \Phi_{\rm{nilp}} \colon (\Delta^*)^k 
\to \Sym_{g \times g}(\bZ) \backslash \cH_g.$ 
Composing $\Phi_{\rm nilp}$ with the map 
$\overline{E}$ gives rise to a map 
\begin{align}\label{nilpotent}
(\Delta^*)^k\to {\rm Sym}_{g\times g}(\bZ)\otimes \bC^* \simeq \Sym_{g \times g}(\bC^\ast) \end{align} 
whose image is (an analytic open subset of) 
a translate of the
subtorus $\langle \bB\rangle \otimes \bC^*$
where $\langle \bB\rangle\coloneqq (\bR\bB)
\cap {\rm Sym}_{g\times g}(\bZ)$. 
Thus, Theorem \ref{schmid} can be rephrased as 
saying that the period mapping
is approximated by a translate of a subtorus, 
near $0\in \Delta^k$. 

Since the fan $\fF=\GL_g(\bZ)\cdot \bB$ sits in the 
co-character lattice ${\rm Sym}_{g\times g}(\bZ)$,
the associated toric variety $Y(\fF)$ is a toroidal 
extension of ${\rm Sym}_{g\times g}(\bC^*)$.
Consider the quotient $\GL_g(\bZ)\backslash Y(\fF)$.
This quotient is not globally well-behaved, e.g.~the action
of $\GL_g(\bZ)$ fixes the origin point of the open torus orbit.
But there is a tubular analytic neighborhood 
$T(\fF)\subset Y(\fF)$ of the union of the toric 
boundary strata of $Y(\fF)$
corresponding to cones intersecting $\cP_g$ on which
the $\GL_g(\bZ)$-action is properly discontinuous.
Let $T^*(\fF)$ be the intersection of $T(\fF)$ with
the open torus orbit $U_\bZ\otimes \bC^*$.
Then, we have open embeddings
\begin{align}
    \label{embedding}
\GL_g(\bZ)\backslash T(\fF)
\hookleftarrow \GL_g(\bZ)\backslash T^*(\fF) 
\hookrightarrow
P_\bZ \backslash \cH_g.
\end{align} 
By \cite[Thm.~4.9(iv)]{BB66}, 
the boundary of $\Sp_{2g}(\bZ)\backslash \cH_g$ is locally 
modeled near the Baily-Borel cusp (associated to the 
Lagrangian subspace $\bZ e_1\oplus \cdots 
\oplus \bZ e_g$) by $P_\bZ\backslash \cH_g$. 
Thus, by (\ref{embedding}), we may glue
$\GL_g(\bZ)\backslash T(\fF)$ to 
$\cA_g = \Sp_{2g}(\bZ)\backslash \cH_g$ along their
common analytic open subset $\GL_g(\bZ)\backslash T^*(\fF)$.

More generally, the same construction applies
to any $\GL_g(\bZ)$-invariant fan $\fF$, 
and the resulting toroidal extension
is relatively proper over the $0$-dimensional
cusp of the Baily-Borel \cite{BB66}
compactification $\overline{\cA}_g^{\rm BB}=
\cA_g\sqcup \cA_{g-1}\sqcup \cdots 
\sqcup \cA_1\sqcup \cA_0$
if and only if ${\rm Supp}(\fF) = \cP_g^+$. 

When $\fF$ contains cones 
supported in $\cP_g^+\setminus \cP_g$, 
the gluing defined by (\ref{embedding})
further extends along the 
intermediate-dimensional strata of the 
Baily-Borel compactification.
This is the {\it toroidal extension}
$\cA_g\hookrightarrow \cA_g^{\fF}$
of the orbifold $\cA_g$.

When ${\rm Supp}(\fF) = \cP_g^+$, we notate
the toroidal extension by 
$\cA_g\hookrightarrow \overline{\cA}_g^\fF$;
it is proper, and we call it a
{\it toroidal compactification}.
It follows
from \cite[Thm.~5.2]{mumford-amrt} 
that $\cA_g^\fF$ or $\overline{\cA}_g^\fF$ 
is a DM algebraic stack,
in the former case
by refining and extending $\fF$ to a 
fan with full support $\cP_g^+$.

\begin{notation} For simplicity,
we will write $\cA_g\hookrightarrow
\cA_g^\bB$ to notate the toroidal
extension associated to the fan 
$\fF=\GL_g(\bZ)\cdot \bB$ consisting
of the orbit of a polyhedral cone $\bB\subset \cP_g^+$.\end{notation}

\begin{proposition}\label{monodromy-extend}
    For any degeneration of PPAVs $f^\ast
    \colon X^\ast \to (\Delta^*)^k$
    with monodromy cone $\bB$, the period map
    $(\Delta^*)^k\to \cA_g$ admits a unique
    holomorphic extension 
    $\Delta^k\to \cA_g^{\bB}$.
\end{proposition}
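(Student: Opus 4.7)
\emph{Proof plan.} After choosing a symplectic basis conjugating the monodromies into $U_\bZ$, lift the period map to $\widetilde{\Phi}\colon\bH^k\to\cH_g\subset\Sym_{g\times g}(\bC)$, so that deck transformations act by $\widetilde{\Phi}(\tau+e_j)=\widetilde{\Phi}(\tau)+B_j$. Since each $B_j\in\Sym_{g\times g}(\bZ)$ is integral, the composition $\exp(2\pi i\,\widetilde{\Phi}(\cdot))$ is $\bZ^k$-invariant and descends to a single-valued holomorphic map
\[
\psi\colon(\Delta^*)^k\longrightarrow\Sym_{g\times g}(\bC^*)
\]
into the open torus orbit of $Y(\fF)$, where $\fF=\GL_g(\bZ)\cdot\bB$.

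The heart of the argument is to extend $\psi$ holomorphically to $\Delta^k\to Y(\fF)$, with $\psi(0)$ sitting in the toric stratum associated to $\bB$. I work in the affine chart $Y(\bB)=\Spec\bC[\bB^\vee\cap\Sym_{g\times g}(\bZ)^*]$ and verify that every character $\chi\in\bB^\vee\cap\Sym_{g\times g}(\bZ)^*$ pulls back to a holomorphic function on $\Delta^k$. Writing $\widetilde{\Phi}(\tau)=\widetilde{\Psi}(\tau)+\sum_j\tau_jB_j$ and applying $\chi$ gives
\[
\chi(\psi(u))=\exp\bigl(2\pi i\,\chi(\Psi(u))\bigr)\cdot\prod_{j=1}^k u_j^{\chi(B_j)},
\]
where $u_j=\exp(2\pi i\tau_j)$. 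By Theorem \ref{schmid}, $\Psi$ extends holomorphically to $\Delta^k\to\bD^\vee$, and item (1) places $\Psi(0)$ in the standard affine chart of the Lagrangian Grassmannian identified with $\Sym_{g\times g}(\bC)$; thus $\chi(\Psi(u))$ is a well-defined holomorphic function near $0\in\Delta^k$ and the exponential factor is holomorphic and nonvanishing. Since $\chi\in\bB^\vee$ forces $\chi(B_j)\geq 0$, the monomial factor extends as well, vanishing at $u=0$ precisely when $\chi\not\in\bB^\perp$, which is exactly the condition characterizing the distinguished orbit of $\bB$.

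Shrinking $\Delta^k$ as needed, the extension lands in the tubular neighborhood $T(\fF)\subset Y(\fF)$ (whose existence about the relevant stratum is tied to $\bB\subset\cP_g^+$, cf.~Corollary \ref{pos-def-cor}). Post-composition with $T(\fF)\to\GL_g(\bZ)\backslash T(\fF)\hookrightarrow\cA_g^\fF$ together with the identification (\ref{embedding}) near the Baily--Borel cusp yields the desired holomorphic extension $\Delta^k\to\cA_g^{\GL_g(\bZ)\cdot\bB}$, which agrees with the original period map on $(\Delta^*)^k$ by construction. Uniqueness is immediate from density of $(\Delta^*)^k$ in $\Delta^k$ combined with the separatedness of the DM analytic stack $\cA_g^\fF$. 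The principal technical step is the toric extension in the second paragraph, which rests entirely on Schmid's extendability of $\Psi$ together with the dual-cone inequality $\chi(B_j)\geq 0$; the remaining descent to the stacky quotient is essentially formal.
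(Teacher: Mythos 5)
Your proposal is correct, and it takes a genuinely different route from the paper's proof. The paper argues in two steps: first, the \emph{nilpotent orbit} $\Phi_{\rm nilp}$ is a translate of a subtorus and therefore extends to $\Delta^k\to\cA_g^{\GL_g(\bZ)\cdot\bB}$ by an essentially formal toric computation; second, the exponential decay in item (2) of Theorem \ref{schmid} shows that the actual period map $\Phi$ converges to $\Phi_{\rm nilp}$ in the invariant metric, which yields a continuous extension of $\Phi$, promoted to a holomorphic one by Riemann removable singularities and normality of $\Delta^k$. You instead extend $\Phi$ \emph{directly}: you descend to the unipotent quotient $\psi = E\circ\widetilde{\Phi}$ and, for each character $\chi\in\bB^\vee\cap\Sym_{g\times g}(\bZ)^*$, factor $\chi(\psi(u)) = \exp(2\pi i\,\chi(\Psi(u)))\cdot\prod_j u_j^{\chi(B_j)}$, where the first factor extends because $\Psi$ does (the pre-item part of Theorem \ref{schmid}) and the second because $\chi(B_j)\in\bZ_{\geq 0}$. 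This yields the toric extension on the nose, with no need for the metric estimate or Riemann's theorem; your only input beyond the extendability of $\Psi$ is item (1), which you use to ensure $\Psi(0)$ lies in the affine chart $\Sym_{g\times g}(\bC)\subset\bD^\vee$, so that $\chi(\Psi(u))$ makes sense near $u=0$. (That inference is legitimate: $\exp(\tau N)$ fixes the complement of the affine chart, so if $\exp(\tau N)\Psi(0)\in\cH_g$ then $\Psi(0)$ must already be in the chart.) Roughly speaking, the paper's argument is ``softer'' and delegates the analysis to Schmid's metric estimate, while yours is a direct coordinate computation; in exchange you need to track the affine chart and the character formula carefully. Both handle the final descent to $\GL_g(\bZ)\backslash T(\fF)\hookrightarrow\cA_g^{\fF}$ and the uniqueness claim in the same way, via separatedness.
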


\begin{proof} 
The proposition follows from Theorem \ref{schmid}:~We have shown
that the nilpotent orbit $\Phi_{\rm nilp}$ maps,
analytically-locally near the boundary of $\cA_g$,
into the translate of 
the subtorus $\langle \bB\rangle\otimes \bC^*\subset 
{\rm Sym}_{g\times g}(\bC^*)$, see \eqref{nilpotent}.  
Any cocharacter
$B\otimes (\bC\setminus 0)
\subset \langle \bB\rangle \otimes \bC^*$,
for $B\in \bB$, admits a completion over $0\in \bC$ 
to the toroidal extension 
$\cA_g^{\bB}$, sending $0$
into the toroidal boundary stratum corresponding to the 
cone of $\fF$ containing $B$ in its relative interior.
We deduce that $\Phi_{\rm nilp}\colon (\Delta^*)^k\to \cA_g$ 
extends holomorphically to a map 
$\Delta^k\to \cA_g^{\bB}$.

Next, it follows
from the exponential convergence (\ref{metric-decay}) 
of $\Phi$ towards $\Phi_{\rm nilp}$ (in
the invariant metric on $\cH_g$) that
$\Phi\colon (\Delta^*)^k\to \cA_g$ 
admits a continuous extension $\Delta^k\to 
\cA_g^{\bB}$. Since $\Delta^k$
is normal, Riemann's removable singularities theorem
implies that this continuous extension is holomorphic.
It is unique because the toroidal extension is a separated
analytic stack.
\end{proof}

\begin{remark}\label{monodromy-fit} 
In general, one may wish to consider
monodromy cones $\bB_f$ for which the 
$\bB_f$ does not
embed into $(\cA_g)_{\rm trop}$ (Def.~\ref{ag-trop}).
The issue here is that
the $\GL_g(\bZ)$-orbit of such a cone may intersect itself.
This problem is resolved by rather quotienting 
 $U_\bZ\backslash \cH_g 
\subset U_\bZ\otimes \bC^*$
by a finite index subgroup $\Gamma\subset \GL_g(\bZ)$.

For instance, consider the $n$-torsion subgroup
$X[n]\simeq (\bZ/n\bZ)^{2g}$ of a PPAV $X$. 
The principal polarization
defines a natural non-degenerate symplectic pairing 
on $X[n]$. We define the moduli space
of abelian varieties $X$ with 
{\it full Lagrangian level $n$ structure}
by adding the data of:
\begin{enumerate}
    \item a Lagrangian subspace 
    $\Xi\simeq (\bZ/n\bZ)^g\subset X[n]$ and
    \item a $\bZ/n\bZ$-basis of $\Xi$.
\end{enumerate}

All degenerations we consider in this paper admit
some full Lagrangian level $n$ structure, because
there is a distinguished Lagrangian subspace 
${\rm gr}^W_{-2}H_1(X,\bZ)$ on which the monodromy
acts trivially. The moduli stack $\cA_g[n]$ 
of abelian varieties
with full Lagrangian level $n$ structure is
an \'etale cover $\widetilde{\cA}_g=\cA_g[n]\to \cA_g$. 

At an appropriate $0$-cusp of the Baily--Borel compactification
of $\widetilde{\cA}_g$ (the cusp corresponding to a 
Lagrangian subspace
$\widetilde{\Xi}\subset H_1(X,\bZ)$ for which 
$\tfrac{1}{n}\widetilde{\Xi}/\widetilde{\Xi}= \Xi$), 
the parabolic stabilizer $P_\bZ$ has the following structure: 
$$0\to U_\bZ\to P_\bZ \to \Gamma(n)\to 0,$$
where the unipotent subgroup
$U_\bZ\simeq {\rm Sym}_{g\times g}(\bZ)$
is the same as without level structure, but 
the Levi quotient 
$\Gamma(n)\coloneqq \ker(\GL_g(\bZ)\to \GL_g(\bZ/n\bZ))$ 
is the full level $n$ subgroup. 

Then, a toroidal extension of $\widetilde{\cA}_g$
at this Baily--Borel cusp has the ``advantage'' that
a fan need only be $\Gamma(n)$-invariant. Furthermore,
the fundamental domain for the action of $\Gamma(n)$
is larger than the fundamental domain for that action
of $\GL_g(\bZ)$. In particular, given any polyhedral
cone $\bB\subset \cP_g^+$, there exists some $n$
(depending on $\bB$) for which $\bB$ embeds into the
quotient $\Gamma(n)\backslash \cP_g^+$.
The preceding results also apply at this cusp, since
$\Gamma(n)\cdot \bB$ now defines a fan.
\end{remark}

\begin{proposition} \label{monodromyrealization}
Let $\tau\subset \bR^k$ be a strictly
convex, rational polyhedral cone, and 
consider any homomorphism
$\phi\colon 
\bZ^k\to {\rm Sym}_{g\times g}(\bZ)$ 
for which 
$\bB\coloneq \phi_\bR(\tau)\subset \cP_g^+$.
There exists a quasiprojective
variety $Y$, divisor $D\subset Y$,
point $0\in D$, and projective
family $f^*\colon X^*\to Y^*$, 
$Y^*=Y\setminus D$, 
of PPAVs of dimension $g$, in
the algebraic category, whose
monodromy cone at $0$ is given by $\phi$
in the following sense:
\begin{enumerate}
    \item\label{smooth-star} 
    $Y$ admits, near $0\in Y$, an \'etale-local 
    isomorphism to the toric variety $Y(\tau)$,
    sending $D$ to the toric boundary, and $0$ 
    to the torus fixed point.
    \item the monodromy representation of $Y\setminus D$ near $0$ is given, under this isomorphism, by $$ \pi_1(\bZ^k\otimes \bC^*, *)\simeq \bZ^k \xrightarrow{\phi} {\rm Sym}_{g\times g}(\bZ)=U_\bZ\subset {\rm Sp}_{2g}(\bZ).$$  Here $\bZ^k\otimes \bC^*\subset Y(\tau)$ is
    the open torus orbit.
\end{enumerate}
\end{proposition}

Observe that it follows 
from (\ref{smooth-star}) that $Y^*$
is smooth near $0$.  

\begin{proof}
We first prove the proposition
under the hypothesis that $\phi$ is
injective.

Consider the \'etale cover 
$\cA_g[n]\to \cA_g$
given by full Lagrangian level $n\geq 3$
structure as in Remark \ref{monodromy-fit}.
As noted, there is a Baily--Borel cusp
of $\cA_g[n]$ whose parabolic
stabilizer $P_\bZ$ has unipotent subgroup
$U_\bZ\subset {\rm Sym}_{g\times g}(\bZ)$ 
which is the same as for $\cA_g$ but whose
Levi quotient is 
$P_\bZ/U_\bZ=\Gamma(n)\subset \GL_g(\bZ)$.
Since $\Gamma(n)$ is neat for $n\geq 3$, 
its action on the toroidal extension 
$\cA_g[n]^\fF$ is free
in the tubular neighborhood
$T(\fF)$ of (\ref{embedding}). 
We deduce that there is a Zariski open
$V\subset \cA_g[n]^\fF$, 
containing
all maximally degenerate strata, 
over which
the universal family
$\cX_g\vert_{V^*}\to {V^*}$ of PPAVs, 
$V^*\coloneqq V\cap \cA_g[n]$,
exists as a scheme (rather than just
a DM stack). 

Then, choose $n\geq 3$ so that
$\bB$ lies in the strict interior of a 
fundamental domain for the action of
$\Gamma(n)$ on $\cP_g^+$ and define
$\fF=\Gamma(n)\cdot \bB$.
By the 
construction of toroidal extensions,
the open set
$V$ is \'etale-locally isomorphic
to the toric variety 
$Y_{{\rm Sym}_{g\times g}(\bR)}(\bB)$.

Choose
a finite index sublattice 
$\Lambda\subset {\rm Sym}_{g\times g}(\bZ)$
for which $\Lambda\cap \bB={\rm im}(\phi)$. Then the finite \'etale cover
$\Lambda\otimes \bC^*\to 
{\rm Sym}_{g\times g}(\bZ)\otimes \bC^*$ 
of algebraic tori
induces a branched cover of toric
varieties $Y_{\Lambda\otimes \bR}(\tau)\to Y_{{\rm Sym}_{g\times g}(\bR)}(\bB)$.
Take an algebraic branched
cover $V'\to V$ with the same
branching over the toric boundary,
under the \'etale-local identification
of $\cA_g[n]^\fF$
with $Y_{{\rm Sym}_{g\times g}(\bR)}(\bB)$.
Then, $V'$ is \'etale-locally 
isomorphic to $Y_{\Lambda\otimes \bR}(\tau)$,
over the deepest toroidal stratum.

Finally, to construct $Y$,
we slice $V'$ by 
${g+1\choose 2}- \dim \tau$ 
generic hyperplanes, which under 
the \'etale-local identification of $V'$
with $Y_{\Lambda\otimes \bR}(\tau)$ are
transversal to the deepest toroidal
boundary stratum. We set $Y^*$
to be the inverse image of $V^*$,
$D=Y\setminus Y^*$, and $0\in D$
as an intersection point 
of the hyperplanes with the deepest
stratum. We set the family
of abelian varieties 
$f^*\colon X^*\to Y^*$ to be
the pullback of the universal family 
$\cX_g\vert_{V^*}\to V^*$ (which exists
by the above discussion)
along 
the map $Y^*\to V^*$.

To verify that the monodromy
representation of $f^*$ is as specified,
consider a $1$-parameter 
family $\Delta^*\to Y^*$ 
which extends to a map 
$\Delta\to Y$ sending $0\mapsto 0$. 
The monodromy over $\Delta^*$
can be computed in the \'etale-local
model as the monodromy of the family
of PPAVs over a co-character.
Such co-characters correspond
to lattice points, in $\tau\cap \bZ^k$.
Since $f^*\colon X^*\to Y^*$ 
is pulled back from $V^*$, 
the monodromy representation is pulled back along
the morphism of cocharacter
lattices, given by the inclusion $\Lambda\hookrightarrow 
{\rm Sym}_{g\times g}(\bZ)$ which, in particular, restricts
$\phi \colon \bZ^k\to {\rm Sym}_{g\times g}(\bZ)$. 
Finally, it suffices
to observe that the monodromy over 
the cocharacter $B\otimes \bC^*\subset {\rm Sym}_{g\times g}(\bZ)\otimes \bC^*$ is canonically identified
with $B\in \cP_g^+\cap 
{\rm Sym}_{g\times g}(\bZ)$,
see e.g.~(\ref{mono-eq}).

Finally, we address the case
where $\phi$ is not injective.
Define
$\overline{\tau}:=\phi_\bR(\tau)\subset 
\cP_g^+$, viewed as a polyhedral cone
in the lattice $\phi(\bZ^k)$.
We have a descended, injective homomorphism
$\overline{\phi}\colon 
\phi(\bZ^k) \hookrightarrow 
{\rm Sym}_{g\times g}(\bZ)$
satisfying the hypotheses of 
the proposition. Thus, there is a 
family $\overline{f}^*\colon \oX^*\to \oY^*$
over a base $\oY^*=\oY\setminus \oD$,
a point $\overline{0}\in \oD$,
and an \'etale-local isomorphism
$\alpha \colon 
\oY\to Y(\overline{\tau})$ near 
$\overline{0}$ satisfying
the conclusion of the proposition.
Then the base change of the morphism
$\alpha\circ \overline{f}^*\colon 
\oX^*\to Y(\overline{\tau})$ along
the morphism of
toric varieties $Y(\tau)\to Y(\overline{\tau})$
produces the desired family
$f^*\colon X^*\to Y^*:=
\oY^*\times_{Y(\overline{\tau})}
Y(\tau) \subset \oY\times_{Y(\overline{\tau})}
Y(\tau)\eqqcolon Y$. 
Here we take $0\in Y$ to be the fiber
product of the points $\overline{0}\in \oY$
and the torus fixed point of $Y(\tau)$.
Since the desired monodromy
map factors through $\overline{\phi}$,
the condition on monodromy follows.
\end{proof}

\begin{corollary} \label{corollary:monodromyrealization}
Let $B_i\in \cP_g^+\cap 
{\rm Sym}_{g\times g}(\bZ)$ 
for $i=1,\dots,k$.
There is a smooth quasiprojective
variety $Y$, an snc divisor
$D\subset Y$, a zero-stratum 
$0\in D$, and 
a projective algebraic family
$f^*\colon X^*\to Y^*$ of PPAVs,
such that the local monodromy 
bilinear form (Def.~\ref{def:Bi})
about the component $D_i\ni 0$ is 
$B_i$ for all $i=1,\dots,k$. 
\end{corollary}

Note that $B_i$ need not be primitive.

\begin{proof}
The corollary follows
from Proposition \ref{monodromyrealization},
by taking 
$\tau:=\bR_{\geq 0}^k\subset \bR^k$, 
and the homomorphism
$\phi\colon \bZ^k\to 
{\rm Sym}_{g\times g}(\bZ)$ 
sending $e_i\mapsto B_i$. 
Here one can assume that $Y$ is smooth 
and $D$ is snc because
they are \'etale-locally isomorphic
to $Y(\tau) = \bC^k$ and the union
of the coordinate hyperplanes, respectively.
\end{proof}

\begin{remark}
    Given any two families $X^*_1$, $X^*_2$ of 
    PPAVs over $(\Delta^*)^k$, with the same integral monodromies
    about each coordinate axis, there is an analytic deformation 
    $\cX^*\to (\Delta^*)^k\times Z$ of such families,
    over a connected base $Z$, and points $1,2\in Z$ for which
    $\cX^*_i\simeq X^*_i$ for $i=1,2$. Indeed, we may
    first deform each $X_i^*$ to the nilpotent orbit
    (\ref{nilpotent}) passing
    through the same point of the deepest toroidal stratum
    of $\widetilde{\cA}_g^{\,\bB}$, 
    and then relate the two 
    translates of subtori $\langle \bB\rangle \otimes \bC^*$ 
    by a translation in ${\rm Sym}_{g\times g}(\bC^*)$.
\end{remark}

\section{The Mumford construction}\label{sec:mumford}

In Section \ref{sec:toroidal-extensions}, we have shown
how to extend $\cA_g$ (or  $\widetilde{\cA}_g)$ toroidally, to obtain $\cA_g\hookrightarrow \cA_g^{\,\bB}$ (or $\widetilde{\cA}_g\hookrightarrow\widetilde{\cA}_g^{\,\bB}$),
so that
the period map $(\Delta^*)^k\to \cA_g$ 
of any degeneration $f\colon X\to \Delta^k$
with monodromy cone $\bB_f=\bB$ admits an extension
of the period map
over the punctures $\Delta^k\to \cA_g^{\,\bB}$.
We will now describe how to extend the
universal family of PPAVs over $\cA_g$ so that we may
pull back this extension to produce particularly
nice birational models of degenerations.
It is useful to have examples in mind;
many are provided in Section \ref{sec:examples}. 
All degenerations we consider 
in this section are maximal, in the sense
of Definition \ref{def:max}.

\subsection{Mumford construction, fan version}\label{sec:fan}
Let  $B\in {\rm Sym}^2\bfM^\vee$ be a 
positive-definite, symmetric, integral,
bilinear form on a lattice $\bfM\simeq \bZ^g$.
Then $B$ defines a homomorphism 
\begin{align*} N\colon \bfM &\to \bfM^\vee=\bfN, \\
{\bf m}&\mapsto B({\bf m},-).\end{align*}

Define $\Lambda_B\subset \bfN$ to be ${\rm im}(N)$.
In terms of symmetric $g\times g$ matrices, $\Lambda_B$
is the span of the rows (or columns) of $B$, and so
defines a finite index sublattice of $\bfN\simeq \bZ^g$.

\begin{definition}
$X_{\rm trop}(B) \coloneqq  \bfN_\bR/\Lambda_B$ 
is the {\it tropical abelian variety} 
associated to $B$.
\end{definition}

\begin{definition}\label{tiling}
A (resp.~$\bQ$-){\it tiling} 
of $X_{\rm trop}(B)=\bfN_\bR/\Lambda_B$
is a decomposition into convex polytopes with 
integer (resp.~rational) vertices,
or equivalently, a $\Lambda_B$-periodic 
(resp.~$\bQ$-)polytopal tesselation of $\bfN_\bR$.
A {\it complete triangulation} of $X_{\rm trop}(B)$ 
is a tiling, all of
whose polytopes are lattice simplices 
of minimal volume $(1/g!)$. 
\end{definition}

\begin{construction}[$1$-parameter case]\label{mumford} 
Let $B\in {\rm Sym}^2 \bfM^\vee$ be 
positive-definite and let 
$\cT$ be a $\bQ$-tiling of $X_{\rm trop}(B)$.
We define the $1$-parameter
Mumford degeneration associated to $\cT$. 

Embed $\bfN_\bR\simeq  \bfN_\bR\times \{1\}
\hookrightarrow \bfN_\bR \times \bR$ 
as an affine hyperplane at height $1$ 
in a space of one dimension higher. 
Then, the cone over the tiling 
${\rm Cone}(\cT)$ defines a rational 
polyhedral fan in $\bfN_\bR \times \bR \simeq \bR^{g+1}$. 
See Figure \ref{fig:tate6}
for an example of a tiling
$\cT$ of $\bR^2/\Lambda_B$, 
where $$B=\twobytwo{4}{1}{1}{3}.$$ 

Let 
$Y({\rm Cone}(\cT))$ denote the corresponding
infinite type toric variety. The action of 
$\Lambda_B$ by translations on $\cT$ lifts to a 
linear action $\Lambda_B\hookrightarrow \GL_{g+1}(\bZ)$
on $\bfN_\bR\times \bR$ 
acting on the fan ${\rm Cone}(\cT)$, 
and hence induces an action of $\Lambda_B$ 
on $Y({\rm Cone}(\cT))$ by automorphisms.

Observe that the height function $\bfN_\bR \times \bR
\to \bR$, given by
projecting to the final coordinate,
defines a morphism of fans 
${\rm Cone}(\cT)\to \bR_{\geq 0}$ to the fan of $\bC$
(which is simply the positive ray in $\bR$). 
Hence, there is an induced
map of toric varieties 
$$Y({\rm Cone}(\cT))\to Y(\bR_{\geq 0})=\bC.$$ 
Since the action of $\Lambda_B$ preserves the height function, 
this morphism descends to the quotient 
$\Lambda_B\backslash Y({\rm Cone}(\cT))\to \bC$, though
this full quotient is poorly behaved. Let $u$
be the monomial coordinate about $0\in \bC$. 
By standard toric geometry, we have:
\begin{enumerate}
    \item The fiber of $Y({\rm Cone}(\cT))\to \bC$ over 
    $u\in \bC^*=\bC\setminus \{0\}$ is 
    $\bfN\otimes \bC^*\simeq (\bC^*)^g$, and the fiber
    over $u=0\in \bC$ (i.e.~the toric boundary) 
    is an infinite  quilt of complete toric varieties,
    whose dual complex is the original tiling $\cT$. 
    \smallskip
    \item $\Lambda_B$  acts on the dual complex $\cT$
    of the toric boundary by translation, and 
    acts on the fiber over $u\in \bC^*$ by translations by 
    the rank $g$ subgroup 
    $u^B\coloneqq \Lambda_B\otimes u\subset \bfN\otimes \bC^*$ 
    which in coordinates is the subgroup
$$\langle (u^{B_{11}},\dots, u^{B_{1g}}), \,\dots,\,  (u^{B_{g1}},\dots, u^{B_{gg}})\rangle\subset (\bC^*)^g,
\quad B = (B_{ij})\in {\rm Sym}_{g\times g}(\bZ).$$
\end{enumerate}

Then, since $B$ is positive-definite, 
$u^B\subset \bfN\otimes \bC^*$ is a discrete
subgroup for all $u\in \Delta^*$.
Hence the action of $\bfM\simeq 
\Lambda_B$ is properly 
discontinuous over the unit disk 
$\Delta$, because
the action is clearly properly discontinuous
over $u=0$ since $\bfM$ acts freely on the tiles of $\cT$. 
Setting $X({\rm Cone}(\cT))\coloneqq \Lambda_B\backslash 
Y({\rm Cone}(\cT))^{u\in \Delta}$, we get
a proper, complex-analytic degeneration 
$$f\colon X({\rm Cone}(\cT))\to \Delta$$
of complex tori, called the {\it standard $1$-parameter
Mumford degeneration} associated to the $\bQ$-tiling
$\cT$ of $X_{\rm trop}(B)=\bfN_\bR/\Lambda_B$.

The general fiber is principally polarized,
by a symplectic form $L$ defined as follows: 
Let $u \in \Delta^\ast$. 
Then, noting that we have a canonical isomorphism
$H_1(\bfN\otimes \bC^*,\bZ)\simeq \bfN$, we 
have a canonical exact sequence
\begin{align}\label{weight-exact} 
0\to \bfN\to H_1(X_u,\bZ)\xrightarrow{\sigma} 
\bfM\to 0\end{align}
induced by the long exact sequence of homotopy 
groups associated to the fibration 
$$u^B \hookrightarrow \bfN \otimes \bC^\ast 
\to \bfN \otimes \bC^\ast/u^B = X_u.$$ Here we use that we 
have canonical isomorphisms 
$u^B \simeq \Lambda_B \simeq \bfM$.
Then, we may define a unimodular
symplectic form on $H_1(X_u,\bZ)$ by
choosing a splitting of $\sigma$ and then
using the canonical pairing between
$\bfM$ and $\bfN$. The splitting of $\sigma$
we choose is specified by a 
choice of logarithm $2 \pi i \tau = \log u$.  
Such a choice
gives 
a presentation 
$$X_u
=
\bfN_\bC/(\bfN\oplus \tau N(\bfM))
\simeq 
\bC^g/(\bZ^g\oplus \tau B ( \bZ^g))
;$$
the resulting
symplectic form $L$ is then independent 
of the choice of logarithm $\tau$ 
because the transformation 
$\tau\mapsto \tau+1$
defines a symplectomorphism of 
$H_1(X_u,\bZ)$. Concretely, the symplectic form $L$
is given as $L(({\bf n},{\bf m}), ({\bf n}',{\bf m}'))= 
{\bf m}'({\bf n})-{\bf m}({\bf n}')$, for 
$({\bf n},{\bf m}), ({\bf n}', {\bf m}') 
\in \bfM \oplus \bf N$. 

More generally, given a symmetric $g\times g$ matrix
$a=(a_{ij})\in {\rm Sym}_{g\times g}(\bC^*)$, we may 
perform the same construction, but instead quotient
by the subgroup
$$au^B\coloneqq \langle (a_{11}u^{B_{11}},\dots, a_{1g}u^{B_{1g}}), 
\,\dots,\, 
(a_{g1}u^{B_{g1}},\dots, a_{gg}u^{B_{gg}})\rangle.$$
We must take $|u|$ small enough that
$-B\log |u|-\log|a_{ij}|>0$.
Then this construction may be performed relatively,
choosing $a$ in
some subtorus of ${\rm Sym}_{g\times g}(\bC^*)$ 
forming coset representatives 
for the natural action of $u\in \bC^*$. 
We get an analytic degeneration
$$f^{\rm univ}_\circ \colon X^{\rm univ}_\circ ({\rm Cone}(\cT))
\to \Delta^{\rm univ}$$ 
where $\Delta^{\rm univ} \to 
{\rm Sym}_{g\times g}(\bC^*)/\bC^*$ is a holomorphic disk
bundle over ${\rm Sym}_{g\times g}(\bC^*)/\bC^*$, 
such that $f^{\rm univ}_\circ$ 
restricts to a family of PPAVs over the punctured 
disc bundle 
$(\Delta^\ast)^{\rm{univ}}
\subset \Delta^{\rm{univ}}$. 

In terms of the toroidal
extensions of Section \ref{sec:toroidal-extensions},
$\Delta^{\rm univ}$
maps to the toroidal extension $\cA_g^{\fF}$ 
whose fan is (the orbit of) a single
ray $\fF=\GL_g(\bZ)\cdot \bR_{\geq 0}B$. 
So every $1$-parameter
family
of PPAVs with monodromy
$B$ admits an extension pulled back from 
$f^{\rm univ}_\circ$
along an arc transverse to the boundary divisor
$\{0\}\times 
{\rm Sym}_{g\times g}(\bC^*)/\bC^*=
\Delta^{\rm univ}\setminus (\Delta^*)^{\rm univ}$.
\hfill $\clubsuit$ 
\end{construction}

\begin{remark}\label{non-reduced}
Allowing the tiling
$\cT$ to have strictly rational vertices corresponds
to allowing the components of the central
fiber to have non-reduced components.
More precisely, the irreducible components 
$V_i\subset X_0({\rm Cone}(\cT))\coloneqq f^{-1}(0)$ 
are in bijection with the $0$-cells 
$v_i\in \cT$, 
and the multiplicity $d_i$ of $V_i$ 
is the smallest positive integer
for which $d_iv_i\in \bfN$.
\end{remark}

\begin{remark}\label{kulikov} When $\cT$ has 
integral vertices, the total space $X({\rm Cone}(\cT))$ or 
$X^{\rm univ}_\circ({\rm Cone}(\cT))$ of the Mumford 
degeneration is smooth if 
and only if $\cT$ is a complete triangulation, 
i.e.~$\cT$ is a tiling by standard lattice simplices.
This condition ensures that ${\rm Cone}(\cT)$ 
is a regular fan---its cones are 
all standard affine cones.
Then $X({\rm Cone}(\cT))\to \Delta$ is a 
semistable, 
$K$-trivial degeneration (with smooth total space),
see Proposition \ref{k-triv} below. 
These are sometimes called Kulikov models,
in analogy to K3 degenerations. 
\end{remark}

\begin{remark} \label{imprimitive}
By passing to an 
intermediate cover $\cH_g\to \widetilde{\cA}_g\to \cA_g$
as in Remark \ref{monodromy-fit}, we may assume 
that the Levi quotient 
$\Gamma\subset \GL_g(\bZ)$ 
of the parabolic stabilizer 
at some Baily--Borel $0$-cusp of $\widetilde{\cA}_g$
acts freely on $\cP_g$.
Then, when $B$ is primitive,
the boundary divisor of 
$\widetilde{\cA}_g^{\,\bR_{\geq 0}B}$
is isomorphic to the discriminant 
divisor of $f^{\rm univ}_\circ$, rather
than a further finite quotient of it. In this case, the base
of $f^{\rm univ}_\circ$ glues onto $\widetilde{\cA}_g$ and thus,
we may extend 
$f^{\rm univ}_\circ$ to a family 
$f^{\rm univ}\colon X^{\rm univ}({\rm Cone}(\cT))
\to \widetilde{\cA}_g^{\,\bR_{\geq 0}B}$
extending the universal family
over $\widetilde{\cA}_g$.
A priori,
this extension exists
only in the category of analytic spaces. 
But it is always an algebraic space
(Prop.~\ref{algebraicity}),
and for certain choices of tiling $\cT$, 
we may ensure it is relatively projective,
see Section \ref{sec:polytope}.

When $B$ is not primitive, the base $\Delta^{\rm univ}$
of $f^{\rm univ}_\circ$
rather maps to $\widetilde{\cA}_g^{\,\bR_{\geq 0}B}$
by a map ramified over the toroidal boundary divisor,
to order the imprimitivity of $B$.
\end{remark}

\begin{proposition}\label{monodromy-1-param-mumford} 
The standard $1$-parameter
Mumford degeneration $f\colon
X({\rm Cone}(\cT))\to \Delta$
corresponding to a tiling $\cT$ of 
$\bfN_\bR/\Lambda_B$ has monodromy invariant $B$.
The period map on the universal cover of $\Delta^*$ 
is the nilpotent orbit through the origin: 
The maps $$\widetilde \Phi, \widetilde \Phi_{\rm{nilp}} \colon \bH \to {\rm Sym}_{g \times g}(\bZ) \backslash \cH_g $$ 
satisfy
$\widetilde{\Phi}(\tau)=
\widetilde{\Phi}_{\rm nilp}(\tau)=\tau B,$
where $u=e^{2\pi i \tau}$ is the coordinate
on $\Delta^*$. More generally, the period map
on the restriction of $f^{\rm univ}_\circ$ to 
the universal cover of
$(\Delta^*)^{\rm univ}\vert_{\{a\}}$ 
for $a\in {\rm Sym}_{g\times g}(\bC^*)$,
is given by
$\widetilde{\Phi}(\tau)=
\widetilde{\Phi}_{\rm nilp}(\tau) 
= \tfrac{1}{2\pi i}(\log a_{ij}) +\tau B$.
\end{proposition}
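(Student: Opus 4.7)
The plan is to compute both claims directly from the explicit description $X_u = \bfN_\bC/(\bfN \oplus \tau N(\bfM))$ of the fiber over $u = e^{2\pi i\tau}$, unwinding the identifications built into the construction.

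First, I would fix the choice of logarithm $\tau$ of $u$, which determines the splitting of the exact sequence $0 \to \bfN \to H_1(X_u,\bZ) \to \bfM \to 0$ by sending ${\bf m}\in \bfM$ to the class of $\tau N({\bf m})\in \bfN_\bC$. Using this splitting, one obtains a basis of $H_1(X_u,\bZ)$ consisting of the standard generators $e_i$ of $\bfN$ together with the lifts $f_j \coloneqq \tau N(f_j^\vee)$ of a dual basis $f_j^\vee$ of $\bfM$. In this basis, I would verify directly from the formula $L(({\bf n},{\bf m}),({\bf n}',{\bf m}')) = {\bf m}'({\bf n}) - {\bf m}({\bf n}')$ that $(e_i, f_j)$ is a standard symplectic basis, and that $H^{-1,0}\subset H_1(X_u,\bC)$—identified with the kernel of the projection to the holomorphic tangent space $\bfN_\bC$ modulo the lattice—is the span of the columns of the $2g\times g$ block matrix $\bigl(\begin{smallmatrix}\tau B\\ I\end{smallmatrix}\bigr)$. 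Comparing with the normal form in the discussion after Definition \ref{def:siegel}, this shows $\widetilde{\Phi}(\tau) = \tau B \in \cH_g$.

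Next, for the monodromy, I would observe that the splitting depends on $\tau$: the lift $f_j = \tau N(f_j^\vee)$ is replaced by $(\tau+1) N(f_j^\vee) = f_j + N(f_j^\vee) = f_j + \sum_i B_{ij} e_i$ when $\tau$ is replaced by $\tau + 1$. Hence in the chosen basis, the monodromy transformation $T$ acts as $\bigl(\begin{smallmatrix}I & B\\ 0 & I\end{smallmatrix}\bigr)$, so that $N = T - I$ satisfies $N(f_j) = \sum_i B_{ij} e_i$ and $N(e_i) = 0$. Substituting into $L(N(-),-)$ recovers the bilinear form $B$ via the symplectic pairing formula, which verifies the monodromy invariant is indeed $B$ in the sense of Definition \ref{def:Bi}. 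Since $\widetilde{\Phi}(\tau) = \tau B$ is linear in $\tau$, we get $\Psi(\tau) = \widetilde{\Phi}(\tau) - \tau B \equiv 0$, so in particular $\Psi(0) = 0$ and $\widetilde{\Phi}_{\rm nilp}(\tau) = \Psi(0) + \tau B = \tau B = \widetilde{\Phi}(\tau)$, confirming that the period mapping coincides with its nilpotent orbit through the origin of the unipotent partial quotient.

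Finally, for the universal version, essentially the same computation applies to the fiber $\bfN\otimes \bC^*/a u^B$: with $a_{ij} = \exp(2\pi i \,\alpha_{ij})$ (so that $\log a_{ij} = 2\pi i\,\alpha_{ij}$, absorbing the conventional factor), the lattice becomes $\bfN \oplus (\alpha + \tau B)(\bfM)$ inside $\bfN_\bC$, and the same symplectic basis computation yields the period matrix $\alpha + \tau B$, i.e.\ the formula in the Siegel coordinates. The only mild subtlety I anticipate is bookkeeping: keeping the $\bfN/\bfM$ duality and the sign/factor conventions of Definition \ref{def:Bi} and the description of $\cH_g$ consistent when identifying $H^{-1,0}$ with a subspace of $\bfN_\bC \oplus \bfM_\bC$—but once a basis is fixed, everything becomes a direct matrix computation and no genuine obstacle remains.
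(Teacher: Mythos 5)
Your argument is correct and follows the same basic strategy as the paper's proof: identify the fiber of the Mumford degeneration over $u = e^{2\pi i\tau}$ with the abelian variety whose normalized period matrix is $\tau B$, and observe that linearity in $\tau$ forces $\Psi\equiv 0$ and hence $\widetilde{\Phi}=\widetilde{\Phi}_{\rm nilp}$. The paper compresses the first step by invoking the coordinatewise exponential $E$: the fiber of $\cX_g\to\cA_g$ over $\sigma$ is $(\bC^*)^g/\langle\textrm{rows of }E(\sigma)\rangle$, and $E(\tau B)$ is visibly the matrix $(u^{B_{ij}})$ of generators of $u^B$, so the comparison is immediate on the multiplicative side; you instead work on the additive side with the lattice $\bfN\oplus\tau N(\bfM)$ and a chosen symplectic basis. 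Both routes are valid, and your explicit derivation of the monodromy matrix $T=\bigl(\begin{smallmatrix}I&B\\0&I\end{smallmatrix}\bigr)$ is a welcome redundancy (it already follows from the equivariance $\widetilde{\Phi}(\tau+1)=\widetilde{\Phi}(\tau)+B$ once $\widetilde{\Phi}(\tau)=\tau B$ is established); the one place to be careful, as you anticipate, is the sign convention relating $H^{-1,0}$ to the kernel of the projection to the holomorphic tangent space, which only affects $\sigma$ up to sign and therefore is harmless once one fixes a choice with ${\rm Im}\,\sigma>0$.
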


\begin{proof} On the one hand, the coordinatewise 
exponential $E(\tau B)$ is given by \begin{align}\label{mono-eq}
	\begin{pmatrix}
		\exp(2\pi i B_{11}\tau) & \cdots & \exp(2\pi i B_{1g}\tau) \\
		\cdots & & \cdots \\
		\exp(2\pi i B_{g1}\tau) & \cdots & \exp(2\pi i B_{gg}\tau) \\
	\end{pmatrix}=
	\begin{pmatrix}
		u^{B_{11}} & \cdots & u^{B_{1g}} \\
		\cdots & & \cdots \\
		u^{B_{g1}} & \cdots & u^{B_{gg}} \\
	\end{pmatrix}.\end{align}
 On the other hand, the fiber of the universal family
 $\cX_g\to \cA_g$ over the period matrix $\sigma \in \cH_g$
 is simply the complex torus 
 $\bC^g/(\bZ^g\oplus \bZ^g\sigma) \simeq (\bC^*)^g/
 \langle \textrm{rows of } E(\sigma)\rangle$ 
 so the proposition follows.\end{proof}

\begin{construction}[$k$-parameter case]\label{mumford-multi-fan} 
We now extend Construction \ref{mumford} to the multivariable
setting. Consider a collection of positive semi-definite
bilinear forms $B_i\in {\rm Sym}^2\bfM^\vee$ for $i=1,\dots,k$
for which $\sum_{i=1}^k B_i$ is positive-definite.
These bilinear forms define 
a collection of symmetric homomorphisms
$N_i\colon \bfM\to \bfN$. 

We consider the quotient of $\bfN\otimes \bC^*\simeq (\bC^*)^g$ 
 by the subgroup \begin{align*} u_1^{B_1}\cdots u_k^{B_k}\coloneqq 
 \langle &(u_1^{(B_1)_{11}}\cdots u_k^{(B_r)_{11}} , \dots, 
 u_1^{(B_1)_{1g}}\cdots u_k^{(B_k)_{1g}} ), \,\dots \,, \\
 &(u_1^{(B_1)_{g1}}\cdots u_k^{(B_k)_{g1}} , \dots, 
 u_1^{(B_1)_{gg}}\cdots u_k^{(B_k)_{gg}} )\rangle\simeq \bZ^g.\end{align*} 
If $|u_i|<1$ for all $i$, the resulting action of 
$u_1^{B_1}\cdots u_k^{B_k}$ on $(\bC^*)^g$ is 
properly discontinuous. Thus,
the quotient is a fibration 
\begin{align}\label{align:fibration:monodromy}f^\ast\colon X^*(B_1,\dots,B_k) \to (\Delta^*)^k
\end{align}
of PPAVs, over a punctured polydisk, 
where the base has coordinates $u_i$. 

To define
an extension over $\Delta^k$, we require a fan $\cS$
inside of $\bfN_\bR\times \bR^k\simeq \bR^{g+k}$ 
which is $\bfM$-periodic
for an action respecting the projection to $\bR^k$. 
More precisely, declare ${\bf m}\in \bfM$
to act linearly on $\bfN_\bR \times \bR^{k}$ 
by 
\begin{align}\label{fan-action} 
({\bf n},\,\vec{r}) \mapsto ({\bf n} + 
(\vec{r}\cdot \vec{N})({\bf m}), \,\vec{r})
\in \bfN_\bR \times \bR^k,
\end{align}
where $\vec{r}\cdot \vec{N}\coloneqq r_1N_1+\cdots+r_kN_k$
and $N_i\colon \bfM\to \bfN$ 
are the symmetric homomorphisms
associated to $B_i\in {\rm Sym}^2\bfM^\vee$ as above. 

The fan $\cS$ must then be $\bfM$-periodic
(with respect to the action (\ref{fan-action})) 
and the projection
to $\bR^k$ must induce a morphism of fans to
$(\bR_{\geq 0})^k$, 
such that
${\rm Supp}(\cS)$ contains $\bR^g\times (1,\dots,1)$.
Furthermore, we usually require that the fan
morphism 
$\cS\to (\bR_{\geq 0})^k$ is {\it flat}, 
that is, the image 
of any cone of $\cS$ intersected
with $\bfN\times \bZ^k$ is a
cone of $(\bR_{\geq 0})^k$ intersected
with $\bZ^k$.

Then the multivariable Mumford construction is the result
of quotienting by $\bfM$ the inverse 
image $Y(\cS)^{u\in \Delta^k}$ of $\Delta^k$ in
the infinite type toric variety $Y(\cS)$. 
We call this quotient
\begin{align}\label{mumford:fan}
f \colon X(\cS)\to \Delta^k.\end{align}
It is a proper, analytic,
flat extension of 
$f^\ast\colon  X^*(B_1,\dots,B_k) \to (\Delta^*)^k$, 
with flatness guaranteed by the
flatness of the fan map. 
As in Construction \ref{mumford}, 
the fibration $\bfM 
= u_1^{B_1} \cdots u_k^{B_k} \hookrightarrow 
\bfN \otimes \bC^\ast \to X_u \coloneqq f^{-1}(u)$ 
defines an exact sequence
\begin{align} \label{NH1M}0\to 
\bfN\to H_1(X_u,\bZ)\xrightarrow{\sigma} 
\bfM\to 0, \quad \quad  u = (u_1, \dotsc, u_k)
\in (\Delta^\ast)^k,\end{align}
and by choosing a section of $\sigma$ by taking
logarithms of $u_i$, the canonical 
pairing between $\bfM$ and $\bfN$ induces a 
well-defined principal polarization on $X_u$. 

Over the co-character $\Delta\to \Delta^k$ defined
by $u\mapsto (u^{r_1},\dots,u^{r_k})$, the construction
specializes to the $1$-variable Mumford Construction \ref{mumford}
associated to $B=r_1B_1+\cdots +r_k B_k$ and the relevant
fan is the restriction of $\cS$ to the inverse image
of $\bR_{\geq 0}\vec{r}\subset (\bR_{\geq 0})^k$; here
$\cS\vert_{\bR_{\geq 0}\vec{r}}\simeq {\rm Cone}(\cT)$
for a tiling $\cT$ depending on $\vec{r}$.
In particular, 
by Proposition \ref{monodromy-1-param-mumford},
the monodromy cone 
(see Definition \ref{monodromy-cone}) 
of the degeneration 
$f^\ast\colon  X^*(B_1,\dots,B_k) \to (\Delta^*)^k$ 
is given by $\bB \coloneqq \bR_{\geq 0}\{B_1,\dots,B_k\}$.

More generally, we may, as in Construction \ref{mumford},
perform the multivariable construction 
relatively over the torus 
$({\rm Sym}_{g\times g}(\bZ)/
\langle \bB\rangle) \otimes \bC^*$ 
by twisting the $\bfM$-action by some elements 
$a=(a_{ij})\in {\rm Sym}_{g\times g}(\bC^*)$.
Here, we have quotiented by 
$\langle \bB\rangle\otimes  \bC^*$, 
so as to reduce redundant 
moduli of the general fiber as much as possible.
We denote the resulting fibration
by $$f^{\rm univ}_\circ\colon
X^{\rm univ}_\circ(\cS)\to (\Delta^k)^{\rm univ}$$
where $(\Delta^k)^{\rm univ}\to 
({\rm Sym}_{g\times g}(\bZ)/
\langle \bB\rangle) \otimes \bC^*$ is a polydisk
bundle with $$\dim (\Delta^k)^{\rm univ} = g(g+1)/2+k-\dim \langle \bB\rangle.$$ Note that
$f^{\rm univ}_\circ \colon X^{\rm univ}_\circ(\cS)\to (\Delta^k)^{\rm univ}$ is a locally trivial
deformation of $f \colon X(\cS)\to \Delta^k$. Indeed, as analytic germs about the zero section
of $(\Delta^k)^{\rm univ}$,
the universal cover of the former 
is the product  of the universal cover of the latter
with the torus of 
twists $({\rm Sym}_{g\times g}(\bZ)/
\langle \bB\rangle) \otimes \bC^*$.
\hfill $\clubsuit$ 
\end{construction}

\begin{remark}\label{section-location} 
In terms of fans, the (possibly rational) 
origin section of $X(\cS)\to \Delta^k$ 
is declared to be the image of the 
subtorus whose cocharacter lattice is
$\{0\}\times \bZ^k\subset \bfN\times \bZ^k$. \end{remark}

\begin{remark}
Let $B_1, \dotsc, B_k \in {\rm Sym}_{g \times g}(\bZ)$ 
and let $f^\ast\colon X^*(B_1,\dots,B_k) \to (\Delta^*)^k$ 
be the family of $g$-dimensional PPAVs defined in 
\eqref{align:fibration:monodromy}, 
with fiber $X_t$, $t \in (\Delta^\ast)^k$. 
Then, as we have seen  
above, 
the monodromy bilinear form about $\set{u_i=0}$ (Def.~\ref{def:Bi}) equals $B_i$ 
for a suitable symplectic basis of $H_1(X_t,\bZ)$, 
cf.~Corollary \ref{corollary:monodromyrealization}.
\end{remark}

\begin{construction}[multiparameter case, cone of a fan for $\cA_g$]
\label{singular-base}
Suppose that the cone $$\bB\coloneqq \bR_{\geq 0}\{B_1,
\cdots, B_k\}\subset  \cP_g^+$$
is not standard affine, or not even simplicial.
Recall from Remark \ref{monodromy-fit} that
for some \'etale cover 
$\widetilde{\cA}_g\to \cA_g$, we have a
toroidal extension
$\widetilde{\cA}_g\hookrightarrow \widetilde{\cA}_g^{\,\bB}$
whose monodromy
cone is $\bB$.
For an appropriate choice of fan $\cS$,
Construction \ref{mumford-multi-fan} gives
a degeneration of abelian varieties over 
a $k$-dimensional polydisk $X(\cS)\to 
\Delta^k\subset Y(\bR_{\geq 0}^k)$.
In fact, we may descend 
the construction to an analytic open
neighborhood of the torus fixed point in the affine toric
variety $Y_{\bR \bB}(\bB)$, 
when $\cS$ is supported
rather in the vector space $\bfN_\bR\times \bR\bB$, and 
periodic with respect to the same action (\ref{fan-action}).
Here, the subscript $\bR \bB$ of $Y_{\bR\bB}$ 
refers to the fact that 
we take the toric variety of the polyhedral cone 
$\bB\subset \bR\bB$, sitting
inside the vector space $\bR \bB$, rather than
inside ${\rm Sym}_{g\times g}(\bR)$.

Taking the universal twist,
we produce a degeneration
$$X^{\rm univ}_\circ(\cS)\to T(\bB)\subset Y(\bB)$$ 
where now $Y(\bB)$ is the toric
variety of the polyhedral cone 
$\bB\subset {\rm Sym}_{g\times g}(\bR)$
and $T(\bB)$ is an analytic tubular neighborhood
of the deepest toric boundary stratum.
Then, following Section \ref{sec:toroidal-extensions},
we may analytically glue $T(\bB)$ along the complement $T^*(\bB)$
of the toric boundary to $\widetilde{\cA}_g$.
Taking
the gluing to respect the zero sections, we produce a degeneration
$$X^{\rm univ}_+(\cS)\to 
\widetilde{\cA}_g\cup_{T^*(\bB)} T(\bB)\eqqcolon \widetilde{\cA}_g^+\subset \widetilde{\cA}_g^{\,\bB}$$
which
analytically extends
the universal family 
$\widetilde{\cX}_g\to \widetilde{\cA}_g$.

One may unify 
the set-up of toroidal 
extensions of $\cA_g$ laid out in Section
\ref{sec:toroidal-extensions}, with the toroidal
construction of $X(\cS)$, by 
viewing $\cS$ as a fan
$\cS^{\rm univ}$ in
the larger space $\bR^g\oplus \cP_g^+ \subset  \bR^g
\oplus {\rm Sym}_{g\times g}(\bR)$ admitting
a morphism of fans $\cS^{\rm univ}\to \bB$ via projection to the second factor. 
Namikawa was the first to introduce
such ``mixed cone decompositions'',
see \cite[Sec.~3]{namikawa76} and 
\cite[Sec.~9]{namikawabook} 
(though in these texts,
the focus is one particular Mumford construction,
similar to Construction \ref{voronoi-mumford}).

The family $X^{\rm univ}_+(\cS)$ 
further extends
to a proper, flat family
\begin{align}\label{funiv}f^{\rm univ}\colon X^{\rm univ}(\cS)
\to \widetilde{\cA}_g^{\,\bB}\end{align}
surjecting onto the base $\widetilde{\cA}_g^{\,\bB}$. 
Indeed, this follows from \cite[Ch.~VI.1]{fc}. Defining $\cS^{\rm univ}$ as above, and
taking $T_{\rm max}(\bB)\subset Y(\bB)$ 
as the maximal analytic open neighborhood
of the toric boundary over 
which the action of $\bfM$ on the inverse 
image of $T_{\rm max}(\bB)$ in $Y(\cS^{\rm univ})$
is properly discontinuous, the base $T_{\rm max}(\bB)$ 
surjects onto the toroidal extension
$\widetilde{\cA}_g^{\,\bB}$. Concretely,
$T_{\rm max}(\bB)$ is the relative closure
of $U_\bZ\backslash \cH_g\subset 
{\rm Sym}_{g\times g}(\bC^*)$,
see (\ref{torus-embedding}), in the toric
variety $Y(\bB)$.
Then, 
the family descends from a partial 
uniformization of toroidal strata 
fibering over Baily--Borel strata
of intermediate dimension, to yield \eqref{funiv}.

The precise relation to Faltings--Chai 
\cite[Ch.~VI.1]{fc} is that the fan 
$\cS^{\rm univ}$
is an instance of a 
``$\GL(X)\ltimes X^s$-admissible 
polyhedral cone decomposition''
(for $s=1$) in the terminology of 
{\it loc.cit.}~up to the following
three 
minor modifications: We do not
demand as in \cite[Ch.~VI, Def.1.3.(iii)]{fc}
that $\cS^{\rm univ}$ 
defines 
a complete fan 
(i.e.~a compactification of $\cX_g$),
when working on an \'etale cover
$\widetilde{\cA}_g\to \cA_g$ we only demand
admissibility for a finite index subgroup,
and we do not require the existence of a 
polarization function, 
see \cite[Ch.~VI.1, Rem.~1.6.(a)]{fc}.
The constructions used to prove
\cite[Ch.~VI.1, Thm.~1.13]{fc} generalize 
to this setting, to yield an extension $\widetilde{\cX}_g^{\,\fG}\to 
\widetilde{\cA}_g^{\,\fF}$ 
of the universal family, for any morphism
$\fG\to \fF$ from a mixed cone decomposition
(for $\widetilde{\cX}_g$) 
to a cone decomposition (for $\widetilde{\cA}_g$).
The family \eqref{funiv} is a special case. 

It is crucial
for our applications in \cite{companion}
to achieve both a smooth total 
space {\it and} flatness
over $\widetilde{\cA}_g^{\,\bB}$ 
(see Proposition \ref{X-smooth} and Theorem \ref{theorem:extension}). 
As mentioned in 
\cite[Ch.~VI, Def.~1.3.(v), Rem.~1.4]{fc},
achieving both of these properties is a 
hard combinatorial problem, closely related
to the main result of \cite{alt}; in general,
it is only possible after modifying the base 
$\widetilde{\cA}_g^{\,\bB}$. 

As additional 
historical notes, the first example of a complete
fan for $\cX_g$ (equidimensional but
not regular) was provided by Namikawa,
see e.g.~\cite[Sec.~13, Prop.~13.5, Thm.~13.6]{namikawa76}. The generalization
of Faltings--Chai to the more general
setting of mixed Shimura varieties 
is Pink's dissertation, see especially
\cite[Ex.~2.25, Ch.~6, Ch.~10]{pink}
for discussion relevant to $\cX_g$.
\hfill $\clubsuit$ 
\end{construction}

As an example of the above construction,
the Tate curve extends the family of elliptic curves
$\bC^*/u^\bZ$ over the unit
disk $\Delta_u = \{u\in \bC\,:\,|u|<1\}$, see Example \ref{ex:tate}.
The maximally extended base, on which $u^\bZ$
acts properly discontinuously, is 
$\Delta_u \supset 
\Delta_u^\ast\simeq \bZ\backslash \bH$. 
 The family over 
$\Delta_u^\ast$ descends
(as an orbifold)
 along the infinite degree surjection $\bZ\backslash \bH\to \SL_2(\bZ)\backslash \bH$, to the universal family over 
the orbifold $\cA_1 = \SL_2(\bZ)\backslash \bH$.
Only the punctured disc $\Delta_u(e^{-2\pi})^*$ 
of the smaller analytic disk 
$\Delta_u(e^{-2\pi}) \coloneqq 
\{u\in \bC\,:\,|u|<e^{-2\pi}\}\subset \Delta_u$
embeds, on the level of coarse spaces,
into $\SL_2(\bZ)\backslash \bH$ 
(as stacks, the degree 
of the map from $\Delta_u(e^{-2\pi})^*$
onto its image is,
rather, equal to two,
because of the $\bZ/2$-gerbe on the target). 

\begin{remark}\label{casual}
An additional subtlety is that, to glue
$f^{\rm univ}$ to the universal family, requires
in general, a cover of $\widetilde{\cA}_g^{\,\bB}$ 
which is \'etale in the punctured neighborhood
of the boundary, but branched over the boundary divisors.
For instance, when $\bB = \bR_{\geq 0}B$ is a ray,
this branched cover is necessary if and only if
$B$ is imprimitive (Rem.~\ref{imprimitive}). 
Such a branched cover of 
$\widetilde{\cA}_g^{\,\bB}$ is guaranteed to exist
in an affine open neighborhood of the 
deepest toroidal stratum, as shown
in Proposition \ref{monodromyrealization}.
But it is unclear, in general, whether there  
exists a {\it global} \'etale cover of 
$\widetilde{\cA}_g$ achieving the desired branching
behavior. For example, supposing 
the monodromy cone were of the form
$\bB = \bR_{\geq 0}\{3B_1, 5B_2\}$ 
for $B_1, B_2$ primitive---is then the local, toroidal 
branched cover of $\widetilde{\cA}_g^{\,\bB}$ which
is branched to orders $3$, $5$ over the two toric
boundary divisors $\partial_1,\partial_2\subset \widetilde{\cA}_g^{\,\bB}$ induced by
passing to a further finite index
subgroup of $\Sp_{2g}(\bZ)$?

Regardless, this finite cover does 
exist in an
affine open neighborhood of the relevant 
boundary stratum. By a slight 
abuse of notation, we continue
to notate the resulting cover and
its toroidal extension
by $\widetilde{\cA}_g$ and $\widetilde{\cA}_g^{\,\bB}$,
even though $\widetilde{\cA}_g$ is only a finite \'etale
over a Zariski open subset of $\cA_g$.
\end{remark}

To summarize Constructions \ref{mumford-multi-fan} and
\ref{singular-base}:

\begin{proposition}\label{extension-prop}
Let $\cS$ be a fan in 
$\bfN_\bR\times \bR\bB$ 
satisfying the properties described in Constructions
\ref{mumford-multi-fan} and \ref{singular-base}. 
In particular, $\cS$ is 
$\bfM$-invariant under the action (\ref{fan-action}),
for a collection of symmetric
bilinear forms $\{B_1,\dots,B_k\}\subset 
{\rm Sym}^2 \bfM^\vee$ which are the 
rays generating a polyhedral cone in
the space of positive semidefinite bilinear forms on $\bfM$,
with $\sum_{i=1}^k B_i$ positive-definite.

Then Construction \ref{singular-base}
produces a flat, proper extension of 
the universal family over
$\widetilde{\cA}_g$, 
in the category of complex analytic spaces, 
$$X^{\rm univ}(\cS)\to 
\widetilde{\cA}_g^{\,\bB}.$$
\end{proposition}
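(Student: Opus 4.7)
The plan is to combine the two constructions: first build the relevant toric variety from $\cS$, then quotient by the (properly discontinuous) action of $\bfM$, and finally glue the result analytically to $\widetilde{\cA}_g$ along the open locus $\widetilde{\cA}_g \cap T(\bB)$ using the identification of the period map with the nilpotent orbit.

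First, I would form the infinite-type toric variety $Y(\cS)$ associated to the fan $\cS$ in $\bfN_\bR\times \bR\bB$. By construction, the projection to $\bR\bB$ is a morphism of fans $\cS \to \bB$, inducing a toric morphism $Y(\cS) \to Y_{\bR\bB}(\bB)$. Let $T(\bB)\subset Y_{\bR\bB}(\bB)$ be the analytic tubular neighborhood of the deepest toric stratum (whose existence and properties come from Section \ref{sec:toroidal-extensions}); restrict $Y(\cS)$ to the preimage of $T(\bB)$. The condition that $\cS \to \bB$ is flat in the toric sense (every cone maps into a cone) translates to flatness of the restricted morphism of analytic spaces. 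Properness over the base is guaranteed because the $\bfM$-orbit space of $\cS$ over each cone of $\bB$ is finite (this is what the support condition ${\rm Supp}(\cS)\supset \bfN_\bR\times \bR_{>0}\bB$ and $\bfM$-periodicity enforce).

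Second, I would verify that the lift of $\bfM$ to $Y(\cS)$ via the linear action \eqref{fan-action} acts properly discontinuously on the preimage of $T(\bB)$. On the open torus orbit $\bfN\otimes \bC^\ast$ (which is the fiber over a generic point of the open torus orbit of $Y_{\bR\bB}(\bB)$), the $\bfM$-action is translation by the subgroup $u_1^{B_1}\cdots u_k^{B_k}$ with $|u_i|$ small; because $\sum B_i$ is positive-definite, this is a full-rank lattice and the action is properly discontinuous. One must extend this to the boundary strata of $Y(\cS)$ over $T(\bB)$, which follows from the fact that the $\bfM$-stabilizer of any cone in $\cS$ is trivial (by $\bfM$-periodicity of the fan) combined with the tubularity of $T(\bB)$; this is the step that I expect requires the most care, as one must ensure that translates of any compact set in $Y(\cS)|_{T(\bB)}$ meet only finitely many others.

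Third, after taking the quotient $X^{\rm univ}(\cS)\coloneqq \bfM\backslash Y(\cS)|_{T(\bB)}$, flatness and properness descend. The generic fiber over a point of the open torus orbit in $T(\bB)$ is, by Construction \ref{mumford-multi-fan}, a principally polarized abelian variety whose polarized Hodge structure is given by \eqref{NH1M}. By Proposition \ref{monodromy-1-param-mumford} applied along each cocharacter $\bR_{\geq 0}\vec{r}\subset \bB$, the classifying map of the generic fiber coincides with the nilpotent orbit whose logarithm is $\sum \tau_i B_i$ (twisted by the universal parameters $a_{ij}$), which is precisely the local model for the universal PPAV near the relevant boundary stratum of $\widetilde{\cA}_g^{\,\Gamma\cdot\bB}$. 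This identification, on the locus $T^\ast(\bB)\cap \widetilde{\cA}_g$, realizes $X^{\rm univ}(\cS)|_{T^\ast(\bB)}$ as the pullback of the universal abelian variety $\cX_g$.

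Finally, I would glue $X^{\rm univ}(\cS)\to T(\bB)$ to the universal family $\widetilde{\cX}_g \to \widetilde{\cA}_g$ along their common restriction to $T^\ast(\bB)\cap \widetilde{\cA}_g$, exactly as in the construction of $\widetilde{\cA}_g^{\,\Gamma\cdot\bB}$ from Section \ref{sec:toroidal-extensions}. Flatness and properness being analytic-local conditions, they are preserved under this gluing, yielding the advertised extension $X^{\rm univ}(\cS)\to \widetilde{\cA}_g^+$. The hardest technical point is the proper discontinuity of the $\bfM$-action over all of $T(\bB)$, not just over the open torus orbit, as this is where the interplay between $\bfM$-periodicity of the fan, the positive semi-definiteness of the $B_i$'s, and the tubular choice of $T(\bB)$ must be balanced.
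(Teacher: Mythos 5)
Your proposal correctly identifies that this proposition is a summary of Constructions \ref{mumford-multi-fan} and \ref{singular-base}, and you re-trace exactly the steps those constructions take: form $Y(\cS)$, restrict over $T(\bB)$, quotient by $\bfM$, identify the general fiber via \eqref{NH1M} and Proposition \ref{monodromy-1-param-mumford}, and glue to $\widetilde{\cA}_g$ along $T^*(\bB)$ exactly as in Section \ref{sec:toroidal-extensions}. The paper itself offers no separate proof; the constructions \emph{are} the proof, and you have reproduced them faithfully.

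One correction is needed in your Step 2. You assert that the $\bfM$-stabilizer of every cone in $\cS$ is trivial ``by $\bfM$-periodicity of the fan.'' This is not correct: $\bfM$-periodicity only means the action preserves $\cS$, and the stabilizer $\Stab_\bfM(\tau)$ of a cone $\tau \in \cS$ can have positive rank whenever $\tau$ lies over a face of $\bB$ on which the corresponding combination $\sum r_iB_i$ is degenerate --- this is exactly the mechanism producing abelian factors in boundary strata, see Proposition \ref{bending-read}(3). What is actually true, and what you need, is that the action of $\bfM$ on \emph{points} of $Y(\cS)\vert_{T(\bB)}$ is free and properly discontinuous: on the open torus orbit it is translation by the full-rank lattice $au_1^{B_1}\cdots u_k^{B_k}$ for $|u_i|$ small; and even when ${\bf m}$ stabilizes a cone $\tau$, it acts by translation on the corresponding torus orbit of $Y(\tau)$ (producing a compact abelian factor in the quotient) and therefore has no fixed points. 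This corrected argument reaches the same conclusion you want and is the content the constructions implicitly supply.
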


We now examine when a Mumford degeneration
is $K$-trivial:

\begin{proposition}\label{k-triv}
    If $\cS_{(1,\dots,1)}$ is a tiling
    (as opposed to a $\bQ$-tiling) of $\bfN_\bR$,
    then the multivariable Mumford construction
     $f\colon X(\cS)\to \Delta^k$ 
     (see \ref{mumford-multi-fan})
     is $K$-trivial: $K_{X(\cS)}\sim 0$.
\end{proposition}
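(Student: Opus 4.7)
The plan is to produce a nowhere-vanishing section of $K_{X(\cS)}$ by descending the translation-invariant top form on the fiber torus. Since $\Delta^k$ is Stein and $K_{\Delta^k}$ is trivialized by $du_1\wedge\cdots\wedge du_k$, one has $K_{X(\cS)}\cong f^\ast K_{\Delta^k}\otimes \omega_{X(\cS)/\Delta^k}\cong \omega_{X(\cS)/\Delta^k}$, so it suffices to trivialize the relative dualizing sheaf.

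Working first on the cover $Y(\cS)$, let $z_1,\ldots,z_g$ be coordinates on the fiber torus $\bfN\otimes\bC^\ast$ dual to a basis of $\bfN$, and consider $\omega_f\coloneqq d\log z_1\wedge\cdots\wedge d\log z_g$. As a translation-invariant top form on the open torus, $\omega_f$ is $\bfM$-invariant and descends to a rational section of $K_{X(\cS)/\Delta^k}$; the task is to show this section has trivial divisor under the integrality hypothesis on $\cS_{(1,\ldots,1)}$.

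To compute the divisor, observe that the invariant top form on the full ambient $(g+k)$-dimensional torus,
\[ \Omega \coloneqq d\log z_1\wedge\cdots\wedge d\log z_g \wedge d\log u_1\wedge\cdots\wedge d\log u_k, \]
is a rational section of $K_{Y(\cS)}$ with divisor $-\sum_{\rho\in\cS(1)}D_\rho$ by the standard toric formula. Flatness of $\cS\to (\bR_{\geq 0})^k$ ensures that every ray has a primitive generator of the form $(v_\rho,\,c_\rho e_{i_\rho})$ for some $i_\rho\in\{1,\ldots,k\}$ and integer $c_\rho\geq 1$; pairing with the character $(0,(1,\ldots,1))\in\bfM\oplus\bZ^k$ then gives $f^\ast\mathrm{div}(u_1\cdots u_k)=\sum_\rho c_\rho D_\rho$. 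Writing $\Omega=f^\ast(d\log u_1\wedge\cdots\wedge d\log u_k)\wedge \omega_f$, one finds that $\omega_f$, viewed as a section of $K_{Y(\cS)/\bC^k}=K_{Y(\cS)}\otimes f^\ast K_{\bC^k}^{-1}$, has divisor
\[ -\sum_\rho D_\rho + f^\ast\mathrm{div}(u_1\cdots u_k) = \sum_\rho(c_\rho-1)\,D_\rho. \]

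The key remaining point is that integrality of $\cS_{(1,\ldots,1)}$ is equivalent to the condition $c_\rho=1$ for every ray $\rho$---the multiparameter analogue of Remark~\ref{non-reduced}. Indeed, a primitive generator $(v,\,c\,e_i)$ with $c\geq 2$ would produce a non-reduced component of multiplicity $c$ in $f^{-1}V(u_i)$, and via $\bfM$-periodicity of $\cS$ together with inspection of the slice vertices $\sum_i v_{\ell(i)}/c_{\ell(i)}$ arising from each top-dimensional cone of $\cS$, this forces a non-integral vertex of $\cS_{(1,\ldots,1)}$, contradicting the hypothesis. With this step in hand, $\omega_f$ is a nowhere-vanishing section of $K_{Y(\cS)/\bC^k}$, and its $\bfM$-invariance descends it to a trivialization of $K_{X(\cS)/\Delta^k}$, whence $K_{X(\cS)}\sim 0$. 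The main obstacle is making the implication ``integrality of $\cS_{(1,\ldots,1)}$ $\Rightarrow$ $c_\rho=1$ for all $\rho$'' fully rigorous, which requires combining the $\bfM$-periodicity of $\cS$ with a careful case analysis within each top-dimensional cone.
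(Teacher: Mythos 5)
Your explicit computation $K_{Y(\cS)/\bC^k}=\sum_{\rho}(c_\rho-1)D_\rho$, and the reduction to showing $c_\rho=1$ for every ray $\rho$, is exactly the paper's argument, written more transparently: the paper phrases the same reduction as ``the anticanonical toric boundary equals the reduced preimage of $V(u_1\cdots u_k)$, so relative $K$-triviality follows once the schematic preimage equals its reduction.'' Up to that point the two proofs agree, and your version is cleaner.

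However, the step you correctly flag as the main obstacle --- that integrality of $\cS_{(1,\dots,1)}$ forces $c_\rho=1$ --- is a genuine gap in your argument, and it is also a gap in the paper's. The paper invokes Remark~\ref{non-reduced} after restricting to the diagonal arc $u\mapsto(u,\dots,u)$, but the scheme-theoretic base change of $X(\cS)$ along the diagonal is not the one-parameter Mumford construction $X({\rm Cone}(\cS_{(1,\dots,1)}))$ (the latter is only its normalization), so Remark~\ref{non-reduced} does not apply to it as stated. Your sketch via the slice vertices $\sum_\ell v_\ell/c_\ell$ also does not work, since such a sum can be integral while some $c_\ell\geq 2$. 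For a concrete counterexample to the implication, take $g=1$, $k=2$, $\bfN=\bfM=\bZ$, $N_1=N_2=\mathrm{id}$, and let $\cS$ be the $\bfM$-periodic flat fan whose top-dimensional cones are the $\bfM$-translates of $\sigma=\bR_{\geq 0}\{(0,1,0),(0,0,1),(1,2,0),(1,0,2)\}$ and $\sigma'=\bR_{\geq 0}\{(1,2,0),(1,0,2),(1,1,0),(1,0,1)\}$. Then $\cS_{(1,1)}$ is the integral tesselation of $\bR$ by unit intervals, yet the rays $(1,2,0)$ and $(1,0,2)$ have $c_\rho=2$; equivalently, the slice $\cS_{(1,0)}$ has the half-integral vertex $1/2$, so $f^{-1}V(u_1)$ is non-reduced and $K_{X(\cS)}=\sum_\rho(c_\rho-1)[D_\rho]$ is a nonzero effective divisor. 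So the implication, and hence the proposition as stated and as proved, needs a stronger hypothesis: that each coordinate slice $\cS_{e_i}$ be a tesselation (equivalently, that each $f^{-1}V(u_i)$ be reduced). This stronger condition does hold in every application the paper makes of Proposition~\ref{k-triv}, since there the total space $X(\cS)$ is already regular, but it is not a consequence of integrality of the single slice $\cS_{(1,\dots,1)}$.
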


\begin{proof}
    The universal cover of $X(\cS)$ 
    admits an analytic open embedding
    into the toric variety $Y(\cS)$, whose anticanonical
    divisor is the reduced toric boundary. This toric boundary
    in turn is the reduced inverse image of the union
    of the coordinate hyperplanes 
    $V(u_1\cdots u_k)\subset \bC^k$ under
    the toric
    morphism $Y(\cS)\to Y(\bR_{\geq 0}^k)\simeq \bC^k$.
    Thus, if the inverse image of $V(u_1\cdots u_k)$
    equals its reduced inverse image, we conclude
    that $Y(\cS)$ and in turn $X(\cS)$ are 
    relatively $K$-trivial.

    To check that the divisors contained in the 
    inverse image of $V(u_i)$ are reduced, it suffices
    to restrict to the arc 
    $\Delta\to \Delta^k$, $u\mapsto (u,\dots,u)$. 
    We now apply Remark \ref{non-reduced}.
\end{proof}

\subsection{Weight filtration and dual complex}\label{sec:wf} 

\begin{proposition}\label{mn-identify}
Let $X(\cS)\to \Delta^k$ be a fan
Mumford Construction \ref{mumford-multi-fan} 
and let $X_t$ be the fiber over a point 
$t\in (\Delta^*)^k$.
Consider the exact sequence
$$0\to \bfN \to H_1(X_t,\bZ)\to \bfM\to 0,$$
see \eqref{NH1M}. 
Then $\bfN = \bfN \subset H_1(X_t,\bZ)$ is the weight 
filtration $W_{-2} = W_{-1} \subset W_0$ 
of the limiting mixed Hodge structure on $H_1(X_t,\bZ)$.
That is, there are integral isomorphisms
\begin{align*} 
\bfM &\simeq {\rm gr}^W_0H_1(X_t,\bZ), \\
\bfN &\simeq {\rm gr}^W_{-2}H_1(X_t,\bZ).
\end{align*}
Furthermore, 
$N_i\simeq N_i^{\rm mon}$ and $B_i\simeq B_i^{\rm mon}$
where
$N_i^{\rm mon}\colon {\rm gr}^W_0H_1(X_t,\bZ)\to 
{\rm gr}^W_{-2}H_1(X_t,\bZ)$ and 
$B_i^{\rm mon}\in 
{\rm Sym}^2 ({\rm gr}^W_0H_1(X_t,\bZ))^\vee$ 
are the monodromy
operators and bilinear forms of Section \ref{sec:monodromies}.
\end{proposition}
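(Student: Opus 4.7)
The plan is to compute the monodromy endomorphisms $N_i^{\rm mon}$ on $H_1(X_t,\bZ)$ directly from the fibration structure of Construction \ref{mumford-multi-fan}, and then to read off the weight filtration, the identifications with $\bfN$ and $\bfM$, and the bilinear forms $B_i^{\rm mon}$ from these explicit formulas. The proof therefore reduces to a Picard--Lefschetz-style computation on the intermediate cover $\bfN\otimes \bC^\ast \to X_t$.

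First, fix a basepoint $t=(u_1,\dots,u_k)\in (\Delta^\ast)^k$ and choose logarithms $2\pi i\tau_l = \log u_l$. This specifies a splitting $s\colon \bfM\to H_1(X_t,\bZ)$ of the exact sequence $0\to \bfN\to H_1(X_t,\bZ)\to \bfM\to 0$, by lifting each $e_j^\ast\in \bfM$ to the class represented by a straight-line path in $\bfN\otimes \bC^\ast$ from $0$ to the $j$-th lattice generator $\lambda_j(\tau) = 2\pi i\sum_i \tau_i N_i(e_j^\ast)\in \bfN\otimes \bC$ of $u_1^{B_1}\cdots u_k^{B_k}$. Deforming this path continuously along the loop $\tau_i\mapsto \tau_i+1$ in $(\Delta^\ast)^k$, the endpoint returns to itself in $\bfN\otimes \bC^\ast$ but traces out a loop whose class in $\bfN = H_1(\bfN\otimes \bC^\ast,\bZ)$ is precisely $N_i(e_j^\ast)$. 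Hence the monodromy satisfies $T_i^{\rm mon}(s(m)) = s(m) + N_i(m)$ for $m\in \bfM$, while acting trivially on $\bfN$, which is the first homology of the torus fiber and is unaffected by the deformation. This identifies the logarithm $N_i^{\rm mon}=T_i^{\rm mon}-I$ with the map $N_i\colon \bfM\to \bfN$ of the construction, extended by zero on $\bfN$.

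The weight filtration then follows formally. For any strictly positive combination $N = \sum r_i N_i^{\rm mon}$, $r_i>0$, the restriction $N|_\bfM$ is the map associated to $\sum r_i B_i$, which is positive-definite because $\sum B_i$ is, by the hypothesis of Construction \ref{mumford-multi-fan}, and hence so is every strictly positive combination of the $B_i$. Thus $N|_\bfM$ is injective with image of finite index in $\bfN$. One concludes $W_{-1} = \ker N = \bfN$ and $W_{-2} = ({\rm im}\, N)^{\rm sat} = \bfN$, giving the asserted isomorphisms ${\rm gr}^W_{-2}H_1(X_t,\bZ)\simeq \bfN$ and ${\rm gr}^W_0 H_1(X_t,\bZ)\simeq \bfM$. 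To verify $B_i^{\rm mon}=B_i$, recall that on $\bfN\oplus \bfM = H_1(X_t,\bZ)$ the polarization is $L((n,m),(n',m'))=m'(n)-m(n')$; substituting $s(m) = (0,m)$ and $N_i^{\rm mon} s(m) = (N_i m,0)$ gives
\[
B_i^{\rm mon}(m,m') = L(N_i^{\rm mon} s(m),\, s(m')) = L((N_i m,0),(0,m')) = m'(N_i m) = B_i(m,m'),
\]
where the last equality uses that $N_i m \in \bfN = \bfM^\vee$ is by definition the linear functional $B_i(m,-)$.

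The main technical obstacle is the Picard--Lefschetz computation in the first step: one must verify rigorously that the monodromy on $H_1(X_t,\bZ)$ is computed by continuously deforming the lifted paths, and identify the resulting loop in $\bfN\otimes \bC^\ast$ with $N_i(e_j^\ast)\in \bfN$. Once this is established, the identification of the weight filtration and the matching of bilinear forms are purely formal consequences of the positivity hypothesis and the explicit formula for the polarization from Construction \ref{mumford-multi-fan}.
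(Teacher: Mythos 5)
Your proof is correct and reaches the same identifications, but by a somewhat different route than the paper. The paper's own proof is very short: it cites Proposition \ref{monodromy-1-param-mumford} (the period-matrix computation showing $\widetilde\Phi(\tau)=\tau B$, so the monodromy about $u_i=0$ is $\left(\begin{smallmatrix}I & B_i\\ 0& I\end{smallmatrix}\right)$) to identify $N_i^{\rm mon}$ with $N_i$, remarks that $\bfN\simeq H_1(\bfN\otimes\bC^*,\bZ)$ consists of vanishing cycles (null-homologous near a $0$-dimensional toric stratum, hence in $W_{-2}$), and then invokes $B=\sum B_i>0$ to conclude $({\rm im}\,N)^{\rm sat}=\bfN$. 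You instead compute the monodromy directly on the level of singular chains, by choosing the $\log u_l$-splitting $s\colon\bfM\to H_1(X_t,\bZ)$ and tracking how the lifted path from $1$ to $\lambda_j(\tau)$ in $\bfN\otimes\bC^*$ winds under $\tau_i\mapsto\tau_i+1$. This is a Picard--Lefschetz-style argument that never touches the period matrix; it is more topological and self-contained (it does not lean on Proposition \ref{monodromy-1-param-mumford} or the nilpotent-orbit framework), at the cost of needing the rigor you yourself flag for the path-deformation step, which is standard for a locally trivial family. You also make explicit the verification $B_i^{\rm mon}=B_i$ from $L((n,m),(n',m'))=m'(n)-m(n')$, which the paper leaves implicit as a consequence of $N_i\simeq N_i^{\rm mon}$. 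The weight-filtration step and the positivity argument (valid because each $B_i$ is positive semi-definite, so any strictly positive combination dominates a positive multiple of $\sum B_i$) coincide in both.
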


\begin{proof} The identification of $\bfM$ and $\bfN$
with the stated graded pieces of the weight filtration follows
by construction, see e.g.~(\ref{NH1M})---the homology group 
$H_1(\bfN\otimes \bC^*, \bZ)\simeq \bfN$ 
is spanned by
the vanishing cycles, 
which are null-homologous in the neighborhood
of any $0$-dimensional toric stratum of $X(\cS)$.
Proposition \ref{monodromy-1-param-mumford}
shows that the monodromy operator $N_i^{\rm mon}$ agrees
with $N_i\colon \bfM\to \bfN$ and the hypothesis that 
$B= \sum_{i=1}^kB_k>0$ ensures that $W_{-2}=({\rm im}\,N)^{\rm sat}$
for $N=\sum_{i=1}^k N_i$ agrees with $\bfN$.
\end{proof}

\begin{proposition}
Let $X(\cS)\to \Delta^k$ be a fan
Mumford Construction \ref{mumford-multi-fan}. Then there is a canonical isomorphism 
${\bf M} \simeq H_1(\Gamma(X_0), \bZ)$ where $\Gamma(X_0)$
is the dual (polyhedral) complex of the central fiber $X_0$. 
\end{proposition}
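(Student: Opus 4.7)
The plan is to identify $\Gamma(X_0)$ with a polyhedral decomposition of a real torus $T$, and then read off $H_1(T,\bZ) \simeq \bfM$ via covering space theory.

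First, I would realize $\Gamma(X_0)$ as a quotient of the slice $\cT := \cS \cap (\bfN_\bR \times \{(1,\ldots,1)\})$. The universal cover of $X(\cS)$ over $\Delta^k$ is $Y(\cS)^{|u_i|<1}$, whose central fiber $X_0^{\rm univ}$ is the reduced inverse image of the torus-fixed point $0 \in \bC^k = Y((\bR_{\geq 0})^k)$. By the orbit-cone correspondence applied to the flat toric morphism $Y(\cS) \to \bC^k$, the toric orbit $O(\sigma) \subset Y(\cS)$ lies in $X_0^{\rm univ}$ exactly when the cone $\sigma \in \cS$ surjects onto $(\bR_{\geq 0})^k$; in that case, $\dim O(\sigma) = g + k - \dim \sigma$. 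Hence the codimension-$i$ strata of $X_0^{\rm univ}$ correspond to $(k+i)$-dimensional cones of $\cS$ surjecting onto $(\bR_{\geq 0})^k$, which in turn correspond bijectively with $i$-dimensional cells of $\cT$. In particular, the irreducible components of $X_0^{\rm univ}$ are indexed by vertices of $\cT$, and $\Gamma(X_0^{\rm univ}) \simeq \cT$ as a polyhedral complex (using ${\rm Supp}(\cS) \supseteq \bfN_\bR \times \{(1,\ldots,1)\}$ to see that $\cT$ covers all of $\bfN_\bR$).

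Second, I would descend to $X_0 = X_0^{\rm univ}/\bfM$. At height $\vec r = (1,\ldots,1)$, the action \eqref{fan-action} restricts to translation on $\bfN_\bR$ by $N({\bf m})$, where $N := N_1 + \cdots + N_k$ corresponds to $B := B_1 + \cdots + B_k$. Since $B$ is positive-definite by hypothesis, $N$ is injective and $\Lambda := N(\bfM) \subset \bfN$ is a full-rank sublattice. Consequently $\bfM$ acts freely on $\cT$ by translation, and the quotient $\cT/\bfM$ is a polyhedral decomposition of the $g$-dimensional torus $T := \bfN_\bR/\Lambda$. This quotient is the dual complex $\Gamma(X_0)$, whose geometric realization is homeomorphic to $T$.

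Finally, I would read off $H_1$. The quotient map $\bfN_\bR \to T$ is the universal cover, with deck transformation group $\bfM$ acting through the injection $N$. Because $T$ is a real torus, $\pi_1(T)$ is abelian, and the Hurewicz isomorphism gives $H_1(\Gamma(X_0),\bZ) = H_1(T,\bZ) \simeq \pi_1(T) \simeq \bfM$, with the canonical identification ${\bf m} \mapsto [\textrm{loop tracing }{\bf n} \mapsto {\bf n}+N({\bf m})]$. The main technical obstacle is the identification $\Gamma(X_0^{\rm univ}) \simeq \cT$: spelling out the orbit-cone correspondence for a flat toric morphism requires care, but once established, everything else follows from elementary covering space theory and the positive-definiteness of $\sum_i B_i$.
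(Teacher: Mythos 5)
Your proposal follows essentially the same route as the paper: identify the dual complex of the $\bfM$-prequotient with the slice $\cS_{(1,\ldots,1)}$ of the fan, quotient by the $\bfM$-action (translation by $N(\bfM)=\Lambda_B$, injective since $\sum_i B_i$ is positive-definite) to obtain a polyhedral decomposition of the real torus $\bfN_\bR/\Lambda_B$, and then read off $H_1 \simeq \bfM$. You merely spell out two steps that the paper leaves implicit---the orbit-cone correspondence giving $\Gamma(X_0^{\rm univ}) \simeq \cS_{(1,\dots,1)}$ and the covering-space/Hurewicz argument at the end---but the proof is the same.
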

\begin{proof}
    The dual polyhedral complex of the $\bfM$-prequotient
is the infinite periodic polyhedral decomposition 
of $\bfN_\bR$ given by the preimage of $(1,\dots,1)\in \bR^k$
under the morphism of fans $\cS\to (\bR_{\geq 0})^k$, see
Construction \ref{mumford-multi-fan}. It follows
that $\Gamma(X_0)\simeq \cS_{(1,\dots,1)}/N(\bfM)=
\bfN_\bR/\Lambda_B$. Thus, there is a canonical
isomorphism $H_1(\Gamma(X_0),\bZ)\simeq \bfM$.
\end{proof}
 
\subsection{Mumford construction, polytope version} \label{sec:polytope}
We discuss now a polytopal version of 
the Mumford degeneration, which outputs
a relatively projective degeneration, together
with a relatively ample line bundle.
Furthermore, it is isomorphic to the fan construction 
as in Section \ref{sec:fan}, for an appropriate
choice of fan $\cS$ in $\bfN_\bR \times \bR^k$. 
Our approach is, in part, 
inspired by Gross--Siebert \cite[Sec.~2]{gross},
and their construction of canonical theta functions, 
building on the classical
theory of theta functions, see 
e.g.~\cite[Prop.~II.1.3 and Thm.~II.1.3]{tata}. 
It is primarily based on a ``PL version'' 
of the classical theory,
in line with Alexeev--Nakamura 
\cite{alexeev-nakamura}. 

Let $A = \bC^g/(\bZ^g \oplus \bZ^g\sigma)$ be an 
abelian variety with principal polarization $L$. 
The classical theory of theta functions
studies explicit sections of the powers of $\cL$, a 
lift of $L$, by pulling back to
the universal cover $\pi\colon \bC^g\to A$. Since
$\pi^*\cL\simeq \cO_{\bC^g}$, such sections can be understood
via holomorphic functions on $\bC^g$, with appropriate
factors of automorphy under the deck action
of the periods $\bZ^g\oplus \bZ^g\sigma$. 
Such holomorphic functions are called 
\emph{theta functions}. We do the same here, for
the intermediate cover $(\bC^*)^g\to A$ discussed in the introduction, of the fibers $A=X_t$ of a degenerating family
of PPAVs $X^*\to (\Delta^*)^k$. These theta functions
extend as holomorphic sections of a line
bundle over a toric extension of $X^*$ over $\Delta^k$.

Consider the standard torus 
$\bT^g\coloneqq  \bR^g/\bZ^g=\bfM_\bR/\bfM$.
We define $\bZ {\rm PL}/\bZ {\rm L}$ to
be the sheaf on $\bT^g$ of 
$\bZ$-piecewise linear functions 
modulo the subsheaf of $\bZ$-linear
functions. On an open set 
$U\subset \bfM_\bR/\bfM$, the sections 
$\bZ{\rm PL}(U)\coloneqq\{f\colon U\to \bR\}$
consist of continuous, piecewise linear
functions, which in a domain $D\subset U$ of linearity
are of the form 
\begin{align}\label{loc-zpl}
    f\vert_D({\bf m})= a_1m_1+\cdots+a_gm_g+a_{g+1}
\end{align}
with $a_j\in \bZ$. Here $m_i$ are integral coordinates on 
$\bfM_\bR$ which define local coordinates on $D$.
Similarly, $\bZ{\rm L}$ is the sheaf
of locally $\bZ$-linear functions on $U$, of the same shape.
A section $\bZ{\rm PL}/\bZ{\rm L}(U)$ 
can be understood globally on $U$ in terms
of its ``bending locus'' (Definition \ref{bendinglocus}).

The pull-back of a section
$\overline{b}_i\in H^0(\bT^g, \bZ{\rm  PL}/\bZ {\rm L})$ to
the universal cover $\bfM_\bR \to \bT^g$ 
lifts to a $\bZ$-piecewise linear function 
$b_i\colon \bfM_\bR \to \bR$, since $H^1(\bfM_\bR,\bZ {\rm L})=0$.
It is integer-valued
on $\bfM$ and, more generally, $\tfrac{1}{w}\bZ$-valued
on $\tfrac{1}{w}\bfM$ for any positive integer $w\in \bN$.
The function $x \mapsto b_i(x+{\bf m})-b_i(x)$
is a linear function on $\bfM_\bR$ for all 
${\bf m}\in \bfM$, because $b_i$ is lifted from $\bT^g$.
Conversely, any 
$\bZ$-piecewise linear function $b_i \colon M_\bR \to \bR$, 
such that $b_i(x+{\bf m}) - b_i(x)$ is linear, 
for all ${\bf m}\in \bfM$, 
descends to a section 
$\overline{b}_i \in 
H^0(\bT^g, \bZ{\rm  PL}/\bZ {\rm L})$ that determines 
the equivalence class $[b_i]$ uniquely 
(where $b_i \sim b_i'$ if the difference 
$b_i - b_i'$ is linear). 

The domains
of linearity of $b_i$ are rational polyhedra.
The sections $\overline{b}_i$ for which a lifted
function $b_i$ is convex
form a convex polyhedral subcone 
of $H^0(\bT^g, \bZ{\rm  PL}/\bZ {\rm L})$. 

\begin{definition} \label{bendinglocus}
Associated to $\overline{b}_i\in 
H^0(\bT^g, \bZ{\rm  PL}/\bZ {\rm L})$ is a weighted 
polyhedral complex in $\bT^g$, called the 
{\it bending locus} ${\rm Bend}(\overline{b}_i)$. 
Its faces are the codimension $1$
polytopes in $\bT^g$ along which 
$\overline{b}_i$ is non-linear,
and the {\it bending parameter} 
(positive when $\overline{b}_i$ is convex)
defining the weight on
a codimension $1$ polytope, 
is the change in slope of 
the restriction of $\overline{b}_i$ 
an integral, complementary segment
to the hyperplane containing the face.
\end{definition}

\begin{definition} \label{dicing}
    Let
$\{\overline{b}_1,\dots,\overline{b}_k\}
\in H^0(\bT^g, \bZ{\rm  PL}/\bZ {\rm L})$ 
be a collection of convex sections, with
nontrivial bending in every direction, i.e.~for every non-zero
${\bf m}\in \bfM$, there exists some
$i\in \{1,\dots,k\}$ for which 
$b_i(x+{\bf m})-b_i(x)\not\equiv 0$ is not
identically zero. 
Equivalently, 
$\bigcup_i {\rm Bend}(\overline{b}_i)$ cuts
$\bT^g$ into polytopes.
We say that the $\overline{b}_1,\dots,\overline{b}_k$ 
are {\it dicing} if the polyhedral
decomposition 
$\bigcup_i {\rm Bend}(\overline{b}_i)$ 
has integral
vertices. 
\end{definition}

The dicing condition is quite restrictive, since
only the origin of $\bT^g$ may appear as a vertex
of $\bigcup_i {\rm Bend}(\overline{b}_i)$. We will
relax this hypothesis in Construction \ref{veronese}.

\begin{example}
Let $g=1$ and $\bfM = \bZ$, so that ${\bf M}_\bR/{\bf M}= \bR/\bZ$. 
Define a PL function
$b\colon \bR\to \bR$ which is linear on each interval
$[m,m+1]$, $m\in \bZ$, and has values on $\bZ$ equal to 
$b(m) = \tfrac{1}{2}(m^2-m)$. The graph of $b$ is depicted
in the left of Figure \ref{fig:tate}. The locus where $b$ 
is non-linear is $\bZ$, hence ${\rm Bend}(\overline{b}) 
= \set{0} \in \bR/\bZ = \bT^1$, with weight one, 
see Figure \ref{fig:tate3}. 
\end{example}

\begin{remark} \label{completion}
Any projective morphism $X \to \Delta^k$ of 
analytic spaces gives rise to an algebraic family 
$\wh X \to \Spec \bC[[u_1, \dotsc, u_k]]$, 
the \emph{formal completion} of $X \to \Delta^k$. 
Indeed, the projectivity of $X\to \Delta ^k$ implies that 
there is a positive integer $N$ such that 
$X\subset \Delta^k\times \mathbb P^N$ is cut out 
by homogeneous polynomials 
whose coefficients are convergent power series.
The completion $\wh X$ is then cut out by the 
same equations, viewing the convergent 
power series as formal power series in 
$\bC[[u_1, \dotsc, u_k]]$.
\end{remark}

\begin{construction}\label{mumford-polytope}
Let
$\{\overline{b}_1,\dots,\overline{b}_k\}
\in H^0(\bT^g, \bZ{\rm  PL}/\bZ {\rm L})$ 
be a collection of convex sections, with
nontrivial bending in every direction, 
and assume that the 
$\overline b_1, \dotsc, \overline b_k$ are dicing.
Let $\overline{v}\in \tfrac{1}{w}\bfM/\bfM\in \bT^g$
be a $\tfrac{1}{w}$-integral point of $\bT^g$, for some 
positive integer $w\in \bN$. 
We define the {\it weight $w$ theta function 
associated to $\overline{v}$} to be 
\begin{align}\label{theta-def}
\Theta_{\overline{v}}(z_1,\dots,z_g, u_1,\dots,u_k)\coloneqq 
\sum_{v\,\in\, \overline{v}+\bfM} 
(z_1^{x_1(v)}\cdots z_g^{x_g(v)}
u_1^{b_1(v)}\cdots u_k^{b_k(v)})^w\end{align}
where $x_i$ is the $i$-th coordinate function,
cf.~\cite[Sec.~4.5]{alexeev-nakamura}.

The condition that the $\overline{b}_i$ have bending
in every direction ensures that this power series
converges in an appropriate power series ring 
(which notably involves both negative and positive
powers of $z_i$). Consider the $\bC[[u_1,\dots,u_k]]$-module
$$R({\overline{b}_1,\dots,\overline{b}_k})\coloneqq 
\bigoplus_{w=0}^\infty 
\,\bigoplus_{\overline{v}\,\in\, \frac{1}{w}\bfM/\bfM}
\bC[[u_1,\dots,u_k]]\cdot \Theta_{\overline{v}}.$$ 
Expanding the product of two theta functions
$\Theta_{\overline{v}_1}$, $\Theta_{\overline{v}_2}$
of weights $w_1$, $w_2$  
by collecting all monomial terms into $\bfM$-orbits
(see (\ref{polytope-action}) below), 
we see that there is an
expansion
\begin{align}
    \label{mult-rule}
\Theta_{\overline{v}_1}\Theta_{\overline{v}_2}=
\sum_{\overline{v}_3\in \frac{1}{w_1+w_2}\bfM/\bfM} 
c_{\overline{v}_1\overline{v}_2}^{\overline{v}_3}
(u_1,\dots,u_k) \Theta_{\overline{v}_3}
\end{align}
where the coefficients
$c_{\overline{v}_1\overline{v}_2}^{\overline{v}_3}
(u_1,\dots,u_k)\in \bZ[[u_1,\dots,u_k]]$ are 
integral power series, as opposed to simply Laurent series, 
by the convexity of the $b_i$, see e.g.~\cite[Eqn.~(2.5)]{gross}.
Note that
to get a nonzero coefficient,
there must be a lift of $\overline{v}_3$ of the form 
$$v_3 = \frac{w_1v_1+w_2v_2}{w_1+w_2}.$$ 
Hence
$R({\overline{b}_1,\dots,\overline{b}_k})$ is 
closed under multiplication. It is, furthermore,
a finitely generated, graded ring over $\bC[[u_1,\dots,u_k]]$.
Consider the resulting projective 
$\bC[[u_1,\dots,u_k]]$-scheme 
\begin{align}\label{projfamily}
{\rm Proj}_{\bC[[u_1,\dots,u_k]]} \,
R(\overline{b}_1,\dots,\overline{b}_k) 
\to {\rm Spec} \,\bC[[u_1,\dots,u_k]].\end{align}
It is a degeneration of PPAVs of dimension $g$,
with the theta functions providing the projective
embedding, 
which is the 
completion (in the sense of Remark \ref{completion}) 
of a relatively projective 
complex analytic degeneration 
\begin{align}\label{mumford:polytope}f\colon 
X({\overline{b}_1,\dots, \overline{b}_k})\to 
\Delta^k\end{align} over a polydisk. This 
can be proven e.g.~by observing
that $\Theta_{\overline{v}}$ are analytically convergent
power series on a Mumford fan construction
when all $|u_i|<1$, a fact which
is justified in the course of the proof
of Theorem \ref{poly-is-fan}.
We call $f$ the {\it Mumford degeneration}
associated to $\{\overline{b}_1,\dots,\overline{b}_k\}$. 

\begin{remark} \label{genericfiberabelian}
That the generic fiber of \eqref{projfamily} is 
an abelian variety also
follows from the classical theory of theta functions. 
\end{remark}

We have assumed that the $\overline{b}_i$ are dicing, 
see Definition \ref{dicing}.
Define $\Gamma\subset \bfM_\bR\times \bR^k$ 
as the {\it overgraph} of the collection of functions
$(b_1,\dots,b_k)\colon \bfM_\bR\to \bR^k$, that is,
\begin{align}\label{overgraph}\Gamma =
\Gamma(b_1,\dots,b_k)+(\bR_{\geq 0})^k
\subset \bfM_\bR\times \bR^k.\end{align}
Then, $\Gamma$ is an 
infinite convex, locally finite 
polytope in $\bfM\times \bR^k$,
whose faces are integral polytopes.
We may think of the lattice 
points, in $\Gamma\cap (\bfM\times \bZ^k)$, as
the monomial sections of $\cO(1)$ 
on the corresponding infinite type 
toric variety $Y=Y_\Gamma$,
see Section \ref{sec:toric-varieties}.
Similarly, we may think of the 
$\tfrac{1}{w}$-integral points of 
$\Gamma$ as the 
monomial sections of $\cO(w)$, 
cf.~Remark \ref{toricpolytope}.

Then,
the theta function $\Theta_{\overline{v}}$ for 
$\overline{v}\in \tfrac{1}{w}\bfM/\bfM$ is the result
of summing such monomials, over an $\bfM$-orbit,
where ${\bf m}\in \bfM$ acts on $\bfM_\bR\times \bR^k$ by 
the affine-linear action
\begin{align}\label{polytope-action}
(x,\vec{r})\mapsto 
(x+{\bf m}, \,
\vec{r} + \vec{b}(x+{\bf m})-\vec{b}(x))\end{align}
and $\vec{b} = (b_1,\dots,b_k)$.
Note that this action of $\bfM$ preserves $\Gamma$.

Form the normal fan $\cS$ to $\Gamma$.
As (\ref{polytope-action}) gives an action
of $\bfM$ on $\Gamma$, it induces an $\bfM$-action
on $\cS$. This action agrees with the 
action (\ref{fan-action})
for a multivariable Mumford fan construction,
associated to the bilinear forms $B_1,\dots, B_k\in 
{\rm Sym}^2\bfM^\vee$ 
defined by the equations
\begin{align}\label{Bi}
B_i({\bf m},{\bf m}')\coloneqq b_i({\bf m}+{\bf m}')-
b_i({\bf m})-b_i({\bf m}')+b_i(0),\quad 
{\bf m},{\bf m}'\in \bfM.\end{align}

By the convexity of the $b_i\colon \bfM_\bR\to \bR$,
the normal fan 
$\cS\subset (\bfM_\bR \times \bR^k)^\vee$
admits a canonical morphism to the fan 
$\bR_{\geq 0}^k\subset \bR^k\simeq (\bR^k)^\vee$, 
given by restricting
linear functionals in 
$(\bfM_\bR\times \bR^k)^\vee$ to
$\bR^k$. Then, the data of $\cS$, together with
the projection to $\bR_{\geq 0}^k$, defines the data
of a Mumford fan Construction \ref{mumford-multi-fan}.
We will prove that
$X(\overline{b}_1,\dots,\overline{b}_k)\simeq X(\cS)$
 in Theorem \ref{poly-is-fan}. 

To ``twist'' the construction, as
in Constructions \ref{mumford}, \ref{mumford-multi-fan},
\ref{singular-base}, by
some continuous parameters 
$a= (a_{ij})\in {\rm Sym}_{g\times g}(\bC^*)$, and produce
a universal degeneration which
represents all possible continuous moduli of degenerations
of the specified combinatorial type, we must introduce
appropriate coefficients 
$$ \Theta_{\overline{v}}^a
(z_1,\dots,z_g, u_1,\dots,u_k)\coloneqq 
\sum_{v\,\in\, \overline{v}+\bfM} d_v(a)
(z_1^{x_1(v)}\cdots z_g^{x_g(v)}
u_1^{b_1(v)}\cdots u_k^{b_k(v)})^w$$
for $d_v(a)\in \bC^*$. This twists the structure
constants to give a graded ring 
$R^a({\overline{b}_1,\dots,\overline{b}_k})$
and ranging over the moduli of $a$,
produces a relatively projective
multivariable Mumford degeneration over the base
which is a
${\rm Spec}\,\bC[[u_1,\dots,u_k]]$-bundle
over $(\bC^*)^D$,
$D = \dim \cA_g - {\rm rank}\,\bR\{B_1,\dots, B_k\}$.
It agrees on the general fiber
with the quotient by the family of subgroups
$au_1^{B_1}\cdots u_k^{B_k}$.
These constants $d_v(a)$ form part of the
so-called ``degeneration data'' of \cite{fc}.
\hfill $\clubsuit$ 
\end{construction}

\begin{notation} The construction
of the ring $R(\overline{b}_1,\dots,\overline{b}_k)$
depends only on the $\overline{b}_i$ and not the lifts
$b_i$ to PL functions on $\bfM_\bR$. But
it is usually easiest to specify 
$\overline{b}_i$ by providing the PL function 
$b_i\colon \bfM_\bR\to \bR$.
With this in mind, we will henceforth 
notate the Mumford degeneration 
$X({\overline{b}_1,\dots, \overline{b}_k})\to 
\Delta^k$ by $X(b_1,\dots,b_k)\to \Delta^k$, 
${\rm Bend}(\overline{b}_i)$
by ${\rm Bend}(b_i)$, etc. \end{notation}

\begin{remark}\label{d-dicing}
The number of theta functions
$\Theta_{\overline{v}}$ of weight $w$
is exactly $w^g$. These functions form the {\it theta basis},
a canonical (up to scaling) basis of sections of 
$H^0(X({b_1,\dots,b_k}),\,
 \cL^{\otimes w})$ where $\cL=\cO(1)$ is a lift
 of the relative principal
 polarization. In particular, 
 $\Theta = V(\Theta_{\overline{0}})$ extends as a
 Cartier divisor over the degenerating family 
 $X({b_1,\dots,b_k})\to \Delta^k$.

 Since the $b_i$ are dicing, 
 $\Gamma$ is an integer polyhedron, 
 which is why $\cO_Y(1)$ is Cartier on $Y=Y_\Gamma$.
 It also admits a natural linearization with respect
 to the $\bfM$-action. This is why the principal
 polarization extends, as a line bundle, 
 to $X(b_1,\dots,b_k)$.
 Absent the dicing condition, 
 one may consider the least positive
 integer $d$ for which the overgraph $\Gamma$
 is a $\tfrac{1}{d}(\bfM\times \bZ^k)$-integral polyhedron.
 Then $\cO_Y(d)$ defines an integral polyhedron
 and so descends as a line bundle on $X(b_1,\dots,b_k)$
 which is a lift of $d$ times a principal 
 polarization on the smooth fibers.
\end{remark}

\begin{definition}\label{def:d-dicing}
We say that $\{b_1,\dots,b_k\}$
are {\it $\tfrac{1}{d}$-dicing} if
they are $\bQ$-piecewise linear,
the corresponding
overgraph $\Gamma$
of $\Gamma(b_1,\dots,b_k)\subset \bfM_\bR\times \bR^k$
is a $\tfrac{1}{d}(\bfM\times \bZ^k)$-integral 
polyhedron, 
and in the local form (\ref{loc-zpl}),
the slopes $a_1,\dots,a_g\in \bZ$ are still integral, but 
we allow $a_{g+1}\in \tfrac{1}{d}\bZ$. We denote sheaves
of functions with such a local form by $\tfrac{1}{d}\bZ{\rm PL}$
and $\tfrac{1}{d}\bZ{\rm L}$. \end{definition}

\begin{construction}\label{mumford-polytope-2}
    Like Construction \ref{singular-base} 
    vis-\`a-vis Construction
    \ref{mumford-multi-fan}, we 
    generalize Construction \ref{mumford-polytope} to the case
    where $\{\overline{b}_1,\dots,\overline{b}_k\}$ 
    are the extremal rays of a convex polyhedral cone 
    $\mathbbm{b}\subset H^0(\bT^g,\bZ{\rm PL}/\bZ{\rm L})$ mapping isomorphically to a convex polyhedral cone
    $\bB \subset \cP_g^+$ under the map 
    $\overline{b}_i \mapsto B_i$
    with $B_i \in \Sym^2\bfM^\vee$ defined in \eqref{Bi}.
    We replace $(b_1,\dots,b_k)$ by the PL function 
    \begin{align*}
    \bfM_\bR & \to (\bR\mathbbm{b})^\vee
    \simeq 
    \bR^{\dim \bR \mathbbm{b}} \\ 
    {\bf m} & \mapsto (b\mapsto b({\bf m})). 
    \end{align*}
    Otherwise, the
    details of Construction \ref{mumford-polytope} 
    are the same.
    The output is a relatively projective
    degeneration of abelian varieties
    $$X(\mathbbm{b})\to T(\mathbbm{b})\subset Y(\mathbbm{b})$$ 
    over an analytic tubular neighborhood
    of the torus fixed point of $Y(\mathbbm{b})$. Performing
    this construction with the universal twist by 
    $a\in {\rm Sym}_{g\times g}(\bZ)/\langle \bB\rangle\otimes \bC^*$,
    for $\bB=\bR_{\geq 0}\{B_1,\dots,B_k\}$, 
    and extending/descending over the toroidal
    extension $\widetilde{\cA}_g^{\,\bB}$ as in 
    Construction \ref{singular-base}, we may
    produce an analytically-locally relatively projective, 
    analytic 
    extension of the universal family 
    $$X^{\rm univ}(\mathbbm{b})\to  \widetilde{\cA}_g^{\,\bB},$$
    see Proposition \ref{extension-prop} 
    (we descend from an open set 
    $T_{\rm max}(\mathbbm{b})\subset Y(\mathbbm{b})$
    as in Construction \ref{singular-base} to verify
    the analytic-local projectivity). 
    We will show in Section \ref{sec:alexeev}
    that $X^{\rm univ}(\mathbbm{b})\to  \widetilde{\cA}_g^{\,\bB}$ 
    is an \'etale-locally projective morphism
    of algebraic spaces over $\widetilde{\cA}_g^{\,\bB}$. 
    \hfill $\clubsuit$ 
\end{construction}

\begin{remark} Our primary case of interest in 
\cite{companion} is where $\mathbbm{b}$ 
defines a simplicial cone $\bB\subset \cP_g^+$
and in this setting, Constructions \ref{mumford-multi-fan}
and \ref{mumford-polytope} over a polydisk will suffice. \end{remark}

The next proposition follows directly from Construction
\ref{mumford-polytope}:

\begin{proposition}\label{bending-read}
    Let $(\bT^g,b_1 ,\dots,b_k)$
    define a polarized, multivariable Mumford degeneration 
    $X(b_1,\dots,b_k)\to \Delta^k$. Then the following hold:
    \begin{enumerate}
        \item 
    The intersection complex
    of the fiber $X_I$ over the generic point
    of the coordinate subspace $V(u_i\,:\,i\in I)$ is 
    the polyhedral decomposition 
    $\bigcup_{i\in I}{\rm Bend}(\overline{b}_i)$
    of $\bT^g$.
    \item 
 The polytopes of this 
 polyhedral decomposition, when compact,
    are the polytopes of the polarized toric components, 
    in the sense of Remark \ref{toricpolytope}. 
    \item
    Non-compact faces $F$ of 
    $\bigcup_{i\in I}{\rm Bend}(\overline{b}_i)$
    are of the form $F\simeq F_0\times \bT^h$ with $F_0$ compact.
    The dimension of the abelian part 
    of the corresponding component
    of $X_I$ is $h$. This component is a toric variety
    bundle over an abelian $h$-fold, possibly self-glued, 
    where the toric variety has polytope $F_0$.
    \end{enumerate}
\end{proposition}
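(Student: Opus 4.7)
The plan is to invoke the isomorphism $X(b_1,\dots,b_k) \simeq X(\cS)$ of Theorem \ref{poly-is-fan}, where $\cS$ is the normal fan to the overgraph $\Gamma = \Gamma(b_1,\dots,b_k) + (\bR_{\geq 0})^k$, and then read off the structure of fibers of $X(\cS)\to\Delta^k$ from the standard toric geometry of $Y(\cS)$ followed by descent under the $\bfM$-action \eqref{polytope-action}.

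First I would identify the toric strata of $Y(\cS)$ sitting above the generic point of a boundary stratum $V(u_i : i \in I) \subset \bC^k$. Under the morphism of fans $\cS \to \bR_{\geq 0}^k$ dual to the projection $\Gamma \to \bR^k$, a cone $\sigma \in \cS$ dual to a face $F$ of $\Gamma$ maps onto the face of $\bR_{\geq 0}^k$ spanned by those $e_i$ for which $F$ lies in the lower envelope (the graph of $b_i$) rather than in the cylinder direction $+\bR_{\geq 0}e_i$. Hence the cones projecting precisely onto $\sum_{i\in I}\bR_{\geq 0}e_i$ correspond to faces of $\Gamma$ lying in the graph of $(b_i)_{i\in I}$, equivalently to cells of the polyhedral decomposition $\bigcup_{i\in I}{\rm Bend}(b_i)$ of $\bfM_\bR$. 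By standard toric geometry, the union of the corresponding toric strata of $Y(\cS)$ forms the fiber of $Y(\cS) \to \bC^k$ over the generic point of $V(u_i : i \in I)$, and its intersection complex is exactly this bending decomposition. Quotienting by $\bfM$ collapses $\bfM_\bR$ to $\bT^g$ and yields $\bigcup_{i \in I}{\rm Bend}(\overline{b}_i)$ as the intersection complex of $X_I$, proving $(1)$.

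For $(2)$, a top-dimensional cell of $\bigcup_{i\in I}{\rm Bend}(\overline{b}_i)$ with compact lift $P \subset \bfM_\bR$ corresponds to a compact face of the lower envelope of $\Gamma$, whose dual cone in $\cS$ is standard affine by the dicing hypothesis. The associated toric stratum of $Y(\cS)$ is then the standard polarized toric variety with polytope $P$, the polarization coming from the equivariant restriction of $\cO_{Y(\cS)}(1)$, cf.~Remark \ref{d-dicing}. Since $P$ is compact, no nonzero element of $\bfM$ stabilizes it, so this toric stratum descends isomorphically to a component of $X_I$ with the same polytope. For $(3)$, a non-compact cell $F$ of $\bigcup_{i\in I}{\rm Bend}(\overline{b}_i)$ lifts to a polyhedron $\widetilde F \subset \bfM_\bR$ with lineality subspace $V$ of dimension $h$, so that $\widetilde F \simeq F_0 \times V$ with $F_0$ compact. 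The subgroup $\Lambda \coloneqq \bfM \cap V$ is a full-rank lattice in $V$ that translates $\widetilde F$ to itself; its action on the corresponding toric stratum of $Y(\cS)$ is by translation along the subtorus $V \otimes_\bZ \bC^\ast$. The $\Lambda$-quotient of this stratum is a toric variety bundle, with fiber polytope $F_0$, over the abelian $h$-fold $(V \otimes_\bZ \bC^\ast)/\Lambda$, and the residual cosets of $\bfM/\Lambda$ whose image in $\bT^g$ preserves $F$ further glue this bundle to itself along the appropriate boundary faces.

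I expect the main obstacle to be making $(3)$ fully precise, in particular identifying the self-gluing data. This reduces to tracking which cosets in $\bfM/\Lambda$ fix $F$ modulo lineality in $\bT^g$, and matching their action on the toric bundle to identifications of its boundary strata. This bookkeeping is cleanest after making the identification $X(b_1,\dots,b_k) \simeq X(\cS)$ completely explicit on the level of the toric boundary stratification.
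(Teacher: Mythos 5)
Your proposal is correct and follows essentially the same route as the paper's (brief) sketch: both pass through the isomorphism $X(b_1,\dots,b_k)\simeq X(\cS)$ of Theorem \ref{poly-is-fan}, read off the toric strata of the fiber via the orbit--cone correspondence for the morphism of fans $\cS\to(\bR_{\geq 0})^k$ (equivalently, via the faces of the overgraph $\Gamma$), and then track the $\bfM$-action; the paper's sketch is simply more terse and works directly with $\Gamma$ rather than spelling out the fan morphism. One small correction to your (2): the dual cone $\sigma_F$ of a top-dimensional compact cell is indeed standard affine (its generators have the form $(n_i,e_i)_{i\in I}$, which automatically extend to a $\bZ$-basis of $\bfN\times\bZ^k$), but this is a consequence of the product structure of the overgraph, not of the dicing hypothesis---dicing is what makes $P$ an integral polytope, hence $\cO(1)$ a line bundle; in any case the conclusion of (2) only needs the polytope-of-an-orbit-closure fact and not regularity of $\sigma_F$.
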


 \begin{proof}[Sketch]
The universal cover of the Mumford construction
is the toric variety $Y_\Gamma$ whose polytope is $\Gamma$,
and hence $\Gamma$ is the intersection complex
of this universal cover. Then the intersection complex
of the Mumford construction itself is the 
quotient by the $\bfM$ action, and the stated description 
follows---components with an abelian factor
of dimension $h>0$
arise from infinite
faces of $\Gamma$ 
stabilized by a rank $h$ subgroup of $\bfM$.
See also Theorem \ref{poly-is-fan}.
 \end{proof}

 \subsection{Comparison of polytope and fan constructions}\label{sec:comparison}

We explain why the polytope construction 
of the Mumford degeneration coincides with the fan 
construction.

\begin{theorem}\label{poly-is-fan}
Let
$\{\overline{b}_1,\dots,\overline{b}_k\}
\in H^0(\bT^g, \bZ{\rm  PL}/\bZ {\rm L})$ 
be a collection of convex sections, with
nontrivial bending in every direction, which are dicing. 
Define $\Gamma \subset \bfM_\bR \times \bR^k$ as in 
\eqref{overgraph}, and let $\mathcal S$ be the normal 
fan to $\Gamma$. 
Then, there is a canonical isomorphism of analytic spaces
\[
X({\overline{b}_1,\dots, \overline{b}_k}) \simeq X(\mathcal S)
\]
over $\Delta^k$, where $X({\overline{b}_1,\dots, \overline{b}_k}) 
\to \Delta^k$ is the polytope Mumford degeneration defined 
in \eqref{mumford:polytope} and where $X(\mathcal S) \to \Delta^k$ 
is the fan Mumford degeneration defined in \eqref{mumford:fan}. 
\end{theorem}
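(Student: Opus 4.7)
The plan is to exhibit both $X(\overline b_1,\dots,\overline b_k)$ and $X(\mathcal S)$ as the same $\bfM$-quotient of a common infinite-type toric variety $Y$. Since by hypothesis $\mathcal S$ is the normal fan to the (integral, by the dicing assumption) polyhedron $\Gamma$, there is a canonical identification $Y(\mathcal S)\simeq Y_\Gamma$, and by Remark~\ref{toricpolytope} the polyhedron $\Gamma$ singles out a torus-equivariant line bundle $\cO_Y(1)$ whose space of monomial sections $H^0(Y,\cO_Y(w))$ is indexed by lattice points of $w\Gamma$.

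The first substantive step is to match the two $\bfM$-actions on $Y$. The affine-linear action \eqref{polytope-action} of $\bfM$ on $\bfM_\bR\times \bR^k$ preserves $\Gamma$ by construction and therefore induces an action on $Y_\Gamma$ together with a linearization of $\cO_Y(1)$. Passing to the dual under the normal-fan correspondence, a direct computation using the defining relation \eqref{Bi} shows that the induced action on $\bfN_\bR\times \bR^k$ is exactly the action \eqref{fan-action} used in Construction~\ref{mumford-multi-fan}. Hence the two constructions share the same $\bfM$-equivariant morphism $Y\to \bC^k$, and both restrict over $\Delta^k$ to the quotient of $Y|_{\Delta^k}$ by the $\bfM$-action, which is properly discontinuous because $\sum_i B_i$ is positive-definite.

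The remaining work is to identify the theta ring with the invariant section ring:
\[
R(\overline b_1,\dots,\overline b_k) \;\simeq\; \bigoplus_{w\geq 0} H^0(Y,\cO_Y(w))^{\bfM}
\]
as graded $\bC[[u_1,\dots,u_k]]$-algebras. Under the identification of the previous step, the monomial in \eqref{theta-def} indexed by a representative $v\in \overline v+\bfM$ corresponds to the lattice point $(wv,wb(v))\in w\Gamma$ sitting on the graph of $\vec b$, and the sum defining $\Theta_{\overline v}$ is literally the sum over the $\bfM$-orbit of this point. Convexity of each $b_i$ implies that every lattice point of $w\Gamma$ lies on or above the graph, so an arbitrary $\bfM$-invariant section of $\cO_Y(w)$ is a $\bC[[u_1,\dots,u_k]]$-coefficient combination of theta functions, with integrality of the coefficients exactly reproducing the multiplication rule \eqref{mult-rule}. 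Since $\cO_Y(1)$ is $\bfM$-linearized and the action is properly discontinuous over $\Delta^k$, it descends to a relatively ample line bundle on $X(\mathcal S)$ whose ring of sections over $\Delta^k$ is this invariant ring. The polytope construction realizes the same Proj, first formally at the origin (so that $X(\overline b_1,\dots,\overline b_k)$ and $X(\mathcal S)$ have isomorphic formal completions along the central fiber, by Remark~\ref{completion}), and then analytically over all of $\Delta^k$ via convergence of the theta series for $|u_i|<1$.

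The main obstacle is the identification of the $\bfM$-invariants in the third paragraph: one must verify both that every orbit sum of monomials in $H^0(Y,\cO_Y(w))$ reduces, modulo multiplication by $u_i$'s, to a single theta function $\Theta_{\overline v}$, and that no spurious invariants escape this description. This is precisely where the convexity of the $b_i$ and the integrality noted in \eqref{mult-rule} are used, and it is also the content needed to justify convergence of the power series \eqref{theta-def} on the preimage of $\Delta^k$ inside $Y$, so that the formal Proj of Construction~\ref{mumford-polytope} genuinely analytifies to the fan quotient $X(\mathcal S)$.
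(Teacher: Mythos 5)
Your strategy coincides with the paper's up through the identification $Y(\cS) \simeq Y_\Gamma$, the matching of the two $\bfM$-actions under normal-fan duality, and the observation that each $\Theta_{\overline{v}}$ is an $\bfM$-orbit sum of monomial sections of $\cO_Y(w)$ which converges over $|u_i| < 1$; these steps are correct. The gap is in the clause ``since $\cO_Y(1)$ is $\bfM$-linearized and the action is properly discontinuous over $\Delta^k$, it descends to a relatively ample line bundle on $X(\cS)$.'' Linearization together with proper discontinuity yields a descended line bundle $\cL$ on $X(\cS)$, but it does not yield relative ampleness of $\cL$, and there is no a priori ampleness on $Y$ to descend since $Y$ is only locally of finite type. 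Relative very ampleness of some power $\cL^{\otimes w}$ is precisely the substantive step; without it the Proj description of $X(\cS)$, and hence the claimed isomorphism, do not follow, and the proposed identification of $R(\overline{b}_1,\dots,\overline{b}_k)$ with the full $\bfM$-invariant section ring is no easier.

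The paper establishes very ampleness by verifying that the descended theta functions separate points and tangents on the central fiber: the argument of Alexeev--Nakamura (Theorem 4.7 there) carries over with $\bigcup_i \mathrm{Bend}(\overline{b}_i)$ replacing the Delaunay decomposition, using the fact that the theta multiplication rule \eqref{mult-rule} reduces modulo $(u_1,\dots,u_k)$ to the usual toric multiplication of monomial sections on each stratum of $X_0(\cS)$ (Lemma~\ref{mult-rule-mod-u}), and one then passes from the central fiber to all of $\Delta^k$ by openness of very ampleness. Note also that your appeal to Remark~\ref{completion} to compare formal completions presupposes that $X(\cS) \to \Delta^k$ is relatively projective, which is the conclusion you are trying to reach, so as written that step is circular; once very ampleness of $\cL^{\otimes w}$ is established directly, that remark is no longer needed.
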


\begin{proof} Let $Y=Y_\Gamma$ be the locally finite
type toric variety defined by the 
polytope $\Gamma$, as in Section
\ref{sec:toric-varieties}. Then $Y=Y(\cS)$
by definition, and the $\bfM$-action 
(\ref{polytope-action}) on $\Gamma$, which we denote
${\bf m}\cdot -$, defines a linearization of the 
line bundle $\cO_Y(1)$ associated to
the polytope $\Gamma$.
A $\tfrac{1}{w}$-integral point
$(v,\vec{r})\in \Gamma\cap 
\tfrac{1}{w}(\bfM\times \bZ^k)$
defines an analytic section 
$(z,u)^{w(v,\vec{r})}\coloneqq
z^{wv} u^{w\vec{r}}\in H^0(Y, \cO_Y(w))$.
When this $\tfrac{1}{w}$-integral
point lies on the graph $\Gamma(b_1,\dots,b_k)$, 
we have the equality
\begin{align}\label{m-average}\Theta_{\overline{v}}(z,u) = 
\sum_{{\bf m}\in \bfM}(z,u)^{w({\bf m}
\cdot (v,\vec{r}))}\end{align} of analytic
functions, on the analytic open subset of $Y$
where all $|u_i|<1$. Convergence
holds because the $b_i$ having nontrivial
bending in all directions (see Definition \ref{dicing}),
and so with respect to an exhaustion
of $\bfM$, the powers of $u$ grow quadratically
while the powers of $z$ only grow linearly.
If, rather,
$(v,\vec{r})$ lies above the graph $\Gamma(b_1,\dots,b_k)$,
the corresponding sum over the $\bfM$-orbit
is simply a monomial in $u$ 
times $\Theta_{\overline{v}}(z,u)$.
We deduce that $\Theta_{\overline{v}}(z,u)$
descends, as an analytic section, to 
$H^0(X(\cS), \cL^{\otimes w})$ where
$\cL$ is the descent of the $\bfM$-linearized line 
bundle $\cO_Y(1)$ to the $\bfM$-quotient $X(\cS)$.

It suffices then to verify
that these descended sections define,
for some fixed $w$, a relatively very ample
line bundle on $X(\cS)$---in particular, 
that they separate points and tangents.
The argument is essentially the same as
\cite[Thm.~4.7]{alexeev-nakamura}, replacing
the Delaunay decomposition with the
more general decompositions 
$\bigcup_i {\rm Bend}(\overline{b}_i)$
that we consider. 

In fact, in the setting where all polytopal faces $F$
cut by $\bigcup_i {\rm Bend}(\overline{b}_i)$
are embedded as opposed to immersed in $\bT^g$, 
the multiplication rule (\ref{mult-rule}) for 
theta functions $\Theta_{\overline{v}}$ for
$\overline{v}\in F$, reduce, modulo the ideal 
$(u_1,\dots,u_k)$, to the usual multiplication
rule (Rem.~\ref{toricpolytope}) 
for the monomial sections of the powers of
the line bundle $\cL\vert_{Y_F}$ on the toric
stratum $Y_F\subset X(\cS)$ associated to $F$; 
see Lemma \ref{mult-rule-mod-u}.
Assuming $w$ is sufficiently large, we also ensure
that the non-normal union $X_0(\cS)=\lim_F Y_F$ 
is projectively
embedded via $\cL^{\otimes w}$. One deduces very ampleness
for all fibers, by the openness of very ampleness.
\end{proof}

For instance, we have the following special
case, for $1$-parameter degenerations:

\begin{corollary}
Let $\overline b \in H^0(\bT^g, \bZ{\rm  PL}/\bZ {\rm L})$ 
be dicing, with PL lift $b \colon \bfM_\bR \to \bR$ and define
$\Gamma \coloneqq \Gamma(b) + \bR_{\geq 0} \subset 
\bfM_\bR \times \bR$. Let $\mathcal S$ be the normal fan 
to $\Gamma$, and define $B \in {\rm{Sym}^2}\bfM^\vee$ by
$B({\bf m},{\bf m}') \coloneqq b({\bf m}+{\bf m}') - 
b({\bf m}) - b({\bf m}')+b(0)$. 
Define a $\Lambda_B$-invariant tiling $\mathcal T$ 
of ${\bfN}_\bR$ by slicing $\mathcal S$ at height $1$. 
Then 
$X({\rm{Cone}}(\mathcal T)) \simeq X(\mathcal S) 
\simeq X(\overline b).$ \hfill $\qed$
\end{corollary}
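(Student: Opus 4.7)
The identity $X(\mathcal S)\simeq X(\overline b)$ is immediate from Theorem \ref{poly-is-fan} applied to $k=1$. Thus the real work is to establish $X({\rm Cone}(\mathcal T))\simeq X(\mathcal S)$, which I would derive by showing that $\mathcal S = {\rm Cone}(\mathcal T)$ as polyhedral fans in $\bfN_\bR\times \bR$, and that the $\bfM$-action on $\mathcal S$ matches the $\Lambda_B$-action on ${\rm Cone}(\mathcal T)$ along the isomorphism $N\colon \bfM \xrightarrow{\sim} \Lambda_B$ (which is indeed an isomorphism because $B$ is positive-definite).

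For the fan identification, I would first observe that the support of $\mathcal S$ is $\bfN_\bR\times \bR_{\geq 0}$: a linear functional $(n,s)\in \bfN_\bR\times \bR$ is bounded below on $\Gamma=\Gamma(b)+\bR_{\geq 0}$ if and only if $s\geq 0$, because $\Gamma$ has recession cone $\{0\}\times \bR_{\geq 0}$ in the vertical direction and is otherwise bounded below by the graph of $b$. Every cone of $\mathcal S$ has apex at the origin by definition of a normal fan, and the dicing hypothesis (which ensures $b$ has nontrivial bending in every direction) guarantees that the slice of $\mathcal S$ at height $1$ is a genuine tesselation of $\bfN_\bR$ into bounded cells. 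Since $\mathcal S$ is supported on the half-space $\bfN_\bR \times \bR_{\geq 0}$ with cones based at the origin, it is tautologically the cone on its height-$1$ slice, so $\mathcal S = {\rm Cone}(\mathcal T)$.

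For the matching of actions, I would compute the linear part of the affine $\bfM$-action \eqref{polytope-action} on $\bfM_\bR\times \bR$. Formula \eqref{Bi} (specialized to $k=1$) gives $b(x+{\bf m})-b(x) = \langle N({\bf m}),x\rangle + c_{\bf m}$ with $N({\bf m})=B({\bf m},-)\in \bfN$, so the linear part of ${\bf m}\cdot-$ is $(x,r)\mapsto (x,\,r+\langle N({\bf m}),x\rangle)$. Dualizing produces an action $(n,s)\mapsto (n\pm s\,N({\bf m}),\,s)$ on $\mathcal S\subset \bfN_\bR\times \bR$ (the sign being a pullback-versus-pushforward convention), which is precisely the action \eqref{fan-action} of $\Lambda_B$ on ${\rm Cone}(\mathcal T)$, pulled back along $N$. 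In particular, at height $s=1$ this becomes translation by $N({\bf m})\in \Lambda_B$, confirming the $\Lambda_B$-invariance of $\mathcal T$ asserted in the statement. Taking the analytic quotients of the height $|u|<1$ loci of $Y(\mathcal S)$ and $Y({\rm Cone}(\mathcal T))$ therefore yields $X({\rm Cone}(\mathcal T))\simeq X(\mathcal S)$, completing the chain of isomorphisms. The only non-bookkeeping step is the toric-geometric observation that a fan supported on a half-space with all cones at the origin equals the cone over its boundary slice, which is essentially immediate.
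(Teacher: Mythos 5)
Your decomposition into the two isomorphisms $X(\mathcal S)\simeq X(\overline b)$ (Theorem \ref{poly-is-fan} with $k=1$) and $X({\rm Cone}(\mathcal T))\simeq X(\mathcal S)$ (equality of fans together with equivariance) is exactly what the paper's unwritten proof of this corollary amounts to, and the equivariance computation matching the $\bfM$-action of \eqref{polytope-action} with the $\Lambda_B$-action of \eqref{fan-action} along $N$ is also correct.

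There is, however, a genuine error in the support computation. You assert that a functional $(n,s)$ is bounded below on $\Gamma = \Gamma(b)+\bR_{\geq 0}$ if and only if $s\geq 0$, so that $|\mathcal S|= \bfN_\bR\times\bR_{\geq 0}$. This is false: for $s=0$ and $n\neq 0$, the restriction of $(n,0)$ to $\Gamma$ is $\langle n,m\rangle$, which is unbounded below since $m$ ranges over all of $\bfM_\bR$. The implication ``recession cone is $\{0\}\times\bR_{\geq 0}$, hence the bounded-below functionals form the dual cone'' is valid for polytopes cut out by \emph{finitely} many half-spaces, but fails for locally-finite polyhedra like $\Gamma$ whose boundary is the graph of a function of superlinear growth (exactly the paraboloid phenomenon). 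The correct support is $(\bfN_\bR\times \bR_{>0})\cup\{0\}$; more to the point, every cone $\sigma\in\mathcal S$ satisfies $\sigma\cap\{s=0\}=\{0\}$, because a functional $(n,0)$ with $n\neq 0$ attains no minimum on $\Gamma$. With the support as you stated it, the claim that $\mathcal S$ is ``tautologically'' the cone over its height-one slice would actually require an argument: a fan filling a \emph{closed} half-space can have positive-dimensional cones inside the boundary hyperplane, and one would need to show they all occur as faces of cones at positive height. With the correct support, there is nothing to show, since every nonzero cone intersects $\{s=1\}$ in a nonempty set and is the cone over that slice; this is the form in which the claim really is immediate. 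So the error is localized and does not sink the argument, but the sentence justifying $\mathcal S={\rm Cone}(\mathcal T)$ should be repaired as above.
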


\subsection{Examples}\label{sec:examples} 
We now discuss some examples
of the Mumford construction.

\begin{example}\label{ex:tate}
The basic example is the Tate curve
$\bC^*/u^\bZ$. Here $g=1$, so 
${\bf M}_\bR/{\bf M}\simeq \bR/\bZ$. Define a PL function
$b\colon \bR\to \bR$ which is linear on each interval
$[m,m+1]$, $m\in \bZ$, and has values on $\bZ$ equal to 
$b(m) = \tfrac{1}{2}(m^2-m)$. The graph of $b$ is depicted
in the left of Figure \ref{fig:tate}.

\begin{figure}[ht]
\includegraphics[height=2.25in]{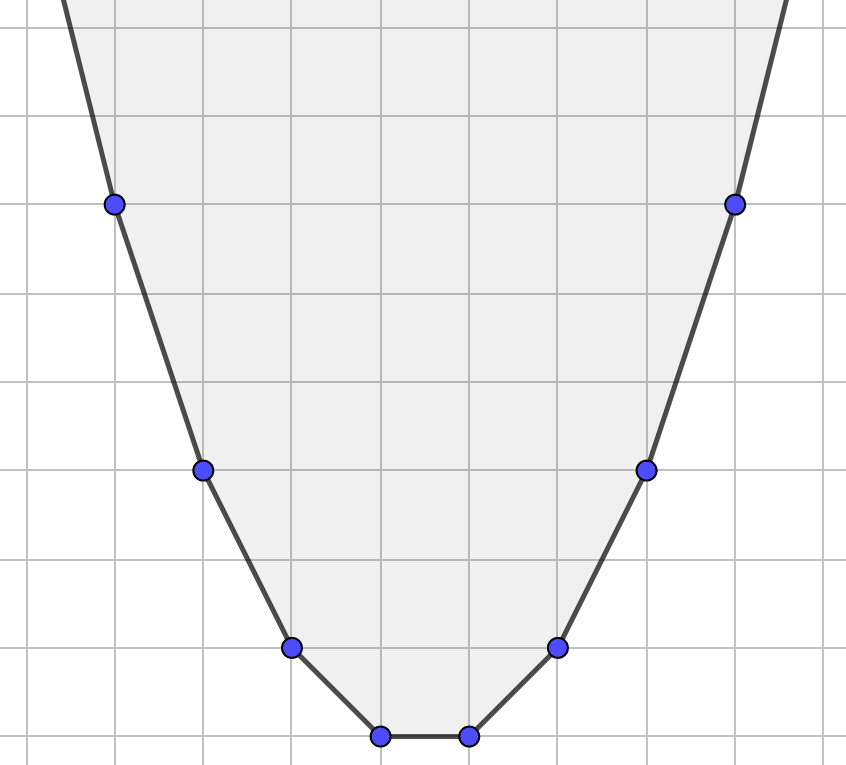}\hspace{10pt}
\includegraphics[height=2.25in]{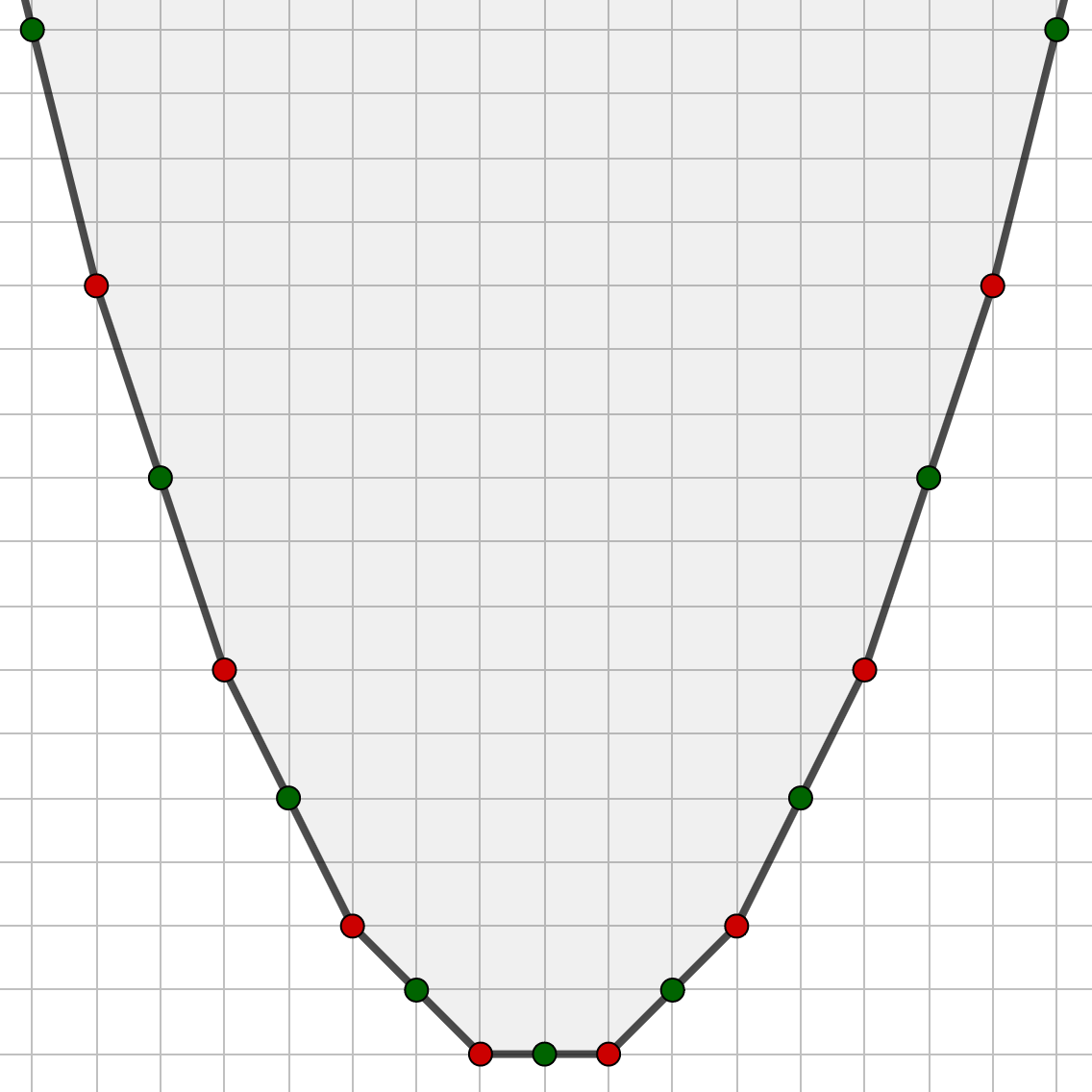}
\caption{Mumford polytope construction of the Tate curve,
$\Theta_{0/1}(z,u)$ in blue, $\Theta_{0/2}(z,u)$ in red, $\Theta_{1/2}(z,u)$ in green.}
\label{fig:tate}
\end{figure}

Then $\Gamma$ is the shaded region in Figure \ref{fig:tate}, and the convex hull of 
$\{(m, \tfrac{1}{2}(m^2-m)): m\in \bfM\}
\subset \bfM\times \bZ = \bZ^2$.
The theta functions of weight $1$, $2$, and $3$ are 
\begin{align*} \Theta_{0/1}(z,u)&= \cdots + z^{-3}u^{6} + z^{-2}u^{3}+ z^{-1}u^1+z^0u^0+ z^1u^0+z^2u^1+z^3u^3+\cdots \\
\Theta_{0/2}(z,u)&= \cdots + z^{-6}u^{12} + z^{-4}u^{6}+ z^{-2}u^2+z^0u^0+ z^2u^0+z^4u^2+z^6u^{6}+\cdots \\
\Theta_{1/2}(z,u)&= \cdots + z^{-5}u^{9} + z^{-3}u^{4}+ z^{-1}u^1+z^1u^0+ z^3u^1+z^5u^4+z^7u^9+\cdots \\
\Theta_{0/3}(z,u)&= \cdots + z^{-9}u^{18} + z^{-6}u^{9}+ z^{-3}u^3+z^0u^0+ z^3u^0+z^6u^{3}+z^9u^{9}+\cdots \\
\Theta_{1/3}(z,u)&= \cdots + z^{-8}u^{15} + z^{-5}u^{7}+ z^{-2}u^2+z^1u^0+ z^4u^1+z^7u^5+z^{10}u^{12}+\cdots \\
\Theta_{2/3}(z,u)&= \cdots + z^{-7}u^{12} + z^{-4}u^{5}+ z^{-1}u^1+z^2u^0+ z^5u^2+z^8u^7+z^{11}u^{15}+\cdots
\end{align*} 
with those of weight $1$ and $2$
depicted in Figure \ref{fig:tate} as the sum of the
blue, red, and green monomials. The normal
fan is depicted in Figure \ref{fig:tate2}
and the tiling $\cT$ (i.e.~the slice of the normal
fan at height $1$) is the tiling of $\bR^1/\Lambda_B$
by a segment of length $1$. Here 
$\Lambda_B\simeq \bZ$ 
because the bilinear form on $\bR^1$
defined by the formula 
$$B(x,y)= \tfrac{1}{2}(x+y)^2-\tfrac{1}{2}x^2-\tfrac{1}{2}y^2=xy$$ 
has $1\times 1$ Gram matrix $[1]\in {\rm Sym}_{1\times 1}(\bZ)$.

\begin{figure}[ht]
\includegraphics[width=4in]{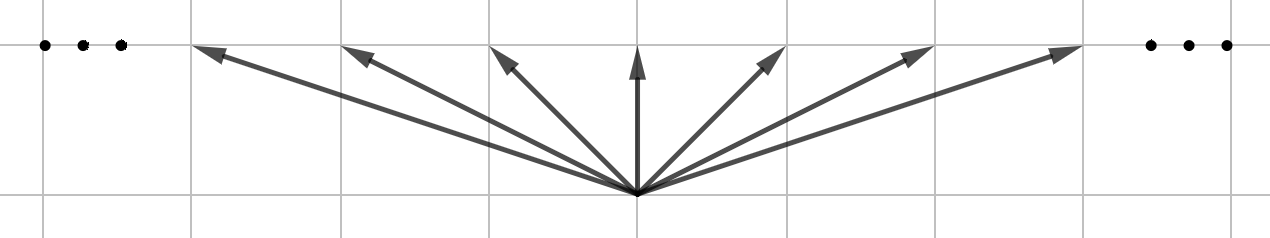}
\caption{Normal fan of the Tate curve.}
\label{fig:tate2}
\end{figure}
\end{example}

The torus $\bT^1=\bR/\bZ$ and the weighted
polyhedral complex ${\rm Bend}(b)$
inside it are depicted in Figure \ref{fig:tate3}
(the most condensed presentation of a 
Mumford construction). This figure happens to
be the same as the tiling $\cT$, but this
is a coincidence. By Proposition \ref{bending-read},
this polyhedral decomposition of $\bT^1$ is the 
intersection complex of the special fiber $X_0$ 
of the degeneration of elliptic curves 
$X = X(b) \to \Delta$, with strata formed
from the polytopes of the decomposition ${\rm Bend}(b)$.
Hence $X_0$ is $\bP^1$ glued to 
itself along two points, $0$ and $\infty$.

\begin{figure}[ht]
\includegraphics[height=1.2in]{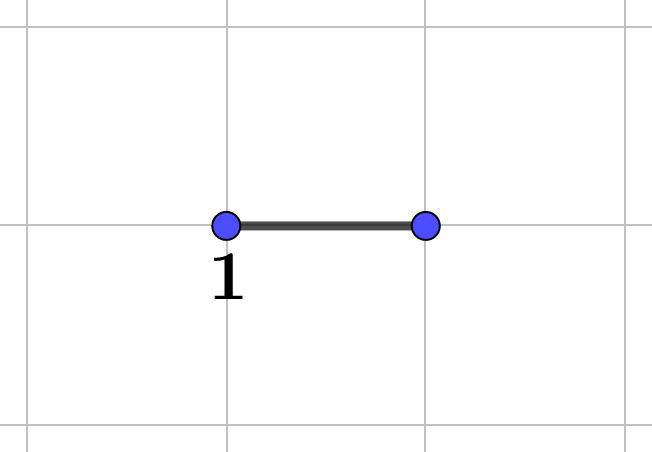}
\caption{Bending complex of the Tate curve in $\bR/\bZ$. 
The integer $1$ indicates the bending parameter, 
see Definition \ref{bendinglocus}.}
\label{fig:tate3}
\end{figure}

Figure \ref{fig:tate_quotient} is a visual
depiction of Construction \ref{mumford}. By considering
the maximal cones of the normal
fan of Figure \ref{fig:tate3}, we see that
the universal cover $Y({\rm Cone}(\cT))\to \Delta_u$ 
of the Tate curve
may be constructed as an infinite union of copies
of $\bC^2$: $$Y({\rm Cone}(\cT))=\bigcup_{n\in \bZ}\bC^2_{(x_n,y_n)}$$ where the gluings are $x_{n+1}=y_n^{-1}$ and $y_{n+1} = x_ny_n^2$.
The map to $\bC_u$ is given on local charts
by $u=x_ny_n$ and respects the gluings. Finally,
the $\bZ$-action is 
$(x_n,y_n)\mapsto (x_{n+1}, y_{n+1})$.

\begin{figure}[ht]
\includegraphics[width=6in]{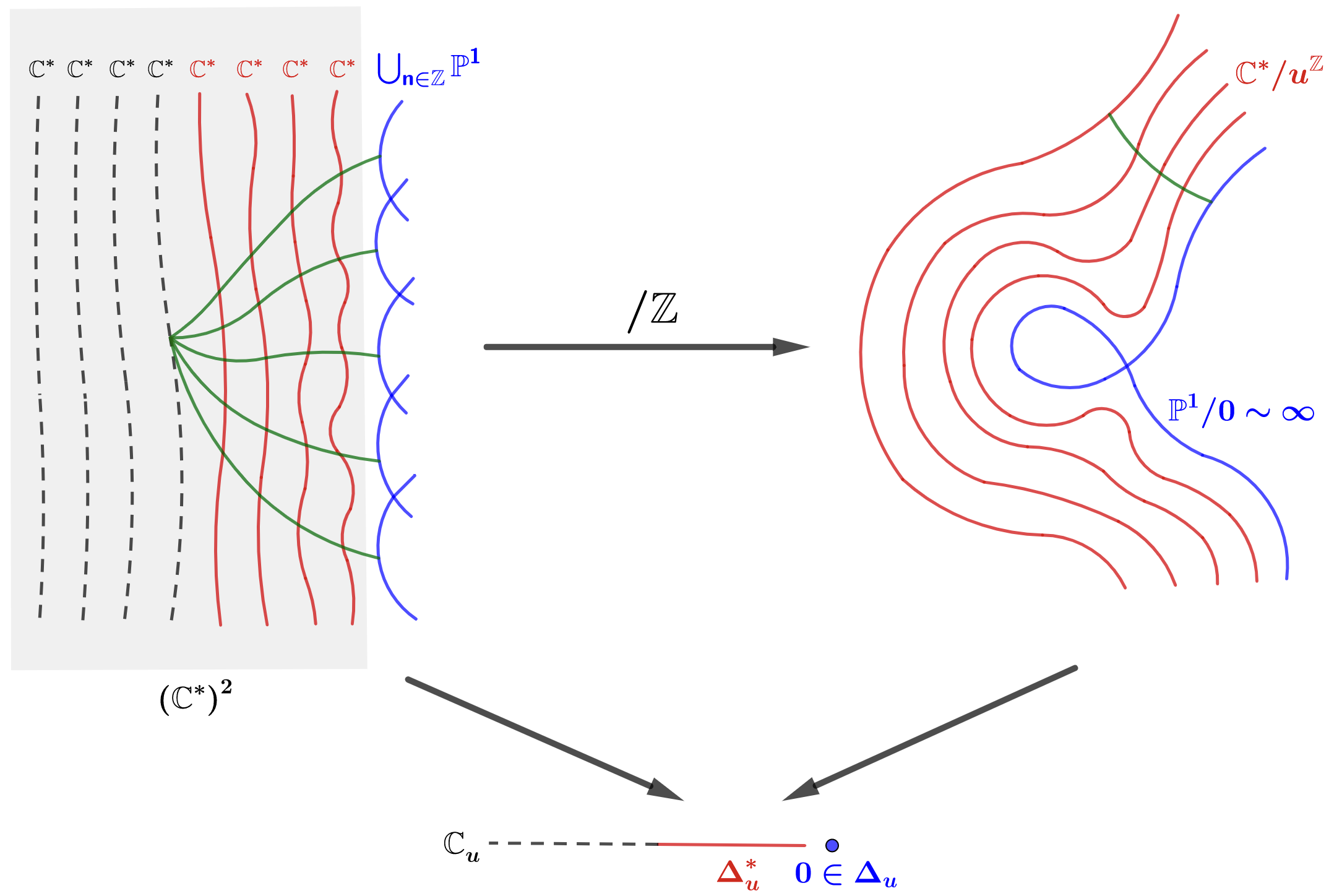}
\caption{Top left: Universal cover of the Tate curve.
Inverse image of $\Delta^*_u$ depicted in red, 
with embedding into $(\bC^*)^2$, in grey.
In the toroidal extension $Y({\rm Cone}(\cT))$,
the fiber over $0\in \Delta_u$ in blue is an
infinite $\bZ$-periodic quilt of toric varieties,
given by gluing an infinite chain of $\bP^1$s.
A $\bZ$-orbit of co-characters passing through
$(1,1)\in (\bC^*)^2$ and forming
sections over $\bC_u$ is depicted in green.
Top right: The Tate curve, with
general fiber $\bC^*/u^\bZ$ in red, central
nodal fiber in blue, and section in green.
}
\label{fig:tate_quotient}
\end{figure}

\begin{example}[Multiplication of theta functions]
In general the multiplication rule for theta functions
is quite complicated, but here we check that for
the central fiber $u = 0$ of the Tate curve, 
Example \ref{ex:tate}, the sections of $\cL^{\otimes 3}$
define an embedding of a nodal cubic $X_0(b)\hookrightarrow\bP^2$. 
In addition, the computation will show that the generic fiber 
of $\Proj_{\bC[[u]]} R(b) \to \Spec \bC[[u]]$, see \eqref{projfamily}, 
is a smooth cubic plane curve, as also follows from 
Theorem \ref{poly-is-fan}. See Remark \ref{genericfiberabelian}.

By quotienting by the ideal $(u)$, the multiplication
rule (\ref{mult-rule}) significantly simplifies.
Generally, a product of two monomials lies in the ideal
$(u_1,\dots,u_k)$, whenever they lie over distinct 
domains of linearity of the $b_i$ as then
their product, viewed as a lattice point in 
$\bfM\times \bZ^k$, lies strictly above the graph 
$\Gamma(b_1,\dots,b_k)$ by convexity. We deduce:

\begin{lemma}\label{mult-rule-mod-u} 
$\Theta_{\overline{v}_1}\cdot 
\Theta_{\overline{v}_2}=0\textrm{ mod }(u_1,\dots,u_k)$ 
whenever $\overline{v}_1$, $\overline{v}_2$ 
do not lie in any common polyhedral domain of 
$\bigcup_i {\rm Bend}(\overline{b}_i)\subset \bT^g$. 
Furthermore, if both $\overline{v}_1$, $\overline{v}_2$
lie in the interior
of a polyhedral domain of maximal dimension, then we have 
$$\Theta_{\overline{v}_1}\cdot \Theta_{\overline{v}_2} = 
\Theta_{\frac{w_1\overline{v}_1+w_2\overline{v}_2}{w_1+w_2}}
\,{\rm  mod}\,(u_1,\dots,u_k).$$
\end{lemma}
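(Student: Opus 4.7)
The plan is to expand the product as a double sum over pairs $(v_1, v_2) \in (\overline{v}_1 + \bfM) \times (\overline{v}_2 + \bfM)$ and then regroup by the $z$-exponent. Each pair contributes the monomial $z^{w_1 v_1 + w_2 v_2}\, u^{w_1 \vec{b}(v_1) + w_2 \vec{b}(v_2)}$. Setting $v_3 \coloneqq (w_1 v_1 + w_2 v_2)/(w_1 + w_2)$, the $z$-exponent rewrites as $(w_1 + w_2)\, v_3$ with $v_3 \in \overline{v}_3 + \bfM$, matching a monomial of $\Theta_{\overline{v}_3}$ (for $\overline{v}_3 = (w_1 \overline{v}_1 + w_2 \overline{v}_2)/(w_1 + w_2)$) whose companion $u$-exponent is $(w_1 + w_2)\vec{b}(v_3)$. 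Hence the pair $(v_1, v_2)$ contributes with an excess $u$-exponent
\[
\vec{E}(v_1, v_2) \coloneqq w_1 \vec{b}(v_1) + w_2 \vec{b}(v_2) - (w_1 + w_2)\vec{b}(v_3).
\]

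The key input is the convexity of each $b_i \colon \bfM_\bR \to \bR$, which gives $E_i(v_1, v_2) \geq 0$, with equality iff $v_1$, $v_2$, $v_3$ lie in a common domain of linearity of $b_i$. Since $v_3$ is the weighted barycenter of $v_1$ and $v_2$, the simultaneous vanishing of all $E_i$ occurs iff $v_1, v_2$ lie in a common cell $P$ of the lifted polyhedral decomposition of $\bfM_\bR$ underlying $\bigcup_i {\rm Bend}(\overline{b}_i) \subset \bT^g$ (and then $v_3 \in P$ automatically by convexity of $P$). Thus, if $\overline{v}_1$ and $\overline{v}_2$ share no common polyhedral cell in $\bT^g$, then no lifts share a common cell either, $\vec{E}(v_1, v_2) \neq 0$ for every pair, and the entire product lies in $(u_1, \dotsc, u_k)$.

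For the second assertion, suppose $\overline{v}_1, \overline{v}_2$ lie in the interior of a common maximal-dimensional cell $\overline{P} \subset \bT^g$. Fix a lift $P \subset \bfM_\bR$ of $\overline{P}$, and let $\tilde{v}_1, \tilde{v}_2 \in P^\circ$ be the unique interior lifts of $\overline{v}_1, \overline{v}_2$ in $P$. Then the pairs with $\vec{E}(v_1, v_2) = 0$ lying in a common translate $P + \mathbf{m}$ are precisely $(\tilde{v}_1 + \mathbf{m},\, \tilde{v}_2 + \mathbf{m})$ for $\mathbf{m} \in \bfM$, yielding $v_3 = \tilde{v}_3 + \mathbf{m}$ with $\tilde{v}_3 \coloneqq (w_1 \tilde{v}_1 + w_2 \tilde{v}_2)/(w_1 + w_2) \in P^\circ$. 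Summing the resulting contributions over $\mathbf{m} \in \bfM$ reproduces $\sum_{\mathbf{m}} z^{(w_1+w_2)(\tilde{v}_3 + \mathbf{m})}\, u^{(w_1+w_2)\vec{b}(\tilde{v}_3 + \mathbf{m})} = \Theta_{\overline{v}_3}$ modulo $(u_1, \dotsc, u_k)$.

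The main obstacle is the bijection count in the second assertion: one must confirm that each monomial of $\Theta_{\overline{v}_3}$ arises from exactly one contributing pair, and in particular that contributions do not come from distinct $\bfM$-translates of $P$ that happen to meet. Both points rest on the interior hypothesis, which ensures $\tilde{v}_1, \tilde{v}_2, \tilde{v}_3$ avoid boundary faces shared with adjacent cells, together with the embedded character of $\overline{P}$ that is presumed in the surrounding setup of Theorem \ref{poly-is-fan}.
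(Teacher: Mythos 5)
Your proof is correct and takes essentially the same route as the paper: the key input is the convexity of each $b_i$, giving a non-negative excess $\vec{E}(v_1,v_2)$, which the paper phrases as the product of two monomials (viewed as a lattice point in $\bfM\times\bZ^k$) lying strictly above the overgraph $\Gamma(b_1,\dots,b_k)$. The paper's exposition is a single-sentence observation followed by ``we deduce,'' and your proposal supplies the omitted details---in particular the bijection count for the second assertion, which does indeed rest on the embeddedness hypothesis on the maximal cell $\overline{P}$, as you correctly flag.
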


Otherwise, the multiplication rule mod $(u_1,\dots,u_k)$
must take into account the fact that there are might be
multiple representatives $v_1$, $v_2$ of 
$\overline{v}_1,\overline{v}_2$ which lie in the same
domain of linearity.
In any case, we may apply this comment
and Lemma \ref{mult-rule-mod-u}
to the weight $3$ theta functions of Example \ref{ex:tate}.
We deduce the following multiplication rules mod $u$:
\begin{align*} 
\Theta_{0/3}^3&=
\Theta_{0/9}+3\Theta_{3/9}+3\Theta_{6/9} 
&&\Theta_{1/3}^3= \Theta_{3/9} 
\\
\Theta_{0/3}^2\Theta_{1/3}&=
\Theta_{1/9}+2\Theta_{4/9}+\Theta_{7/9} 
&& \Theta_{1/3}^2\Theta_{2/3}=
\Theta_{4/9}
&&& \Theta_{0/3}\Theta_{1/3}^2=
\Theta_{2/9}+\Theta_{5/9}
\\
\Theta_{0/3}^2\Theta_{2/3}&=
\Theta_{2/9}+2\Theta_{5/9}+\Theta_{8/9} 
&& \Theta_{1/3}\Theta_{2/3}^2=
\Theta_{5/9}
&&&\Theta_{0/3}\Theta_{2/3}^2=
\Theta_{4/9}+\Theta_{7/9}
\\
\Theta_{0/3}\Theta_{1/3}\Theta_{2/3} &= 
\Theta_{3/9}+\Theta_{6/9}
&&\Theta_{2/3}^3=
\Theta_{6/9}
\end{align*}
as only $0/3\in \tfrac{1}{3}\bfM/\bfM\subset
\bR/\bZ$ lies in multiple
domains of linearity of $\overline{b}$.

These are the ten cubics 
in ${\rm Sym}^3H^0(X_0(b),\cL^{\otimes 3})$,
and there is indeed one linear relation:
$$\Theta_{0/3}\Theta_{1/3}\Theta_{2/3} = 
\Theta_{1/3}^3+\Theta_{2/3}^3\textrm{ mod }u,$$
which gives the projective equation 
$\{xyz=x^3+y^3\}\subset \bP^2$ of a cubic curve,
with a simple node at $[0:0:1]$. Computing
the expansion of theta products
rather over $\bC[[u]]/(u^2)$,
one additionally sees that the local equation
of the node in the total space is $xy=u$ and hence the general
fiber of $X(b)\to \Delta$ is a smooth elliptic curve.
\end{example}

\begin{example}[Theta graph, $1$-parameter]\label{theta-1-param}
Now consider the $1$-parameter Mumford degeneration
$(\bT^2, b)$
corresponding to the PL
function $$b(m_1,m_2) = r_1\frac{m_1^2-m_1}{2}+
r_2\frac{m_2^2-m_2}{2}+r_3\frac{(m_1+m_2)^2-(m_1+m_2)}{2}$$
on $\bfM=\bZ^2$ where $(r_1,r_2,r_3)\in \bN^3$ is some
fixed vector. Then $b$ is convex and the
boundary of $\Gamma$ is depicted in Figure
\ref{fig:tate4}.

\begin{figure}[ht]
\includegraphics[height=3.3in]{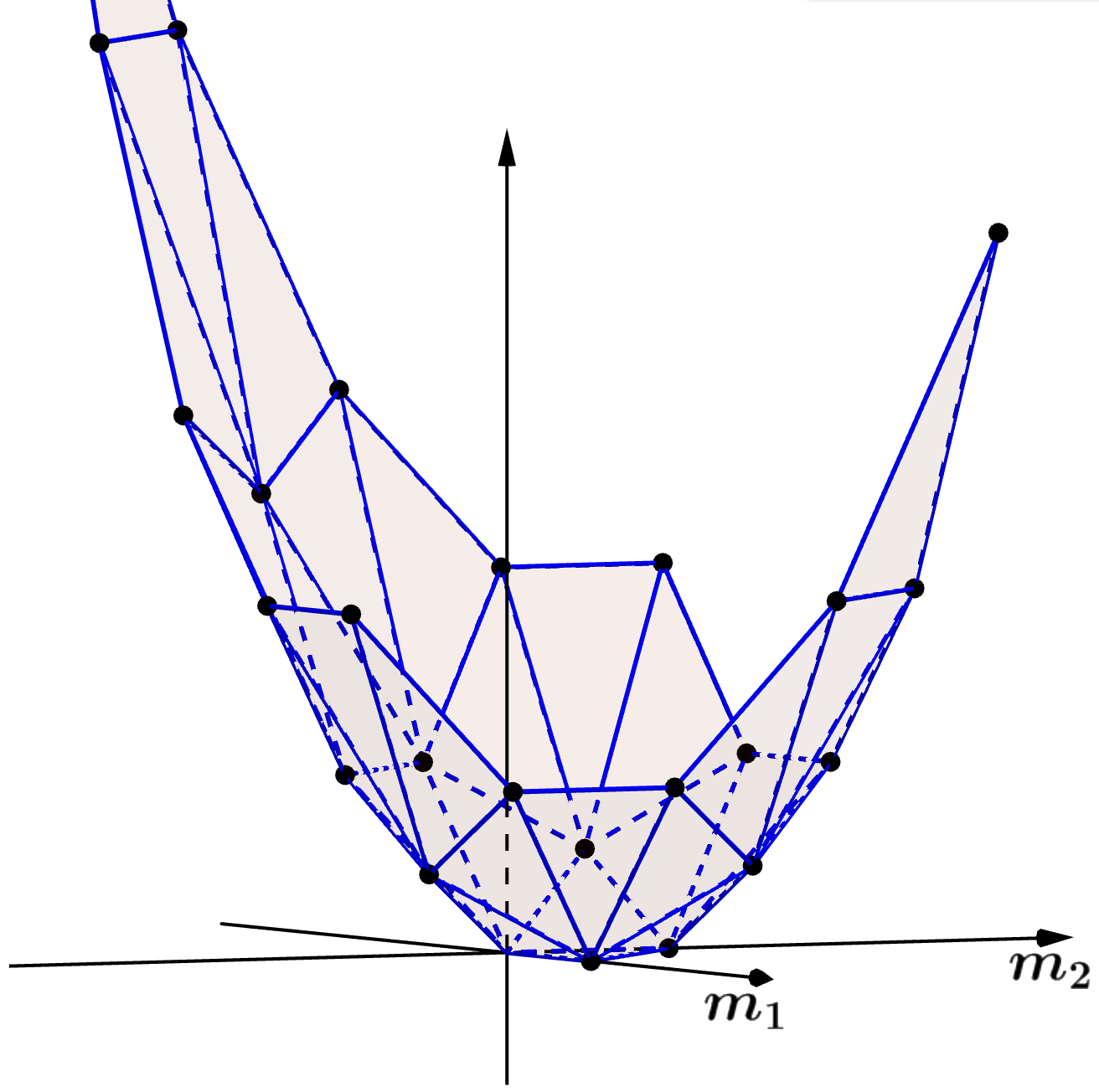}
\caption{The graph of $b$ over $\bfM_\bR= \bR^2$, 
for values
$r_1=r_2=r_3=1$.}
\label{fig:tate4}
\end{figure}

\begin{figure}[ht]
\includegraphics[height=1.5in]{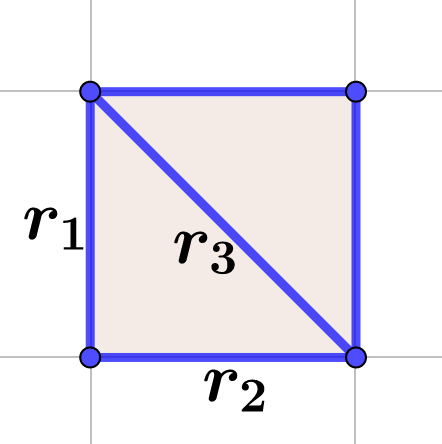}
\caption{Bending complex of $b$ 
in $\bT^2$. 
}
\label{fig:tate5}
\end{figure}

\begin{figure}[ht]
\includegraphics[height=3.5in]{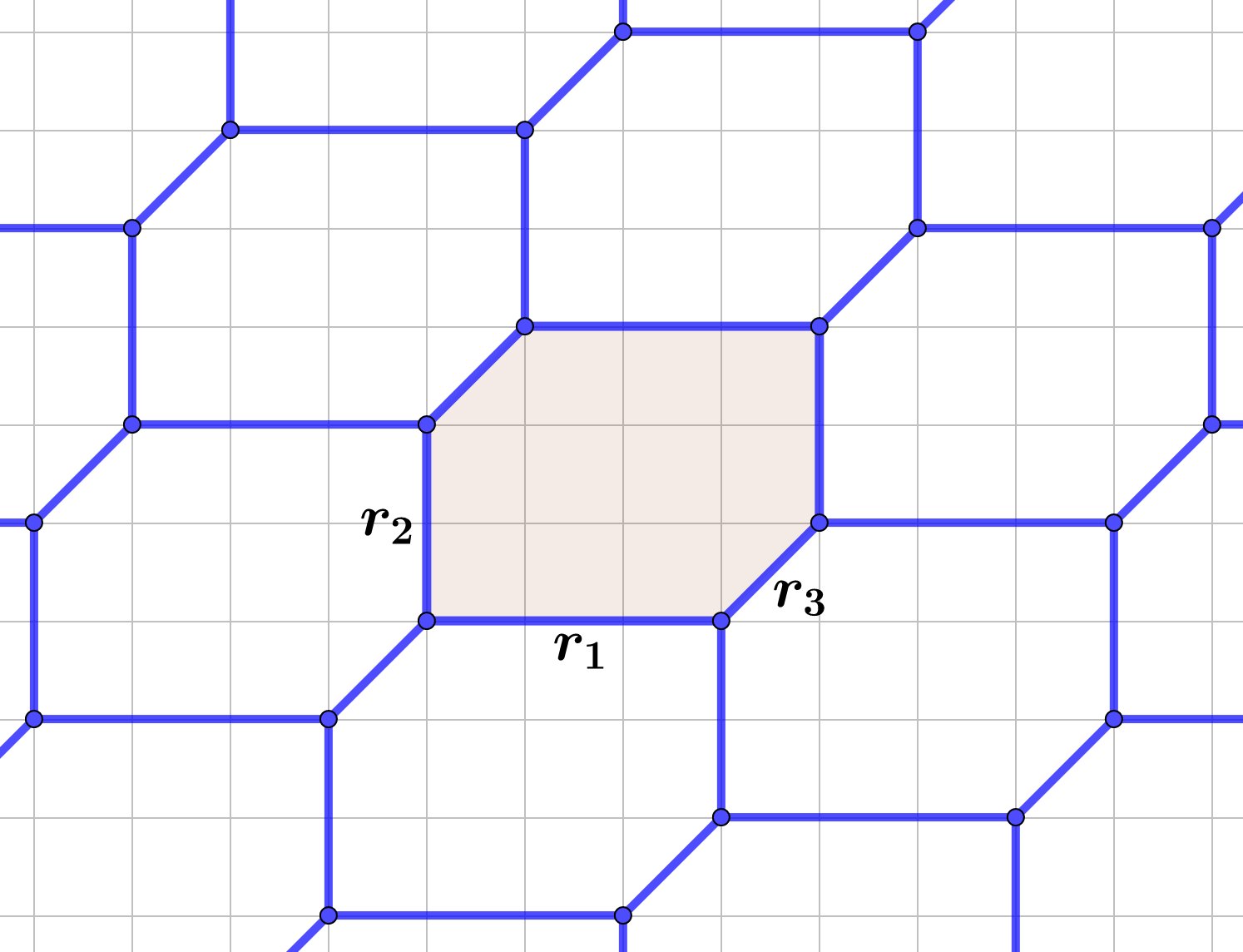}
\caption{Fundamental domain for the 
$\Lambda_B$-action on $\bR^2$, 
$(r_1,r_2,r_3)=(3,2,1)$,
and tiling
$\cT$, arising from slicing
the normal fan at height $1$.}
\label{fig:tate6}
\end{figure}

We have, for instance, the weight $1$ theta function
\begin{align*}\Theta_{(0/1,\,0/1)}(z_1,z_2,u)=\cdots &+ 
 z_1^{-1}z_2^{1}\,\,\,u^{r_1} &&+z_1^0z_2^1\,\,\,u^0 &&+z_1^{1}z_2^1\,\,\,u^{r_3} \\
&+z_1^{-1}z_2^0\,\,\,u^{r_1+r_3} &&+z_1^0z_2^0\,\,\,u^0 &&+z_1^1z_2^0\,\,\,u^0 \\
&+z_1^{-1}z_2^{-1}u^{r_1+r_2+3r_3} &&+z_1^0z_2^{-1}u^{r_2+r_3} &&+z_1^1z_2^{-1}u^{r_2}+\cdots \end{align*}
e.g.~since $b(-1,-1)=r_1+r_2+3r_3$. The bending complex 
of $b$ is depicted in Figure
\ref{fig:tate5}. The associated bilinear form 
has Gram matrix $$B = r_1\twobytwo{1}{0}{0}{0}+
r_2\twobytwo{0}{0}{0}{1}+
r_3\twobytwo{1}{1}{1}{1}$$
and the tiling $\cT$ 
is depicted in Figure \ref{fig:tate6}.

To produce the universal $1$-parameter
Mumford degeneration, requires twisting
the construction by 
$a\in {\rm Sym}_{2\times 2}(\bC^*)/u^B\simeq (\bC^*)^2$.
\end{example}

\begin{example}[Theta graph, $3$-parameter]\label{theta-3-param}
We now modify the previous example, by
instead taking a $3$-parameter Mumford degeneration
for $(\bT^2, b_1, b_2, b_3)$
where
$$b_1(m_1,m_2)\coloneqq \frac{m_1^2-m_1}{2},\quad 
b_2(m_1,m_2)\coloneqq \frac{m_2^2-m_2}{2},\quad 
b_3(m_1,m_2)\coloneqq \frac{(m_1+m_2)^2-(m_1+m_2)}{2}.$$
This example is originally attributed
to Deligne \cite[Sec.~7]{mumford1972}; 
called by Mumford the ``keystone'' of the 
compactification of $\cA_2$. 
The figure is similar to Figure \ref{fig:tate4}, 
but we now use three different colors, to indicate the different
bending loci ${\rm Bend}(b_i)$, for $i=1,2,3$.
See Figure \ref{fig:tate7}.

\begin{figure}[ht]
\includegraphics[height=1.5in]{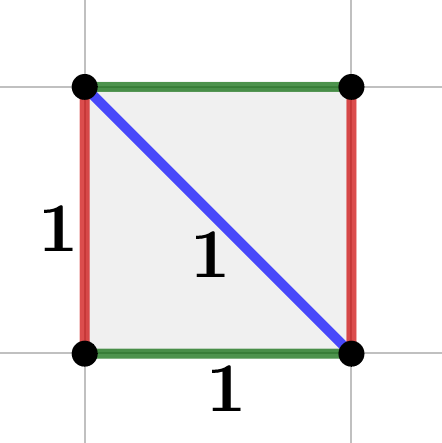}
\caption{Bending complexes of 
$b_1$, $b_2$,
$b_3$
in $\bT^2$, in red, green, and blue, respectively. 
The integers are the bending parameters of $b_1, b_2$, and $b_3$.}
\label{fig:tate7}
\end{figure}

The theta function of weight $1$ is
\begin{align*}\Theta_{(0/1,\,0/1)}(z_1,z_2,u_1, u_2, u_3)=\cdots 
& + z_1^{-1}z_2^{1}u_1^1u_2^0u_3^0 
 &&+z_1^0z_2^1u_1^0u_2^0u_3^0 
 &&+z_1^{1}z_2^1u_1^0u_2^0u_3^1 \\
&+z_1^{-1}z_2^0u_1^1u_2^0u_3^1 
&&+z_1^0z_2^0u_1^0u_2^0u_3^0 &&+z_1^1z_2^0u_1^0u_2^0u_3^0 \\
&+z_1^{-1}z_2^{-1}u_1^1u_2^1u_3^3 
&&+z_1^0z_2^{-1}u_1^0u_2^1u_3^1 &&+z_1^1z_2^{-1}u_1^0u_2^1u_3^0+\cdots.\end{align*}
Note that, upon restriction to the co-character 
${\rm Spec}\,\bC[[u]]\subset {\rm Spec}\,\bC[[u_1,u_2,u_3]]$
defined by $(u_1,u_2,u_3)=(u^{r_1},u^{r_2},u^{r_3})$ 
with $(r_1,r_2,r_3)\in \bN^3$, 
we get $\Theta_{(0/1,\,0/1)}(z_1,z_2,u)$ 
from Example \ref{theta-1-param},
and indeed, the multivariable Mumford 
construction restricts
to a $1$-parameter one along this 
co-character, and 
$B= r_1B_1+r_2B_2+r_3B_3$.

To understand the fibers of 
$X(b_1,b_2,b_3)=X\to \Delta^3$ 
over the various coordinate subspaces, we 
refer to Figure \ref{delaunayv2}.
By Proposition \ref{bending-read}, the 
polytopes of the components of
the fiber over $(u_1,u_2,u_3)\in \Delta^3$ 
of the Mumford construction can be read
off from the bending 
loci of the $b_i$
for which $V(u_i)=0$.
The fiber over the origin is the union 
$$X_{(0,0,0)}=
\bP^2\cup_{\triangle} \bP^2$$
of two copies of $\bP^2$ along a triangle of lines,
so that the intersection complex is the upper left
of Figure \ref{delaunayv2}.
The limit of the theta divisor is the union
of two lines $$V(\Theta_{(0/1,\,0/1)})\cap X_{(0,0,0)}=
\ell_1\cup \ell_2\subset \bP^2\cup_\triangle \bP^2.$$

The fiber $X_{(0,0,u_3)}$ over a point on the $u_3$-axis
is normalized by the square $(\bP^1\times \bP^1,\square)$ 
and results from gluing two
sections (the top and bottom of the square) and two fibers 
(the left and right of the square). The gluing isomorphisms
are $u_3\in \bC^*$, $u_3^{-1}\in \bC^*$. The theta divisor
$V(\Theta_{(0/1,\,0/1)})$ lies in the linear system of $\cO(1,1)$
on the normalization,
and glues to a Cartier divisor on the 
non-normal surface $X_{(0,0,u_3)}$.
The fibers over the $u_1$- and $u_2$-axes of $\Delta^3$
are similar; the intersection complexes are
in the top row of Figure \ref{delaunayv2}.

\begin{figure}[ht]
\includegraphics[width=5in]{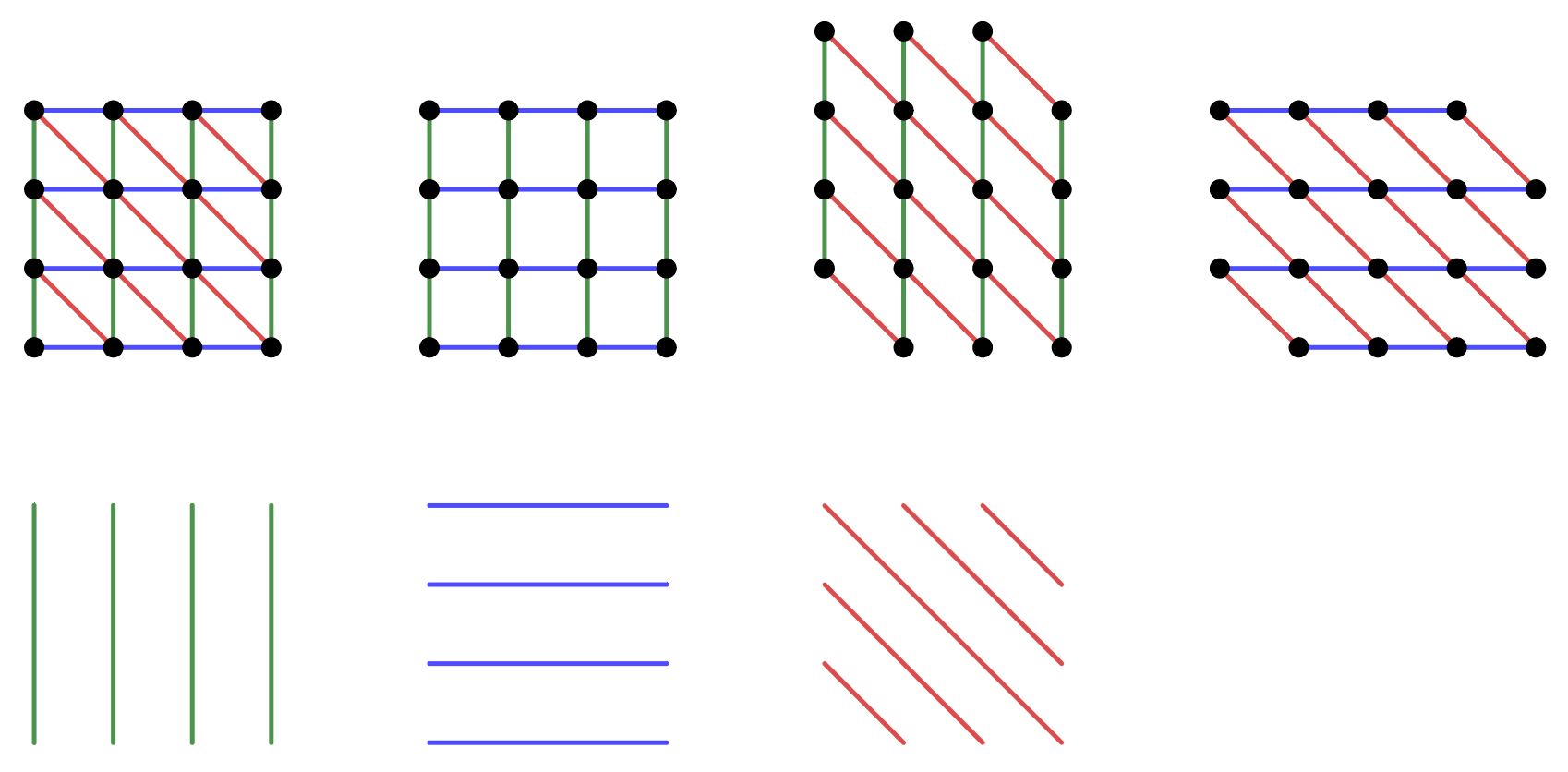}
\caption{Intersection complexes of the 
fibers of the Mumford construction.}
\label{delaunayv2}
\end{figure}

Over a general point of a coordinate hyperplane $V(u_1)$,
the fiber $X_{(0,u_2,u_3)}$ is the result of gluing
a $\bP^1$-bundle $\bP_E(\cO\oplus \cM)$ over an elliptic
curve $E$ to itself, $\cM\in {\rm Pic}^0(E)$,
by gluing the $0$- and $\infty$-sections
of the bundle by a translation
depending on $\cM$. 
Fibers over the other coordinate hyperplanes
are similar, and the corresponding intersection complexes 
are the first three figures in the second row 
of Figure \ref{delaunayv2}.
Finally, the general fiber $X_{(u_1,u_2,u_3)}$ over a
point $(u_1,u_2,u_3)\in (\Delta^*)^3$ is a smooth, principally
polarized abelian surface with $V(\Theta_{(0/1,\,0/1)})$ 
the theta divisor.
\end{example}

\subsection{Base change and Veronese embedding}\label{sec:base-change}

Let $X(b_1,\dots, b_k)\to \Delta^k$ be a Mumford construction
on a collection of convex sections $b_i\in 
H^0(\bT^g, \bZ{\rm PL}/\bZ{\rm L})$,
see Construction \ref{mumford-polytope},
and consider a monomial
base change, of the form $\Delta^{n}\to \Delta^k$,
such that the pullback of coordinates $u_i$ are of the form
\begin{align}\begin{aligned} \label{monomial-sub}
u_1 &= w_1^{r_{11}}\cdots w_n^{r_{1n}}=: w^{\vec{r}_1}, \\ 
 & \cdots \\
u_k &= w_1^{r_{k1}}\cdots w_n^{r_{kn}}=: w^{\vec{r}_k}. 
\end{aligned}\end{align}

\begin{proposition} The base change of the Mumford
degeneration $X(b_1,\dots,b_k)\to \Delta^k$ 
 along a monomial morphism
 $\Delta^n\to \Delta^k$ 
 is the Mumford construction $X(c_1,\dots,c_n)\to \Delta^n$ 
 associated to the convex PL functions 
 $$c_j\coloneqq 
r_{1j}b_1+\cdots+r_{kj}b_k.$$
\end{proposition}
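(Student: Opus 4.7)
The proposition amounts to a direct verification at the level of theta functions. I would begin by recording two easy compatibilities. First, the $c_j = \sum_i r_{ij} b_i$ are convex PL functions on $\bfM_\bR$, being nonnegative integer combinations of convex PL functions, and their bending loci satisfy ${\rm Bend}(\overline{c}_j) \subset \bigcup_i {\rm Bend}(\overline{b}_i)$; in particular the vertices remain integral, so the $c_j$ are dicing in the sense of Definition \ref{dicing}. Second, by the formula \eqref{Bi} and the bilinearity of the assignment $b_i \mapsto B_i$, the associated bilinear forms satisfy $C_j = \sum_i r_{ij} B_i$; this matches the expected monodromy transformation rule, since the monodromies about the $w_j$-axes are $T_j^w = \prod_i (T_i^u)^{r_{ij}}$, hence $N_j^w = \sum_i r_{ij} N_i^u$ and $C_j = L(N_j^w\cdot,\cdot) = \sum_i r_{ij} B_i$ by Definition \ref{def:Bi}.

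The key computation is the transformation of theta functions under the monomial substitution \eqref{monomial-sub}. For $\overline{v}\in \tfrac{1}{w}\bfM/\bfM$, the monomial coefficients transform as
\[
\prod_{i=1}^k u_i^{w\, b_i(v)} \bigg|_{u_i = w^{\vec{r}_i}}
= \prod_{j=1}^n w_j^{w\sum_i r_{ij} b_i(v)}
= \prod_{j=1}^n w_j^{w\, c_j(v)},
\]
so that $\Theta^{(b_\bullet)}_{\overline{v}}(z,u)\vert_{u = w^{\vec{r}}} = \Theta^{(c_\bullet)}_{\overline{v}}(z,w)$, weight by weight and residue by residue. Consequently, the multiplication rule \eqref{mult-rule} is preserved on the nose by the substitution, inducing an isomorphism of graded $\bC[[w_1,\dots,w_n]]$-algebras
\[
R(b_1,\dots,b_k) \otimes_{\bC[[u_1,\dots,u_k]]} \bC[[w_1,\dots,w_n]] \simeq R(c_1,\dots,c_n).
\]
Taking $\Proj$ over $\bC[[w_1,\dots,w_n]]$ gives the desired isomorphism on formal completions, see \eqref{projfamily}.

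To promote this from the formal to the analytic setting, I would invoke the convergence argument already used in the proof of Theorem \ref{poly-is-fan}: on the locus where all $|u_i|<1$, each $\Theta^{(b_\bullet)}_{\overline{v}}(z,u)$ defines a convergent analytic section of the relevant power of the polarization, and the substitution $u_i = w^{\vec{r}_i}$ sends $\{|w_j|<1\}^n$ into $\{|u_i|<1\}^k$, pulling convergent theta series back to convergent theta series. Combined with Remark \ref{completion}, which identifies a relatively projective analytic family with its formal completion, the formal isomorphism descends to the desired analytic isomorphism $X(b_1,\dots,b_k) \times_{\Delta^k} \Delta^n \simeq X(c_1,\dots,c_n)$ over $\Delta^n$.

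The only genuine subtlety is bookkeeping the edge cases. For $\Delta^n\to\Delta^k$ to be a well-defined map of polydisks, one needs $\vec{r}_i \neq 0$ for every $i$; and if some $c_j$ happens to have trivial bending (i.e.\ is linear), then that $w_j$ coordinate contributes a trivial $\Delta$-factor to the base and the Mumford construction in fewer parameters is constant along it. Neither of these presents an obstacle, only mild care in stating the result cleanly.
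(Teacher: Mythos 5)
Your proof is correct and follows the same route as the paper's (which is essentially a one-line substitution of \eqref{monomial-sub} into the theta series \eqref{theta-def}); you have simply supplied the bookkeeping details---convexity, dicing, transformation of bilinear forms, the ring isomorphism, and convergence---that the paper leaves implicit.
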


\begin{proof} Substituting (\ref{monomial-sub}) into
the defining equations for $\Theta_{\overline{v}}$
in (\ref{theta-def}), we see that the result
is again a Mumford degeneration $X(c_1,\dots,c_n)\to \Delta^n$
where $c_j$ has the stated formula.
\end{proof}

A simple case is exhibited by Examples 
\ref{theta-1-param}, \ref{theta-3-param},
where we make the monomial
base change $\Delta\to \Delta^3$,
$w\mapsto (w^{r_1}, w^{r_2}, w^{r_3})$ to 
Example \ref{theta-3-param},
to get Example \ref{theta-1-param}.

\begin{remark}\label{ram-base}
A base change $\Delta^k\to \Delta^k$ ramified
over the coordinate hyperplane $V(u_i)$ 
to order $r_i$ is given
by $u_i=w_i^{r_i}$ and we have the simpler relation
$c_i=r_ib_i$.
\end{remark}

We now consider the effect of replacing $L$
with $dL$ for some positive multiple $d>0$ 
of the principal polarization. Equivalently, 
we are taking the Veronese subring 
$$R(b_1,\dots,b_k)^{(d)}\subset 
R(b_1,\dots,b_k)
$$ consisting of the theta 
functions of weights
$w$ divisible by $d$. 
The passage to the Veronese subring
suggests also a natural generalization
of Construction \ref{mumford-polytope},
which allows us to relax
the restrictive dicing condition: 

\begin{construction} \label{veronese}
Consider convex sections 
$$\overline{b}_i\in 
H^0(\bT^g, \,\tfrac{1}{d}\bZ{\rm PL}/
\tfrac{1}{d}\bZ {\rm L})$$
for some positive integer $d$, see Definition
\ref{dicing}. 
Then, we may define theta functions similarly
to formula (\ref{theta-def}), 
but only for the weights $w$ 
divisible by $d$.
Assuming that the $\overline{b}_i$
are $\tfrac{1}{d}$-dicing on $\bT^g$ 
(Def.~\ref{def:d-dicing}), we may 
deduce from Remark \ref{d-dicing} 
that $dL$ lifts to an ample line bundle on
the resulting degeneration, 
which we denote by 
$$X(d\mid b_1,\dots,b_k)\to \Delta^k.$$
While the general fiber of the degeneration
still admits a principal
polarization $L$, only $dL$ extends to a line bundle
on the total space, in general.

In terms of the polytope, we still take
the overgraph $\Gamma = \Gamma(b_1,\dots,b_k)+(\bR_{\geq 0})^k$,
which is now only a $\tfrac{1}{d}(\bfM \times \bZ^k)$-integral
polyhedron. Then, as in Remark \ref{toricpolytope},
we consider ${\rm Cone}(\Gamma)\subset 
(\bfM_\bR \times \bR^k)\times \bR$ but 
one only considers monomials, and
their $\bfM$-averagings (\ref{m-average}) to theta functions,
lying in $(\bfM\times \bZ^k)\times d\bZ$.
The general fiber of the Mumford construction
is still the quotient by $\bfM$ of $\bfN\otimes \bC^*$;
in particular, the exact sequence
(\ref{weight-exact}) still holds.\hfill $\clubsuit$ 
\end{construction}

\begin{remark}\label{rem-shifts} The isomorphism type of 
the degeneration $X(d \mid b_1,\dots,b_k)\to \Delta^k$ 
does not depend on the lifts $b_i$
of $\overline{b}_i$ but the choice of origin section does, 
since different lifts of $\overline{b}_i$ shift 
the normal fan, and thus
affect which subtorus forms the origin section, see Remark
\ref{section-location}. The same applies to Construction
\ref{mumford-polytope-2}---to produce an extension 
of the universal family over $\widetilde{\cA}_g^{\,\bB}$
requires a choice of lift of cone $\mathbbm{b}\subset H^0(\bT^g, \tfrac{1}{d}\bZ{\rm PL}/\tfrac{1}{d}\bZ{\rm L})$ into 
$H^0(\bfM_\bR, \tfrac{1}{d}\bZ{\rm PL})$
as the gluing with the universal family $\widetilde{\cX}_g\to \widetilde{\cA}_g$ 
depends on a choice of origin section.
\end{remark}

\begin{example}[Base change and resolution 
of the Tate curve]
\label{tate-base}

Consider the following Mumford constructions
of degenerating elliptic curves:

\begin{enumerate}
    \item The Tate curve, i.e.~Example \ref{ex:tate}.
    \item\label{2} The order $3$ base change 
    $u=w^3$ to Example \ref{ex:tate}.
    \item The order $3$ Veronese embedding of (\ref{2}).
\end{enumerate}

These are encoded respectively by the following
data:

\begin{enumerate}
    \item $\bR/\bZ$, $b^{(1)}=\tfrac{1}{2}(m^2-m)$ 
    on $m\in \bZ$, $d=1$.
    \item $\bR/\bZ$, $b^{(2)}=\tfrac{3}{2}(m^2-m)$ 
    on $m\in \bZ$, $d=1$.
    \item $\bR/\bZ$, 
    $b^{(3)} = b^{(2)}$, $d=3$ 
    (see the outer edge of Figure \ref{fig:tate-base}).
\end{enumerate}

The total space of the Tate curve
$X(b\,\!^{(1)})$ is smooth 
and the central fiber $X_0(b\,\!^{(1)})$ is an
irreducible nodal curve (i.e.~of Kodaira type $I_1$).
The total space of the base
change $X(b\,\!^{(2)})$ is singular, with 
an $A_2$-singularity at the node point
of the irreducible central fiber.
The spaces $X(3\mid b\,\!^{(2)})\simeq X(b\,\!^{(2)})$ 
are isomorphic degenerations
over $\Delta$, with the former polarized by $3L$
rather than $L$.

\begin{figure}
\includegraphics[width=3.5in]{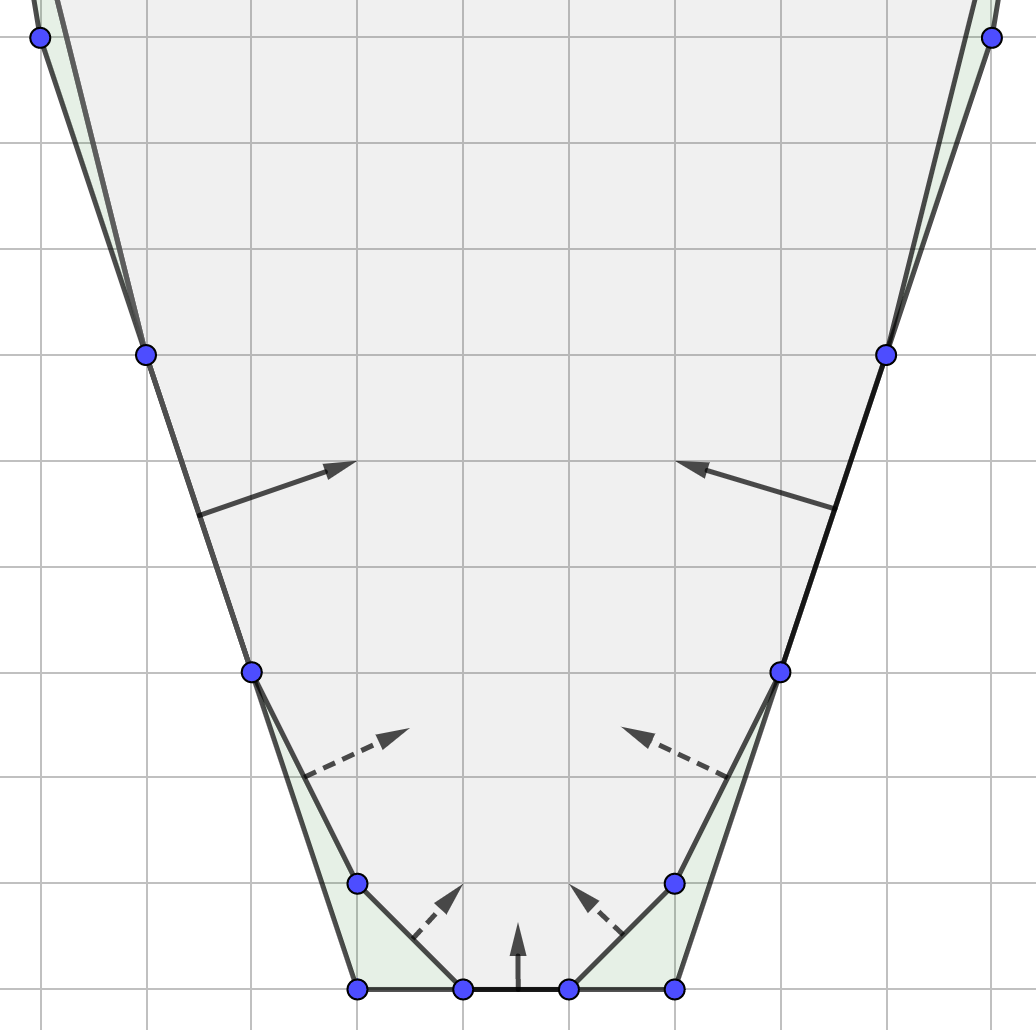}
\caption{Above:
order $3$ base change of the Tate curve,
with polarization $\cO(3)$, and resolution. Following
Remark \ref{veronese}, grid points are $(\tfrac{1}{3}\bZ)^2$.
Below: normal fan of the order $3$ base change, and
of the resolution.}

\includegraphics[width=4in]{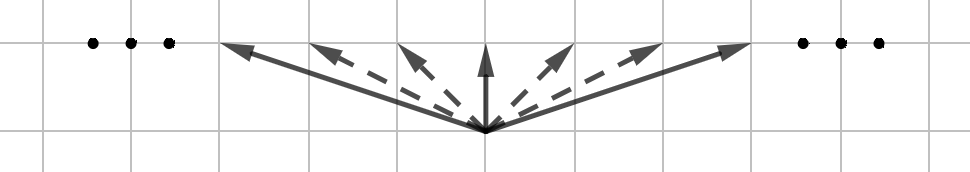}
\label{fig:tate-base}
\end{figure}

To resolve the total space 
$X(b\,\!^{(2)})$, we take
the minimal resolution $\wX\to
X(b\,\!^{(2)})$, which resolves
the $A_2$-singularity to a chain of two
$(-2)$-curves $E_1+E_2$. Then the central
fiber $\wX\to \Delta$ is an $I_3$-type Kodaira
fiber, i.e.~a wheel of $\bP^1$s of length $3$. 

To realize this resolution
as a {\it polarized} resolution, first,
we pull back $3L$ to $\wX$. It has multidegree
$(3,0,0)$ on the three components of the wheel.
Now, we twist, defining $$\wL \coloneqq 3L-E_1-E_2.$$
The resulting line bundle has multidegree
$(1,1,1)$ on the wheel,
and the overgraph of the
Mumford degeneration defining 
$(\wX,\wL)\to \Delta$ is shown in gray in
Figure \ref{fig:tate-base}.

On the $3\bZ$-prequotient,
this is the usual blow-up operation on polytopes
of polarized toric varieties,
which cuts a corner off the polytope,
whose size depends on the chosen polarization on
the blow-up. The cut corners 
are depicted in green in Figure \ref{fig:tate-base}. 
The normal fans are depicted in the bottom of 
Figure \ref{fig:tate-base}. We see that,
indeed the normal fan for $\wX = 
X(3\mid \tfrac{1}{3}b\,\!^{(1)}(3x))$ 
is a $3\bZ$-invariant refinement
of the normal fan to $X(b\,\!^{(2)})$.
Thus, there is a $3\bZ$-equivariant toric 
morphism between the corresponding toric varieties,
descending on quotients to give the minimal resolution.

\begin{figure}
\includegraphics[width=5in]{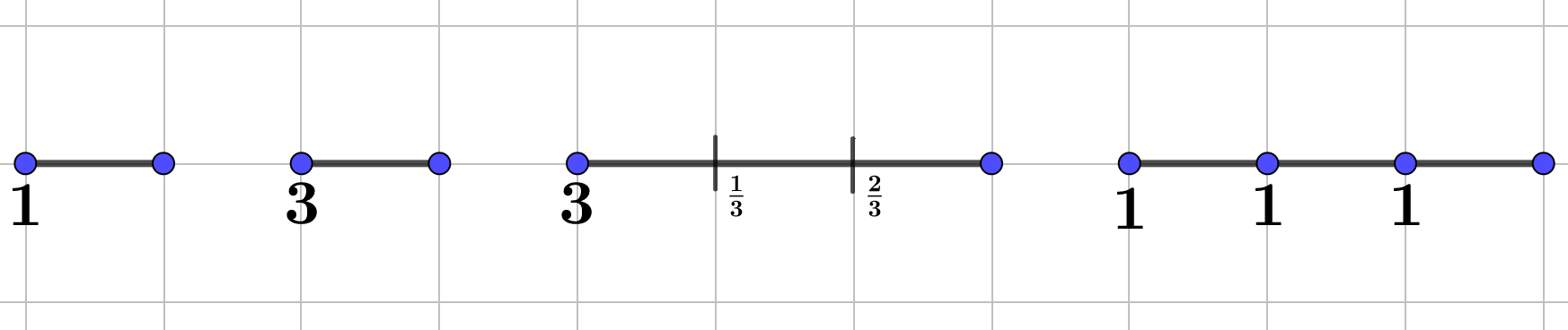}
\caption{Left-to-right: the Tate curve,
the order $3$ base change of the Tate curve,
the order $3$ base change of the Tate curve plus
$3$rd Veronese embedding, and its polarized resolution. 
Integers at blue vertices are the bending parameters.}
\label{fig:tate-resolve}
\end{figure}

In terms of $\bT^1$  and the
bending loci, the various operations 
are depicted in Figure \ref{fig:tate-resolve}.
\end{example}

\section{Regular matroids}
\label{sec:regular-matroids}

We now discuss the construction of Mumford degenerations
associated to regular matroids.

\subsection{Matroids, graphs,
and quadratic forms}\label{sec:matroid}

\begin{definition}\label{matroid-def} A {\it matroid} 
$\uR= (\underline{R},E)$
is a finite set $E$, together with a collection $\underline{R}$
of {\it independent subsets} of $E$, satisfying
the following axioms:

\begin{enumerate}
    \item The empty set is independent.
    \item Any subset of an independent set is independent.
    \item If $I,J\subset E$ are independent sets with $|I|>|J|$, 
    then there is an element $i\in I\setminus J$ for which 
    $J\cup \{i\}$ is independent.
\end{enumerate}
The set $E$ is called the {\it ground set} of $\uR$. 
\end{definition}

These axioms encapsulate the concept of linear independence of a 
collection of vectors in a vector space.
A {\it basis} is a maximal independent subset $E'\subset E$, 
and a {\it circuit} is a minimal dependent set. Note that 
all bases of $\underline{R}$ have the same 
cardinality by Def.~\ref{matroid-def}(3); 
this cardinality is called the \emph{rank} of 
$\underline{R}$. 

The {\it dual matroid} 
$\underline{R}^*$ is a matroid on the same
ground set $E$, whose bases are the complements
of bases of $\underline{R}$. A circuit of
$\underline{R}^*$ is a {\it cocircuit} of $\underline{R}$. 

\begin{definition} \label{def:regular}
A {\it realization} of $\underline{R}$ over
the field $\bF$ is a map $\phi\colon E\to \bF^g$ to an $\bF$-vector 
space for which the independent sets $E'\subset E$ are 
exactly those for which $\{\phi(i)\}_{i\in E'}$ are linearly independent.
A matroid $\underline{R}$ is {\it regular}
if it admits a realization over any field. 
An {\it integral realization}
of a regular matroid is a map $\phi\colon E\to \bfN$ 
to a free $\bZ$-module $\bfN$ 
which gives a realization of $\underline{R}$ upon
base change $\bfN\otimes_\bZ \bF$ to any field $\mathbb F$. 
\end{definition}

We will always assume that $\phi(E)$ generates the lattice
$\bfN$. In particular, the rank of $\uR$ agrees
with the rank of $\bfN$.

By a theorem of Tutte \cite{tutte}, 
every regular matroid can be defined
by a {\it totally unimodular matrix}, that is, a matrix
all of whose minors (in particular, all entries)
have determinant in $\{\pm 1,0\}$. 
Then, an integral realization of the matroid arises
by considering the set of column vectors. 
 More generally, any
{\it unimodular matrix}---an integer entry matrix whose
maximal minors have determinant in $\{\pm 1,0\}$---defines
an integral realization of a regular matroid
\cite[Ch.~3, Thm.~3.1.1]{White_1987}.
Equivalently, the lattice 
spanned by any collection of columns is saturated.

\begin{example}\label{cographic}
Let $G$ be a graph and let $E=E(G)$ be its
set of edges. Choose an orientation on the edges.
We have an inclusion $H_1(G,\bZ)\subset \bZ^E$
as every homology class $\gamma\in H_1(G,\bZ)$ can be viewed
as a $\bZ$-linear combination of directed edges. 

Let $e_i$ denote 
the basis vector of $\bZ^E$ corresponding 
to the $i$-th edge and let 
$\widetilde{\bf x}_i\coloneqq e_i^\vee\in (\bZ^E)^\vee$ 
be the corresponding coordinate
function. By restriction, we get a linear function
${\bf x}_i\in H_1(G,\bZ)^\vee\simeq H^1(G,\bZ).$
The {\it cographic
matroid} $M^*(G)$ of $G$, on the ground set $E$, 
has realization \begin{align*} E&\to H^1(G,\bZ), 
\\ i&\mapsto {\bf x}_i.\end{align*}
The {\it graphic matroid} 
$M(G)$ is the dual matroid,
and has realization 
\begin{align*} 
E&\to \bZ^E/H_1(G,\bZ), \\ 
i&\mapsto \overline{e}_i
\end{align*}
where $\overline{e}_i$ is the image of 
$e_i$ under the natural quotient map. Its rank is
$|E(G)|-\rk(H_1(G,\bZ))$.
The graphic and cographic matroids of $G$
are sometimes called the {\it cycle} and {\it bond
matroids} of $G$, respectively, in the matroid
literature.
\end{example}

\begin{remark}
Let $T\subset E$ be a spanning forest of $G$. Associated
to $T$ is a basis of $H_1(G,\bZ)$ indexed by the edges in
$E\setminus T$: each edge $i\in E\setminus T$ completes
a unique closed circuit $C_i$ of the graph $G$ whose edges lie in 
$T\cup \{i\}$. These closed circuits determine a $\bZ$-basis
of $H_1(G,\bZ)$ and form the circuits of the graphic
matroid $M(G)$.

Let $g=g(G)$ be the genus of the graph
and $k = |E|$ be the number of edges. Then,
in this basis, the integral realization of the cographic 
matroid $M^*(G)$ in Example \ref{cographic}
is a $g\times k$ matrix of the form
$M_G^*=({\rm Id}_g\,|\,P)$
where $P$ is a matrix of $0$'s and $\pm 1$'s, 
and whose $i$-th row consists of the directed
edges of $T$ involved in the circuit $C_i$.
\end{remark}

\begin{example}\label{theta-matroid}
 If $G$ is taken to be the theta graph
(see Figure \ref{theta-figure}), with $(g,k)=(2,3)$, and the 
spanning tree $T$ is $\{e_3\}\subset E$, then
$$M_{G}^* = \begin{pmatrix} 
1 & 0 & 1 \\
0 & 1 & 1 \\
\end{pmatrix} $$
because the circuits completed by $e_1$ and $e_2$
are, respectively, $e_1+e_3$ and $e_2+e_3$. 
\end{example}

\begin{figure}[ht]
\includegraphics[width=1.5in]{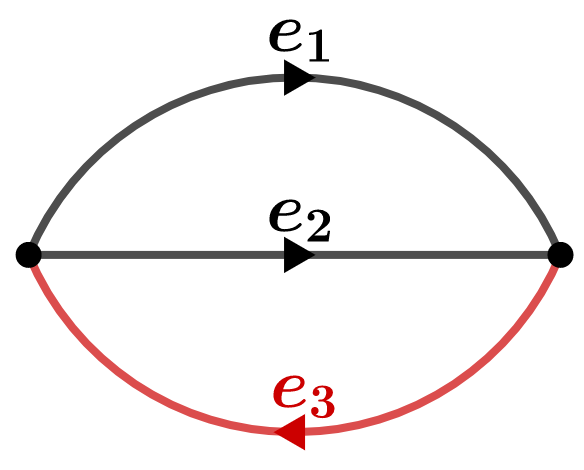}
\caption{The theta graph, with spanning tree
in red.}
\label{theta-figure}
\end{figure}

\begin{definition}\label{matroidal-cone} 
Let $i\mapsto {\bf x}_i\in \bfN\simeq \bfM^\vee$ 
be an integral realization of a regular
 matroid $\underline{R}$. 
 Then the associated {\it matroidal cone} $\bB_{\underline{R}}$
is the $\bR_{\geq 0}$-span of 
${\bf x}_i^2\in {\rm Sym}^2 \bfM^\vee$. 

For example, let $G$ be a graph. Its 
{\it cographic cone} $\bB_{M^*(G)}$
is the cone of symmetric, positive
semi-definite bilinear forms on $H_1(G,\bZ)$ given by 
\begin{align*} 
\bB_{M^*(G)}:&=\bR_{\geq 0}\{
{\bf x}_i^2\,\colon \,i\in E(G)\} \\ 
& = \{M_G^* D (M_G^*)^T \,:\,
D\textrm{ diagonal with }D_i\geq 0\}\subset \cP_g^+.
\end{align*}

\end{definition}

See Alexeev-Brunyate and Melo--Viviani for analyses of which 
matroidal cones appear in various toroidal
compactifications of $\cA_g$ 
\cite{brunyate, melo}.

\begin{example}[Seymour-Bixby {\cite{seymour, bixby}}] 
\label{seymour-bixby}
Consider the totally unimodular matrix
$$ R_{10} = \begin{pmatrix} 
1 & 0 & 0 & 0 & 0 & -1 & 1 & 0 & 0 & 1 \\
0 & 1 & 0 & 0 & 0 & 1 & -1 & 1 & 0 & 0 \\
0 & 0 & 1 & 0 & 0 & 0 & 1 & -1 & 1 & 0 \\
0 & 0 & 0 & 1 & 0 & 0 & 0 & 1 & -1 & 1 \\
0 & 0 & 0 & 0 & 1 & 1 & 0 & 0 & 1 & -1 \\
 \end{pmatrix}.$$ Then, the columns of 
 $R_{10}$ define a regular matroid on $10$ elements in $\bZ^5$.
We have already seen this matroid, which can be identified with 
the $10$ vanishing cycles $\gamma_i\in {\rm gr}^W_{-2}V_\bZ\simeq \bZ^5$ 
 of the nodes of the Segre cubic threefold, 
 $V_\bZ=H_3(Y_*,\bZ)(-1)$, see Example
 \ref{gwena-ex}.
 
  The associated matroidal cone is
 \begin{align*} \bB_{\underline{R}_{10}}=\bR_{\geq 0}
 \{{\bf x}_1^2,\, {\bf x}_2^2,&\,{\bf x}_3^2,\,
 {\bf x}_4^2,\,{\bf x}_5^2,\,
 ({\bf x}_5-{\bf x}_1+{\bf x}_2)^2,\,
 ({\bf x}_1-{\bf x}_2+{\bf x}_3)^2, \\ 
 &({\bf x}_2-{\bf x}_3+{\bf x}_4)^2,\,
 ({\bf x}_3-{\bf x}_4+{\bf x}_5)^2,
 \,({\bf x}_4-{\bf x}_5+{\bf x}_1)^2\}\subset 
 {\rm Sym}^2\, \bfM^\vee\otimes \bR\end{align*}
 where ${\bf x}_i$ for $i=1,\dots,5$ are the 
 coordinates on $\bfM\simeq \bZ^5$ given by
 the first five columns of $R_{10}$. A particularly
 nice realization of $\underline{R}_{10}$ over $\bF_2^5$
 is as the $10$ vectors in $(\bF_2)^{\oplus 6}/
 \bF_2\langle 1,1,1,1,1,1\rangle$ 
 which have exactly three nonzero entries, 
 cf.~Example \ref{gwena-ex}. In this realization,
 the full automorphism group 
 $S_6\simeq {\rm Aut}(\underline{R}_{10})$
 is readily visible.
\end{example}

\begin{remark} \label{R10notcographic}
Examples \ref{cographic} and \ref{seymour-bixby} are 
essentially different: the matroid $\underline{R}_{10}$ 
is not isomorphic to $M(G)$ or $M^\ast(G)$, 
for any graph $G$, 
see e.g.~\cite{seymour}. 
\end{remark}

\begin{definition}\label{mat-degen}
Let $f^\ast \colon 
X^\ast\to Y^\ast$ be a smooth,
projective family of PPAVs over a
smooth quasiprojective base $Y^\ast$,
and let $Y^\ast\hookrightarrow Y$ be an snc extension. 
We say that the morphism $f^\ast$ is {\it matroidal}
with respect to the extension $Y$ if:
\begin{enumerate}
    \item the monodromy at the boundary $Y\setminus Y^\ast$
    is unipotent, and
    \item the monodromy cones at all snc strata
    of $Y\setminus Y^\ast$ are matroidal cones.
\end{enumerate}
\end{definition}

Matroidal morphisms exist, in view of the following result:

\begin{proposition} \label{puncturedconstruction}
Let $(\underline R,E)$ be a regular matroid of rank 
$g$ on a $k$ element set $E = \{1, \dotsc, k\}$, 
with integral realization $E\to \bfN=\bfM^\vee$, 
$i\mapsto {\bf x}_i$, and let 
$(r_1,\dots,r_k)\in \bN^k$ be a 
vector of positive integers.
Then there is a smooth projective family 
$f^\ast \colon X^\ast\to Y^\ast$ of $g$-dimensional 
PPAVs over a smooth quasiprojective 
base of dimension $k$, 
such that the following hold: 
\begin{enumerate}
    \item There is a smooth extension 
    $Y^\ast\subset Y$ with snc boundary divisor 
    $D=Y\setminus Y^\ast$ 
    and an embedded polydisc $\Delta^k\subset Y$ such that 
    the restriction of $D$ to $\Delta^k$ agrees with the 
    union $\set{u_1 \cdots u_k = 0}$ of the coordinate 
    hyperplanes.
    \item Consider the base change 
    $X^\ast_{(\Delta^\ast)^k}\to (\Delta^\ast)^k$ 
    and let $t\in  (\Delta^\ast)^k$.
    Then there is an isomorphism 
    ${\rm gr}^W_0 H_1(X_t,\bZ)\simeq \bfM$ under which 
    the monodromy bilinear form $B_i$ around the 
    $i$-th coordinate hyperplane (Def.~\ref{def:Bi}) 
    is given by $r_i{\bf x}_i^2$.
\end{enumerate} 
\end{proposition}

\begin{proof}
Apply Corollary 
\ref{corollary:monodromyrealization}
to the symmetric
bilinear forms 
$B_i=r_i{\bf x}_i^2\in \cP_g^+\cap {\rm Sym}_{g\times g}(\bZ)$. 
\end{proof}

In the following sections, we will study
regular extensions $f\colon X\to Y$
of matroidal morphisms. 

\subsection{Mumford degenerations associated to regular matroids}
\label{sec:reg}

\begin{definition} \label{hyperplanearrangement}
 A \emph{hyperplane arrangement} is a finite 
collection $\{\oH_i\}_{i\in I}$
 of torsion translates 
 $\oH_i\subset \bT^g=\bfM_\bR/\bfM$ 
 of codimension $1$ subtori. Equivalently, 
 it is a finite collection $\{H_i\}_{i \in I}$ where
 $H_i$ is the union of all $\bfM$-translates of an affine
 linear hyperplane in $\bfM_\bR$ defined over $\bQ$. 
\end{definition}

Let ${\bf x}_1,\dots, {\bf x}_k\in {\bf N}$
be a finite collection of vectors, ${\rm rank}\,{\bf N}=g$, giving
an integral realization of a regular matroid $\underline{R}$.
Each ${\bf x}_i$ defines a family
of parallel hyperplanes \begin{align}\label{Hi}H_i\coloneqq \{{\bf m}\in 
{\bf M}_\bR\,:\,{\bf x}_i({\bf m})\in \bZ\}
\subset {\bf M}_\bR.\end{align} 
Then $\{H_1, \dotsc, H_k\}$
defines a hyperplane arrangement, 
see Definition \ref{hyperplanearrangement}.
Moreover, $H_i$ is the bending locus of 
the convex $\bZ$PL function 
$b_i$ on ${\bf M}_\bR$ that satisfies
$$b_i({\bf m})
=
\frac{{\bf x}_i({\bf m})^2-{\bf x}_i({\bf m})}{2}
\textrm{ for }{\bf m}\in {\bf M}.$$
Note that $b_i$ descends as a convex
section $\overline{b}_i\in 
H^0(\bT^g, \bZ{\rm PL}/\bZ {\rm L})$.
The regularity of the matroid implies 
that the $b_i$ are dicing, 
i.e.~the polytopes cut by the union of hyperplanes
$H_{\underline{R}} \coloneqq \bigcup_{i=1}^k H_i$ 
have integral vertices. See Erdahl--Ryshkov \cite{erdahl}.

\begin{definition}\label{matroidal-mumford} 
We define the {\it matroidal Mumford construction}
$X(\uR)\to\Delta^k$ to be the
Mumford construction $X(b_1,\dots,b_k)\to \Delta^k$
associated to the collection 
$(\bT^g, \overline{b}_1,\dots,\overline{b}_k)$ 
of sections $\overline{b}_i\in H^0(\bT^g, \bZ{\rm PL}/\bZ {\rm L})$
above, defined by the regular matroid $\underline{R}$.
\end{definition}

Then $k$ is the size of the ground set of $\uR$ while
the dimension $g$ of the fibers is the rank of $\uR$.
By construction, the monodromy cone
of the matroidal Mumford construction
on $\underline{R}$ is given by the matroidal cone $\bB_{\underline{R}}\subset \cP_g^+$ because
the bilinear form $B_i$ 
associated to $b_i$ is 
$B_i = {\bf x}_i^2$.

\begin{example}[Cographic matroids] \label{cographic-ex}
We have implicitly seen an important matroid, realized by the vanishing cycles associated
to a nodal projective curve $C_0$ 
with $k$ nodes, as in Example \ref{nodal}. 
We aim to 
\begin{enumerate}
\item\label{item1} 
identify the matroid realized
by the vanishing cycles as the cographic matroid 
$M^*(G)$ where  $G\coloneqq H_1(\Gamma(C_0),\bZ)$ 
is the dual graph of the nodal curve, and 
\item\label{item2} under the simplifying
assumption that the normalized components of $C_0$ have genus zero,
use this identification and Construction
\ref{matroidal-mumford} to compactify the 
relative Jacobian of the universal deformation
$\pi\colon 
\cC\to {\rm Def}_{C_0}\simeq \Delta^{3g-3}
\simeq \Delta^k\times 
\Delta^{(3g-3)-k}$.
\end{enumerate}

We begin with (\ref{item1}). 
Let $C_t$ be a smooth fiber nearby $C_0$.
Let $E=\{1,\dots ,k\}$ be the set of nodes of $C_0$ 
and let $\gamma_1,\dots ,\gamma_k\in H_1(C_t,\bZ)$ 
be the corresponding vanishing cycles, unique up to 
sign. This realizes a matroid on the ground set $E$.
A choice of sign for each $\gamma_i$ is equivalent to 
a choice of orientation of the edges of $G=\Gamma(C_0)$.
Using the intersection pairing and Poincar\'e duality 
on $C_t$, we may view each $\gamma_i$ as a linear form 
on $H_1(C_t,\bZ)\simeq H_1(JC_t,\bZ)$.
This linear form vanishes on $W_{-1}$ and so 
descends to a linear form on ${\rm gr}^W_0H_1(C_t,\bZ)
\simeq {\rm gr}^W_0H_1(JC_t,\bZ)$.

We also have an identification 
${\rm gr}^W_0H_1(C_t,\bZ)\simeq 
H_1(\Gamma(C_0),\mathbb Z)$; thus
$\gamma_i$ is identified with the linear form 
on $H_1(G,\bZ)$ giving the coordinate 
$i\mapsto {\bf x}_i=e_i^\vee\in H^1(G,\bZ)$
of the oriented edge $e_i\in E(G)$.
So the matroid realized by the 
$k$ vanishing cycles in $C_t$ is isomorphic to 
the cographic matroid $M^*(G)$.
Conversely, the cographic matroid $M^*(G)$
associated to any graph $G$ arises this way, 
because we can construct a nodal projective curve 
$C_0$ whose dual complex is $G$.

We now compactify the relative Jacobian
fibration $J\pi^\circ\colon
J\cC^\circ\to (\Delta^*)^k\times \Delta^{(3g-3)-k}$ 
of the  punctured family 
$\pi^\circ\colon 
\cC^\circ\to (\Delta^*)^k$ of smooth curves,
as in (\ref{item2}). 
So take ${\bf M}=H_1(G,\bZ)$
and ${\bf N}=H^1(G,\bZ)$
and apply the universal form
of the matroidal Mumford construction
of Definition \ref{matroidal-mumford}.
If no ${\bf x}_i=0$ nor ${\bf x}_i=
\pm {\bf x}_j$ for $i\neq j$
(in matroidal language:~$M^*(G)$ contains
no circuits of length $\leq 2$), 
we get an extension
$$X^{\rm univ}(M^*(G))\to 
\widetilde{\cA}_g^{\,\bB_{M^*(G)}}$$
of the universal family.
Otherwise, we only get a degeneration
$$X^{\rm univ}_\circ(M^*(G))\to (\Delta^{k})^{\rm univ}$$
where $(\Delta^{k})^{\rm univ}\to 
(\bC^*)^{{g+1\choose 2}-\ell}$ is a 
$k$-dimensional polydisk
bundle for some $\ell<k$, for which the 
classifying map $(\Delta^k)^{\rm univ}\to \cA_g$
loses dimension. 
So assume the former.
It follows then from
Proposition \ref{monodromy-extend}
that the Torelli map extends to a morphism 
$${\rm Def}_{C_0}\simeq
\Delta^k\times \Delta^{(3g-3)-k}
\hookrightarrow 
\widetilde{\cA}_g^{\, \bB_{M^*(G)}}.$$
Then the pullback of 
$X^{\rm univ}(M^*(G))$ defines 
an extension of the relative
Jacobian fibration 
$$J\pi\colon JC\to \Delta^{k}\times 
\Delta^{(3g-3)-k}$$ where $\{0\}\times 
\Delta^{(3g-3)-k} \to  \{0\}\times 
(\bC^*)^{{g+1\choose 2}-k}$ maps the 
locally trivial deformations of $C_0$ into the 
deepest toroidal stratum 
$\widetilde{\cA}_g^{\, \bB_{M^*(G)}}$.

Example \ref{theta-3-param} is an example
of a matroidal Mumford construction on the cographic matroid
of the theta graph (Example \ref{theta-matroid}),
where $C_0=\bP^1\cup_{\{0,\,1,\,\infty\}}\bP^1$
is the union of two smooth rational curves
along three points. The dual graph 
$\Gamma(C_0)$ is the theta graph, as depicted
in Figure \ref{theta-figure}.
\end{example}

\begin{example}[Matroidal Mumford degeneration on $\underline{R}_{10}$] \label{r10-ex}

Another example comes from the Seymour--Bixby matroid 
$\underline{R}_{10}$ (Example \ref{seymour-bixby}) 
which gives a degeneration 
$X(\uR_{10})\to \Delta^{10}$ 
of PPAVs of dimension $5$ over a $10$-dimensional 
polydisk.  To produce a universal 
degeneration whose monodromy
cone is $\bB_{\underline{R}_{10}}$
we must twist by $a\in (\bC^*)^5$ 
(here $5=15-10$ and $15=\dim \cA_5$).
The resulting universal Mumford degeneration
is an extension of the universal family
$$X^{\rm univ}(\uR_{10})
\to 
\widetilde{\cA}_5^{\, \bB_{\underline{R}_{10}}}.$$ 
If $\pi\colon Y\to {\rm Def}_{Y_0}\simeq \Delta^{10}$ 
is the universal deformation
of the Segre cubic threefold (Example \ref{gwena-ex}),
there is a morphism $ {\rm Def}_{Y_0} \to \widetilde{\cA}_5^{\, \bB_{\underline{R}_{10}}}$ 
transversely slicing the deepest toroidal
boundary stratum $(\bC^*)^5$. 
The intersection is transverse because 
the monodromy about each coordinate 
hyperplane is $B_i={\bf x}_i^2$---thus, 
in toroidal charts of 
$\widetilde{\cA}_5^{\,\bB_{\uR_{10}}}$, 
the period
map is approximated by a translate of a subtorus which
transversely slices the deepest boundary stratum.
The pullback
of $X^{\rm univ}(\uR_{10})$ defines an extension
$IJ\pi\colon IJY\to \Delta^{10}$ of the relative 
intermediate Jacobian fibration
$IJ\pi^\circ\colon IJY^\circ
\to (\Delta^*)^{10}$ over the smooth locus.
\end{example}

\begin{remark} By \cite[Lem.~4.0.5, Cor.~4.0.6]{melo}, 
we do not need to pass to some \'etale cover
$\widetilde{\cA}_g\to \cA_g$ to produce the toroidal
extension associated to a matroidal cone---up 
to quotienting by the group of symmetries of $\uR$
and identifying some faces,
every matroidal cone $\bB_{\uR}$ 
on a regular  matroid $\uR$ with no loops or parallel 
elements ({\it simple} regular 
matroids in {\it loc.~cit.}) embeds into
$(\cA_g)_{\rm trop}$ as 
in Example \ref{embed}. Indeed, 
there is a universal matroidal 
extension $\cA_g\hookrightarrow
\cA_g^{\rm mat}$ whose fan is the union
of all matroidal cones, on regular matroids
of rank $\leq g$, with no 
loops or parallel elements. By \cite[Thm.~A]{melo},
$\cA_g^{\rm mat}$ is the toroidal extension of
the maximal common subfan of the first and second Voronoi fans. 
\end{remark}

\subsection{Shifted and transversely shifted
matroidal degenerations}
\label{sec:shift-reg}

We describe here a modification of the matroidal
Mumford construction of Definition \ref{matroidal-mumford}
which produces a regular total space;
this property is quite special, and under some 
additional hypotheses, 
characterizes Mumford degenerations with
regular total space.

\begin{construction}[Shifted matroidal degenerations]\label{shifted-matroidal-construction}
Suppose that $E\to {\bf N}$, $i\mapsto {\bf x}_i$ 
gives an integral realization of a regular matroid. 
Consider the family of parallel hyperplanes
$H_i^o=\{{\bf m} \in \bfM_\bR \,:\, 
{\bf x}_i({\bf m})\in \bZ\}\subset {\bf M}_\bR$ for $i\in E$.
Then, all $H_i^o$ intersect at all lattices points
${\bf M}$, or in the quotient $\bT^g=\bfM_\bR/\bfM$, 
at the origin.
Thus, we consider the shifted hyperplanes 
$$H_i\coloneqq \{
{\bf m} \in \bfM_\bR \,:\, 
{\bf x}_i({\bf m})\in \epsilon_i+\bZ\}$$
for $\epsilon_i\in \tfrac{1}{d}\bZ$. For sufficiently
large $d$, it is possible to choose
values of $\epsilon_i$ for which
this shifted hyperplane arrangement $\{H_1, \dots, H_k\}$
satisfies the following additional property:

\begin{definition} \label{transversalarrangement}
A hyperplane arrangement $\{H_i\}_{i\in I}$
is {\it transversal} if at any 
intersection point $p\in \bigcap_{i\in I}H_i$
the normal vectors of $H_i$ for $i\in I$
are linearly independent.
\end{definition}

Associated to $H_i$ (transversal or not)
we define
a piecewise linear function 
$b_i\colon {\bf M}_\bR\to \bR$ 
by the properties that:
\begin{enumerate}
    \item 
$b_i(m) \coloneqq  
\tfrac{1}{2}({\bf x}_i({\bf m}-{\bf m}_0)^2-
{\bf x}_i({\bf m}-{\bf m}_0))$ for any 
${\bf m}\in {\bf m}_0+\bfM$
where ${\bf m}_0\in {\bf M}_\bR$ is any point 
for which ${\bf x}_i({\bf m}_0)=\epsilon_i$ and 
\item ${\rm Bend}(b_i) = H_i$ is the shifted family
of hyperplanes.
\end{enumerate}
The associated bilinear forms $B_i={\bf x}_i^2$
are the same as 
for the unshifted case.

While the function $b_i$ is not
integer valued on integer points, it is 
$\tfrac{1}{d}$-integer valued on 
$\tfrac{1}{d}$-integral points. Thus,
by Construction \ref{veronese}, we may take
a Mumford degeneration
$$f\colon X(d\mid b_1,\dots,b_k)\to \Delta^k$$
whose monodromies $B_i$ are the same as
those of the matroidal Mumford degeneration.
More generally, we
have, by Construction 
\ref{mumford-polytope-2},
a universal form
$X^{\rm univ}(d\mid b_1,\dots,b_k)\to 
\widetilde{\cA}_g^{\, \bB}$. 
We may further generalize
this set-up:

\begin{definition} \label{def:shifted-matroidal-degeneration}
Let $\underline{R}$ be a regular matroid.
A {\it shifted matroidal degeneration} 
on $\underline{R}$ is a 
Mumford construction $X(d\mid b_1,\dots,b_k)\to \Delta^k$
for which the bending locus of $b_i$ is a union
of parallel hyperplanes in $\bfM_\bR$ whose primitive
normal vectors
${\bf x}_i\in \bfN$ give an integral realization of $\uR$.
We furthermore call a shifted matroidal
degeneration {\it transversely shifted} if 
\begin{enumerate}
    \item the hyperplane arrangement 
    $\{{\rm Bend}(b_i)\,:\,i=1,\dots,k\}$
    is transversal, and
    \item the bending parameter of $b_i$ along each hyperplane is $1$.
\end{enumerate}
\end{definition}

We remark that Definition 
\ref{def:shifted-matroidal-degeneration} 
is well-defined, independent
of the choice of primitive 
normal vector ${\bf x}_i\in \bfN$, which is 
only unique up to sign. 
Closely related constructions go by the
name ``multiplicative hypertoric variety'' in more
representation-theoretic literature, see especially
\cite[Sec.~8.3]{mcbreen} (for the cographic case), 
generalizing the ``additive'' case
\cite{arbo, bd, hausel}.\hfill $\clubsuit$ 
\end{construction}

See the right hand sides of Figures 
\ref{fig:shifted-theta}, \ref{super_theta}
for examples of transversely shifted arrangements.

\begin{notation} \label{notation:XunivRH}
    The bending locus ${\rm Bend}(b_i)$ can be viewed as a multiset
    $\{H_i^{(1)},\dots,H_i^{(r_i)}\}$ of rational translates
    of the hyperplane normal to ${\bf x}_i$---a hyperplane $H$
    appears $m$ times in the multiset if the bending
    parameter of $b_i$ along $H$ is $m\in \bN$. Then, we assemble
    the hyperplane arrangement into a single symbol
    $$\mathscr{H}\coloneqq \{H_1^{(1)},\dots,H_1^{(r_1)},\dots,
    H_k^{(1)},\dots,H_k^{(r_k)}\}$$ 
    where $H_i^{(j)}$ is the set of
    $\bfM$-translates of a single
    hyperplane normal to ${\bf x}_i$. 
    To indicate the relation
    to the regular matroid $\uR$, we re-notate
    the Mumford construction of 
    Definition \ref{def:shifted-matroidal-degeneration},
    or the universal version, as $$X(\uR, \mathscr{H})\to \Delta^k \quad 
    \textrm{ or } \quad
    X^{\rm univ}(\uR, \mathscr{H})\to \widetilde{\cA}_g^{\,\bB}.$$ 
\end{notation}

\begin{example}[Transversely shifted matroidal degeneration
for the theta graph] \label{shifted-matroidal}

Beginning with the standard matroidal degeneration
$X(M^*(G))\to \Delta^3$ associated to the theta graph $G$
(Example \ref{theta-3-param}), take the degree
$2$ Veronese embedding. The resulting polyhedral decomposition
of $\bT^2$ and bending loci are depicted
in the top left of Figure \ref{fig:shifted-theta}.
Now consider the shifts of the families of hyperplanes 
$H_1^o$, $H_2^o$, $H_3^o$ (of colors red, green, blue, 
respectively) of the standard arrangement for $M^*(G)$,
by $(\epsilon_1,\epsilon_2,\epsilon_3)= (0, 0, \tfrac{1}{2})$. 
The result is a transversal arrangement 
$\mathscr{H}=\{H_1,H_2,H_3\}$ with $H_1=H_1^o$,
$H_2=H_2^o$, $H_3=H_3^o+(\tfrac{1}{2},0)$.

\begin{figure}
\includegraphics[width=4in]{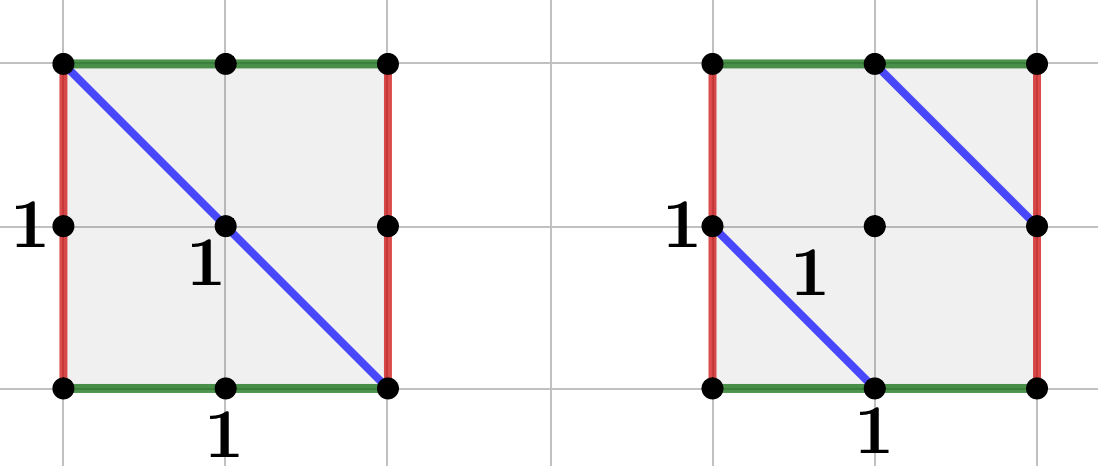}
\caption{Left: $2$nd Veronese embedding
of the matroidal degeneration of the theta graph.
Right: a shifted version. 
Grid points are $(\tfrac{1}{2}\bZ)^2$.
The integers are the multiplicities of given 
hyperplane in the arrangement $\mathscr{H}$.} 
\label{fig:shifted-theta}
\end{figure}

\begin{figure}
\includegraphics[width=2.5in]{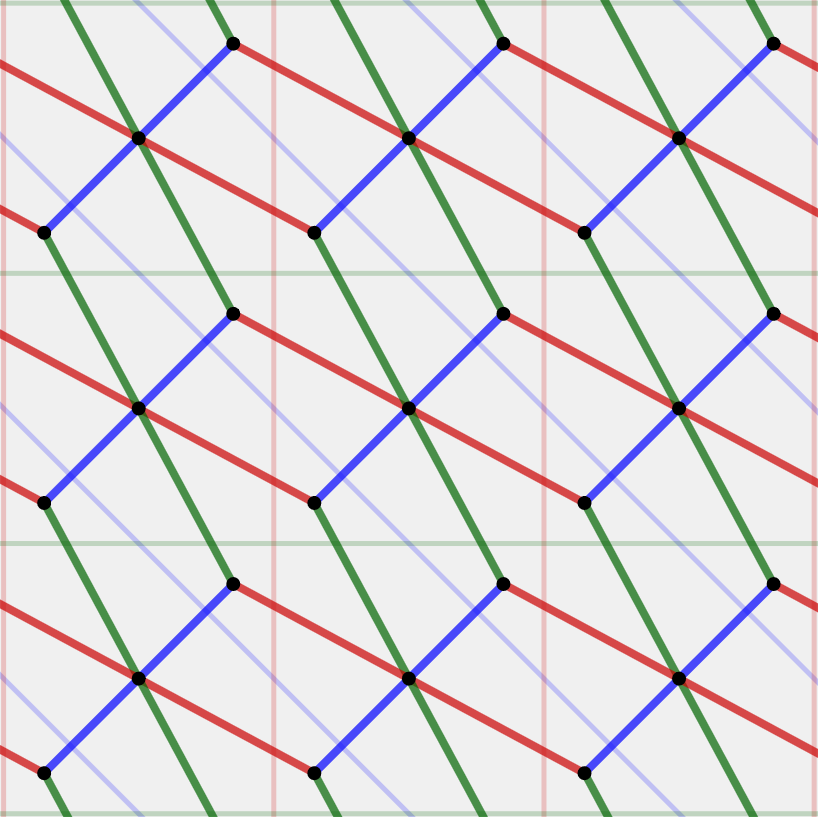}
\caption{
Dual complex
(edges colored) of the righthand shifted arrangement in Figure \ref{fig:shifted-theta}. It is a tiling of
a $2$-torus by cubes.}
\end{figure}

Applying Proposition \ref{bending-read}, it can be seen that
the central fiber of the left-hand Mumford 
degeneration in Figure \ref{fig:shifted-theta}
is the union of two copies of $\bP^2$,
both polarized by $\cO_{\bP^2}(2)$.
Similarly,
the central fiber of the righthand Mumford degeneration
$X(M^*(G),\mathscr{H})\to \Delta^3$
is the union of three surfaces: Two copies of
$\bP^2$, both polarized by $\cO_{\bP^2}(1)$,
and a Cremona surface 
$V\coloneqq Bl_{p_1,p_2,p_3}(\bP^2)$,
polarized by the anticanonical divisor $-K_V$.
\end{example}

\begin{figure}[ht]
  \centering
  \setbox1=\hbox{\includegraphics[width=2.5in]{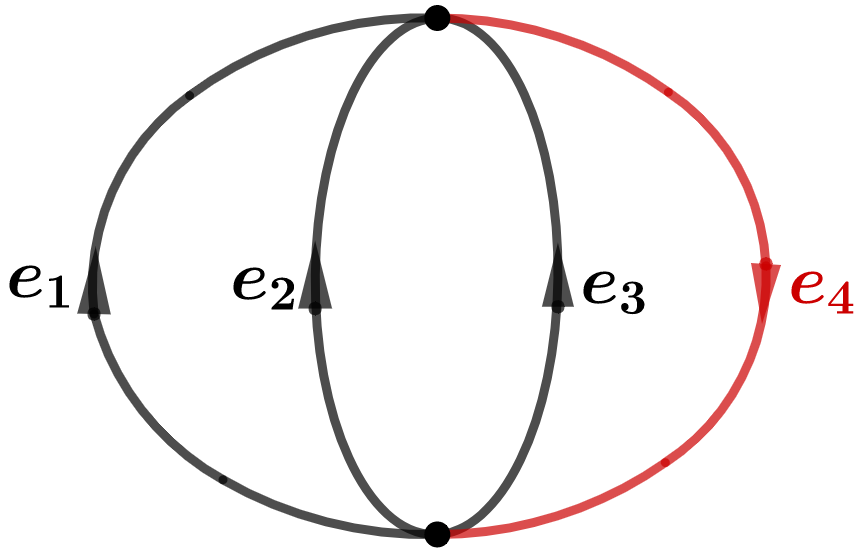}}
  \setbox2=\hbox{\includegraphics[width=2.5in]{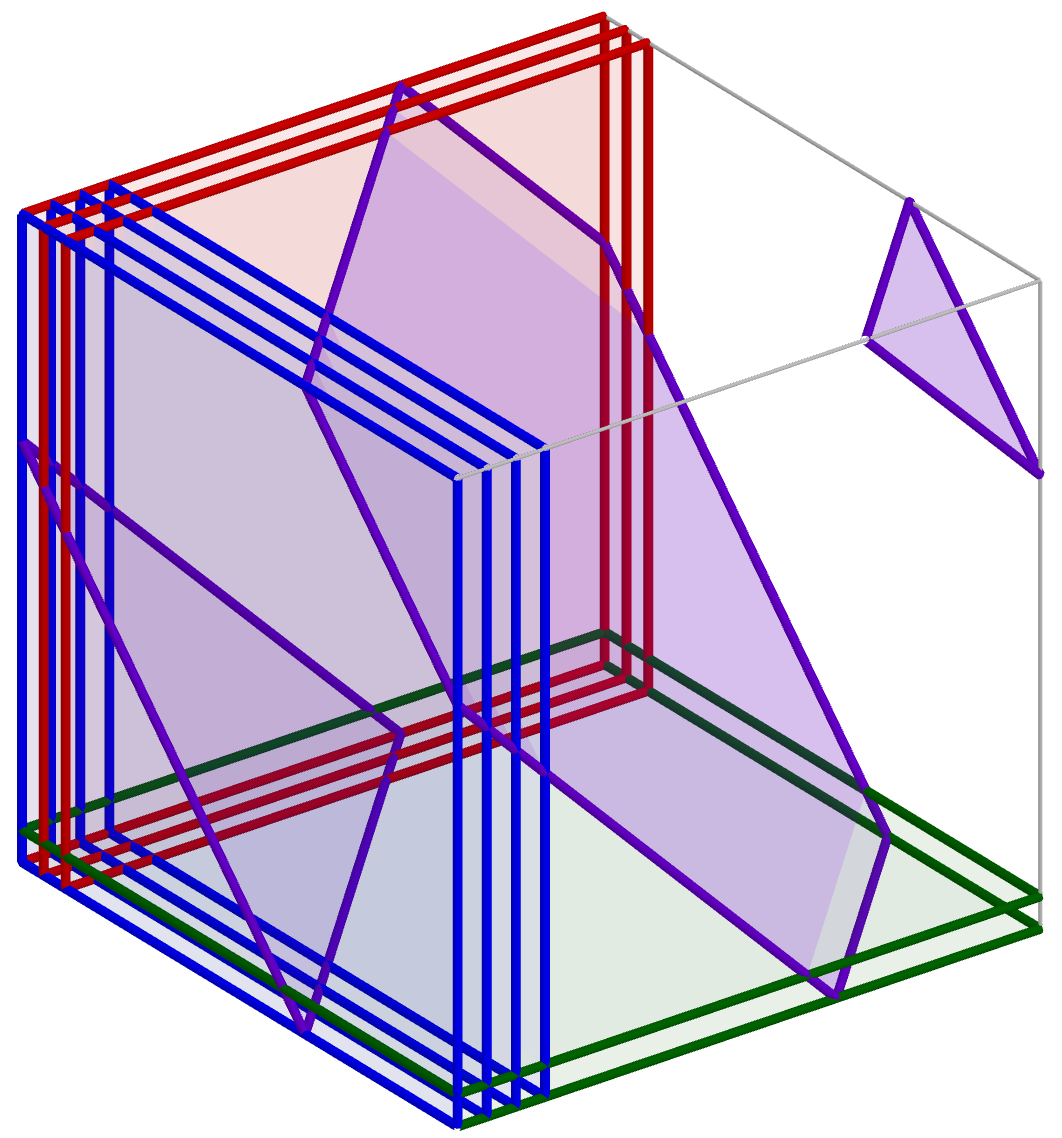}}
  \newlength{\imageheightA}
  \newlength{\imageheightB}
  \setlength{\imageheightA}{\ht1}
  \setlength{\imageheightB}{\ht2}
  \newlength{\maximageheight}
  \setlength{\maximageheight}{\maxof{\imageheightA}{\imageheightB}}
  
  \raisebox{0.5\dimexpr\maximageheight-\imageheightA}{\usebox1}
  \hspace{1em}
  \raisebox{0.5\dimexpr\maximageheight-\imageheightB}{\usebox2}

  \caption{Left: Oriented genus $3$ graph with spanning
  tree in red. 
  Right: Transversely shifted hyperplane arrangement in $\bT^3$.}
  \label{super_theta}
\end{figure}

\begin{example}\label{shifted-ex-2}

Consider the regular
matroid defined by the matrix $$\begin{pmatrix}
1 & 0 & 0 & 1 \\
0 & 1 & 0 & 1 \\
0 & 0 & 1 & 1
\end{pmatrix}.$$
It is the
cographic matroid of the oriented
graph $G$ depicted in the left of Figure 
\ref{super_theta}. Letting ${\bf x}_i\in H^1(G,\bZ)$ 
for $i=1,2,3,4$ be the linear forms corresponding
to the four oriented edges of $G$, we form a transversely
shifted matroidal degeneration $X(M^*(G),\mathscr{H})$
where $$\mathscr{H}=\{H_1^{(1)}, H_1^{(2)}, H_1^{(3)}, 
H_2^{(1)}, H_2^{(2)}, H_2^{(3)}, H_2^{(4)},
H_3^{(1)}, H_3^{(2)},
H_4^{(1)}
\}.$$ 
There are, respectively, $3$, $4$, $2$, $1$
hyperplanes perpendicular to 
${\bf x}_1=(1,0,0)$,
 ${\bf x}_2=(0,1,0)$,
  ${\bf x}_3=(0,0,1)$,
   ${\bf x}_4=(1,1,1)$. These hyperplanes
   are, respectively,
   depicted in red, blue, green, and purple
   in Figure \ref{super_theta}.
\end{example}

\section{Nodal and semistable morphisms
over higher-dimensional bases}
\label{sec:semistable}

\begin{definition}\label{D-nodal} 
Let $Y$ be a smooth analytic space,
and let $D\subset Y$ be an snc divisor, 
$D=\bigcup_i D_i$. 
Let $f\colon X\to Y$ be a morphism of 
analytic spaces. We say that $f$ is 
\begin{enumerate}
\item {\it $D$-nodal} if for every point $p\in X$, 
there are analytic coordinates
in which the morphism $f$ is 
of the form $$\prod_{i\in I} \{x_iy_i=u_i\}\times 
\Delta^{j+k}\to \prod_{i\in I}\Delta_{u_i} \times \Delta^j$$ 
where $u_i$ are local equations for some components 
$D_i\subset D$, $i\in I$ and $\Delta^{j+k}\to \Delta^j$
is the projection to the first $j$ coordinates,
\item {\it nearly $D$-nodal}\label{d-nearly-form} 
if we rather have a normal form
of shape $$\prod_{i\in I} \{x_i^{(1)}y_i^{(1)}=\cdots = 
x_i^{(n_i)}y_i^{(n_i)}=u_i\}\times \Delta^{j+k}\to 
\prod_{i\in I}\Delta_{u_i} \times \Delta^j,$$
\item\label{d-snc-form} {\it $D$-semistable} if we have
$$\prod_{i\in I} \{x_i^{(1)}\cdots x_i^{(n_i)} =u_i\}\times 
\Delta^{j+k}\to \prod_{i\in I}\Delta_{u_i} \times \Delta^j.$$ 
\end{enumerate}
In all three cases, if we furthermore have
that the irreducible components $V_i\subset X_i$ of the 
generic fiber of $f$ over each component of $D_i$ 
are smooth, we use the term {\it strict}.
\end{definition}

This definition works equally well 
in the algebraic category, replacing $\Delta$ with
$\bA^1$ and analytic-local charts with \'etale-local charts.
In the cases where $f$ is $D$-nodal or $D$-semistable, 
the total space $X$ is smooth, 
but if some $n_i\geq 2$ for a nearly $D$-nodal 
morphism, then $X$ is singular.

\begin{remark} 
The notion of a $D$-semistable morphism is already known
in the literature by the term {\it semistable morphism},
and when context is clear, we also drop the $D$.
By Adiprasito--Liu--Temkin's resolution
\cite{alt} of the conjecture of 
Abramovich--Karu \cite{ak}, for every dominant morphism
$f\colon X\to Y$, there is an alteration
$Y'\to Y$ and a modification $X'\to X\times_YY'$
of the base change which is $D$-semistable, 
for the discriminant divisor $D$.
\end{remark}

\subsection{Mumford degenerations with nodal singularities}

\begin{proposition}\label{X-smooth}
Let $f\colon X(\uR, \mathscr{H})\to \Delta^k$
be a transversely
shifted matroidal degeneration. Then,
 the morphism $f$ has $D$-nodal singularities,
 where $D\coloneqq V(u_1\cdots u_k)\subset \Delta^k$ 
 is the union of the coordinate hyperplanes. 
In particular, $X(\uR, \mathscr{H})$ is smooth, 
$f$ is flat, and the fibers of $f$ are reduced.
The same results hold for the morphism
$f^{\rm univ}\colon X^{\rm univ}(\uR, \mathscr{H})\to 
\widetilde{\cA}_g^{\,\bB}$ with $D$ the 
toroidal boundary. Conversely, any Mumford
degeneration with $D$-nodal singularities is a 
transversely shifted
matroidal degeneration on some regular matroid $\underline{R}$. 
\end{proposition}

\begin{proof}
First we prove the forward direction.

Since the action of $d{\bf M}$ on the universal
cover of the Mumford degeneration is free, it suffices
to check the statement on this universal cover.
We first show
that every cone of the normal fan to 
$\Gamma$
is a standard affine cone 
(i.e.~integral-affine equivalent
to $\bN^i$ for some $i$).

The cones of the normal fan
are in bijection with the faces of $\Gamma$.
Let $F$ be a polyhedral face in the decomposition
$\bigcup_{i=1}^k H_i$ of $\bT^g$.
Then $F$ also defines a face of $\Gamma$
by evaluating $(b_1,\dots,b_k)$ on $F$. 
Conversely, all faces of $\Gamma$
contain such a face in their closure.
So it suffices to examine the normal fan
of the faces adjacent to $F$.

By the transversality hypothesis,
$F$ is locally described as an intersection
$\bigcap_{i\in I} H_i$ for which ${\bf x}_i\in \bfN$
for $i\in I$ are linearly independent.
Furthermore, the normal vectors
${\bf x}_i$ for $i\in I$ generate a standard 
affine cone in $\bfN$, because $i\mapsto {\bf x}_i$
is an integral realization 
of a regular matroid, and so the sublattice
generated by them is saturated.

By hypothesis,
the bending parameter of $b_i$ is $1$ across any 
hyperplane $H_i$ with normal vector
${\bf x}_i$. Thus, after an integral change of basis 
and translation of
$F$ to the origin, 
$b_i$ is locally expressible as
\begin{align}\label{local-b}
b_i({\bf m})=
\twopartdef{0}{{\bf x}_i({\bf m})\leq 0}{{\bf x}_i({\bf m})}{{\bf x}_i({\bf m})\geq 0}
\end{align}
on $\bfM_\bR$ for $i\in I=\{1,\dots, k\}$, $k\leq g$.
Combining all the above considerations, we
deduce that the local monoid $\bfM_F$ of 
the face $F$ is a product
$$\bfM_F=\prod_{i\in I} (\bZ_{\geq 0})^2 \times \!\!\!\!\!\prod_{i\,\in\,\textrm{Basis}\setminus I} \!\!
\bZ \times \prod_{E\setminus I}
\bZ_{\geq 0}\subset {\bf M}\times \bZ^k\simeq \bZ^{g+k}$$
where the first factors, indexed by $i\in I$, correspond
to two vectors along the graph of $b_i$ in the
two (local) domains
of linearity of (\ref{local-b}), the
second factors go along the
face $F$, and the third factors, indexed
by $E\setminus I$, are ``vertical'' faces,
arising from the fact that we took the overgraph
$\Gamma(b_1,\dots,b_k)\vert_{F}+(\bZ_{\geq 0})^k$.
Note that the dimensions of the factors
add up to the correct
value $$2|I|+(g-|I|)+(|S|-|I|) = g+k.$$
The dual cone to $\bfM_F$
is isomorphic to $(\bZ_{\geq 0})^{|I|+k}$.

We deduce that the cone
$\sigma_F\in \cS$ corresponding
to $F$, in the normal fan 
$\cS$ of $\Gamma$, is
standard affine. So $X$ is smooth. Furthermore,
the morphism to the fan $(\bR_{\geq 0})^k$ 
is, on the first factors $(x,y)\mapsto x+y$,
on the second factor is zero, and on the
third factor, is an isomorphism to the coordinate
axis indexed by the corresponding 
element of $E\setminus I$.
We deduce that the morphism 
$\sigma_F\to (\bR_{\geq 0})^k$
is a product of node smoothings, with a smooth
morphism, as in the definition of a $D$-nodal morphism.
The second statement follows. 

We now prove the reverse direction.
The condition that 
$X(d\mid b_1,\dots,b_k)\to \Delta^k$
have $D$-nodal singularities implies that the 
polyhedral complex ${\rm Bend}(b_i)$
can only have codimension $1$ faces. By convexity, we deduce
that ${\rm Bend}(b_i)$ is a disjoint union of parallel 
hyperplanes $H_i$ in $\bT^g$.
At any face $F$ of $\bigcup_{i=1}^k {\rm Bend}(b_i)$ 
where these hyperplanes intersect,
the normal vectors ${\bf x}_i$ to the hyperplanes  
must be a subset of a $\bZ$-basis of $\bfN$ 
for the normal fan to be standard affine. 
It follows that any linearly independent collection of 
${\bf x}_i$ generate
a saturated sublattice of $\bfN$ and so ${\bf x}_i$ 
define a regular matroid $\underline{R}$.
Furthermore, the fact that 
the normal vectors must be linearly independent
proves that the $H_i$ define a transversal arrangement.

The same results hold for
$X^{\rm univ}(\uR,\mathscr{H})\to 
\widetilde{\cA}_g^{\,\bB}$, which
over the germ of each open stratum
of $\widetilde{\cA}_g^{\,\bB}$
forms a locally trivial deformation
of the degeneration 
$X(\uR,\mathscr{H})\to \Delta^k$.
\end{proof}

\begin{question} A $1$-parameter semistable degeneration
$f\colon X\to Y$ which is relatively $K$-trivial
(i.e.~$K_X\sim_f0$) is called a Kulikov model,
see Remark \ref{kulikov}. Proposition
\ref{X-smooth} shows that it is natural to 
generalize this notion to a {\it multivariable 
Kulikov model}---a proper, semistable,
relatively $K$-trivial morphism. 
It is unclear in what context they are guaranteed to exist.
Proposition \ref{X-smooth} gives nontrivial
examples of such, 
for abelian varieties.
Given a family of $K$-trivial varieties $f\colon X\to Y$,
is there an alteration of the base $Y'\to Y$
and birational modification $f'\colon X'\to Y'$
of the base change, which is a multivariable Kulikov model?
Do multivariable Kulikov models
exist for families of K3 surfaces?
\end{question}

\begin{proposition}\label{strict-D}
Suppose that every face $F$
of a transversal
arrangement $\mathscr{H}$ 
is embedded in $\bT^g$ (as opposed to
immersed) for all subsets $I\subset E$. 
Then, every stratum of 
$X=X(\uR, \mathscr{H})$ 
is smooth. In particular, 
$f\colon X\to \Delta^k$
is strictly $D$-nodal.
Thus, a transversely
shifted matroidal degeneration
is strictly $D$-nodal if and only if there are least
two hyperplanes with normal vector ${\bf x}_i\in \bfN$
for all $i=1,\dots,k$.
\end{proposition}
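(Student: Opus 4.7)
The plan is to combine the stratum description from Proposition \ref{bending-read} with the local smoothness analysis in the proof of Proposition \ref{X-smooth}. The strata of $X = X(\uR, \mathscr{H})$ lying over the generic point of a coordinate subspace $V(u_i : i \in I)$ are indexed by faces $F$ of the polyhedral decomposition of $\bT^g$ cut by $\bigcup_{i\in I} {\rm Bend}(b_i)$, and each is realized as a (possibly self-glued) toric variety bundle over an abelian factor, the toric polytope being the compact factor of any lift $\widetilde F \subset \bfM_\bR$.

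First I would observe that the local toric chart at every face is smooth: by the argument in the proof of Proposition \ref{X-smooth}, transversality of $\mathscr{H}$ together with the regular-matroid property of $\uR$ forces the local monoid $\bfM_F$ to decompose as a product of standard affine cones, making the associated toric variety smooth. The only remaining obstruction to smoothness of the corresponding stratum of $X$ is the gluing induced by the $\bfM$-quotient: if the closure $\overline{\widetilde F}$ fails to inject into $\bT^g$, the quotient identifies boundary points of the otherwise smooth toric chart, yielding a non-normal stratum. The hypothesis that every face is embedded rules this out, so every stratum is smooth. Strict $D$-nodality follows, since the components of the generic fiber over each $D_i = V(u_i)$ are top-dimensional strata and hence smooth, while the local $D$-nodal form was established in Proposition \ref{X-smooth}.

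For the if-and-only-if, I would first argue the contrapositive of one direction: if $r_i = 1$, then $\bigcup_j H_i^{(j)}$ reduces to a single subtorus in $\bT^g$ whose complement is connected; the unique chamber lifts to an infinite slab in $\bfM_\bR$ whose two parallel boundary hyperplanes (with shared normal ${\bf x}_i$) are identified in $\bT^g$. The corresponding component is therefore a self-glued $\bP^1$-bundle over an abelian $(g-1)$-fold, hence non-normal, and strict $D$-nodality fails. Conversely, if $r_i \geq 2$ for every $i$, one verifies that every face of every sub-arrangement is embedded and invokes the main implication above. This verification reduces, via the product decomposition of $\bfM_F$, to the single-direction statement: two or more parallel hyperplanes in $\bT^g$ cut the transverse circle $\bR/\bZ$ into closed embedded arcs.

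The main obstacle will be this last verification: propagating the single-direction embeddedness to embeddedness of all faces of all sub-arrangements requires carefully using the transversality of $\mathscr{H}$ to decouple the distinct normal directions in the $\bfM$-quotient, so that bounded faces arising from higher-codimension sub-arrangements remain embedded after passage to $\bT^g$.
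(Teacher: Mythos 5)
Your treatment of the first claim (embeddedness of all faces implies the strata of $X(\uR,\mathscr{H})$ are smooth, hence $f$ is strictly $D$-nodal) takes essentially the same route as the paper: the local toric chart at each face is already shown to be smooth in the proof of Proposition~\ref{X-smooth}, and the embeddedness hypothesis is precisely what rules out self-identifications of the closure of a toric stratum under the $\bfM$-quotient, so that the only remaining identifications are the free translations producing the abelian factor. Your explicit contrapositive in the ``only if'' direction of the second claim (if some $r_i=1$, the unique component over $V(u_i)$ is a self-glued $\bP^1$-bundle over an abelian $(g-1)$-fold, hence non-normal) is correct and is a direction the paper leaves implicit; good to spell it out.

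The gap you flag yourself in the ``if'' direction is, however, real, and the mechanism you propose to close it is the wrong one. The product decomposition of the local monoid $\bfM_F$ from Proposition~\ref{X-smooth} is a statement about the germ of the arrangement near the face $F$: it tells you the normal fan at $F$ is standard affine. It says nothing about whether the closure $\overline{F}$ of a bounded face self-intersects at far-away points after passing to $\bT^g$, which is the global question embeddedness actually asks. So ``via the product decomposition of $\bfM_F$'' does not by itself reduce the problem to the one-dimensional circle statement. The paper closes the gap with a genuinely global argument: for any $I\subset E$, choose a maximal independent subset $I_0\subset I$ of the matroid $\uR$; by regularity the vectors $\{{\bf x}_i\}_{i\in I_0}$ extend to a lattice basis of $\bfN$, in which coordinates $\bT^g$ splits as $\bT^{h}\times\bT^{g-h}$ and the sub-arrangement $\bigcup_{i\in I}{\rm Bend}(b_i)$ is a polyhedral refinement of the tiling of $\bT^h\times\bT^{g-h}$ by cuboids of side lengths $1/r_i$. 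When every $r_i\geq 2$ these cuboids are embedded (each side length is $\leq 1/2<1$, which is exactly your one-dimensional arc observation, applied coordinatewise in the basis coordinates), and every face of a refinement of an embedded cuboid is embedded. That is the ``decoupling of the distinct normal directions'' you were reaching for: it comes from choosing a matroid basis and the resulting global product/cuboid structure, not from the local monoid.
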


\begin{proof} It follows from the hypothesis
and Proposition \ref{bending-read}
that if the $d{\bf M}$-action on the universal
cover $X(\cS)$ of the Mumford degeneration 
identifies two points $p,q\in X(\cS)$
lying on the same smooth toric stratum, then $p,q$ must
lie in a subtorus $(\bC^*)^k$ being quotiented
to an abelian $k$-fold. The first part of the proposition 
follows.

To show the second part: Suppose 
that there are $r_i\geq 2$ 
hyperplanes with normal vector ${\bf x}_i$. Let $I_0$
be a subset of a basis, say 
$I_0=\{1,\dots,h\}\subset \{1,\dots,g\}$ for which
the span of ${\bf x}_i$ for 
$i\in I$ is generated by the ${\bf x}_i$
for $i\in I_0$.
Then $\bigcup_{H\in \mathscr{H}}H$ 
is, combinatorially, 
a tiling of $\bT^g$ by the product
of a subtorus $\bT^{g-h}\subset \bT^g$ with some
polyhedral subdivision of the tiling of $\bT^h$ by
cubes of size $1/r_1\times \cdots \times 1/r_h$. 
All such cubes are embedded in $\bT^h$ once $r_i\geq 2$.
\end{proof}

\begin{remark}\label{perturb}
Let $\mathscr{H}$ define a 
transversal arrangement. Then,
any small rational
perturbation 
of the $H\in \mathscr{H}$ which keeps
the combinatorics of the intersection complex
$\bigcup_{H\in \mathscr{H}} H \subset \bT^g$ 
constant produces an isomorphic
degeneration 
$X(\uR, \mathscr{H})\to \Delta^k$,
since the normal fan is unchanged by such a perturbation.
Thus, the only difference between these two Mumford 
constructions is the choice of polarization on the total
space. More generally:
\end{remark}

\begin{proposition}\label{perturb2}
For any arrangement $\mathscr{H}$,
and any sufficiently small perturbation $\mathscr{H}'$
of $\mathscr{H}$,
there is a birational morphism
$X(\uR,\mathscr{H}')\to X(\uR,\mathscr{H})$ over
$\Delta^k$. \end{proposition}

\begin{proof} The proposition
follows from the fact that the
normal fan associated
to $\mathscr{H}'$ is a refinement 
of that for $\mathscr{H}$---all domains
of linearity of the PL
function $(b_1,\dots,b_k)\colon \bfM_\bR\to \bR^k$ 
bending along $\mathscr{H}$
``persist'' (up to a small deformation)
as domains of linearity for the PL function
$(b_1',\dots,b_k')\colon \bfM_\bR\to \bR^k$ 
bending along $\mathscr{H}'$.
Hence, any normal cone to a face of the polytope
$\Gamma$ is a union of normal cones
of the corresponding faces of $\Gamma'$. \end{proof}

\begin{corollary}
A small, transversal perturbation $\mathscr{H}'$
of the hyperplane arrangement 
(\ref{Hi}) defining the standard matroidal
Mumford construction $X(\uR)\to \Delta^k$
(Def.~\ref{matroidal-mumford}) defines
a projective resolution of singularities 
$X(\uR,\mathscr{H}')\to X(\uR)$ which is $D$-nodal over $\Delta^k$.
\end{corollary}

\begin{proof} The corollary follows from
Propositions \ref{perturb2} and \ref{X-smooth}. 
\end{proof}

\subsection{Weight filtration of a 
semistable morphism}

Throughout this section, suppose that 
$f\colon X\to Y$ is a (strict) 
$D$-semistable, proper morphism, 
for an snc pair $(Y,D)$, with $X$ K\"ahler. 
We fix a point $0\in D$ and denote by
$D_I^o$ the open snc stratum containing $0$.
Assume without loss of generality that
$I=\{1,\dots,k\}$. We also fix a base point
$t\in Y\setminus D$ near $0$.

\begin{proposition}\label{restrict}
Let $f\colon X\to Y$ be a (strict) 
$D$-semistable morphism for an snc pair 
$(Y,D)$ and let $C\to (Y,D)$ be
a pointed curve which transversely 
intersects the open stratum of a 
component $D_i\subset D$. Then the base change $
X\times_Y C\to C$ is a (strict) semistable degeneration. 
\end{proposition}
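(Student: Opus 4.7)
The strategy is to verify, at every point of $X\times_Y C$, the local normal form of a $1$-parameter semistable degeneration, by pulling back the $D$-semistable normal form of $f$ along $C\to Y$. I would first reduce to the case where $p\in X\times_Y C$ lies above the marked point $0\in C$: at every other point $C$ maps into $Y\setminus D$, where $f$ is smooth, and smoothness is preserved under arbitrary base change.

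Second, let $q\in X$ be the image of $p$, so $f(q)\in C\cap D$. The hypothesis that $C$ transversely meets the \emph{open} stratum of $D_i$ forces $f(q)\in D_i\setminus\bigcup_{j\neq i}D_j$. Consequently, in the normal form of Definition \ref{D-nodal}, the indexing set is $I=\{i\}$, and there are analytic coordinates near $q$ in which $f$ takes the shape
$$\{x^{(1)}\cdots x^{(n)}=u\}\times \Delta^{j+k}\ \longrightarrow\ \Delta_u\times \Delta^j,$$
with $u$ a local equation of $D_i$.

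The key step is to realize the base change in these coordinates. Since $C$ is transversal to $V(u)$ at $f(q)=(0,v_0)$, and $f$ depends only on the coordinate $u$, I would perform a biholomorphic change of the $\Delta^j$-coordinates on $Y$ so that $C$ becomes the horizontal slice $\{v=v_0\}$, parametrized by $u=t$; because $f$ is independent of $v$, its local form is undisturbed. Pulling back then directly produces
$$X\times_Y C\ \cong\ \{x^{(1)}\cdots x^{(n)}=t\}\times \Delta^k\ \longrightarrow\ \Delta_t,$$
which is by inspection the local normal form of a $1$-parameter semistable degeneration (the case $|I|=1$, $j=0$ in Definition \ref{D-nodal}).

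For the strict version, the local model exhibits the central fiber as the union of the smooth pieces $\{x^{(s)}=0\}\times \Delta^k$, $s=1,\dots,n$. Globally, strict $D$-semistability gives smooth irreducible components $V_{i,s}\subset f^{-1}(D_i)$, and the same local model shows the restriction $V_{i,s}\to D_i$ is smooth; hence for any point of $D_i$, in particular $f(0)$, the components $V_{i,s}\cap f^{-1}(f(0))$ of the fiber are smooth, giving strictness of the base change. I do not expect a genuine obstacle: the proposition is essentially a direct unpacking of the normal form, where transversality to the open stratum is exactly what guarantees $|I|=1$ locally, so that the $|I|$-dimensional $u$-direction collapses to the $1$-dimensional base $C$.
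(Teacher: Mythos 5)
Your proof is correct and tracks the paper's own one-line argument, which simply unwinds the local normal form of Definition \ref{D-nodal}. One small point to tighten: a biholomorphic change of \emph{only} the $\Delta^j$-coordinates cannot in general move $C$ onto a horizontal slice $\{v=v_0\}$, since an injective $\phi\colon \Delta^j\to\Delta^j$ cannot collapse a non-constant path $t\mapsto v(t)$ to the single point $v_0$; what you want is a $u$-dependent shear $(u,v)\mapsto\bigl(u,\,v-(v(u)-v_0)\bigr)$ on $Y$ (which lifts to $X$ by pulling back along $u=x^{(1)}\cdots x^{(n)}$, leaving the normal form intact), or even more simply no coordinate change at all: reparametrize $C$ so $u$ is the parameter $t$ and observe that in the fiber product $\{x^{(1)}\cdots x^{(n)}=t,\ v=v(t)\}$ the coordinate $v$ is eliminated, leaving $\{x^{(1)}\cdots x^{(n)}=t\}\times\Delta^k\to\Delta_t$. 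The reduction to $|I|=1$ via transversality to the open stratum, and the strictness argument (the central fiber of the base change is the fiber of $f$ over $f(0)\in D_i^o$, i.e.\ the generic fiber over $D_i$, whose components are smooth by hypothesis), are both sound.
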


\begin{proof}This follows immediately from the normal
form (\ref{d-snc-form}). \end{proof}

Consider the inclusion of the nearby fiber 
$X_t\hookrightarrow X_{\Delta^I}$ 
into the restriction of $X\to Y$
to a polydisk $\Delta^I\ni 0$ transversely slicing the
snc stratum $D_I^o\ni 0$. We claim:

\begin{proposition}\label{clemens-collapse}
There is a deformation-retraction 
$c\colon X_{\Delta^I}\to X_0.$ 
The fibers of $c_t\coloneqq c\vert_{X_t}$ 
are real tori; more precisely, if $p\in X_0$
lies in a product of snc strata 
$p\in \prod_{i\in I} V(x_i^{(1)},\cdots ,x_i^{(n_i)})$,
cf.~\eqref{d-snc-form}, 
then the fiber of $c_t$ 
is $c^{-1}_t(p)=\prod_{i\in I} (S^1)^{n_i-1}$
where $(S^1)^{n_i-1}\subset X_t$ 
is the vanishing torus
of the semistable degeneration
$x_i^{(1)}\cdots x_i^{(n_i)}=u_i$. 
\end{proposition}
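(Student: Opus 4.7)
The plan is to construct $c$ by iterating the classical one-parameter Clemens retraction \cite{clemens} in each coordinate direction $u_1, \dotsc, u_k$ of $\Delta^I$, exploiting the product form (\ref{d-snc-form}) of a strict $D$-semistable morphism.

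First, fix $i \in I$ and consider the projection $\pi_i \colon X_{\Delta^I} \to \Delta_{u_i}$. Applying Proposition \ref{restrict} to the coordinate axis $\Delta_{u_i} \hookrightarrow \Delta^I$ (or reading off directly from (\ref{d-snc-form})), $\pi_i$ is a proper one-parameter semistable degeneration whose central fiber is the snc divisor $X_{\Delta^I} \cap V(u_i)$ in the smooth total space $X_{\Delta^I}$. The classical Clemens retraction applied to $\pi_i$ yields a deformation retraction
\[
c_i \colon X_{\Delta^I} \longrightarrow X_{\Delta^I} \cap V(u_i) = X_{\Delta^{I \setminus \set{i}}},
\]
whose restriction to any smooth fiber $X_t$ has, over a point lying in a stratum $V(x_i^{(1)}, \dotsc, x_i^{(n_i)})$ where $n_i$ components of $\pi_i^{-1}(0)$ meet, a fiber equal to the vanishing torus $(S^1)^{n_i - 1}$.

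Second, compose in sequence: define $c \coloneqq c_1 \circ c_2 \circ \cdots \circ c_k$. Since $c_i$ retracts onto $V(u_i)$ and fixes $V(u_i)$ pointwise, the composition $c$ is a deformation retraction of $X_{\Delta^I}$ onto $\bigcap_{i \in I} V(u_i) = X_0$. To identify the fibers of $c_t = c|_{X_t}$, I would work in a local product chart (\ref{d-snc-form}): here the $i$-th factor carries only the base coordinate $u_i$, so the Clemens retraction for $\pi_i$ can be taken to be the product of the identity on the $j$-th factors for $j \neq i$ with the one-parameter Clemens retraction on the $i$-th factor $\set{x_i^{(1)} \cdots x_i^{(n_i)} = u_i}$. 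Consequently, $c$ locally agrees with a product of one-parameter Clemens collapses, and its fiber $c_t^{-1}(p)$ over $p \in \prod_{i \in I} V(x_i^{(1)}, \dotsc, x_i^{(n_i)})$ equals the product $\prod_{i \in I} (S^1)^{n_i - 1}$ of vanishing tori.

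The hard part will be ensuring that the local product structure of the retractions is consistent across charts, so that the $c_i$ pairwise commute globally and the fibers of $c_t$ really are the stated products. Clemens's retraction involves a choice of real $C^\infty$ vector field tangent to $\pi_i^{-1}(0)$ together with a partition of unity; a priori different directions $u_i$ could produce non-commuting flows after gluing. To remedy this, strict $D$-semistability ensures that the snc strata of $X_0$ and their normal directions in $X_{\Delta^I}$ are smooth and globally defined, so one may select cutoff functions pulled back from $\Delta^I$ along $f$, and real vector fields that, in each local product chart, are supported in and tangent to a single factor. With these adapted choices, the vector fields $V_1, \dotsc, V_k$ generating $c_1, \dotsc, c_k$ pairwise commute in every chart, hence globally, and their joint flow realizes $c$ as the simultaneous collapse of all vanishing tori, with the product form of the fibers as stated.
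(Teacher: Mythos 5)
Your proposal matches the paper's strategy: construct a product of one-parameter Clemens collapses in each local product chart for \eqref{d-snc-form}, and patch. The organizational difference is that the paper defines the retraction once in each chart as the simultaneous product (linearly decreasing all $|x_i^{(j)}|$ at once, reading off the fiber as $\prod_{i\in I}\{|x_i^{(1)}|=\cdots=|x_i^{(n_i)}|=|u_i(t)|^{1/n_i}\}$) and patches only at the end, whereas you first patch each $c_i$ globally and then compose $c_1\circ\cdots\circ c_k$. Your sequential route introduces exactly the compatibility issue you flag as the ``hard part'': for the composition to land in $X_0$ and have the stated product fibers, each $c_i$ must preserve $V(u_j)$ for $j\neq i$. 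Your instinct --- choose in each chart a vector field tangent to a single factor --- is the right one, and tangency to $V(u_j)$ is a pointwise linear condition, so it survives patching by a partition of unity. Two details in your remedy need correction, though. First, the partition of unity must live on a neighborhood of $X_0$ in $X$, not be pulled back from $\Delta^I$: distinct product charts overlap within a single fiber $X_t$, and functions constant on fibers cannot separate them. Second, pairwise commutativity of the patched flows does not follow from commutativity in each chart (the Lie bracket of $\sum_\alpha \rho_\alpha V_i^\alpha$ and $\sum_\beta \rho_\beta V_j^\beta$ picks up derivative-of-cutoff terms); commutativity is not actually needed once each $c_i$ preserves the other divisors, but the fiber computation for the composition then has to be checked chart by chart. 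The paper's one-shot product formulation sidesteps both issues and is the cleaner path; your proposal is correct in spirit and correctly identifies the genuine subtlety, but would be simpler to make rigorous by adopting the simultaneous product definition rather than composing separately-patched retractions. (Also, a minor point: Proposition~\ref{restrict} concerns the base change $X\times_Y C\to C$, not the projection $\pi_i$ itself, whose general fibers are singular; your parenthetical ``reading off directly from \eqref{d-snc-form}'' is the correct justification.)
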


It would be natural to call the retraction
$c$ the ``multivariable Clemens collapse'',
in analogy with the Clemens collapse of a
$1$-parameter semistable degeneration, as in
\cite[Thm.~5.7]{clemens},
\cite[Sec.~2.3]{persson},
\cite[Prop.~C.11]{peters-steenbrink-mixedhodge}.

\begin{proof}[Proof of Proposition \ref{clemens-collapse}]
  Consider first a $1$-parameter semistable degeneration over
  $\Delta_u$ as for the usual Clemens collapse.
  Near an snc stratum of the fiber, one defines $c$ 
  as the deformation-retraction
  of $\{x^{(1)}\cdots x^{(n)}=u\}\subset \Delta^n\times \Delta_u$ 
  to $\{x^{(1)}\cdots x^{(n)}=0\}\subset \Delta^n\times \{0\}$ given
  by keeping the arguments of the complex numbers $x^{(j)}$ 
    constant and linearly decreasing the absolute values 
    $|x^{(j)}|$ until one of these absolute values equals zero. 
    These local deformation-retractions 
    may be patched via partitions of unity, 
    cf.~\cite[p.~236]{clemens}, to give a piecewise
    smooth retraction of the total space of the semistable
    degeneration onto its central fiber.
    
    The same procedure works in the $D$-semistable case: 
    We define a deformation-retraction $c$ of 
    $$\prod_i \{x_i^{(1)}\cdots x_i^{(n_i)}=u_i\}
    \subset \Delta^n\times \prod_i \Delta_{u_i}$$ onto 
    $\prod_i \{x_i^{(1)}\cdots x_i^{(n_i)}=0\}
    \subset \Delta^n\times \{0\}$ where $n = \sum n_i$.
    It is the product of the $1$-variable retractions,
    linearly decreasing the absolute values of $x^{(j)}_i$
    until one of them equals zero, for each $i$. 
    Note that $c$ fibers over the 
    deformation-retraction of the base 
    $\prod_i \Delta_{u_i}$ 
    to the origin, which radially contracts 
    each coordinate $u_i$ until it equals zero.

    The fiber $c^{-1}_t(p)$ over the origin $p$ of
    the given chart is a
    product of tori---each 
    torus factor $(S^1)^{n_i-1}$ is given by 
    $$|x_i^{(1)}|=\cdots = |x_i^{(n_i)}|=|u_i(t)|^{1/n_i}.$$
    Since one linearly decreases the absolute
    values $|x_i^{(j)}|$ for $j=1,\dots, n_i$ 
    simultaneously, they all hit the value
    zero at the same time.
    Finally, one patches the local retractions thus defined
    over products of snc strata, in a manner similar to \cite{clemens}.
    \end{proof}

Denote the local monodromy
operators on (co)homology about the component $D_i\ni 0$ 
by $$T_i\colon H^q(X_t,\bZ)\to  H^q(X_t,\bZ) \textrm{ or }
H_q(X_t,\bZ)\to H_q(X_t,\bZ).$$
A semistable degeneration
over a curve has unipotent monodromy, 
so by Proposition \ref{restrict}, the
$T_i$ are commuting unipotent operators.
Let $$N_i\coloneqq 
\log T_i\coloneqq (T_i-{\rm Id}) - 
\tfrac{1}{2}(T_i-{\rm Id})^2+
\tfrac{1}{3}(T_i-{\rm Id})^3-\cdots $$ 
be their nilpotent logarithms. Any linear combination
$N\coloneqq \sum a_iN_i$ for $a_i\in \bN$ positive integers, 
defines the same weight filtration $W_\bullet$ on 
$H^q(X_t,\bZ)$ or $H_q(X_t,\bZ)$ with weights
lying between $0$ and $2q$, resp.~$-2q$ and $0$.
Here we use that $X$ is K\"ahler, so that
that $W_\bullet$ is the weight filtration of the limit
mixed Hodge structure.

\begin{proposition}\label{specialization} 
Let $X\to Y$ be a
strict $D$-semistable degeneration, $X$ K\"ahler,
$0\in Y$ a point and
$t\in Y$ a nearby point. There is a canonical
specialization 
map ${\rm sp}\colon H_q(X_t,\bZ)
\to H_q(\Gamma(X_0),\bZ)$ for all $q$.
Furthermore, ${\rm sp}$ is surjective when $q=1$. \end{proposition}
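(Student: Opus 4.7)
The plan is to define ${\rm sp}$ as a composition of two canonical maps on homology, and then to prove surjectivity for $q=1$ by exhibiting explicit geometric preimages of a generating set.

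First, using the multivariable Clemens retraction $c\colon X_{\Delta^I}\to X_0$ from Proposition~\ref{clemens-collapse}, restricted to $c_t\colon X_t\to X_0$, one obtains the induced map $(c_t)_*\colon H_q(X_t,\bZ)\to H_q(X_0,\bZ)$. I then construct a natural continuous map $\rho\colon X_0\to |\Gamma(X_0)|$ to the geometric realization of the dual complex. Concretely, pick open tubular neighborhoods $U_i\supset V_i$ of the irreducible components of $X_0$, fine enough that for each nonempty subset $S$ of components meeting at a stratum $V_S$, the intersection $U_S\coloneqq \bigcap_{i\in S}U_i$ deformation-retracts onto $V_S$. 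A partition of unity $\{\psi_i\}$ subordinate to $\{U_i\}$ then defines $\rho(p)\coloneqq \sum_i\psi_i(p)\,v_i$, viewed as a point in the unique simplex of $\Gamma(X_0)$ spanned by the vertices $v_i$ with $\psi_i(p)>0$. Any two such $\rho$ are homotopic by a convex-linear homotopy, so the induced $\rho_*\colon H_q(X_0,\bZ)\to H_q(\Gamma(X_0),\bZ)$ is canonically defined. Setting ${\rm sp}\coloneqq \rho_*\circ (c_t)_*$, its independence of $t$ within a small neighborhood of $0$ and of the other choices follows from homotopy invariance.

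For surjectivity at $q=1$, the group $H_1(\Gamma(X_0),\bZ)$ is generated by combinatorial graph cycles $\gamma=e_{V_0V_1}+e_{V_1V_2}+\cdots+e_{V_{n-1}V_0}$, where each edge $e_{V_sV_{s+1}}$ corresponds to a chosen connected component of a pairwise intersection $V_s\cap V_{s+1}$. For each vertex $v_s$, I pick a generic point $q_s$ in the interior of the component $V_s$; this lies in a minimal snc stratum, so by Proposition~\ref{clemens-collapse} applied in a suitable local chart adapted to $q_s$, the preimage $c_t^{-1}(q_s)$ is a single point $\widetilde{q}_s\in X_t$. For each edge, I choose a path $\beta_s$ in $X_0$ from $q_s$ to $q_{s+1}$ that crosses $V_s\cap V_{s+1}$ transversally at a single point in the chosen connected component, and lift $\beta_s$ to a path $\widetilde{\beta}_s\subset X_t$ starting at $\widetilde{q}_s$ and ending at $\widetilde{q}_{s+1}$. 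Such a lift exists because Clemens retraction is a continuous map with torus fibers over each open stratum, so paths lift by choosing continuous trivializations in local analytic charts of the form $\prod_i\{x_i^{(1)}\cdots x_i^{(n_i)}=u_i\}$. Concatenating the $\widetilde{\beta}_s$ yields a 1-cycle $\widetilde{\gamma}\in H_1(X_t,\bZ)$ whose image under $(c_t)_*$ projects via $\rho$ to $\gamma$, so ${\rm sp}[\widetilde{\gamma}]=[\gamma]$.

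The main technical obstacle is verifying that the topological map $\rho$, and hence ${\rm sp}$, is truly independent of the various choices. This can be bypassed by an alternative route: identify ${\rm sp}$ on $H_1$ with the natural quotient $H_1(X_t,\bZ)\twoheadrightarrow {\rm gr}^W_0 H_1(X_t,\bZ)$ under the weight filtration of the limiting mixed Hodge structure from Section~\ref{sec:monodromies}, combined with the canonical isomorphism ${\rm gr}^W_0 H_1(X_t,\bZ)\simeq H_1(\Gamma(X_0),\bZ)$ obtained from the $q=0$ row of the weight spectral sequence, which coincides with the simplicial chain complex of $\Gamma(X_0)$. Surjectivity is then immediate, ${\rm sp}$ being a quotient map.
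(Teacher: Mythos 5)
Your overall architecture agrees with the paper's: push forward along the Clemens retraction $c_t\colon X_t\to X_0$ and then down to the dual complex, and prove surjectivity for $q=1$ by lifting combinatorial $1$-cycles through points chosen in the open strata $V_j^o$. That part of the plan, and the lifting argument, is essentially the paper's proof.

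The genuine gap is in the map $\rho\colon X_0\to |\Gamma(X_0)|$. For a (strict) $D$-semistable morphism over a base of dimension $k\geq 2$, the dual complex $\Gamma(X_0)$ is not the nerve of the covering of $X_0$ by its irreducible components but a \emph{polysimplicial} complex: the cell sitting over an open stratum $V_J^o$ where the local model is $\prod_{i\in I}\{x_i^{(1)}\cdots x_i^{(n_i)}=u_i\}$ is the product $\prod_{i\in I}\sigma_{n_i-1}$, which has dimension $\sum_i(n_i-1)$, whereas the $\sum_i n_i$ branches passing through that stratum span a simplex of the (larger) dimension $\sum_i n_i-1$. Concretely, near a point of the local model $\{xy=u_1\}\times\{zw=u_2\}$ all four components of $X_0$ meet, so a partition-of-unity assignment $\rho(p)=\sum_j\psi_j(p)\,v_j$ wants to land in the interior of a $3$-simplex; but the corresponding cell of $\Gamma(X_0)$ is a square $\sigma_1\times\sigma_1$, and no such $3$-simplex exists in $\Gamma(X_0)$. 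Moreover, even when $k=1$, two components $V_i$, $V_j$ may meet along several connected components of their intersection, each of which contributes its own edge in $\Gamma(X_0)$; your formula cannot distinguish these, so ``the unique simplex spanned by the vertices with $\psi_i(p)>0$'' is not well-defined. This is exactly why the paper inserts the intermediate space $\wX_0$, built as a union of polysimplex bundles $\Sigma_J$ over the strata: the zigzag $X_0\leftarrow \wX_0\xrightarrow{\mu}\Gamma(X_0)$ handles both the polysimplex structure and the multiplicities, and $\wX_0\to X_0$ is a homotopy equivalence. To repair your argument you would need a ``multi-partition of unity'' with $\sum_j\psi^{(i)}_j\equiv 1$ separately for each direction $i\in I$, and you would still have to resolve the multiplicity issue; at that point you have effectively reconstructed $\wX_0$.

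Your ``alternative route'' via the weight filtration is circular in the present paper: the canonical isomorphism ${\rm gr}^W_0 H_1(X_t,\bZ)\simeq H_1(\Gamma(X_0),\bZ)$ is Proposition \ref{prop:gr-H_lim=gr-H-Gamma}, whose proof \emph{uses} the surjectivity statement you are trying to establish. It also only addresses $q=1$, whereas the proposition asks for a specialization map in all degrees. A self-contained spectral-sequence argument could in principle work, but it would amount to a different and substantially longer proof (e.g.\ reducing to the one-parameter Clemens--Schmid sequence via a diagonal cocharacter, as the paper does one proposition later), not a shortcut.
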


\begin{proof}
Proposition \ref{clemens-collapse} produces a map 
$c_t\colon X_t\to X_0$.
Our goal is now to define a homotopy 
equivalence $\wX_0\to X_0$ from
a new topological space $\wX_0$, 
that admits a map
$\wX_0 \to \Gamma(X_0)$, 
so that we have the following diagram:
$$X_t \to X_0 \leftarrow \wX_0\to  \Gamma(X_0);$$
our map ${\rm sp}$ will be the composition 
$H_1(X_t,\bZ) \to H_1(X_0,\bZ) \xleftarrow{\sim} 
H_1(\wX_0,\bZ) \to H_1(\Gamma(X_0),\bZ)$. 

The topological space $\wX_0$ is 
built as follows. Let $X_0=\bigcup V_j$ be the 
decomposition of $X_0$ 
into its irreducible components and 
$V_J$ denote (an irreducible component
of) $\bigcap_{j\in J} V_j$.
Each stratum $V_J\subset X_0$ is 
locally a product of snc strata,
with local form $$\textstyle \prod_{i\in I} \{x_i^{(1)}=\cdots=x_i^{(n_i)}=0\}\subset 
\prod_{i\in I} \{x_i^{(1)}\cdots x_i^{(n_i)}=0\},$$
see Definition \ref{D-nodal}(\ref{d-snc-form}).
Thus, the local dual complex of $X_0$ at 
any point $v\in V_J^o$ in the open snc stratum
is a product of $(n_i-1)$-simplices, 
and these local dual complexes
form a product-of-simplices, i.e.~polysimplex 
bundle $\Sigma_J\to V_J$. Since
we assume that $f\colon X\to Y$ is strict $D$-semistable, the 
polysimplex bundle $\Sigma_J$ is in fact trivial: 
$\Sigma_J\simeq_{\rm homeo} V_J\times \prod_i \sigma_{n_i-1}$. 
We define $$\textstyle \wX_{0} \coloneqq \bigsqcup_J \Sigma_J/\sim $$
where $\sim$ is the equivalence relation given by the inclusion
of $\Sigma_J\vert_{V_{J'}}\hookrightarrow \Sigma_{J'}$ corresponding
to the face inclusion of the product of simplices 
$\prod \sigma_{n_i-1}$
corresponding to the inclusion of subsets $J\subset J'$.
See Figure \ref{homotopy-equivalence}.

\begin{figure}
\includegraphics[width=5in]{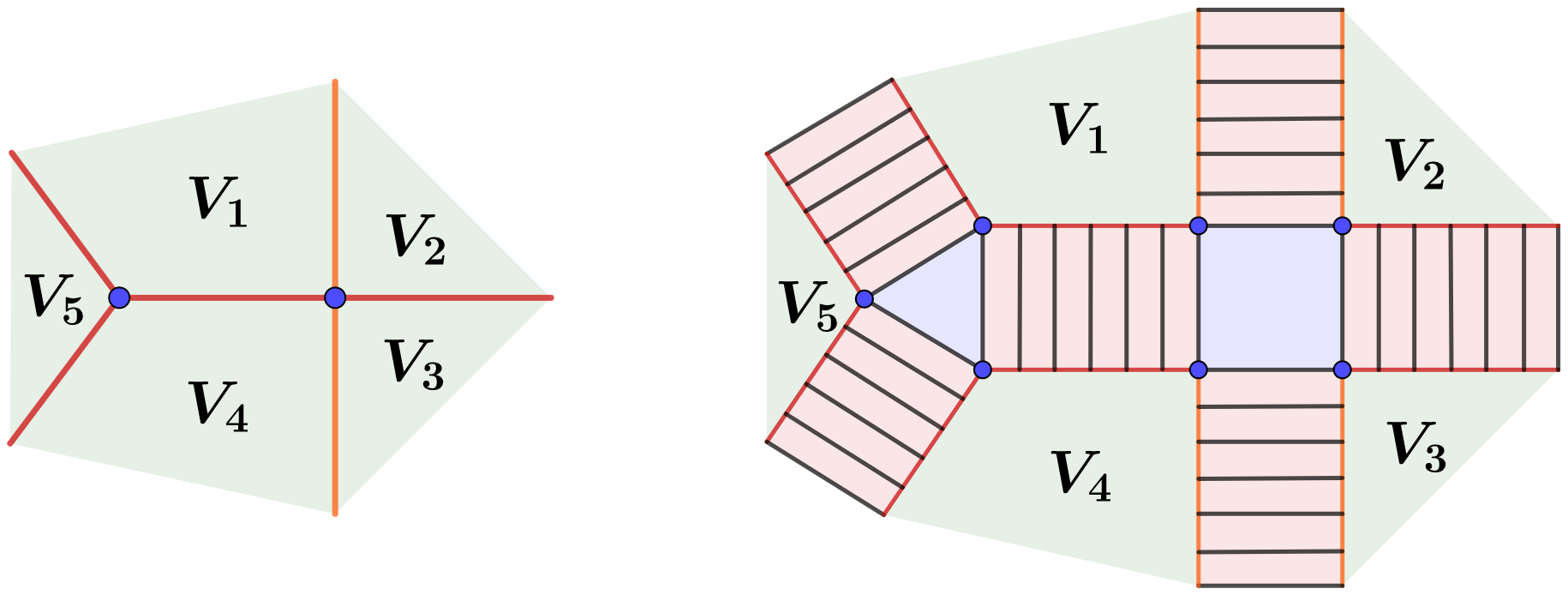}
\caption{Left: The central fiber of a semistable degeneration 
$X\to \Delta_{u_1}\times \Delta_{u_2}$ with five 
components $V_i\subset X_0$, $i=1,2,3,4,5$ of the 
central fiber, in green. Double loci extending
over $V(u_1)$ in 
red and double loci over $V(u_2)$ in orange. Local
equation of the smoothing of the lefthand triple locus
$V_{145}$ (in blue) is $x^{(1)}y^{(1)}z^{(1)}=u_1$ and
local equation of the smoothing
of the righthand codimension $2$
stratum $V_{1234}$ (in blue)
is $\{x^{(1)}y^{(1)}=u_1\}\times 
\{x^{(2)}y^{(2)}=u_2\}$. Right:
Topological space $\wX_0$ with
double loci $V_{12}$, $V_{23}$,
$V_{34}$, $V_{45}$, $V_{51}$
replaced with $1$-simplex bundles
$\Sigma_{12}$, $\Sigma_{23}$, $\Sigma_{34}$, $\Sigma_{45}$, $\Sigma_{51}$ and with $V_{145}$
and $V_{1234}$ replaced, respectively, with $2$-simplex
and $(1,1)$-polysimplex (i.e.~square) bundles $\Sigma_{145}$ and $\Sigma_{1234}$. The dual complex
$\Gamma(X_0)$ is the blue 
triangle glued to the blue square.
}
\label{homotopy-equivalence}
\end{figure}

Then $\wX_0$ has a homotopy 
equivalence to $X_0$
by decreasing the proportions of the 
polysimplices from side length $1$ to $0$.
Furthermore, there is a natural contraction map
$$\mu\colon \wX_0\to \Gamma(X_0)$$ 
given by collapsing
each open snc stratum to a point, which collapses 
the polysimplex bundle $\Sigma_J$ to a 
polysimplex
$\prod_i \sigma_{n_i-1}$. For instance,
in righthand side of 
Figure \ref{homotopy-equivalence},
the components are contracted to points,
and the interval bundles over double
loci are contracted to intervals.

Noting that
$H_q(X_0,\bZ)\simeq H_q(\wX_0,\bZ)$ 
by the homotopy
equivalence $\wX_0\to X_0$ we may
then define 
${\rm sp}\coloneqq 
\mu_*\circ (c_t)_*\colon 
H_q(X_t,\bZ)\to H_q(\Gamma(X_0),\bZ)$.

To prove surjectivity of $\rm{sp}$ when $q=1$, 
let $\alpha_0\in H_1(\Gamma(X_0),\bZ)$.
Then, we may lift $\alpha_0$ to an element
$\alpha\in H_1(\Gamma^{[1]}(X_0),\bZ)$
as the map $H_1(\Gamma^{[1]}(X_0),\bZ)\to 
H_1(\Gamma(X_0),\bZ)$ is surjective for any
polyhedral complex.

Starting with the vertices $v_j\in \alpha^{[0]}$,
we fix a lift $\widetilde{v}_j\in V_j^o$
in the open stratum of the corresponding 
component.  For an edge
$e_{jj'}\in \alpha^{[1]}$ connecting vertices
$v_j$ and $v_{j'}$, we lift to
a path $\widetilde{e}_{jj'}$ in $X_0$ connecting
$\widetilde{v}_j$ and $\widetilde{v}_{j'}$ 
and crossing the open stratum $V_{jj'}^o$. 
This produces a lift $\widetilde{\alpha}$
of $\alpha_0$ to a closed singular 
$1$-chain in $X_0$. To further lift
to $H_1(X_t,\bZ)$, consider the inverse
image $$\alpha_t\coloneqq 
\textstyle c_t^{-1}(\widetilde{\alpha}
\cap X_0^{\rm reg})$$ of the intersection
of $\widetilde{\alpha}$ with the regular locus. 

Consider the point $p_{jj'}=\widetilde{e}_{jj'}
\cap V^o_{jj'}$ where the edge 
$\widetilde{e}_{jj'}$ crosses
a double locus. There are two 
limit points of $\alpha_t$ on 
the circle $c_t^{-1}(p_{jj'})$.
We may connect these limiting points
by an arc of the circle so as
to lift the corresponding path $\widetilde{e}_{jj'}$ into
$X_t$. The result is a closed
$1$-chain in $X_t$ whose homology
class maps to $\alpha_0$
under ${\rm sp}$.
\end{proof}

\begin{proposition} \label{prop:gr-H_lim=gr-H-Gamma}
Let $f\colon X\to Y$ be 
a strict $D$-semistable morphism over an snc pair $(Y,D)$, 
$X$ K\"ahler, and let $0\in Y$. We have a canonical
isomorphism ${\rm gr}^{W}_0 H_1(X_t,\bZ)\simeq H_1(\Gamma(X_0),\bZ)$.
\end{proposition}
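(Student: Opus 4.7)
The plan is to reduce to the classical $1$-parameter strict semistable case and invoke the Clemens--Steenbrink description of the limit mixed Hodge structure. First, I choose a smooth pointed curve $\phi\colon (C,0)\to (Y,0)$ meeting each component $D_i\ni 0$ transversally with multiplicity one. By Proposition~\ref{restrict}, the pullback $f'\coloneqq \phi^\ast f$ is a strict $1$-parameter semistable degeneration whose nearby fiber is canonically $X_t$ and whose central fiber has the same dual complex as $X_0$ (the local normal form $\prod_i\{x_i^{(1)}\cdots x_i^{(n_i)}=u_i\}$ restricts compatibly along each $u_i$). By \cite[Thm.~3.3]{cattani-kaplan}, the weight filtration on $H_1(X_t,\bZ)$ depends only on the open monodromy cone, so it coincides with the filtration attached to $N=\sum_i N_i$, i.e.\ with that of $f'$. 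The Clemens retraction $c_t$ and the specialization map $\mathrm{sp}$ of Proposition~\ref{specialization} pull back consistently, so it suffices to treat $\dim Y=1$.

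In the $1$-parameter case, $N^2=0$ on $H_1(X_t,\bZ)$, so $W_{-1}=\ker N$ and $W_{-2}=\mathrm{im}\,N$. By the Clemens--Steenbrink theory one has $\mathrm{rk}\,\mathrm{gr}^W_0H_1(X_t,\bZ)=b_1(\Gamma(X_0))$ and a rational identification $\mathrm{gr}^W_{-1}H_1(X_t,\bQ)\simeq H_1(X_0^\nu,\bQ)$, with $X_0^\nu$ the normalization (see e.g.\ \cite[Sec.~2.3]{persson} or \cite[App.~C]{peters-steenbrink-mixedhodge}). I then verify that $W_{-1}\subset\ker\mathrm{sp}$ in two steps. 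A class in $W_{-2}=\mathrm{im}\,N$ is a combination of vanishing cycles which, by Proposition~\ref{clemens-collapse}, lie in a single torus fiber $(S^1)^{n-1}$ of $c_t$; that fiber collapses to a point in $X_0$ and hence to a vertex in $\Gamma(X_0)$, so the class vanishes under $\mathrm{sp}$. A class in $W_{-1}\setminus W_{-2}$ arises via the Steenbrink identification from $H_1(X_0^\nu)$, and is represented modulo $W_{-2}$ by a loop $\gamma\subset X_t$ whose image $c_t(\gamma)$ is supported in the open stratum $V_j^o$ of a single component $V_j\subset X_0$; under the collapse $\mu\colon \wX_0\to\Gamma(X_0)$, the component $V_j$ is contracted to a vertex, so $\mathrm{sp}([\gamma])=0$.

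Combining $W_{-1}\subset\ker\mathrm{sp}$ with the surjectivity of $\mathrm{sp}$ (Proposition~\ref{specialization}) yields a surjection $\mathrm{gr}^W_0H_1(X_t,\bZ)\twoheadrightarrow H_1(\Gamma(X_0),\bZ)$. Since $W_{-1}=\ker N$ is a saturated sublattice of the finitely generated abelian group $H_1(X_t,\bZ)$, the quotient $\mathrm{gr}^W_0H_1(X_t,\bZ)$ is torsion-free, and the rank equality with $b_1(\Gamma(X_0))$ promotes the surjection to an isomorphism of abelian groups.

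The main obstacle is making precise the Clemens--Steenbrink identification of $W_{-1}/W_{-2}$ with classes coming from $H_1(X_0^\nu)$, so that one can represent every class in $W_{-1}$ geometrically by a loop supported, up to a vanishing cycle, in a single component of $X_0$. This is where the Hodge-theoretic input (rather than just the topological definition $W_{-1}=\ker N$) is indispensable; the remainder of the argument is bookkeeping with the Clemens retraction $c_t$ and the combinatorial collapse $\mu\colon \wX_0\to\Gamma(X_0)$ constructed in the proof of Proposition~\ref{specialization}.
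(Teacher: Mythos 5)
Your reduction to the $1$-parameter case has a genuine gap at the very first step. You choose a pointed curve $\phi\colon(C,0)\to(Y,0)$ with $0\in D_I^o$ meeting each component $D_i$, $i\in I$, transversally with multiplicity one, and then invoke Proposition~\ref{restrict} to conclude that $\phi^\ast f$ is a strict $1$-parameter semistable degeneration. But Proposition~\ref{restrict} only applies to a curve meeting the \emph{open stratum of a single component} $D_i^o$; it says nothing about a curve passing through a deeper stratum $D_I^o$ with $|I|\geq 2$. And in fact the conclusion you want is false: the local normal form of a $D$-semistable morphism is $\prod_{i\in I}\{x_i^{(1)}\cdots x_i^{(n_i)}=u_i\}$, so pulling back along a curve with $u_i\circ\phi=a_it+O(t^2)$, $a_i\neq 0$, produces a total space with equations $\prod_{i\in I}\{x_i^{(1)}\cdots x_i^{(n_i)}=a_it\}$, which is \emph{singular} whenever two or more of the $n_i$ are $\geq 2$ (a fiber product of snc degenerations over $\Delta_t$, not a semistable family). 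So the pullback is not semistable, $\Gamma$ of its central fiber is a polysimplicial complex rather than simplicial, and you cannot directly apply the classical Clemens--Schmidt/Steenbrink machinery.

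The paper's proof addresses precisely this: it pulls back along the diagonal cocharacter, observes that the resulting $1$-parameter family has fiber-product singularities, and then performs a toroidal resolution $X'_\Delta\to X_\Delta$ by subdividing each dual polysimplex into lattice simplices. This produces a genuine semistable degeneration whose dual complex is a subdivision of $\Gamma(X_0)$ (so has the same $H_1$), whose specialization map agrees with $\mathrm{sp}$, and to which the Clemens--Schmidt exact sequence applies. Your argument needs exactly this resolution step. A secondary, smaller issue is the appeal to the Steenbrink identification to represent a class in $W_{-1}\setminus W_{-2}$ modulo $W_{-2}$ by a loop whose image under $c_t$ lies in a single open stratum $V_j^o$: this is plausible but is not justified beyond a sentence, and it is essentially the content of the Clemens--Schmidt reference that the paper invokes. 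Your final step (surjection of free $\bZ$-modules of equal rank is an isomorphism, since the kernel is a free subgroup of rank $0$) is correct as stated, though you should be aware that a rank count alone does not do it --- you also need the domain to be free, which you do establish via saturation of $W_{-1}$.
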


\begin{proof}
By the definition of the integral weight filtration,
we have $${\rm gr}^{W}_0H_1(X_t,\bZ)= H_1(X_t,\bZ)/\ker(N)$$
and $\ker(N) = \bigcap_{i\in I}\ker(N_i)$. 
By the second part of Proposition
\ref{specialization}, it suffices to prove that 
${\rm ker}({\rm sp}) = \ker(N)$ rationally.

Let $\Delta\to \Delta^k$, 
$u\mapsto (u,\dots,u)$
be the diagonal cocharacter.
Then the pullback of $X\to Y$ along
$\Delta$ is a $1$-parameter degeneration
$X_\Delta\coloneqq X\times_Y\Delta \to \Delta$,
whose singularities are analytically locally 
of the form
$$x_1^{(1)}\cdots x_1^{(n_1)}=\cdots = 
x_k^{(1)}\cdots x_k^{(n_k)}=u,$$ 
i.e.~a fiber product of snc singularities.
By subdividing into lattice simplices
the corresponding dual polysimplex 
$\prod_{i=1}^k \sigma_{n_i-1}$
to this stratum, we produce a toroidal 
resolution $X_\Delta'\to \Delta$ which
is a semistable degeneration, and for
which $\Gamma(X_{\Delta,0}')\simeq \Gamma(X_0)$
is a subdivision of the dual complex.

In particular, we have a canonical isomorphism
$H_1(\Gamma(X_{\Delta,0}'),\bZ)\simeq H_1(\Gamma(X_0),\bZ)$ and furthermore, the specialization map ${\rm sp}_\Delta$ 
for the $1$-parameter semistable
degeneration $X_\Delta'$ agrees with ${\rm sp}$
under this isomorphism. Thus, it suffices
to prove that the kernel of 
${\rm sp}_\Delta\colon H_1(X_t,\bQ)\to 
H_1(\Gamma(X_{\Delta,0}'),\bQ)$ is 
$W_{-1}\otimes \bQ$.
The result now follows from 
\cite[Sec.~1, p.~105]{clemens-schmidt}.
\end{proof}

\begin{proposition}\label{mono-1}
Let $f\colon X\to Y$ be a $D$-nodal degeneration,
with $0\in Y$ and $t\in Y$ a nearby point. Let
$T_i\colon H_1(X_t,\bZ)\to H_1(X_t,\bZ)$ 
be the monodromy
about a component $D_i\subset D$
passing through $0$. Then $$N_i(x) =  -
\sum_{\{j,j'\}} (x\cdot \widetilde{\gamma}_{jj'})
\gamma_{jj'}$$ 
where the indices $\{j,j'\}$ 
run through
all double loci $V_{jj'}$ 
over the general point of $D_i$, 
$\gamma_{jj'}\coloneqq [c_t^{-1}(p_{jj'})]$
and $\widetilde{\gamma}_{ij} \coloneqq  [c_t^{-1}(V_{jj'})]$
for any point $p_{jj'}\in V_{jj'}$, 
where $c_t\colon X_t\to X_0$ is the continuous map from Proposition \ref{clemens-collapse}.
\end{proposition}

\begin{proof} The formula follows
from a theorem of Clemens which computes the monodromy
of any semistable degeneration \cite[Thm.~4.4]{clemens-picardlefschetz}, though the case at hand is easier since we only
have simple nodes in the general fiber
over $D_i$ and the computation
is essentially the same as the Picard--Lefschetz 
formula. \end{proof}

This formula is compatible with the formula
$r_iB_i$ for the monodromy bilinear form of a shifted
matroidal degeneration $X(\uR,\mathscr{H})\to \Delta^k$.
Indeed, any double locus $V_{jj'}$ of the general fiber
over the $i$-th coordinate hyperplane of $\Delta^k$ 
has, by construction, vanishing cycle 
$\gamma_{jj'}={\bf x}_i\in \bfN\simeq {\rm gr}^W_{-2}H_1(X_t,\bZ)$. Thus, Proposition \ref{mono-1}
gives $$B_i(x,x)=\sum_{\{j,j'\}} (x\cdot \widetilde{\gamma}_{jj'})L(x, {\bf x}_i)\quad \textrm{for }x\in H_1(X_t,\bZ),$$
where $\widetilde{\gamma}_{jj'}\in H_{2g-1}(X_t,\bZ)$ 
is defined as above and $L$ is the principal polarization. 
But for all $\{j,j'\}$, we have
$(-\cdot \widetilde{\gamma}_{jj'})= L(-,\gamma_{jj'})$.
Thus, $B_i(x,x) = r_i {\bf x}_i^2$ where $r_i$
is the number of hyperplanes 
normal to ${\bf x}_i$ in 
the multiset $\mathscr{H}$.

\subsection{Resolution of the base change
of a nodal morphism}\label{sec:resolution}
The goal of this section is to prove
the following general theorem.

\begin{theorem}\label{can-res}
Let $\pi\colon Y'\to Y$ be a morphism, 
and let $f'\colon X'\to Y'$ be the base change
of a strictly $D$-nodal morphism 
$f\colon X\to Y$ along $\pi$. 
Suppose furthermore that $Y'$ 
is smooth and 
the reduction of $E\coloneqq \pi^{-1}(D)$
is an snc divisor. 

Then an ordering of the components of $E$, 
and an ordering of the components
$V_i$ over each $D_i$ determines, 
in a canonical manner, a relatively projective
resolution of singularities
$X'''\to X'$ for which the morphism 
$f'''\colon X'''\to Y'$ is strictly $E$-semistable,
and an intermediate partial 
resolution $X'''\to X''\to X'$ for which 
$f''\colon X''\to Y'$ is strictly nearly $E$-nodal.
\end{theorem}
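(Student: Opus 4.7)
The plan is to reduce locally to a product of toric hypersurfaces whose canonical resolutions are prescribed by the given orderings, then to glue these local resolutions into a global relatively projective modification. Near a point $p \in X$, strict $D$-nodality provides analytic coordinates in which $f$ has the normal form $\prod_{i\in I}\{x_iy_i = u_i\}\times \Delta^{a+b} \to \prod_{i \in I}\Delta_{u_i}\times\Delta^{a}$, with the two smooth branches $V_i^{+} = \{x_i=0\}$ and $V_i^{-}=\{y_i = 0\}$ being globally well-defined components of $f^{-1}(D_i)$. Because $E$ is snc, each preimage point in $Y'$ admits local coordinates $v_1, \ldots, v_K, w_1, \ldots, w_\ell$ in which $\pi^{\ast}u_i = \text{unit}\cdot \prod_j v_j^{m_{ij}}$, where $m_{ij}$ is the ramification of $\pi$ along $E_j$ over $D_i$. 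Consequently $X'$ is analytic-locally isomorphic to
$$
\prod_{i \in I} \Bigl\{ x_iy_i = \prod_{j} v_j^{m_{ij}} \Bigr\} \times \Delta^{N},
$$
for a suitable $N$, and the resolution problem decouples as a product over $i \in I$.

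For each fixed $i$, the local hypersurface $Z_i = \{x_iy_i = \prod_j v_j^{m_{ij}}\}$ is affine toric, its cone in the cocharacter lattice generated by rays $\rho_{x_i}, \rho_{y_i}, \rho_{v_j}$ subject to the single relation $\rho_{x_i} + \rho_{y_i} = \sum_j m_{ij}\rho_{v_j}$. Using the global ordering of the components of $E$ to order the rays $\rho_{v_j}$, and the ordering of $f^{-1}(D_i)$-components to distinguish $\rho_{x_i}$ from $\rho_{y_i}$, I produce a canonical subdivision by inserting, for each $j$ in the prescribed order, the $m_{ij}-1$ interior rays of the two-plane spanned by $\rho_{x_i}$ and $\rho_{v_j}$ that interpolate linearly between them. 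This refinement realizes a canonical sequence of blowups of the ideals $(x_i, v_j^{r})$ for $r = 1, \ldots, m_{ij}-1$. Performing this procedure simultaneously for all $i \in I$ yields $X''\to X'$, and a chart computation shows that each local model of $X''$ is of strictly nearly $E$-nodal form $\prod_{j}\{x_j^{(1)}y_j^{(1)}=\cdots=x_j^{(n_j)}y_j^{(n_j)}=v_j\}$ with $n_j = \sum_{i \in I} m_{ij}$.

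To pass from $X''$ to $X'''$, I further subdivide each remaining toric cone. Such a cone, associated to a nearly nodal singularity $\{x^{(1)}y^{(1)}=\cdots=x^{(n)}y^{(n)}=v\}$, is the cone over the product of intervals $[\rho_{x^{(a)}},\rho_{y^{(a)}}]$; the ordering of branches determines a canonical ``staircase'' triangulation of this product of simplices, and the resulting toric refinement produces a strictly $E$-semistable total space. Composing the two stages, $X''' \to X'$ is a sequence of blowups of coherent ideal sheaves, hence relatively projective, and strictness throughout follows from the global branch ordering, which prevents any self-identifications of components.

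The main obstacle is verifying that these local toric constructions glue to well-defined global blowups on $X'$. Concretely, the ideals $(x_i, v_j^{r})$ depend a priori on the analytic chart, but the prescribed orderings rigidify them: ordering the components of $E$ globalizes the indexing of the $v_j$, while ordering the components of $f^{-1}(D_i)$ globally picks out $x_i$ as opposed to $y_i$. Once this globalization is established, the construction becomes a sequence of blowups of globally defined coherent ideal sheaves on $X'$; projectivity, strictness, and the desired factorization $X''' \to X'' \to X'$ then follow from the explicit toric description, together with the ``product over $i\in I$'' structure which ensures that the toric modifications associated to different components of $D$ commute on overlaps.
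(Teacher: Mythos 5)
Your overall strategy---reduce to local toric models, use the orderings to make canonical choices, glue, refine further to get semistability, and argue projectivity via blowups of global ideal sheaves---matches the paper's strategy in spirit. However, the concrete local construction has a genuine gap that breaks the argument.

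Your refinement inserts, for each pair $(i,j)$, exactly $m_{ij}-1$ new rays "between $\rho_{x_i}$ and $\rho_{v_j}$." This handles the case of a single node over a single $E_j$ with high order of vanishing, but it does not separate nodes whose image in the base involves \emph{several distinct} $E_j$'s. Consider the basic example $X' = \{x y = v_1 v_2\}$, arising from one node ($m=1$) with $m_{11}=m_{12}=1$. Your prescription inserts $m_{1j}-1 = 0$ rays for each $j$, hence does nothing. But $\{xy=v_1v_2\}\to \Delta_{v_1}\times\Delta_{v_2}$ is an isolated ordinary double point, not nearly $E$-nodal (and certainly not $E$-nodal or $E$-semistable). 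One must blow up, e.g., the ideal $(x,v_1)$, which falls entirely outside your family $(x_i, v_j^r)$, $r=1,\dots,m_{ij}-1$. The total number of rays required for the first stage, per index $i$, is $\bigl(\sum_j m_{ij}\bigr) - 1$; your count $\sum_j (m_{ij}-1)$ undershoots by $\#\{j : m_{ij}>0\}-1$, and that missing count is precisely the "separation between distinct $E_j$'s" that the paper's construction encodes by the large integer $N$ and the PL functions $c_i$ bending at $z_i = jN+\ell$. Relatedly, the claimed local form of $X''$ with $n_j = \sum_{i\in I} m_{ij}$ cannot be right; after a correct separation the number of node equations sharing a given $w_j$ at a fixed point is at most $|I|$, and your formula would (falsely) assert that all the nodes remain concentrated at a single point.

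Beyond the missing rays, the gluing step between snc strata of $E$ of different dimensions is asserted rather than verified. The paper checks compatibility explicitly by showing that the projection $p_{J,J'}\colon \bR^J \to \bR^{J'}$ carries $c_J$ to $c_{J'}$, so the refined polytopes restrict correctly under face inclusions; some analogue of this verification is needed to justify that your local toric modifications patch globally. Your appeal to globally defined ideal sheaves is an appealing alternative route, but it needs both a corrected list of ideals (including the cross-$j$ separations) and a careful specification of the order of blowups, together with a check that the strict transforms behave correctly across strata.
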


This theorem can be viewed as an explicit special case
of the functoriality theorem for multivariable semistable
reductions, see \cite[Thm.~4.4]{alt}. The proof
works by observing that the base change 
is locally toroidal. This allows us to apply toroidal 
resolutions locally, which glue to a global resolution.

\begin{proof}
Define a bijection between the components of $E$
and the non-negative
integers \begin{align}
    \label{set-E}
\{1,\dots,\#\textrm{ components of }E\},\end{align}
increasing in the total order, so that any snc stratum
$E_J\coloneqq \bigcap_{j\in J} E_j$ defines a unique subset
of (\ref{set-E}).
Suppose that $E_J$ is an snc stratum
of codimension $n$ in $Y'$, 
so that $|J|=n$. Say $J=\{1,\dots,n\}$ for indexing convenience.
At any point in the open snc stratum $E_J^o$,
the hypothesis that $E=\pi^{-1}(D)$ is snc implies that
$\pi$ induces a local monomial transform 
\begin{align}\begin{aligned} \label{monomial-substitution}
u_1 &= w_1^{r_{11}}\cdots w_n^{r_{1n}} \eqqcolon  w^{\vec{r}_1}, \\ 
 & \cdots \\
u_k &= w_1^{r_{k1}}\cdots w_n^{r_{kn}} \eqqcolon w^{\vec{r}_k}. 
\end{aligned}\end{align}
where $u_i$ are a subset of local 
coordinates on $Y$, which cut out
the stratum $D_I=\bigcap_{i\in I}D_i$ 
into which $E_J$ maps,
and $w_j$ cuts out $E_j$.
By the hypothesis that $f$ is $D$-nodal, 
the base change $f'$ has a local 
form which is the product of a smooth morphism with
\begin{align}\label{monomial-substitution2}
\{x_1y_1 = w^{\vec{r}_1},\,
\cdots, \, x_m y_m = w^{\vec{r}_m}\}\mapsto (w_1,\dots,w_n), 
\end{align} up to relabeling the indices $\{1,\dots,m\}$
of the fiber components.

Note that $x_i$ and $y_i$ are local equations of 
components of $V_i\subset X_i$ over $D_i$. 
By convention, take $x_i$ to cut out
the component earlier in the total order (here we use smoothness of $V_i$
to ensure that $x_i=y_i=0$ is not a self-nodal locus of a component).

We will first
construct the partial resolution 
$f''\colon X''\to Y$ which is nearly
$E$-nodal. The equations (\ref{monomial-substitution2})
define a morphism of toric varieties. 
The domain of (\ref{monomial-substitution2})
is described by the normal fan of a 
polytope $P(\vec{r}_1,\dots,\vec{r}_n)$,
which we define now.

Let $b_i\colon \bR\to \bR^J\simeq \bR^n$ for $i=1,\dots,k$ 
be the $\bZ$-piecewise linear function
 $$b_i(z)\coloneqq \twopartdef{-z\vec{r}_i}{z \leq  0,}{0}{z\geq 0,}$$
and let $b(z_1,\dots,z_k)\coloneqq \sum_{i=1}^k b_i(z_i)$. Then the 
graph $\Gamma(b)\subset \bR^{k}\times \bR^J$
is the boundary of the polytope 
$P(\vec{r}_1,\dots,\vec{r}_n)\coloneqq 
\Gamma(b)+(\bR_{\geq 0})^J$---the monomials
$x_i$ and $y_i$ respectively correspond to the primitive 
integral vectors along the restriction
of the graph of $b$, to the positive- and negative 
$i$-th coordinate axis $\bR\subset \bR^k$, respectively.

The {\it bending parameter} of a piecewise linear 
function $b_0\colon \bR\to \bR^J$ at $z=z_0$ is 
defined by $\tfrac{\partial b_0}{\partial z}(z_0+\epsilon)-
\tfrac{\partial b_0}{\partial z}(z_0-\epsilon)\in \bR^J,$ 
for $\epsilon \ll 1$. Then the function
$b_i\colon \bR\to \bR^J$
above is uniquely characterized by the following properties:
\begin{enumerate}
\item $b_i(z)=0$ for $z\gg 0$,
\item $b_i$ only bends at $z=0$, and
\item the bending parameter at $z=0$ is $\vec{r}_i=r_{i1}e_1+\cdots +r_{in}e_n\in \bR^J$.
\end{enumerate}

Fix a very large integer $N\gg 0$. We may uniquely define a (continuous, piecewise linear)
function $c_i\colon \bR\to \bR^J$ for $i=1,\dots, m$ by the following properties:
\begin{enumerate}
\item $c_i(z)=0$ for $z\gg 0$,
\item $c_i(z)$ only bends at $z=jN+\ell$ 
for $j\in J$ and $\ell\in \{1,\dots,r_{ij}\}$ and
\item\label{3} the bending parameter 
at $z=jN+\ell$ is the basis vector $e_j\in \bR^J$.
\end{enumerate}

See Figure \ref{monomial-separation} for a depiction
of the domains of linearity in an example,
when $k=2$.
We define $c\colon \bR^k\to \bR^J$ 
by the formula $c(z_1,\dots,z_k)\coloneqq \sum_{i=1}^k c_i(z_i)$.
Then since each $c_i$ is convex, 
$\Gamma(c)$ is the boundary of a polytope 
$Q(\vec{r}_1,\dots,\vec{r}_n) \coloneqq \Gamma(c)
+(\bR_{\geq 0})^J$. 

\begin{figure}
    \centering
    \includegraphics[width=3.5in]{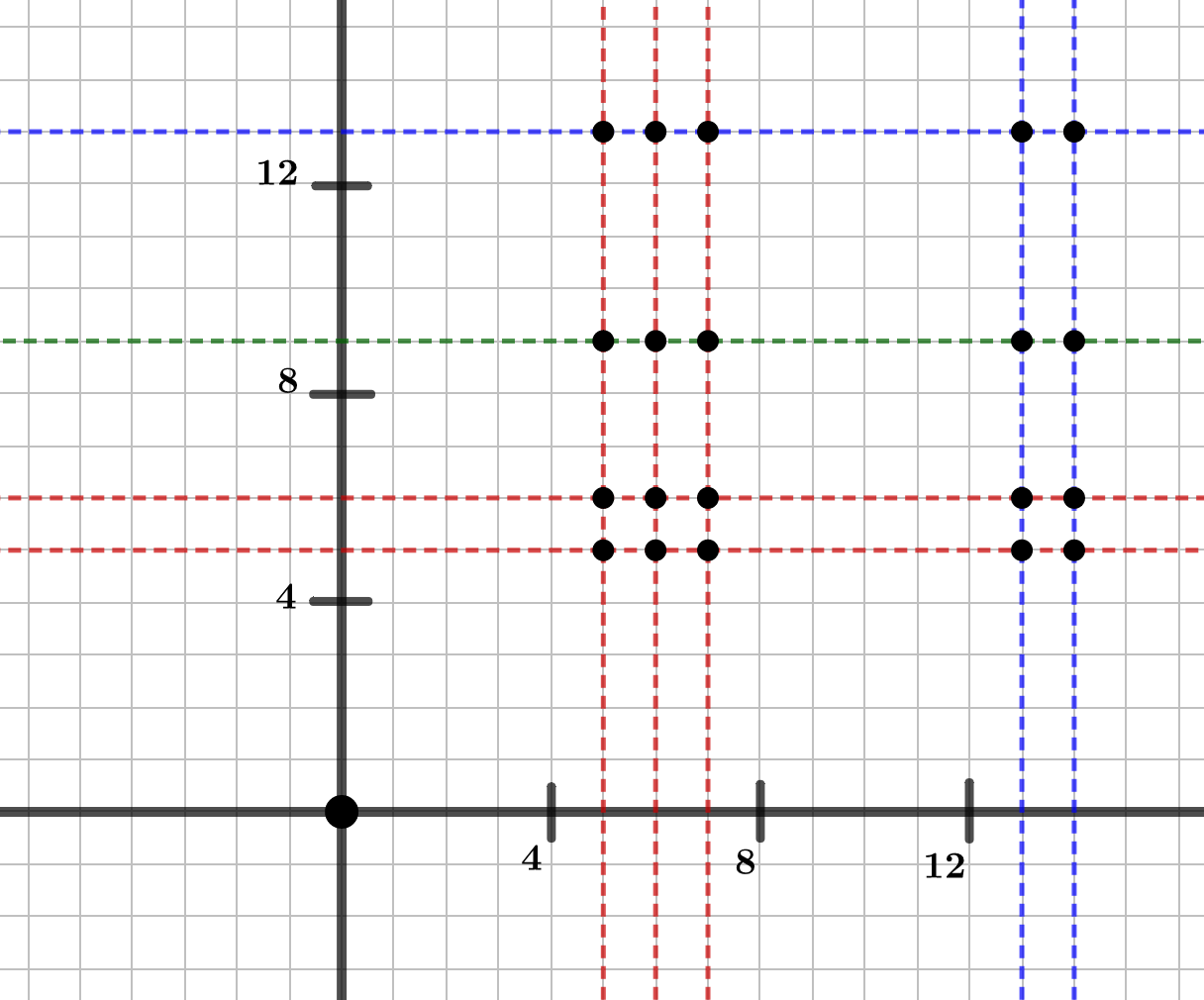}
    \caption{Domains of linearity of $b$ and $c\colon \bR^2\to \bR^3$. 
    The relevant monomial transform is $u_1=w_1^3w_3^2$ 
    and $u_2=w_1^2w_2w_3$. The bending
    locus of $b$ is depicted in solid black, while the bending
    locus of $c$ is depicted in dotted red, green, and blue lines. 
    The red, green, and blue lines have bending parameter 
    $e_1$, $e_2$, and $e_3$, respectively.
    Here, we took $N=4$.}
    \label{monomial-separation}
\end{figure}

Note that the normal fan $\fF_J$ 
of $Q(\vec{r}_1,\dots,\vec{r}_n)$ depends only on 
labelling of the indices
$J=\{1,\dots,n\}$. Furthermore, the 
normal fan of $Q(\vec{r}_1,\dots,\vec{r}_n)$ is a refinement
of the normal fan of $P(\vec{r}_1,\dots,\vec{r}_n)$ since 
the linear parts of $b_i(z)$ and $c_i(z)$
are the same for any $z<0$ or $z\gg 0$, 
e.g.~$z>(\#\textrm{ components of }E)\cdot N+\max r_{ij}$ suffices. 

Thus, in a neighborhood of a point $p\in (f')^{-1}(E_J)$, 
the normal fan of $Q(\vec{r}_1,\dots,\vec{r}_n)$ 
defines a toroidal birational morphism $X''_p\to X'_p$.
To check that this birational modification is globally
well-defined, it suffices to prove that these birational 
modifications are compatible with the incidences
$E_J\subset E_{J'}$ for some $J'\subset J$, and that they 
are compatible on overlapping charts
over a given stratum $E_J$. 
The latter is automatic since the
modification $X''_p\to X'_p$ depended only on the ordering of $J$
and the snc divisor $E$ has global normal crossings.

To check the compatibility between strata, 
the restriction of the morphism 
of fans 
$$\fF_J\to \prod_{j\in J} \bR_{\geq 0} e_j^\vee$$
to the coordinate subspace 
$\fF_{J'}\to  \prod_{j=J'} \bR_{\geq 0} e_j^\vee$ 
should be the normal fan
of the corresponding polytope for $J'$. Dually, 
in terms of the above defined PL function $c=c^J\colon 
\bR^k\to \bR^J$, 
if we consider the projection 
$p_{J,J'}\colon \bR^J\to \bR^{J'}$, then
the composition $$\bR^k\xrightarrow{c_J} \bR^J 
\xrightarrow{p_{J,J'}} \bR^{J'}$$
should agree with the PL function $c_{J'}$ 
and indeed it does---this projection simply forgets
the bending (\ref{3}) along any $z_i =  jN+\ell$ 
for the $j\in J\setminus J'$ and any $i\in \{1,\dots,m\}$.
In Figure \ref{monomial-separation}, this corresponds
to forgetting the colors indexed by $J\setminus J'$.

Hence, there is a globally well-defined toroidal 
birational modification $X''\to X'$ which is locally
defined by the morphism from the normal 
fan of $Q(\vec{r}_1,\dots,\vec{r}_n)$ to the normal
fan of $P(\vec{r}_1,\dots,\vec{r}_n)$.
It is furthermore relatively projective, 
since we defined it in terms of polytopes.

We claim that $f''\colon X''\to Y'$ has nearly 
$E$-nodal singularities.
We check that $f''\colon X''\to Y'$ has the 
desired local form of Definition \ref{D-nodal}(\ref{d-nearly-form}) 
by examining a neighborhood of a face of the polytope
$Q(\vec{r}_1,\dots,\vec{r}_n)$, i.e.~a neighborhood 
of a domain of linearity of the function 
$c\colon \bR^k\to \bR^J$. 
Such a domain of linearity is given by equations $$\textstyle
\bigcap_{i\in I} \{z_i = j_iN+\ell_i\}$$ for some subset 
$I\subset \{1,\dots,k\}$ and some indices
$j_i\in J$. In the neighborhood of such
a domain, the bending parameter of  $c_i(z_i)$ is 
$e_{j_i}$ and thus, the local equation of the morphism $f''$
is $$x_1y_1=w_{j_1},\quad \cdots \quad ,x_my_m=w_{j_m}$$ 
where $x_i$ and $y_i$ are the local
equations of the reduced union of components corresponding, respectively,
to the facets of $Q(\vec{r}_1,\dots,\vec{r}_n)$ given
by $z_i\leq j_iN+\ell_i$ and $z_i\geq j_iN+\ell_i$.
Thus, $f''$ has nearly $E$-nodal singularities, which are 
$E$-nodal if and only if
the $j_i$ are distinct, ranging over all possible strata over all $E_J$. 

Thus, we have completed our first goal: producing a 
birational modification $X''\to X'$ for which
$f''\colon X''\to Y$ is nearly $E$-nodal. But, as noted before, 
$X''$ may not be smooth, due to
the presence of the local form \begin{align}\label{cube}
x_1y_1=\cdots=x_m y_m=w,\end{align}
 and products thereof.
The fan of this local form may be described as follows: 
Let $[0,1]^m\subset \bR^m$ denote the unit cube
and let ${\rm Cone}\,[0,1]^m\subset \bR^{m}\times \bR$ denote 
the cone over the cube. Then, the morphism
to $\bC_w$ is given by the morphism of fans 
${\rm Cone}\,[0,1]^m\to \bR_{\geq 0}$ which is projection
to the last coordinate.

We may define a small,
regular resolution of (\ref{cube}) by subdividing 
the fan into standard affine cones,
in a manner which introduces no new rays. Note that the 
original rays, corresponding
to the components over $w=0$, are the cones over the $2^m$ 
vertices of the cube.
Equivalently, we must decompose the cube $[0,1]^m$ 
into lattice simplices of minimal volume
$(1/m!)$. 
Such a subdivision 
arises from a sequence of toric blow-ups, 
by blowing up the 
components of the fiber over $w=0$ 
of (\ref{cube}) in any order.
Blowing up the component
corresponding of a vertex of the cube $[0,1]^m$
produces a subdivision of the cube which inserts
all diagonals of the cube emanating from that
vertex. The resulting polyhedral cells are cones
over lower dimensional cubes. Further
blow-ups further subdivide these cones, until
all cells are standard simplices.

Thus, to define globally a projective  
subdivision, requires a total ordering
of the components of $f''\colon X''\to Y$ 
over each component $E_j\subset E$. Since (\ref{cube})
only involves a single coordinate $u$, 
it suffices to resolve by blowing up,
in the specified order, the
components over each $E_j=V(w_j)$.
Then, over a deeper stratum $E_J$,
these blow-ups are a product of blow-ups 
of the local forms (\ref{cube}), ranging over $j\in J$.
Thus, they induce the product resolution 
over deeper strata $E_J$.

The total ordering of all components $V_i$ over
$D_i$ for all $i\in I$
induces a total order on the components
over $E_j$. For instance,
the total order on the original
components induces a lexicographical ordering
on the components introduced by the partial
resolution $X''\to Y'$ over each $E_j$, 
which are naturally indexed by the top-dimensional
cells of a cuboid whose corners
are the strict transforms of components of the fibers
of $X'\to Y'$ over $E_j$, 
see Figure \ref{monomial-separation}.
Blowing up these components in order,
we produce a 
subdivision for each 
(singular) stratum over $E_j$,
giving a projective resolution
of singularities $X'''\to X''$.

Examining the cones of the resulting fan for $X'''$, 
we see that the local form for $f'''\colon X'''\to Y'$
is given by a product of morphisms of fans of the form 
$$\prod \,[{\rm Cone}\{0\leq z_1\leq \cdots 
\leq z_{m}\leq 1\}\to \bR_{\geq 0}]$$
(here, we allow $m=1$ to include factors which are
smoothings of nodes) with a smooth morphism. We deduce
that the morphism $f'''$ is $E$-semistable.
\end{proof}

\begin{remark} Suppose $\pi\colon Y'\to Y$ is a birational
modification. Consider the strict
transforms $E_i\coloneqq \pi_*^{-1}D_i$. For any stratum
$E_J\subset E_i$ 
contained in the strict transform (i.e.~$J\ni i$),
the local monomial transform (\ref{monomial-substitution})
is of the form $u_j=w^{\vec{r}_j}$ where
$w^{\vec{r}_j}$ does not involve the variable $w_i$
for all $j\neq i$, and $w^{\vec{r}_i}=
w_i\cdot (\textrm{a monomial
in }w_j\textrm{ for }j\neq i)$. Thus, in 
Theorem \ref{can-res}, nodes over $D_i$
are in natural bijection with the nodes over $E_i$
and indeed, $E_i^o\to \pi(E_i^o)\subset D_i^o$ 
is an isomorphism onto its image,
with the restriction of the map
$X'''\to X$ an isomorphism.
So, at least over $E_i^o$, the morphism 
$f'''\colon X'''\to Y'$ is $E$-nodal.

More generally, when $\pi$ is an alteration,
the nodes over $E_i^o$ are \'etale over
the nodes of $D_i^o$.
\end{remark}

\subsection{Resolution of the base change 
of a transversely shifted matroidal degeneration}
\label{sec:res-shifted}

Suppose that $X=X(\uR, \mathscr{H})$ is a 
transversely
shifted matroidal degeneration, so that, in particular,
$X$ is regular and $f\colon X\to \Delta^k$ has
$D$-nodal singularities (Prop.~\ref{X-smooth}). 
Then $f$ is strictly $D$-nodal
if and only if for each element 
${\bf x}_i\in \bfN$ of the matroid 
$\underline{R}$, there are at least two hyperplanes
with normal vector ${\bf x}_i$ (Prop.~\ref{strict-D}). 
In this case,
we may directly apply the resolution algorithm of 
Theorem \ref{can-res}. But in fact, even if
an irreducible component over $V(u_i)$ is self-nodal,
the two branches are not permuted by monodromy,
because it is possible to choose globally
a normal vector to a hyperplane $H\in \mathscr{H}$.
So the resolution algorithm of Theorem \ref{can-res} 
still works analytically, by consistently choosing
one of the two branches and performing the blow-up in 
local charts, as in the proof. 
However, since one does not blow up
global Weil divisors, it is unclear whether the
result is, in general, projective.

\begin{example}\label{ex:mon-sep}
Let $X(\uR, \mathscr{H})=X(2\mid b_1,b_2,b_3)$ be the 
transversely shifted matroidal
degeneration depicted in the righthand 
side of Figure \ref{fig:shifted-theta}.
Pass to a Veronese embedding for some large
$d\gg 0$ (we take $32$ times the principal polarization,
in the present example).

Consider the monomial base change $\Delta^2\to \Delta^3$
given by \begin{align*}
u_1 =w_1^2w_2, \quad
u_2 =w_2^4, \quad
u_3 =w_1^3w_2.
\end{align*}
The pullback and its 
nearly $E$-nodal resolution $X(32\mid c_1, c_2)$ are depicted
in Figure \ref{fig:partial-resolution}, where $E\coloneqq V(w_1w_2)$
is the reduced inverse image of $D\coloneqq V(u_1u_2u_3)$.

\begin{figure}
\includegraphics[width=6in]{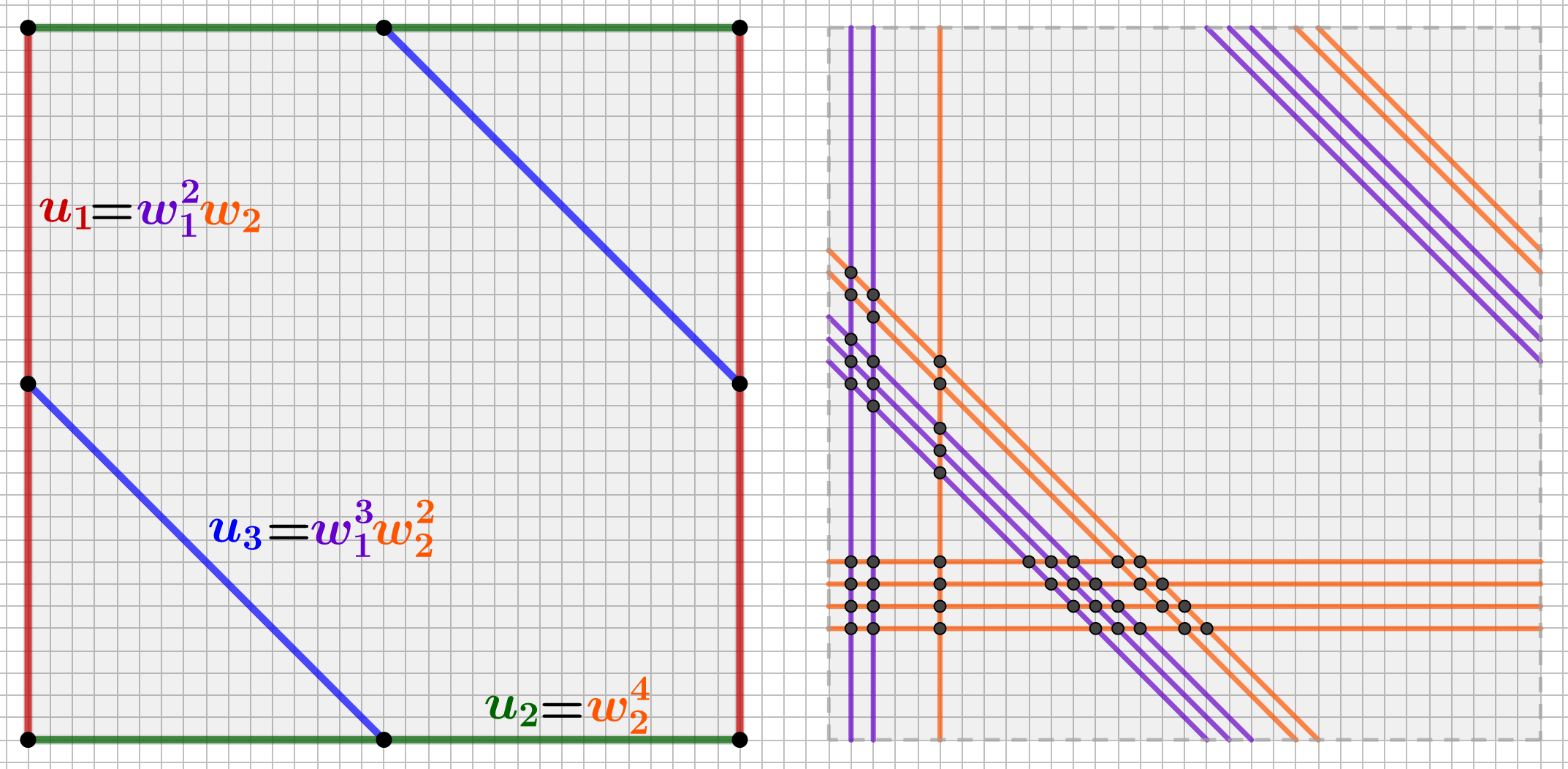}
\caption{Left: Bending of $b_1$, $b_2$, $b_3$
for $X(32\mid b_1,b_2,b_3)$ a shifted matroidal
degeneration. Right: Bending of $c_1$, $c_2$ for the 
nearly $E$-nodal partial resolution
of the base change, in purple, 
orange, respectively.
Grid points are $(\tfrac{1}{32}\bZ)^2$. We have
taken $N=4$, as in Figure \ref{monomial-separation}.}
\label{fig:partial-resolution}
\end{figure}

\begin{figure}
\includegraphics[width=4in]{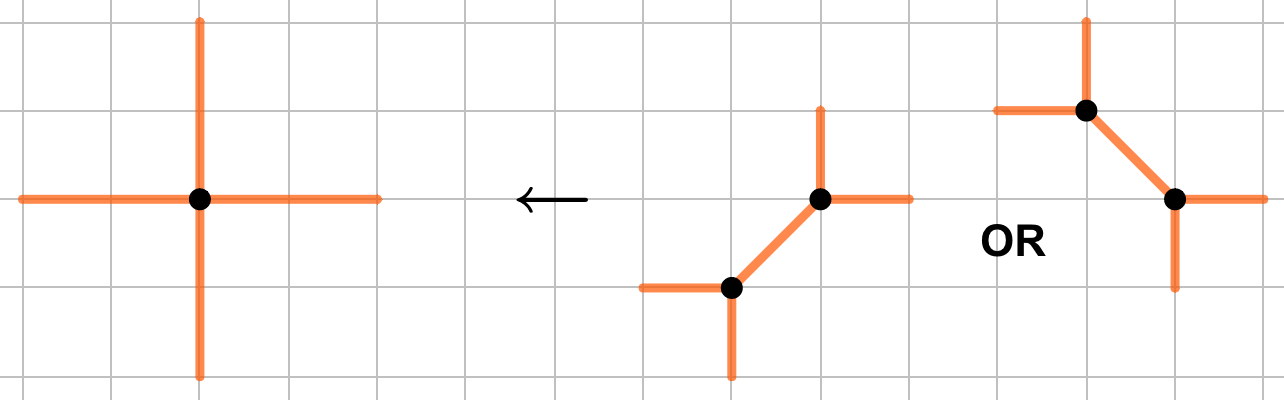}
\caption{The two possible small resolutions
for a $4$-valent intersection 
point of $2$-dimensional cuboids in
${\rm Bend}(c_i)$. The normal fan is
a subdivision of $[0,1]^2$.}
\label{fig:small}
\end{figure}

The original red hyperplane in the left 
of Figure \ref{fig:partial-resolution}
generates hyperplanes to its right, in the direction
of positive intersections with ${\bf x}_1=(1,0)$,
the green hyperplane generates hyperplanes above it,
in the direction of positive intersections with 
${\bf x}_2=(0,1)$,
and the blue hyperplane generates hyperplanes above
and to the left, in the direction of positive intersection
with ${\bf x}_3=(1,1)$. Purple hyperplanes (corresponding
to $u_1$), always precede orange hyperplanes (corresponding
to $u_2$), because of the ordering on the components 
$V(u_1)$, $V(u_2)$ of $E$.

The righthand figure is then the Mumford construction
which describes the nearly $E$-nodal partial resolution,
as in Theorem \ref{can-res}, of the base-changed 
Mumford construction.

In terms of Mumford constructions, 
the further small resolution
to an $E$-semistable morphism is a bit more 
complicated to describe.
Essentially, the relevant blow-ups 
resolve the $4$-valent intersection
points of the (monochromatic) cuboids 
in ${\rm Bend}(c_i)$ for $i=1,2$, 
into two $3$-valent intersection
points, in a manner which is locally 
of the form shown in Figure \ref{fig:small}.
\end{example}

\section{The second Voronoi fan and Alexeev's theorem} 
\label{sec:alexeev}

\subsection{The universal family
of abelian torsors with theta divisor}

One of the most celebrated
applications of the Mumford construction 
is the modular compactification
of the moduli space $\cA_g$ of PPAVs 
of dimension $g$, due to Alexeev \cite{alexeev}, 
building on work of Namikawa,
Nakamura, and Faltings--Chai 
\cite{nakamura75, nakamura77,
namikawa76, namikawa79, fc, alexeev-nakamura}; see 
\cite[Thm.~9.20]{namikawabook}.

In previous sections, we have extracted
from a section $\overline{b}_i\in 
H^0(\bT^g,\bZ{\rm PL}/\bZ{\rm L})$  on a torus
$\bT^g=\bfM_\bR/\bfM$, or its
PL lift $b_i\colon \bfM_\bR\to \bR$, an integral
bilinear form $B_i\in {\rm Sym}^2\bfM^\vee$. Here
we reverse this procedure, extracting
from a bilinear form $B_i$ a PL function $b_i$
with periodic bending locus. In this manner,
we produce both a canonical choice
of fan for $\cA_g$ (see Def.~\ref{def:fan}),
and a ``tautological'' Mumford construction over its cones. 
The procedure is straightforward:
we graph (a function closely related to) 
$B_i({\bf m},{\bf m})$ 
over the lattice points ${\bf m}\in \bfM$, 
take the convex hull of the corresponding integral
points, and take the unique PL function whose graph
is the boundary of this hull.

\begin{definition}\label{vor-cell}
Let $B\in \cP_g$ be a positive-definite 
symmetric bilinear form on $\bfM_\bR$. 
It defines a square-distance 
function $d_B$ on $\bfM_\bR$ 
by $x\mapsto B(x,x)$. The {\it Voronoi decomposition}
${\rm Vor}_B$ of $\bfM_\bR$ is the one 
whose maximal open polyhedral cells 
are defined as follows: $${\rm Vor}_{B,\,{\bf m}}=
\{x\in \bfM_\bR\,\big{|}\,d_B(x,{\bf m})
<d_B(x,{\bf m}')\textrm{ for all }{\bf m}'\in \bfM
\setminus {\bf m}\},$$
ranging over all ${\bf m}\in \bfM$. 
That is, the maximal cells are those points closer
(with respect to $d_B$) 
to one lattice point ${\bf m}\in \bfM$ than any other.

The {\it Delaunay decomposition} 
${\rm Del}_B$ is the polyhedral decomposition
of $\bfM$ whose cells are dual to the 
cells of the Voronoi decomposition, and whose 
vertices are $\bfM\subset \bfM_\bR$.
 \end{definition}

  \begin{figure}
\includegraphics[width=2.7in]{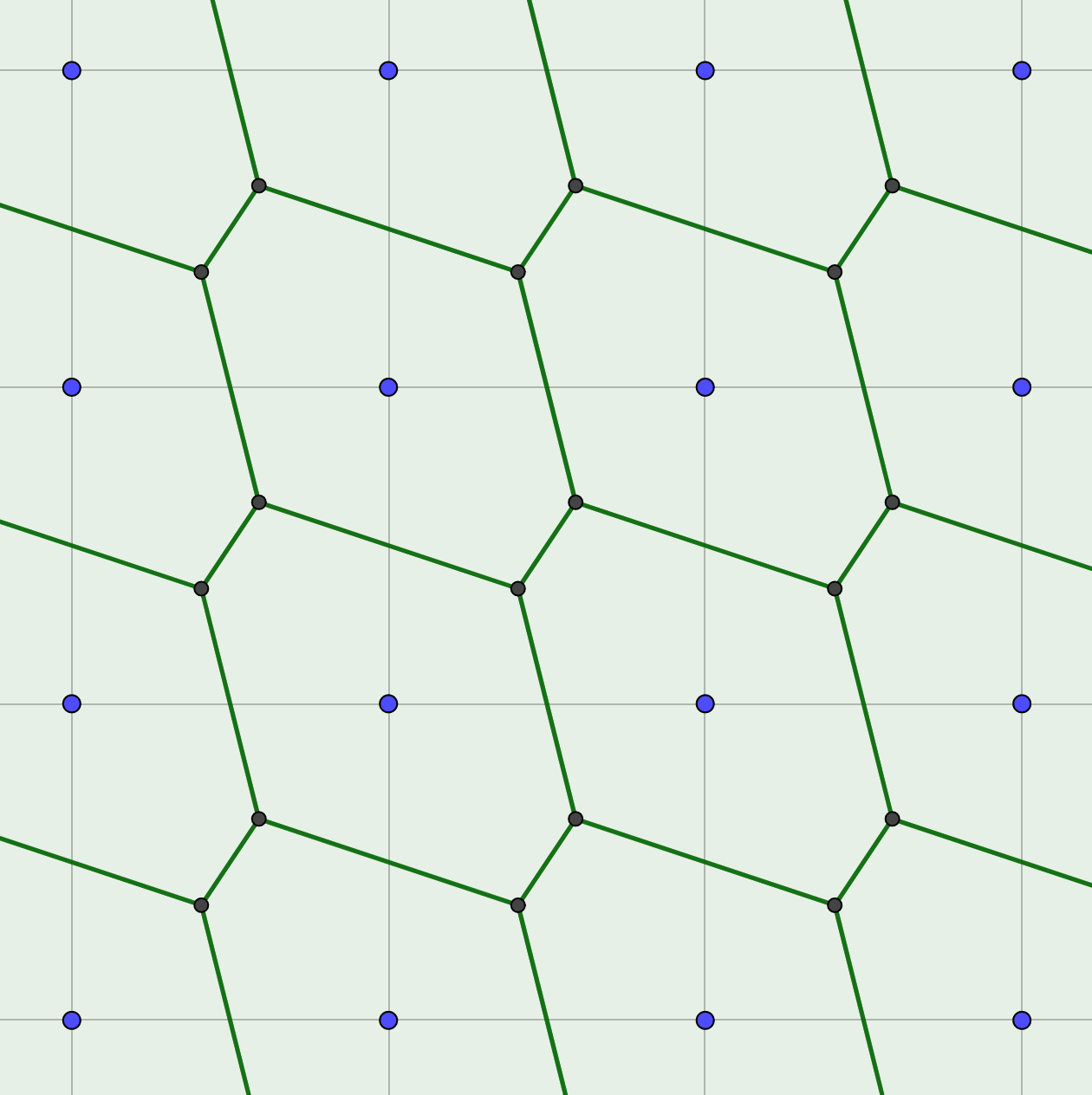}
\caption{Voronoi cells for 
$4{\bf x}^2+2{\bf x}{\bf y}+3{\bf y}^2$.
Lattice points $m\in \bfM$ in blue.
}
\label{fig:voronoi-cells}
\end{figure}

 \begin{example}
Consider the bilinear form $B$ on $\bfM_\bR\simeq \bR^2$
corresponding to the matrix $$B=\twobytwo{4}{1}{1}{3}$$
from Figure \ref{fig:tate6} and Example \ref{theta-1-param}.
The associated
square distance function is given by the quadratic form 
$4{\bf x}^2+2{\bf x}{\bf y}+3{\bf y}^2$. The Voronoi
cells are depicted in Figure \ref{fig:voronoi-cells}.
The corresponding Delaunay decomposition is depicted
in Figure \ref{fig:tate5}.
 \end{example}

 \begin{remark} When $B$ is degenerate, 
 the Voronoi cells are still defined,
 but they are of infinite volume, as they are invariant 
 under translation by the null subspace of $B$. \end{remark}
 
On the one hand, the Voronoi decomposition varies 
continuously with $B\in \cP_g^+$, 
and while its cells are polytopes,
they are not integral. On the other hand, 
the Delaunay decomposition has integral polytope cells,
which do not vary continuously,
but rather are constant along the 
relative interiors of the cones
of a fan:
 
 \begin{definition}\label{vor-fan} 
 The {\it second Voronoi fan} 
 $\mathfrak{F}_{\rm vor}$ is the polyhedral 
 decomposition of $\cP_g^+$ whose cones
 are the closures of loci on which the Voronoi decomposition is 
 combinatorially constant, or equivalently, on which the 
 Delaunay decomposition is constant. More precisely, 
 $B, B' \in \cP_g^+$ are in the relative interior $\tau^o$
 of the same cone $\tau\in \mathfrak{F}_{\rm vor}$ if 
 and only if  $B$ and $B'$ are connected by a path
 along which the Delaunay decomposition is constant.
 The {\it second Voronoi compactification} 
 is the toroidal compactification 
 (see Section \ref{sec:toroidal-extensions})
 $$\cA_g\hookrightarrow \overline{\cA}_g^{\rm vor}\coloneqq \overline{\cA}_g^{\fF_{\rm vor}}.$$
 \end{definition}
 
As required by Definition \ref{def:fan},
$\mathfrak{F}_{\rm vor}$ is 
invariant under the action of $A\in \GL_g(\bZ)$
 on $B\in \cP_g^+$ via $B\mapsto ABA^T$, as these transformations
 correspond to changes-of-basis of the lattice 
 $\bfM\simeq \bZ^g$. It is a theorem due to Voronoi
 that the number of $\GL_g(\bZ)$-orbits of cones is finite.

\begin{construction}[Mumford construction of second Voronoi type]
\label{voronoi-mumford}
Over each cone $\bB\in \fF_{\rm vor}$ intersecting
 $\cP_g^+$ there is a ``tautological''
 Mumford construction, which we will now define. 
 
 Let $B_i\in \bB$
 be primitive integral vectors generating the rays
 of $\bB$. Then, by considering the characteristic vector
 of the bilinear form $B_i$ mod $2$, 
 it is possible to choose {\it characteristic}
 linear forms $L_i\colon \bfM\to \bZ$ for which 
 \begin{align}\label{characteristic} {\bf m}
 \mapsto b_i({\bf m})\coloneqq 
 \frac{B_i({\bf m},{\bf m})-L_i({\bf m})}{2}\end{align}
 is integer-valued
 on $\bfM\simeq \bZ^g$. For instance, 
 we may take the coefficients
 of $L_i$ to be the diagonal entries of the matrix $B_i$
 in some basis.
 Then, there is a convex section 
 $\overline{b}_i\in 
 H^0(\bT^g, \bQ {\rm PL}/\bQ {\rm L})$ admitting a
 lift to $\bfM_\bR$ which agrees with the 
 above function $b_i$
 on $\bfM$ (in contrast to the sheaf 
 $\tfrac{1}{d}\bZ{\rm PL}$, sections
 of $\bQ$PL allow
 all $a_i\in \bQ$ in the notation of 
 Definition \ref{def:d-dicing}, 
 see \eqref{loc-zpl}).
 
 Since $b_i({\bf m})$ is integer-valued 
 on $\bfM$, the function
 $b_i$ is $\bZ{\rm PL}$ on each
 cell of the Delaunay decomposition that
 {\it generates} $\bfM$, in the sense
 that the differences between all
 vertices of the cell generate $\bfM$.
 When $g\leq 4$, every Delaunay cell is
 generating, but once $g\geq 5$, there
 are counterexamples,
 see e.g.~\cite[p.~796]{erdahl-ryshkov-emptysphere} and 
 \cite[Sec.~1.14, Ex.~1.15]{alexeev-nakamura}.

It is a simple verification from
Definition \ref{vor-cell} 
that $\bigcup_i {\rm Bend}(\overline{b}_i)$ 
is exactly  ${\rm Del}_{\sum r_iB_i}$ for any 
$(r_1,\dots,r_k)\in \bN^k$---this condition translates
into a condition that the bending
locus of the convex $\bQ$PL function which agrees
with ${\bf m}\mapsto \sum_{i=1}^k r_ib_i({\bf m})$ 
on $\bfM$ is the same for all $(r_1,\dots,r_k)\in \bN^k$.
Thus, the $\overline{b}_i$
are dicing. Furthermore, the additional condition 
of Construction \ref{mumford-polytope-2} is satisfied:
The associated bilinear forms $B_i$ span extremal
rays of a polyhedral cone in $(\cA_g)_{\rm trop}$.
Despite $b_i$ only being $\bQ{\rm PL}$,
we may still take the overgraph 
$\Gamma$ of the $b_i$ as a polytope, 
and form its normal fan, see Constructions 
\ref{singular-base}, \ref{mumford-polytope-2}.
Thus, we get a relatively proper 
extension of the universal family
$X^{\rm univ}(\mathbbm{b})\to 
\widetilde{\cA}_g^{\, \bB}.$ 
Note that not all fibers need
be reduced, see Remark \ref{non-reduced}.
\hfill $\clubsuit$
\end{construction}

\begin{construction}\label{torsor}
We now construct a torsor
$\cX_g^\star\to \cA_g$ over the universal abelian
variety $\cX_g\to \cA_g$ and an extension of it
over the second Voronoi compactification.

The issue begins in the interior $\cA_g$,
see e.g.~the discussion in 
\cite[Sec.~1]{grushevsky-hulek} and 
\cite[Sec.~19 and bottom of p.~209]{namikawa76}:~For 
a given abelian 
variety $(A,0,L)$ with origin
$0\in A$ and principal polarization
$L\in {\rm NS}(A)$, there are
$2^{2g}$ different possible $(-1)$-symmetric
lifts $\cL\in {\rm Pic}(A)$ of $L$. 
These lifts define naturally a torsor over 
the $2$-torsion subgroup
$A[2]$ and thus, on the universal 
family $\cX_g\to \cA_g$
we have a natural torsor 
${\rm Lifts}(L) \to \cA_g$ 
under the group scheme $\cX_g[2] \to \cA_g$ 
of relative $2$-torsion in $\cX_g\to \cA_g$.  
But ${\rm Lifts}(L)$ admits no section---it 
is impossible to globally lift $L$ to some $(-1)$-symmetric
$\cL\in {\rm Pic}(\cX_g/\cA_g)$ 
when $g \geq 2$. 

There are two ways
to resolve the issue: Either one passes to a finite
\'etale cover $\widetilde{\cA}_g\to \cA_g$ over which this
torsor is trivialized, or one defines a new universal family
$\cX_g^\star\to \cA_g$ of abelian torsors $(X,\cL)$, with 
a lift of the principal polarization
to a line bundle.

The family $\cX_g^\star$ will be, \'etale-locally
over $\cA_g$, isomorphic to $\cX_g\to \cA_g$.
Over an \'etale open chart $U_i\to \cA_g$
over which there is a lift $\cL$ of $L$, we have
a family $((\cX_g)_{U_i}, \,\cL_{U_i})\to U_i$. 
We may uniquely glue
these families over the double overlaps $U_i\cap U_j$
to produce a universal family $(\cX_g^\star,\cL)\to \cA_g$. 
Notably, the gluing of $(\cX_g)_{U_i}$
and $(\cX_g)_{U_j}$ may not respect the
origin sections, but must respect the lift $\cL$.

For a cone $\bB\in \fF_{\rm vor}$,
Construction \ref{voronoi-mumford} 
gives a Mumford construction
$X^{\rm univ}(\mathbbm{b})\to \widetilde{\cA}_g^{\, \bB}$. In the category of DM analytic stacks, this family
descends {\it as a family of polarized 
varieties} over an \'etale neighborhood
of the boundary strata of $\cA_g^{\bB}$. 
The reason is that $\overline{b}_i\in 
H^0(\bT^g, \bQ{\rm PL}/\bQ {\rm L})$ which define
the relevant polytopal Mumford Construction
\ref{mumford-polytope-2} are defined canonically
by the $B_i$---one may worry that some
non-canonicity is introduced
by the choice of the characteristic
linear form $L_i({\bf m})$ in (\ref{characteristic})
which determine the lifts $b_i$.
But a different choice $L_i\mapsto L_i+2L_i'$ produces 
the same section $\overline{b}_i$.

On the other hand, due to the shifts $L_i({\bf m})$,
the resulting Mumford construction
has no canonical origin section, see Remark \ref{rem-shifts}. 
Thus, the output
of Construction \ref{voronoi-mumford} does not glue
canonically (i.e.~in a manner independent of the choice
of $L_i$) to the universal family $\cX_g\to \cA_g$ (which
has an origin section), but rather to the universal family 
$\cX_g^\star\to \cA_g$ (which has a canonical lift
of the principal polarization). If one were to take $L_i=0$
in (\ref{characteristic}), we would retain a canonical
origin point, but the principal polarization
may not lift to a theta divisor, 
see \cite{namikawa76}.

By their canonicity and the uniqueness of gluings, 
the Mumford constructions of Construction
\ref{voronoi-mumford} are 
compatible between adjacencies of cones
in $\fF_{\rm vor}$. Thus, we may glue them
via the unique gluings respecting the lift of $L$,
to produce a proper extension $$\overline{\cX_g^\star}^{\rm vor} \coloneqq \cX_g^\star\cup \textstyle 
\bigcup_{\,\bB\,\in\, \GL_g(\bZ)\backslash \fF_{\rm vor}} 
X^{\rm univ}(\mathbbm{b})\to \overline{\cA}_g^{\rm vor}
.$$
In summary, $\overline{\cX_g^\star}^{\rm vor}$ admits
a relatively projective, 
surjective morphism (a priori, just in the 
category of DM analytic stacks)
$$f_{\rm vor}\colon \overline{\cX_g^\star}^{\rm vor}\to 
\overline{\cA}_g^{\rm vor},$$
extending the universal family of abelian torsors with
lift of principal polarization.
It follows from Serre's GAGA for 
Deligne--Mumford stacks, see \cite[Cor.~5.13]{toen}, that
$ \overline{\cX_g^\star}^{\rm vor}$ is a DM algebraic
stack and $f_{\rm vor}$ is
projective. For instance, $f_{\rm vor}$
becomes a morphism of projective schemes after taking
the pullback toroidal compactification of 
an appropriate \'etale cover. 

Our construction also produces 
an extension $$f_{\rm vor}\colon 
(\overline{\cX_g^\star}^{\rm vor},
\overline{\Theta}_g^{\rm vor})\to 
\overline{\cA}_g^{\rm vor}$$
of the universal pair $(X,\Theta)$. 
Here, the theta divisor
$\Theta \in |\cL|$ is the unique element
of the linear system. Note that $\Theta$
extends as an effective, relatively ample divisor over 
$\overline{\cA}_g^{\rm vor}$ 
as the vanishing locus of the unique weight 
$w=1$ theta function
$\Theta=V(\Theta_{(0/1,\dots,0/1)})$ of 
Construction \ref{mumford-polytope}.
\hfill $\clubsuit$
\end{construction}

 \begin{example}[Second Voronoi fan for $g\leq 6$]
 \label{voronoi-ex}
 We now describe, in varying levels of detail, the second
 Voronoi fan of $\cA_g$ for small dimensions, and the
 extension of the universal family over it, defined
 by Construction \ref{torsor}. 
 This line of Russian mathematical inquiry is notable 
 for extending across more than a century.

 \begin{figure}
\includegraphics[width=4.5in]{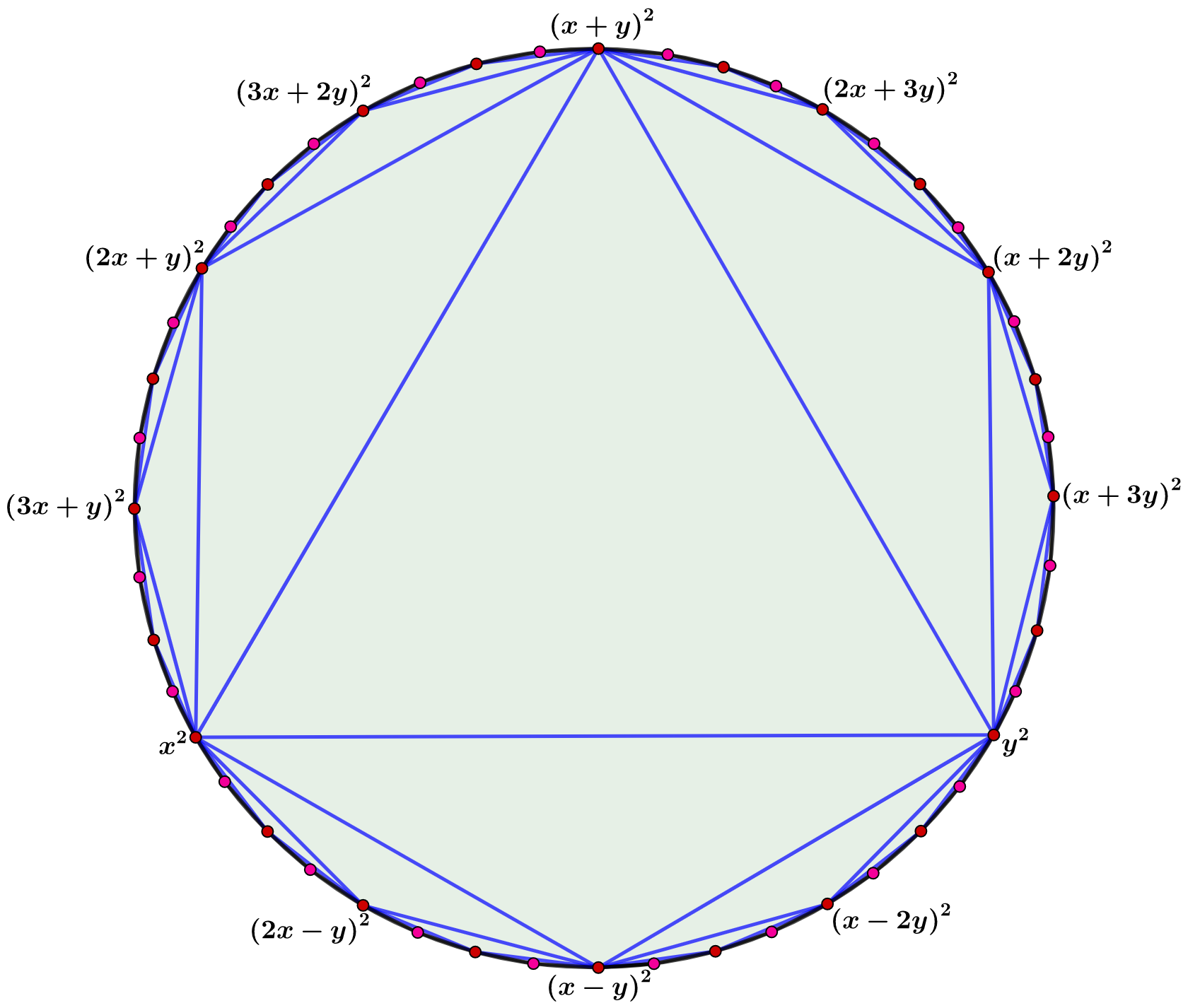}
\caption{Projectivization of the second Voronoi 
fan decomposition of $\cP_2^+$. Cones of dimension
$3$, $2$, $1$ in green, blue, red, respectively.
}
\label{fig:second-voronoi}
\end{figure}
 
\fbox{{$g=1$:}} Here, any fan for $\cA_1$ is the same, 
and there is only one Voronoi cone,
corresponding to the ray 
$\bR_{\geq 0}\{x^2\}\subset \cP_1^+\simeq \bR_{\geq 0}$. The 
corresponding Delaunay decomposition is Figure \ref{fig:tate3},
and the resulting Mumford construction 
is the Tate curve. The universal family 
$$\overline{\cX_1^\star}^{\rm vor}\to 
\overline{\Sp_2(\bZ)\backslash \cH_1}^{\rm vor}
\simeq \bP(4,6)$$ is the extension
of the universal elliptic curve by a nodal elliptic curve.
Over the coarse space $\bP^1_j$ of $\bP(4,6)$, 
i.e.~the $j$-line, the 
nodal curve fibers over $j=\infty$.\smallskip

\fbox{$g=2$:} Here, the second Voronoi fan is 
$\fF_{\rm vor} = \GL_2(\bZ)\cdot 
\bR_{\geq 0}\{{\bf x}^2, {\bf y}^2,
({\bf x}+{\bf y})^2\}$.
See Figure \ref{fig:second-voronoi}.
Thus, $\fF_{\rm vor}$ is the orbit 
of a single cographic cone, associated to the theta graph,
with the lower dimensional faces corresponding to contractions
of the theta graph (caveat lector:~edge contractions of $G$
give, in the sense of matroids, 
deletions of the cographic matroid $M^*(G)$).

There is one orbit each of 
$3$-, $2$-, $1$-, and $0$-dimensional cones, corresponding
respectively to the cographic cones
of the theta graph, the wedge of two circles,
a single circle, and a point. Over
a maximal, $3$-dimensional cone, the universal family
$\overline{\cX^\star_2}^{\rm vor}
\to \overline{\cA}_2^{\rm vor}$
is extended by a Mumford construction
$X(\mathbbm{b})\to \Delta^3$ 
isomorphic to Example \ref{theta-3-param}.
The reduction theory, i.e.~analysis of 
$\GL_2(\bZ)$-equivalence classes,
of positive-definite bilinear forms of rank $2$,
goes back at least to work of 
Fricke--Klein \cite{klein}; see Vallentin
\cite[Ch.~2]{Vallentin2003SphereCL} for some historical
discussion.
\smallskip

\fbox{$g=3$:} Here, the second Voronoi fan is 
$\fF_{\rm vor} = \GL_3(\bZ)\cdot 
\bR_{\geq 0}\{{\bf x}_i^2,({\bf x}_i-{\bf x}_j)^2\}$ 
for $1\leq i<j\leq 3$. 
There is an analogous Voronoi cone in any rank $g$, called
{\it Voronoi's principal domain of the first type}.
It is the graphic cone $\bB_{M(K_{g+1})}$ associated to
the graphic matroid $M(K_{g+1})$ of the complete 
graph $K_{g+1}$ on $g+1$ vertices. 
The number of $\GL_3(\bZ)$-orbits of cones of dimensions 
$6$, $5$, $4$, $3$, $2$, $1$, $0$ are, respectively,
$1$, $1$, $2$, $2$, $1$, $1$, $1$. Over
the maximal, $6$-dimensional cone, the universal family
$\overline{\cX^\star_3}^{\rm vor}
\to \overline{\cA}_3^{\rm vor}$
is extended by a Mumford construction
$X(\mathbbm{b})\to \Delta^6$ associated to the 
cographic or graphic matroid of the complete graph $K_4$ 
(note that $M(K_4)\simeq M^*(K_4)$ 
since $K_4$ is a planar, self-dual graph). 
See Example \ref{cographic-ex}.
\smallskip

\fbox{$g=4$:} Here, the second Voronoi fan is 
$\fF_{\rm vor} = \GL_4(\bZ)\cdot \{\bB_{\rm black}, \,
\bB_{\rm grey},\,
\bB_{\rm white}\}$, see for instance
\cite[Sec.~4.4.1]{Vallentin2003SphereCL}.
That is, there are three $\GL_4(\bZ)$-orbits
of maximal, $10$-dimensional cones of $\fF_{\rm vor}$.
The original computation goes back to
Delaunay \cite[Thm.~III]{delaunay1929},
who found all but one
of the $\GL_4(\bZ)$-orbits of
cones of $\fF_{\rm vor}$, and Shtogrin 
\cite{shtogrin}, who filled the gap.

The cone $\bB_{\rm black}$ is Voronoi's principal
domain of the first type, but unlike for $g\leq 3$,
it no longer forms a fundamental domain for the action
of $\GL_4(\bZ)$. It is a matroidal cone,
associated to the graphic matroid $M(K_5)$
of the complete graph $K_5$.
This cone is {\it not} cographic---the 
dual of a graphic matroid is graphic
if and only if the graph is planar, 
and $K_5$ is not planar.

The cones $\bB_{\rm grey}$ and $\bB_{\rm white}$
are simplicial, but are
not matroidal---they both have one ray 
generated by the positive-definite quadratic form
giving the $D_4$-lattice, whereas all rays
of a matroidal cone are quadratic forms of rank $1$.
There is one additional maximal, 
matroidal cone $\bB_{M^*(K_{3,3})}$ of dimension $9$. It is
the matroidal cone of the 
cographic matroid $M^*(K_{3,3})$
of the complete bipartite graph $K_{3,3}$ and 
is the facet shared between two white cones.

The number of orbits of cones of dimensions $10$,
$9$, $8$, $7$, $6$, $5$, $4$, $3$, $2$, $1$, $0$ are, 
respectively,
$3$, $4$, $7$, $11$, $11$, $9$, $7$, $4$, $2$, $2$, $1$,
see Hulek--Tommasi \cite[p.~232]{huleka4}.
In particular, the toroidal compactification
$\overline{\cA}_4^{\rm vor}$ has $2$ boundary divisors
and $3$ zero-dimensional strata.\smallskip

\fbox{$g=5$:}
Correcting the nearly complete computations of
Baranovskii--Ryshkov \cite{ryshkov} 
to find one missing case,
Peter Engel \cite{engel} verified by computer that
there are $222$ maximal, $15$-dimensional Voronoi
cones for $g=5$. There are $9$ rays, giving the
boundary divisors of $\overline{\cA}_5^{\rm vor}$.
Dutour Sikiri\'c et al.\ \cite{sikiric} 
proved that there are $110305$ total 
$\GL_5(\bZ)$-orbits
of cones in $\fF_{\rm vor}$ ({\it loc.~cit.}~give
a slightly smaller number, as they only count
cones which intersect $\cP_5$).

There are $4$ maximal, matroidal cones, of
dimensions $15$, $12$, $12$, $10$.
The first of these is Voronoi's principal
domain of the first type $\bB_{M(K_6)}$
and the last of these is the matroidal cone 
$\bB_{\underline{R}_{10}}$ associated
to the Seymour--Bixby matroid, see 
Example \ref{seymour-bixby}. The two maximal, 
matroidal cones of dimension $12$ are the 
cographic cones of two trivalent
genus $5$ graphs (one of which is
the $1$-skeleton of a cube). 
\smallskip

\fbox{$g=6$:} By work of Danilov--Grishukhin
\cite[Sec.~9]{danilov-grishukhin},
there are $11$ maximal matroidal cones, 
with $8$ cographic of dimension $15$, and the 
remaining three of dimensions $21$, $16$, $12$.
Respectively, these are the graphic cone of
$K_7$ and two matroidal cones, associated
to regular matroids on $16$ and $12$ 
elements which are neither
graphic nor cographic.
The number of orbits of maximal cones is unknown,
but exceeds $567,613,632$ by 
computations of Baburin--Engel \cite{baburin}.
 \end{example}

\begin{remark}\label{torsor-remark}
Construction \ref{torsor} shows that 
there exists a canonical element
of the group 
$H^1(\cA_g, \cX_g[2])$
giving the abelian torsor $\cX_g^\star$. 
We have an isomorphism with 
the group cohomology
$H^1(\cA_g, \cX_g[2])\simeq 
H^1({\Sp_{2g}}(\bZ), ({\bZ}/2{\bZ})^{2g})$ where
${\rm Sp}_{2g}(\bZ)$ acts on 
$(\bZ/2\bZ)^{2g}$ by the standard representation.
Furthermore, the class of $\cX_g^\star$
is nontrivial for $g\geq 2$, 
cf.~\cite{grushevsky-hulek}. It is natural
to ask whether for $g \geq 2$ the (\'etale)
Tate--Shafarevich
group $H^1(\cA_g, \cX_g)$ satisfies 
$H^1(\cA_g, \cX_g)\simeq \bZ/2\bZ$,
 where we view $\cX_g$ as a group
 scheme over $\cA_g$. An affirmative answer
 would show that $\cX_g^\star$ is the only
 abelian torsor under the universal 
 abelian variety for $g \geq 2$.
 
 The question of whether $H^1(\cA_g, \cX_g)\simeq \bZ/2\bZ$ 
 is equivalent to the question of whether we have 
 $H^1({\Sp_{2g}}(\bZ), (\bZ/2\bZ)^{2g})\simeq \bZ/2\bZ$, 
 as the map $$H^1({\rm{Sp}}_{2g}(\bZ), (\bZ/2\bZ)^{2g}) 
 \simeq H^1(\cA_g, \cX_g[2]) \to H^1(\cA_g, \cX_g)$$ is an 
 isomorphism. To see this, note first that by 
 \cite[Prop.~XIII.2.3]{raynaud}, the group $H^1(\cA_g, \cX_g)$ is 
 torsion. Second, for each $n \in \bN$, the natural map 
 $H^1(\cA_g, \cX_g[n]) \to H^1(\cA_g, \cX_g)[n]$ is an isomorphism 
 by the long exact sequence in cohomology arising from the 
 short exact sequence $$0 \to \cX_g[n] \to 
 \cX_g \xrightarrow{\cdot n} \cX_g \to 0;$$ 
 moreover, the group $H^1(\cA_g, \cX_g) = 
 H^1({\rm{Sp}}_{2g}(\bZ), \bZ^{2g})$ is $2$-torsion, 
 because ${\rm{Sp}}_{2g}(\bZ)$ contains an element that 
 acts as $-1$ on $\bZ^{2g}$. 
 
  A computation via the description
  $\SL_2(\bZ)=(\bZ/4\bZ)*_{(\bZ/2\bZ)} (\bZ/6\bZ)$ 
  shows that
  $H^1(\cA_1, \cX_1)\simeq H^1({\rm{SL}}_2(\bZ), (\bZ/2\bZ)^2)$ 
  is isomorphic to $\bZ/2\bZ$, even though 
  $\cX_1^\star \simeq \cX_1$. Thus, there exists a 
  universal non-trivial torsor under 
  $\cX_1 \to \cA_1$, i.e.\ a family of genus one 
  curves over $\cA_1$ with no section whose 
  Jacobian is $\cX_1 \to\cA_1$.  We do not know a 
  geometric construction of this family.
  \end{remark}

\begin{definition}
Let $(X,D)$ be a pair of a projective variety and
a $\bQ$-divisor $D$. We say that $(X,D)$ is {\it KSBA-stable}
if:
\begin{enumerate}
    \item the pair $(X,D)$ has slc singularities
    (see e.g.\ \cite{kollar_book}), and 
    \item $K_X+D$ is $\bQ$-Cartier and ample.
\end{enumerate}
\end{definition}

\begin{proposition}\label{ksba-extension}
Let $\epsilon>0$ be a sufficiently
small positive rational number. 
There is a canonical
stacky toroidal compactification
$\cA_g\hookrightarrow \overline{\cA}_g^{\rm nvor}$ admitting a partial coarsening $\overline{\cA}_g^{\rm nvor}\to \overline{\cA}_g^{\rm vor}$ along which 
every fiber of the normalized pullback 
$(\overline{\cX_g^\star}^{\rm nvor},\epsilon \overline{\Theta}_g^{\rm nvor})\to 
\overline{\cA}_g^{\rm nvor}$
of 
Construction \ref{torsor}
is a KSBA-stable pair. 
\end{proposition} 

The toric stack structure corresponds
to ``toric stacky data'' in the sense of 
\cite[Def.~4.1]{tyomkin}:~For the cones
$\bB\in \fF_{\rm vor}$ we require the additional
data of $\GL_g(\bZ)$-equivariant,
finite index sublattices
of $\bR\bB\cap {\rm Sym}_{g\times g}(\bZ)$,
which are compatible along face inclusions
of cones in $\fF_{\rm vor}$.

\begin{proof}[Sketch.] 
It follows from
a mild generalization of
Proposition \ref{k-triv}
to affine 
toric bases that 
in a Mumford fan Construction \ref{singular-base},
the fibers are reduced with trivial canonical
bundle $K_X\simeq \cO_X$
and slc singularities,
whenever the slices
$\cS_{(r_1,\dots,r_k)}$ of the normal fan,
for $r_1B_1+\cdots+r_kB_k\in {\rm Sym}_{g\times g}(\bZ)$ integral,
are integral tilings of $\bfN_\bR$.
For the Mumford Construction \ref{voronoi-mumford}
of second Voronoi type, $\cS_{(r_1,\dots,r_k)}$ is, 
up to translation, 
the image of the Voronoi decomposition 
${\rm Vor}_B$ under the map 
$N_\bR\colon \bfM_\bR\to \bfN_\bR$ corresponding
to $B=\sum_{i=1}^k r_iB_i$.

In general, the desired integrality condition
fails for $g\geq 5$, see \cite[Sec.~1.14]{alexeev-nakamura}, exactly because the $b_i$ of 
Construction \ref{torsor}
may only be $\bQ{\rm PL}$.
But we may define canonically
a finite index sublattice of $\bR\bB\cap {\rm Sym}_{g\times g}(\bZ)$ as the one generated
by the linear combinations $r_1B_1+\cdots+r_kB_k$
for which the slices $\cS_{(r_1,\dots,r_k)}$ {\it are}
integral polytopes. This defines
toric stacky data, which is $\GL_g(\bZ)$-equivariant, and so determines a DM stack
$\overline{\cA}_g^{\rm nvor}$ admitting
$\overline{\cA}_g^{\rm vor}$ as a partial coarsening.
See also \cite{molcho}. The normalized
pullback of the Mumford
Construction \ref{voronoi-mumford}
along the partial coarsening map 
$\overline{\cA}_g^{\rm nvor}\to\overline{\cA}_g^{\rm vor}$ is now flat with reduced fibers.

The log canonical centers of a fiber $X^n$ 
of this pullback are exactly
the toric strata of the Mumford construction.
Given an effective divisor $D\subset X^n$,
there is an $\epsilon\ll 1$ for which $(X^n,\epsilon D)$ 
defines a KSBA-stable pair if and only if $D$ contains
no log canonical centers, i.e.~toric strata.
In fact, in our setting, 
any $\epsilon\leq 1$ suffices, 
see \cite[Thm.~3.10]{a}, 
generalizing \cite[Thm.~17.13]{kollar-shaf}.

We claim this property follows from the definition
of $\Theta_{(0/1,\,\dots,\,0/1)}$. The key observation is
that, for every vertex of a polyhedral face  
$F\subset {\rm Del}_B$, $B\in \bB^\circ$, the restriction
$\Theta_{(0/1,\,\dots,\,0/1)}\vert_{Y_F}$ of the theta
divisor to the stratum $Y_F\subset X^n$ 
is a section of a (toric) line bundle, for which 
the coefficient of any monomial corresponding 
a vertex of $F$ is nonzero, i.e.~lies in $\bC^*$.
This property ensures that the restriction
of the theta divisor contains no 
toric strata of $Y_F$.
\end{proof}

We remark that the procedure
of choosing a finite index subcone
of $\bB\cap {\rm Sym}_{g\times g}(\bZ)$
is unnecessary for matroidal cones,
as in this setting,
the Voronoi cell $\cS_{(r_1,\dots,r_k)}$
is always integral---it is a Minkowski sum
of integral line segments $[0,r_i{\bf x}_i]$.

\begin{theorem}[{\cite[Thm.~1.2.17]{alexeev}}] \label{KSBAvor}
For $\epsilon\ll 1$, 
$(\overline{\cX_g^\star}^{\rm nvor},
\epsilon\overline{\Theta}_g^{\rm nvor})
\to  \overline{\cA}_g^{\rm nvor}$ is the universal
family over the normalization of the KSBA compactification 
of the space of KSBA-stable pairs $(X,\epsilon \Theta)$, 
with $X$ a torsor under a $g$-dimensional PPAV and 
$\Theta\subset X$ the theta divisor. 
\end{theorem}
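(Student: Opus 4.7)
The plan is to compare the morphism induced by Proposition~\ref{ksba-extension} to the universal property of the KSBA compactification. By that proposition, the family $(\overline{\cX_g^\star}^{\rm vor}, \epsilon\overline{\Theta}_g^{\rm vor}) \to \overline{\cA}_g^{\rm vor}$ consists fiberwise of KSBA-stable pairs, and it is flat (Mumford constructions of second Voronoi type are flat by construction). Hence there is a classifying morphism
\[
\phi \colon \overline{\cA}_g^{\rm vor} \longrightarrow \overline{M}_{\mathrm{KSBA}}
\]
to the proper DM stack of KSBA-stable pairs $(X,\epsilon\Theta)$ of log general type with the appropriate Hilbert function. Since $\phi$ is the identity on $\cA_g$, its image lands in the closure $\overline{\cA_g}^{\mathrm{KSBA}}$ of $\cA_g$ in $\overline{M}_{\mathrm{KSBA}}$. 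The source $\overline{\cA}_g^{\rm vor}$ is normal (toroidal extensions of orbifolds are normal), so $\phi$ factors through the normalization
\[
\tilde\phi \colon \overline{\cA}_g^{\rm vor} \longrightarrow \overline{\cA_g}^{\mathrm{KSBA},\nu}.
\]

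Next I would show $\tilde\phi$ is an isomorphism. Both sides are proper over the Baily--Borel compactification $\overline{\cA}_g^{\mathrm{BB}}$, and $\tilde\phi$ is birational (it restricts to the identity on $\cA_g$). By Zariski's main theorem it suffices to prove $\tilde\phi$ is injective on closed points with reduced fibers, since then $\tilde\phi$ is a finite birational morphism of normal spaces, hence an isomorphism. Equivalently, I must exhibit an inverse on the level of points: given a KSBA-stable pair $(X_0,\epsilon\Theta_0)$ in $\overline{\cA_g}^{\mathrm{KSBA}}$, I need to recover a unique cone $\bB\in\fF_{\mathrm{vor}}$ and a unique point of the corresponding toroidal stratum.

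The recovery of the Mumford data proceeds as follows. Since $X_0$ is an slc degeneration of abelian varieties with $K_{X_0}\sim 0$, each normalized component is a toric variety, and the intersection complex $\Gamma(X_0)$ is canonically a polyhedral complex. The $\bZ$-local system on the smooth locus lifts $\Gamma(X_0)$ to an infinite periodic polyhedral decomposition of $\bfN_\bR$ for a lattice $\bfN\simeq \bZ^g$, with translation group $\Lambda\subset\bfN$ of full rank. The polarization $\epsilon\Theta_0$ restricted to each toric stratum is a toric divisor, and the compatibility of these restrictions across strata (coming from the fact that the limit $\Theta_0$ is a Cartier divisor globally) determines, via the computation of bending parameters as in Definition~\ref{bendinglocus} and Proposition~\ref{bending-read}, a collection of convex $\bZ$PL sections whose associated bilinear forms $B_i$ lie in $\cP_g^+$. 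The dicing property and the definition of $\fF_{\mathrm{vor}}$ force the tesselation to be a Delaunay tiling, pinning down a unique cone $\bB\in\fF_{\mathrm{vor}}$ up to the $\GL_g(\bZ)$-action. The continuous moduli inside this cone are recovered from the gluing data of components (the twisting parameters $a\in{\rm Sym}_{g\times g}(\bC^*)/\langle\bB\rangle$ appearing in Construction~\ref{mumford-polytope-2}), which are determined by $\Theta_0$ via the multiplication rule~\eqref{mult-rule} modulo $(u_1,\dots,u_k)$.

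The main obstacle is precisely this reconstruction step: showing that the intersection complex of an arbitrary KSBA-stable limit of PPAVs is forced to be a Delaunay decomposition (rather than a more general integral tesselation), and that the polarization uniquely determines the dicing data. The key ingredient is that $(X_0,\epsilon\Theta_0)$ being the flat limit of a family of smooth PPAVs places strong restrictions on the possible components and their polarizations: by applying Theorem~\ref{schmid} and Proposition~\ref{monodromy-extend} to a one-parameter smoothing, one controls the monodromy cone and hence the PL functions $b_i$ up to the action described in Remark~\ref{perturb}. The remaining check that no spurious irreducible components of the KSBA closure are missed---i.e.~that every such stable limit is indeed smoothable to a PPAV through a Mumford construction---follows from the surjectivity of $\tilde\phi$, which is automatic from the properness of $\overline{\cA}_g^{\rm vor}$ over $\overline{\cA}_g^{\mathrm{BB}}$ and the fact that the image contains the dense open $\cA_g$. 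Once bijectivity on closed points is established, the fact that $\overline{\cX_g^\star}^{\rm vor}\to\overline{\cA}_g^{\rm vor}$ is the pullback of the universal KSBA family follows from the universal property of the normalization together with the flat extension of the family.
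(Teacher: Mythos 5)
Your proposal starts the same way as the paper's sketch: use Proposition~\ref{ksba-extension} to get a classifying morphism to the (closure of the image in the) KSBA moduli stack, and then reduce to quasi-finiteness by Zariski's main theorem and normality of the toroidal compactification. Where you diverge is in \emph{how} quasi-finiteness is proved. The paper's sketch avoids reconstruction entirely: it observes that $c$ factors over $\overline{\cA}_g^{\rm BB}$, so any contracted curve lies in a fiber over the Baily--Borel compactification; such a curve admits an algebraic deformation (via the torus action after moving into the deepest cusp) to a union of $1$-dimensional toric boundary strata; and those are not contracted because the combinatorial types of the Mumford fibers over $0$ and over a general $u\in\Delta^*\subset\bP^1$ are distinct by Construction~\ref{voronoi-mumford}. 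This is a short global argument that only needs a comparison of two fibers over a single toric $\bP^1$.

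Your route instead attempts to build an inverse by reconstructing the Mumford data from the stable pair $(X_0,\epsilon\Theta_0)$. This is essentially Alexeev's original strategy, and while it is a legitimate approach, the sketch as written has real gaps. First, you assert that each normalized component of $X_0$ is a toric variety; that is false in general---away from the deepest stratum the components are toric bundles over abelian varieties (cf.~Proposition~\ref{bending-read}(3)), so the reconstruction of the lattice $\bfN$ and the periodic polyhedral decomposition is already more delicate than stated. Second, the crucial claim that ``the dicing property and the definition of $\fF_{\rm vor}$ force the tesselation to be a Delaunay tiling'' is not a consequence of anything you cite: Schmid's theorem and Proposition~\ref{monodromy-extend} control the monodromy cone $\bB$ of a one-parameter smoothing, but they do not say anything about which $\bB$-periodic integral tesselation of $\bfN_\bR$ appears as the intersection complex. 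That is precisely the content that would have to be proved, and it is not supplied by the phrase ``pinning down a unique cone $\bB$.'' Third, recovering the twisting parameters $a$ from $\Theta_0$ via the multiplication rule requires an argument that the theta data determines $a$ uniquely on each torus orbit of $({\rm Sym}_{g\times g}(\bZ)/\langle\bB\rangle)\otimes\bC^*$---again plausible, but not established here. By contrast, the paper's deformation-to-toric-strata argument sidesteps all of this by only needing the (easy) observation that Delaunay decompositions change across walls of $\fF_{\rm vor}$. If you want to pursue the reconstruction route, you should consult Alexeev's Theorem~1.2.17 and its proof, where the semiabelian structure on $X_0^{\rm reg}$, the $T$-symmetry of the theta divisor, and the ``Inverse Theorem'' are developed to make these recovery steps precise.
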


\begin{proof}[Sketch.]
We will only address the equality
of coarse spaces.

By Proposition \ref{ksba-extension},
there is a classifying morphism 
$c\colon \oA_g^{\rm vor}
\to \oA_g^{\Theta}$
where the latter is, by definition, the closure, 
taken
with reduced scheme structure,
of the coarse space of pairs 
$(X,\epsilon \Theta)$ 
as in Construction \ref{torsor} in the 
moduli space of
KSBA-stable pairs \cite{ksb, a, kollar_book}. 
By Zariski's main theorem and the normality 
of toroidal compactifications, it suffices to check
that $c$ is finite.

It is easy to see that $c$ defines a morphism
over the Baily--Borel compactification $\oA_g^{\rm BB}$,
e.g.~by considering the Albanese variety
of the normalization of any component
of $(X,\epsilon \Theta)$. 
So if $c$ contracted some curve,
this curve would lie in a fiber of the morphism
$\oA_g^{\rm vor}\to \oA_g^{\rm BB}$. Any such curve
admits an algebraic deformation to a union of $1$-dimensional
torus orbits---first move the image
point in $\oA_g^{\rm BB}$ to the deepest cusp,
then apply the torus action.
Thus, $c$ would contract some $1$-dimensional
toric boundary
stratum $(\bP^1,0,\infty)\to \oA_g^{\rm vor}$. 
But, for any cone $\bB\in \fF_{\rm vor}$, 
the combinatorial types of the KSBA-stable
fibers over $0$
and over $u\in \Delta^*\subset \bP^1$ are distinct, 
by 
Construction \ref{voronoi-mumford}.
It follows that $c$ contracts no algebraic curves,
and hence is finite.
\end{proof}

A similar strategy was employed
in \cite[Thm.~1, Thm.~5.14]{alexeev-engel} 
to prove the semitoroidality
of certain KSBA compactifications 
of the moduli of polarized K3 surfaces.

\subsection{Algebraicity and projectivity}

We now analyze under what circumstances an extension
of the universal family $\cX_g\to \cA_g$ or 
$\cX_g^\star\to \cA_g$ of principally polarized
abelian varieties or torsors, by
Mumford constructions, are 
either algebraic or projective. 

\begin{proposition}\label{algebraicity}
Let $\bB\subset \cP_g^+$ be a rational 
polyhedral cone and
$\cS$ be a fan satisfying the hypotheses
of Construction \ref{singular-base}. 
The corresponding Mumford
construction 
$f \colon X^{\rm univ}(\cS)\to
\widetilde{\cA}_g^{\,\bB}$
is a proper, equidimensional 
morphism of algebraic spaces. 

Suppose, furthermore, that 
$f\colon X^{\rm univ}(\mathbbm{b})\to 
\widetilde{\cA}_g^{\, \bB}$
is a polytopal Mumford construction,
as in Construction \ref{mumford-polytope-2}.
Then $f$ is \'etale-locally 
projective.
\end{proposition}

\begin{proof}
By replacing $\widetilde{\cA}_g$ with
a suitable further cover, 
we may assume that the distinction between $\cX_g$ and 
$\cX_g^\star$ is erased.
Let $\fF$ be a common refinement of the fans 
$\Gamma\cdot \bB$ and $\fF_{\rm vor}$ 
whose support is $\Gamma\cdot \bB$,
for $\Gamma\subset \GL_g(\bZ)$ the Levi
quotient.
Then we have morphisms $$\widetilde{\cA}_g^{\, \bB}
\leftarrow \widetilde{\cA}_g^{\,\fF}\to 
\widetilde{\cA}_g^{\rm vor}$$ and we may pullback
the (a priori) analytic
family $X^{\rm univ}(\cS)\to 
\widetilde{\cA}_g^{\, \bB}$ and 
the algebraic universal family 
$\widetilde{\cX}_g^{\rm vor}\to 
\widetilde{\cA}_g^{\rm vor}$ 
(see Construction \ref{torsor})
to produce two families
$X^{\rm univ}\,\!'(\cS)$ and 
$X^{\rm vor}\to \widetilde{\cA}_g^{\, \fF}$ 
in the analytic and algebraic categories, respectively.

Taking a common refinement of 
the fans defining $X^{\rm univ}\,\!'(\cS)$ and $X^{\rm vor}$ 
we may dominate $X^{\rm univ}\,\!'(\cS)$ and 
$X^{\rm vor}$ by a common (universal) 
Mumford construction 
$\wX\to \widetilde{\cA}_g^{\,\fF}$. Since
$\wX\to X^{\rm univ}\,\!'(\cS)$ 
and $\wX\to X^{\rm vor}$ 
are both toroidal morphisms, 
we can connect 
$X^{\rm vor}\dashrightarrow X^{\rm univ}\,\!'(\cS)$ 
by a sequence
of toric modifications, 
all of which are
algebraic. 
We deduce that $X^{\rm univ}\,\!'(\cS)$
is an algebraic space.
In turn, its contraction
$X^{\rm univ}(\cS)$ is a proper
algebraic space over $\widetilde{\cA}_g^{\,\bB}$. 

Finally, we address the case of a polytopal Mumford
construction
$f\colon X^{\rm univ}(\mathbbm{b})\to 
\widetilde{\cA}_g^{\,\bB}$.
Then $f$ is a morphism of algebraic spaces by what we have just proved, and by (a generalization of) Theorem \ref{poly-is-fan} that compares the polytope and fan Mumford constructions over $\widetilde{\cA}_g^{\,\bB}$. 
Moreover, 
$f$ is 
analytically-locally projective over 
$\widetilde{\cA}_g^{\, \bB}$, see Construction \ref{mumford-polytope-2}.
The second statement of the proposition is therefore  a consequence of the following general fact: 
If a separated and finitely presented morphism
of algebraic spaces $X\to Y$ is analytically-locally projective, then it is \'etale-locally projective.

The proof follows
from Artin approximation. 
Indeed, the Hilbert scheme $\Hilb_{X/Y}$ is an algebraic space locally of finite presentation over $Y$ by
\cite[\href{https://stacks.math.columbia.edu/tag/0D01}{Tag 0D01}]{stacks-project}.
Take a point $p\in Y$.
An analytic family
of ample divisors $D_U\subset X_U\to U$ 
over an analytic neighborhood 
$U \ni p$, may be approximated
by an algebraic family of ample divisors, 
$D'_{U'}\subset X_{U'}\to U'$ over
an \'etale neighborhood $U'\ni p$, which 
coincides with the restriction of $D_U$ to $p$.
Possibly replacing $U'$ with a smaller,
Zariski open neighborhood of $p\in U'$,
the divisor $D_{U'}'$ is relatively ample.
\end{proof}
 
We now consider the much subtler question of
when $X^{\rm univ}(\mathbbm{b})$ is {\it projective}
over $\widetilde{\cA}_g^{\,\bB}$, as opposed to 
\'etale-locally projective.

\begin{definition} Let 
$\overline{b}$, $\overline{b}\,\!'\in 
H^0(\bT^g,\tfrac{1}{d}\bZ{\rm PL}/\tfrac{1}{d}\bZ{\rm L})$. 
We say that $\overline{b}\sim \overline{b}\,\!'$ lie in the same
{\it shift class} if $\overline{b}-\overline{b}\,\!'$ 
lifts to an $\bfM$-periodic section 
$b-b'\colon \bfM_\bR\to \bR$ of $\tfrac{1}{d}\bZ{\rm PL}$. 
\end{definition}

Recall that ${\bf M}\simeq \bZ^g=H_1(\bT^g,\bZ)$
in Construction \ref{veronese}.
A necessary, but insufficient, condition
for $\overline{b}\sim \overline{b}\,\!'$ is that they
define the same monodromy bilinear form 
$B$ via formula (\ref{Bi}).

\begin{example}
    Let $g=1$ and consider 
    $ \overline{b}$, 
    $\overline{b}\,\!'$, 
    $\overline{b}\,\!''$ for which
    ${\rm Bend}(\overline{b})=2[\tfrac{0}{1}]$,
    ${\rm Bend}(\overline{b}\,\!')=[\tfrac{0}{2}]+[\tfrac{1}{2}]$, and
    ${\rm Bend}(\overline{b}\,\!'')=[\tfrac{1}{3}]+[\tfrac{2}{3}]$ as $\bZ$-weighted linear combinations
    of $\tfrac{1}{d}$-integral
    codimension $1$ polytopes,
    see Definition \ref{bendinglocus}. 
    All three define the same monodromy
    bilinear form $B=2x^2$. But we 
    have $\overline{b}\sim \overline{b}\,\!''$ and
    $\overline{b}\not\sim \overline{b}'$. See 
    Figure \ref{fig:shift_class}. The fundamental
    issue is that, while we could
    subtract from $b-b'$ a linear function of slope 
    $\tfrac{1}{2}$ to make it periodic, such a function
    is not a section of $\tfrac{1}{2}\bZ {\rm L}$ 
    on $\bfM_\bR\simeq \bR$, see Definition \ref{def:d-dicing}, because it does not have integral slope.

    \begin{figure}
        \centering
        \includegraphics[width=0.8\linewidth]{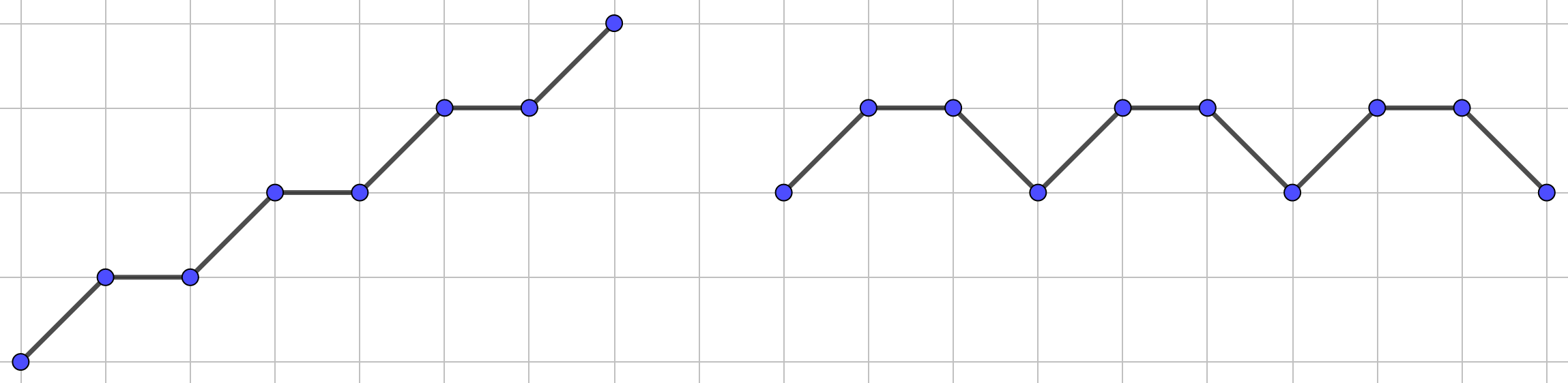}
        \caption{Left: lift of section
        $\overline{b}-\overline{b}\,\!'$ with bending 
        $[\tfrac{0}{2}]-[\tfrac{1}{2}]$. 
        Right: lift of section $\overline{b}-\overline{b}\,\!''$ 
        with bending 
        $2[\tfrac{0}{3}]-[\tfrac{1}{3}]-[\tfrac{2}{3}]$.}
        \label{fig:shift_class}
    \end{figure}
\end{example}

\begin{proposition}\label{bundle-algebraicity}
Let $f\colon X^{\rm univ}(\mathbbm{b})\to 
 \widetilde{\cA}_g^{\, \bB}$ and 
 $f'\colon X^{\rm univ}(\mathbbm{b}')\to 
 \widetilde{\cA}_g^{\, \bB}$ be two
 universal polytopal Mumford 
 Constructions \ref{mumford-polytope-2}, for 
 lifts of two cones $\mathbbm{b},\mathbbm{b}'\subset H^0(\bT^g, 
 \tfrac{1}{d}\bZ {\rm PL}/\tfrac{1}{d}\bZ{\rm L})$
 into $H^0(\bfM_\bR,  \tfrac{1}{d}\bZ {\rm PL})$, mapping
 to the same monodromy cone $\bB\subset {\rm Sym}^2\bfM^\vee$.
 Suppose, that for any $B \in \bB$,
 the PL lifts $b,b'\colon \bfM_\bR\to \bR$ satisfy
 the property that $b'-b$ is $\bfM$-periodic. 
 Then, the canonical analytic
 polarization on the Mumford construction
 extends as a relatively ample
global section of the relative Picard  
${\rm Pic}_{X^{\rm univ}(\mathbbm{b})/\widetilde{\cA}_g^{\,\bB}}$
 if and only if 
 the same holds for $X^{\rm univ}(\mathbbm{b}')/\widetilde{\cA}_g^{\,\bB}$.
\end{proposition}

We note that the data of the gluing of the Mumford construction
onto the universal family 
$\widetilde{\cX}_g\to \widetilde{\cA}_g$ requires
the data of lifts of $\mathbbm{b}$,
$\mathbbm{b}'$
into PL functions on $\bfM_\bR$
by Remark \ref{rem-shifts}.

\begin{proof}
Let $\cL$ and $\cL'$ be the defining
relatively ample line bundles of the two Mumford constructions
over the analytic tubular neighborhood $T(\bB)\subset Y(\bB)$ 
of the deepest toric stratum.

Take a $\bfM$-periodic, regular 
refinement $\cS$ of the normal fans
for $\mathbbm{b}$, $\mathbbm{b}'$, and let
$X^{\rm univ}_\circ(\cS)\to T(\bB)$ 
be the corresponding fan 
Construction \ref{singular-base}. Let $\cL_\cS$
and $\cL_\cS'$ be the pullbacks of $\cL$
and $\cL'$ to $X^{\rm univ}(\cS)$, and define 
$\cE:=\cL_\cS'\otimes \cL_\cS^{-1}$.
Finally, let $\widetilde{\cE}$, $\widetilde{\cL}_\cS$,
$\widetilde{\cL}\,\!'_S$
be the pullbacks of $\cE$, $\cL_\cS$, $\cL_{\cS}'$ 
to the universal cover of $X^{\rm univ}_\circ(\cS)$. Then
$\widetilde{\cE}$, $\widetilde{\cL}_\cS$,
$\widetilde{\cL}\,\!'_S$ are 
$\bfM$-equivariant line bundles.
The condition that
$b'-b$ is $\bfM$-periodic implies 
that we have an $\bfM$-equivariant
isomorphism $$\widetilde{\cE}\simeq 
\cO(\sum_{\substack{\textrm{rays} \\ 
\bR_{\geq 0}v_i\in\cS}} a_{v_i}D_{v_i}),$$
with $a_{v_i}\in \bZ$ depending only on the 
$\bfM$-equivalence
class $\overline{v}_i$ of the ray $\bR_{\geq 0}v_i\in \cS$.
Quotienting, we deduce
that the line bundle $\cE$ is represented by a
finite $\bZ$-linear sum 
$\sum a_{\overline{v}_i}D_{\overline{v}_i}$ of components
over the boundary of $X^{\rm univ}_\circ(\cS)$. 
As components
over the boundary, $D_{\overline{v}_i}$ descend
to algebraic divisors
on the algebraic space $X^{\rm univ}(\cS)\to \widetilde{\cA}_g^{\,\bB}$ and thus $\cL_\cS$
and $\cL'_{\cS}$ differ by twisting by a linear combination
of vertical divisors,
over the boundary of $\widetilde{\cA}_g^{\,\bB}$. 
So one extends as a section of 
relative
Picard if and only if the other does. Furthermore,
the relative ampleness of $\cL$ and $\cL'$ 
over the interior $\widetilde{\cA}_g$
are equivalent. On the other hand, 
the relative ampleness
of either over the boundary $\widetilde{\cA}_g^{\,\bB}\setminus 
\widetilde{\cA}_g$ is automatic, by construction.
\end{proof}

\begin{proposition}\label{projectivity2}
Let $f\colon X^{\rm univ}(\mathbbm{b})\to
\widetilde{\cA}_g^{\,\bB}$ 
be a universal
polytopal Mumford construction associated to
a lift of $\mathbbm{b}\subset H^0(\bT^g, 
\tfrac{1}{d}\bZ{\rm PL}/\tfrac{1}{d}\bZ{\rm L})$, 
extending (as an algebraic space)
the universal family 
$\widetilde{\cX}_g\to \widetilde{\cA}_g$
of abelian varieties. Then $f$
is a projective morphism whenever 
$b_i({\bf m})-b_i(-{\bf m})$
is $\bfM$-periodic. 
Similarly, there
is a relatively projective
extension $f\colon
X^{{\rm univ}\star}(\mathbbm{b})\to 
\widetilde{\cA}_g$ 
of the universal family $\widetilde{\cX}_g^\star
\to \widetilde{\cA}_g$ of abelian
torsors when 
$\overline{b}\sim \overline{b}\,\!^{\rm vor}$
lie in the
same shift class, for any $B\in \bB$, and for 
$\overline{b}\,\!^{\rm vor}$ defined as in
(\ref{characteristic}).
\end{proposition}

\begin{proof} To prove the extension result for $\widetilde{\cX}_g$,
we follow the proof strategy of Proposition 
\ref{bundle-algebraicity}: We may pass to a smooth
$(-1)$-symmetric common refinement $X^{\rm univ}_\circ(\cS)\to T(\bB)$
of the Mumford constructions for $b_i({\bf m})$ 
and $b_i(-{\bf m})$. The pulled back line bundles $\cL_S$, 
$\cL_\cS'$ associated to $b_i({\bf m})$, 
$b_i(-{\bf m})$ are interchanged by the $(-1)$-involution:
$(-1)^*\cL_\cS \simeq \cL_{\cS}'$.
If $b_i({\bf m})-b_i(-{\bf m})$
is $\bfM$-periodic, we may conclude that
$\cL_{\cS}'\simeq \cL_{\cS}(\,\sum a_{\overline{v}_i}D_{\overline{v}_i})$ differ by twisting
by vertical divisors. 
It follows that $(\cL_\cS)^{\otimes 2}$
defines an algebraic extension of 
$(\cL\,\!_{\widetilde{\cX}_g})^{\otimes d}$ 
where 
$\cL\,\!_{\widetilde{\cX}_g}\in {\rm Pic}_{\widetilde{\cX}_g/\widetilde{\cA}_g}(\widetilde{\cA}_g)$
is a tensor square of a $(-1)$-symmetric
local lift of the principal polarization. 

The case of extending $\widetilde{\cX}_g^\star$ is similar,
but again Proposition \ref{bundle-algebraicity} does not directly
apply, since we are gluing onto the universal abelian torsor.
The extension of $\widetilde{\cX}_g^\star$ by the Mumford
construction of second Voronoi type (Construction \ref{torsor})
is relatively projective. Replacing $\bB$ with 
a common refinement of $\bB$ and $\fF_{\rm vor}$, 
we may assume that $\bB$
is contained in a second Voronoi cone.
Choosing lifts $b^{\rm vor}$ as in 
(\ref{characteristic}), defines a local
analytic section of $\overline{\cX_g^\star}^{\rm vor}$
near the boundary stratum associated
to a second Voronoi cone. The hypothesis
that $\overline{b}\sim \overline{b}\,\!^{\rm vor}$,
for all $B\in \bB$ and argument of
Proposition \ref{bundle-algebraicity}
show that, with respect to the chosen local
origin section of $\widetilde{\cX}_g^\star$,
and a well chosen lift of $\mathbbm{b}$,
there is a gluing $X^{{\rm univ}\star}(\mathbbm{b})\to \widetilde{\cA}_g^{\,\bB}$
of $X^{{\rm univ}}(\mathbbm{b})\to T(\bB)$ and
$\widetilde{\cX}_g^\star\to \widetilde{\cA}_g$
for which the ample line bundle on 
the former extends to a section in
${\rm Pic}_{X^{{\rm univ}\star}(\mathbbm{b})/
\widetilde{\cA}_g^{\,\bB}}(\widetilde{\cA}_g^{\,\bB})$.

 Thus, in either case $\widetilde{\cX}_g$ or $\widetilde{\cX}_g^\star$, 
 the canonical polarization on the Mumford 
 construction extends to a relatively ample section
 of relative Picard over
$\widetilde{\cA}_g^{\,\bB}$. 
 
 After passing
 to some further tensor power, we may lift to 
 an element $\cE$ of the (algebraic) Picard group of
 $X^{\rm univ}(\mathbbm{b})$ or 
 $X^{{\rm univ}\star}(\mathbbm{b})$,
 which is relatively very ample. 
 Pushing forward, we get a vector bundle $f_*\cE$ over
 $\widetilde{\cA}_g^{\,\bB}$ over the \'etale site, and 
 therefore over the Zariski site. It follows that the 
 projectivization of $(f_*\cE)^\vee$ is relatively 
 projective over $\widetilde{\cA}_g^{\,\bB}$. Thus,
 $X^{\rm univ}(\mathbbm{b})$ or 
 $X^{{\rm univ}\star}(\mathbbm{b})$ admit closed,
 algebraic embeddings into a projective space over
 $\widetilde{\cA}_g^{\,\bB}$. The result follows.
\end{proof}

Projectivity criteria for extensions of 
$\widetilde{\cX}_g$ should be compared 
to \cite[Ch.~VI]{fc}. Following a standard
toric construction, the polytope 
$\Gamma$ of Section 
\ref{sec:polytope} defines a convex PL function
$\tilde{\phi}\colon \cS\to \bR$ on the normal
fan $\cS$ of Section \ref{sec:fan}. The function
$\tilde{\phi}$ should be 
an ``admissible homogeneous
principal polarization function'' as in
\cite[Ch.~VI, Def.~1.5]{fc}, with our
conditions on the shift class of $b_i$ 
related to Def.~1.5.(vi) of {\it loc.cit.}
Our projectivity results should then
follow from \cite[Ch.~VI, Thm.~1.13]{fc}, though 
translating between the language
used here and that in 
{\it loc.cit.}~is somewhat involved.

\begin{corollary}
Suppose that $\mathscr{H}$ is a 
hyperplane arrangement for the regular matroid $\uR$,
for which the parallel hyperplanes 
normal to $\vec{x}_i$ are 
$H_i^{(j)} \coloneqq 
\vec{x}_i({\bf m})\in \epsilon_{ij}+\bZ$.
Then the extension 
$$X^{{\rm univ}\star}(\uR, \mathscr{H})
\to \widetilde{\cA}_g^{\,\bB_{\uR}}$$ 
of $\widetilde{\cX}_g^\star\to \widetilde{\cA}_g$
is projective whenever
$\sum_j \epsilon_{ij}=0\in \bQ/\bZ$ for all $i=1,\dots,k$
and the extension 
$$X^{\rm univ}(\uR, \mathscr{H})\to 
\widetilde{\cA}_g^{\,\bB_{\uR}}$$ of 
$\widetilde{\cX}_g\to \widetilde{\cA}_g$ 
is projective whenever 
$\sum_j (\epsilon_{ij}+\tfrac{1}{2})=0\in \bQ/\bZ$
for all $i=1,\dots,k$.
\end{corollary}

\begin{proof} 
Let $r_i$ be the number of hyperplanes
$H_i^{(j)}\in \mathscr{H}$ normal to $\vec{x}_i$.

For $\bB_\uR\in \fF_{\rm vor}$ a matroidal
cone, the Delaunay 
decomposition as in Construction
\ref{voronoi-mumford} is the unshifted
hyperplane arrangement (\ref{Hi}). 
Thus, it follows from 
Proposition \ref{projectivity2}
that $\mathscr{H}$ defines a projective
extension of 
$\widetilde{\cX}_g^\star\to \widetilde{\cA}_g$ 
whenever the section 
$\overline{b}_i\in H^0(\bT^g, 
\tfrac{1}{d}\bZ{\rm PL}/\tfrac{1}{d}\bZ{\rm L})$ 
bending along 
$H_i^{(j)}:=\{\vec{x}_i({\bf m}) \in \epsilon_{ij}+\bZ\}$ lies
in the same shift class
as $\overline{b}_i^o\in 
H^0(\bT^g, \tfrac{1}{d}\bZ{\rm PL}/
\tfrac{1}{d}\bZ{\rm L})$
which bends $r_i$ times along the unshifted
hyperplane $\vec{x}_i({\bf m}) \in 0+\bZ$.
Equivalently, 
\begin{align}\label{eq-un}
\textstyle \int_0^1 \int_{0^-}^x (-r_i\delta_0+ \sum_{j=1}^{r_i} \delta_{\epsilon_{ij}}) \,dy \, dx\in \bZ\end{align} where $\delta_p$ denotes the Dirac delta
function at $p$. Now (\ref{eq-un}) holds if and only if $\sum_{j=1}^{r_i} \epsilon_{ij}\in \bZ$.

For the case of extending 
$\widetilde{\cX}_g\to \widetilde{\cA}_g$ 
projectively, we observe that an arrangement
which bends $r_i$ times along the half-shifted hyperplane
$\vec{x}_i({\bf m})\in \tfrac{1}{2}+\bZ$ satisfies
the hypotheses of Proposition \ref{projectivity2}
by lifting to a $\tfrac{1}{2}\bZ{\rm PL}$ 
function $b_i'$ which is identically
zero in a neighborhood of the origin of $\bfM_\bR$.
So $b_i'$ defines a relatively projective extension
of the universal family 
$\widetilde{\cX}_g\to \widetilde{\cA}_g$. Then
by Proposition \ref{bundle-algebraicity}, it suffices
to understand when $\overline{b}_i'\sim \overline{b}_i$
lie in the same shift class, i.e.
\begin{align*}
\textstyle \int_0^1 \int_{0^-}^x (-r_i\delta_{\frac{1}{2}}+ \sum_{j=1}^{r_i} \delta_{\epsilon_{ij}}) \,dy \, dx\in \bZ.\end{align*}
This holds exactly when $\sum_{j=1}^{r_i} (\epsilon_{ij}+\tfrac{1}{2})\in \bZ$. 
\end{proof}

\section{Proof of 
Theorem \ref{regular-extension-thm}}
\label{proof}

Our goal is to prove Theorem 
\ref{regular-extension-thm}, and leverage 
Proposition \ref{algebraicity} 
to prove more algebraic formulations of the 
results therein, see Theorem \ref{theorem:extension} and 
Corollary \ref{corollary:degeneration} below.

\begin{proof}[Proof of 
Theorem \ref{regular-extension-thm}]
Let $f^\ast\colon X^\ast \to (\Delta^*)^k$ be a family
of PPAVs which is matroidal with respect to the snc extension $(\Delta^*)^k\hookrightarrow \Delta^k$ 
(Def.~\ref{mat-degen}). 
Then there are integers 
$r_i>0$ for which the monodromy bilinear
forms about $\{u_i=0\}$ are $r_iB_i$ where 
$B_i={\bf x}_i^2$ is an integral generator of 
the matroidal cone $\bB_\uR$ of the 
corresponding regular
matroid $\uR$. Constructions 
\ref{shifted-matroidal-construction} and 
\ref{mumford-polytope-2},  see also 
Notation \ref{notation:XunivRH}, give
a universal Mumford degeneration
$$f^{\rm univ}\colon 
X^{\rm univ}(\uR, \mathscr{H})\to 
\widetilde{\cA}_g^{\, \bB_\uR}$$ 
on a transversely shifted hyperplane arrangement
for the associated regular matroid $\uR$,
which has exactly $r_i$ bending loci in $\bT^g$, with
bending parameter $1$, along hyperplanes normal
to ${\bf x}_i$.  

The monodromies about the boundary divisors
of $\widetilde{\cA}_g^{\, \bB_\uR}$ 
are exactly $r_iB_i$
and by Proposition \ref{monodromy-extend}, 
the classifying morphism 
$(\Delta^*)^k\to \cA_g$ will
(lift and) extend to
$\Delta^k\to 
\widetilde{\cA}_g^{\, \bB_\uR}$.
Pulling
back $X^{\rm univ}(\uR, \mathscr{H})$ along the extension
of the classifying morphism to $\Delta^k$ produces the desired extension $f\colon X\to \Delta^k$. It has smooth
total space and nodal singularities,
because $f^{\rm univ}$ is locally
trivial along the deepest toroidal stratum of
$\widetilde{\cA}_g^{\, \bB_\uR}$ 
and so smoothness, resp.~nodality,
of $X^{\rm univ}$, resp.~$f^{\rm univ}$ 
(see Proposition \ref{X-smooth}), 
implies smoothness, resp.~nodality, of the 
restrictions $X$, resp.~$f$, 
to the transversal slice $\Delta^k$
 to this deepest toroidal stratum.  

The condition that $f\colon X\to \Delta^k$ be
strictly nodal follows from the condition $r_i\geq 2$ 
by Proposition \ref{strict-D}.

Finally, we address the $K$-triviality of $X$. It
suffices, by Proposition \ref{k-triv}, to show that
the slice $\cS_{(1,\dots,1)}$ of the normal fan
has integral vertices. Indeed, this holds for
any shifted matroidal degeneration, as each
top-dimensional
cell in $\cS_{(1,\dots,1)}$ is a Minkowski
sum of segments $\sum_{i\in I_v}[0,{\bf x}_i]\subset \bfN_\bR$ 
corresponding to hyperplanes $H_i\in \mathscr{H}$ 
meeting at the dual
vertex $v\in \bigcap_{i\in I_v} \oH_i$ 
of the arrangement in $\bfM_\bR/\bfM$.
(For a transversely shifted arrangement, these Minkowski
sums are integral-affine unit cubes).
We deduce the first part of 
Theorem \ref{regular-extension-thm}.

The second
part of Theorem \ref{regular-extension-thm}
follows from the existence of algebraic, 
transversely shifted matroidal
degenerations with specified monodromies, see 
Corollary \ref{corollary:degeneration} below.
\end{proof}

\begin{theorem}
\label{theorem:extension}
Let $f^\ast \colon X^\ast\to Y^\ast$ be a
projective family
of PPAVs over a base $Y^*=Y\setminus D$ for $D\subset Y$
an snc divisor in a smooth quasiprojective variety $Y$. Let $0\in D$ and assume the local monodromy bilinear
forms $B_i$ 
about the components $D_i\ni 0$
are $r_i{\bf x}_i^2$ for an 
integral realization $i\mapsto {\bf x}_i\in \bfM^\vee$ of a regular matroid $\uR$,
where $\bfM\simeq {\rm gr}^W_0H_1(X_t,\bZ)$ 
for $t$ near $0$.
Up to passing to
an \'etale neighborhood of $0\in Y$,
there is a flat, projective, $D$-nodal,
relatively $K$-trivial  extension $f\colon X\to Y$,
which is furthermore strictly $D$-nodal when all
$r_i\geq 2$.
\end{theorem}
\begin{proof}
We have a classifying map $Y^\ast\to \cA_g$ and by
the hypothesis on monodromy, we have,
\'etale-locally about $0\in Y$, an extension
and lift $Y \to \widetilde{\cA}_g^{\, \bB_\uR}$,
$\bB_\uR\coloneqq \bR_{\geq 0}\{B_1,\dots,B_k\}$,
e.g.~because this lift exists analytically-locally about
$0\in Y$ (cf.~Prop.~\ref{monodromy-extend}). 
The morphism $Y \to \widetilde{\cA}_g^{\, \bB_\uR}$ 
is algebraic, for instance by Borel algebraicity. 
Pulling back the family $X^{\rm univ}(\uR, \mathscr{H})\to \widetilde{\cA}_g^{\,\bB_\uR}$, which is 
algebraic and \'etale-locally projective
by Proposition \ref{algebraicity}, we deduce
the result 
(the nodality and relative $K$-triviality
of the extension follow as in the proof above).
\end{proof}

\begin{corollary} \label{corollary:degeneration}
Let $\underline{R}$ be a regular
matroid of rank $g$ on a $k$ element set
and let $(r_1,\dots,r_k)\in \bN^k$. 
There exists a projective 
morphism $f\colon X\to Y$ of smooth 
quasiprojective varieties,
$k=\dim Y$, $g+k=\dim X$,
an snc divisor $D\subset Y$,  
and a zero-dimensional
stratum $0\in D$, 
satisfying the following
conditions:
\begin{enumerate}
\item 
The monodromies 
about the components $D_i\ni 0$ of $D$ are 
of the form $B_i=r_i{\bf x}_i^2$ and 
generate the matroidal cone $\bB_\uR$ 
(Defs.~\ref{def:Bi}, \ref{matroidal-cone}).
    \item The morphism $f \colon X \to Y$ is a transversely 
shifted matroidal 
degeneration on the matroid $\underline{R}$ 
near $0\in Y$ (Def.~\ref{def:shifted-matroidal-degeneration}), 
and the restriction of $f$ to $Y^*\coloneqq 
Y \setminus D$ is a family of principally polarized abelian varieties of dimension $g$.
\item 
The morphism $f$ is, up to shrinking $Y$, a $D$-nodal 
morphism, which is furthermore strictly $D$-nodal if
$r_i\geq 2$ for all $i=1,\dots,k$ (Def.~\ref{D-nodal}). 
\end{enumerate}
\end{corollary}

\begin{proof} By Proposition \ref{puncturedconstruction},
there is a smooth quasiprojective variety $Y$,
snc divisor $D\subset Y$, zero-stratum
$0\in D$, and family $f^*\colon X^*\to Y^*$ 
of PPAVs over $Y^*=Y\setminus D$,
whose monodromies about the components $D_i\ni 0$
are given by $r_i{\bf x}_i^2$. 
Applying Theorem \ref{theorem:extension} and passing
to an \'etale chart about $0$, we produce a
projective  extension $f\colon X\to Y$ with
the desired properties.\end{proof}

\bibliographystyle{mrl}
\bibliography{references}

\end{document}